\title[Projection formulas and induced functors on centers]{Projection formulas and induced functors on centers of monoidal categories}
\date{February 15, 2024}
\author{Johannes Flake}
\address{Mathematical Institute, University of Bonn, Endenicher Allee 60, 53115 Bonn, Germany}
\email{flake@math.uni-bonn.de}
\urladdr{https://johannesflake.net}
\author{Robert Laugwitz}
\address{School of Mathematical Sciences,
University of Nottingham, University Park, Nottingham, NG7 2RD, UK}
\email{robert.laugwitz@nottingham.ac.uk}
\urladdr{https://www.nottingham.ac.uk/mathematics/people/robert.laugwitz}
\author{Sebastian Posur}
\address{University of Münster,
Fachbereich Mathematik und Informatik,
Einsteinstraße 62,
48149 Münster,
Germany}
\email{sebastian.posur@uni-muenster.de}
\renewcommand\MR[1]{}
\newcommand{\superimpose}[2]{%
  {\ooalign{$#1\@firstoftwo#2$\cr\hfil$#1\@secondoftwo#2$\hfil\cr}}}
\newcommand{\leftexpsub}[3]{{\vphantom{#3}}^{#1}_{#2}{#3}}
\newcommand{\lYD}[1]{\leftexpsub{#1}{#1}{\mathbf{YD}}}
\newcommand{\lyd}[1]{\leftexpsub{#1}{#1}{\mathbf{yd}}}
\newcommand\mapsfrom{\mathrel{\reflectbox{\ensuremath{\mapsto}}}}
\newcommand{\oop}{\mathrm{op}}
\newcommand{\coad}{\mathrm{coad}}
\newcommand{\ov}[1]{\overline{#1}}
\newcommand{\un}[1]{\underline{#1}}
\newcommand{\BiMod}[1]{#1\text{-}\mathbf{BiMod}}
\newcommand{\Modlr}[2]{#1\text{-}\mathbf{Mod}\text{-}{#2}}
\newcommand{\Mod}[1]{#1\text{-}\mathbf{Mod}}
\newcommand{\Cat}{\mathbf{Cat}}
\newcommand{\Adj}{\mathbf{Adj}}
\newcommand{\LAdj}{\mathbf{LAdj}}
\newcommand{\Catlax}{\mathbf{Cat}^{\otimes}_{\mathrm{lax}}}
\newcommand{\Catlaxcoref}{\mathbf{Cat}^{\otimes}_{\mathrm{lax,coref}}}
\newcommand{\lmod}[1]{#1\text{-}\mathbf{mod}}
\newcommand{\rmod}[1]{\mathbf{mod}\text{-}#1}
\newcommand{\lMod}[1]{#1\text{-}\mathbf{Mod}}
\newcommand{\lModint}[2]{#1\text{-}\mathbf{Mod}_{#2}}
\newcommand{\rMod}[1]{\mathbf{Mod}\text{-}#1}
\newcommand{\rModint}[2]{\mathbf{Mod}_{#1}\text{-}#2}
\newcommand{\rModloc}[2]{\mathbf{Mod}^{\mathrm{loc}}_{#1}\text{-}#2}
\newcommand{\lcomod}[1]{#1\text{-}\mathbf{Comod} }
\newcommand{\projr}[2]{\mathrm{rproj}_{{#1},{#2}}}
\newcommand{\projl}[2]{\mathrm{lproj}_{{#1},{#2}}}
\newcommand{\projrnoarg}{\mathrm{rproj}}
\newcommand{\projlnoarg}{\mathrm{lproj}}
\newcommand{\projrl}[3]{\mathrm{proj}_{{#1},{#2},{#3}}}
\newcommand{\iprojr}[2]{\mathrm{rproj}^{L}_{{#1},{#2}}}
\newcommand{\iprojl}[2]{\mathrm{lproj}^{L}_{{#1},{#2}}}
\newcommand{\monmon}[1]{T_{#1}}
\newcommand{\unitor}{\mathrm{unitor}}
\newcommand{\associator}{\mathrm{assor}}
\newcommand{\multiplicator}{\mathrm{multor}}
\newcommand{\lineator}{\mathrm{linor}}
\newcommand{\bimodulator}{\mathrm{bimor}}
\newcommand{\rev}{\otimes\text{-}\oop}
\newcommand{\brop}{\Psi\text{-}\oop}
\newcommand{\unitm}{\mathrm{unit}}
\newcommand{\rswap}{\mathrm{swap}}
\newcommand{\Kleisli}[1]{\mathbf{Klei}({#1})}
\newcommand{\EiMo}[1]{\mathbf{EM}({#1})}
\newcommand{\AsKleisli}[1]{[#1]}
\newcommand{\Free}{\mathrm{Free}}
\newcommand{\Forg}{\mathrm{Forg}}
\newcommand{\Forgloc}{\mathrm{Forg}^{\mathrm{loc}}}
\newcommand{\act}{\operatorname{act}}
\newcommand{\ad}{\operatorname{ad}}
\newcommand{\coev}{\operatorname{coev}}
\newcommand{\ev}{\operatorname{ev}}
\newcommand{\End}{\operatorname{End}}
\newcommand{\Hom}{\operatorname{Hom}}
\newcommand{\Ind}{\operatorname{Ind}}
\newcommand{\CoInd}{\operatorname{CoInd}}
\newcommand{\uInd}{\underline{\operatorname{Ind}}}
\newcommand\id{{\operatorname{id}}}
\newcommand{\isomorph}{\stackrel{\sim}{\longrightarrow}}
\newcommand{\lax}{\operatorname{lax}}
\newcommand{\oplax}{\operatorname{oplax}}
\newcommand{\unit}{\operatorname{unit}}
\newcommand{\counit}{\operatorname{counit}}
\newcommand{\mult}{\operatorname{mul}}
\newcommand{\one}{\mathds{1}}
\newcommand{\QCoh}[1]{\mathbf{QCoh}(#1)}
\newcommand{\quo}{\operatorname{quo}}
\newcommand{\reg}{\mathrm{reg}}
\newcommand{\Res}{\operatorname{Res}}
\newcommand{\Rep}{\operatorname{Rep}}
\newcommand{\uRep}{\protect\underline{\mathrm{Re}}\!\operatorname{p}}
\newcommand{\rank}{\operatorname{rank}}
\newcommand{\Spec}{\operatorname{Spec}}
\newcommand{\triv}{\mathrm{triv}}
\newcommand{\vect}{\mathbf{vect}}
\newcommand{\sfG}{\mathsf{G}}
\newcommand{\sfK}{\mathsf{K}}
\newcommand{\sfX}{\mathsf{X}}
\newcommand{\sfN}{\mathsf{N}}
\newcommand{\sfH}{\mathsf{H}}
\newcommand{\sfT}{\mathsf{T}}
\newcommand{\OG}{\mathcal{O}_\mathsf{G}}
\newcommand{\OK}{\mathcal{O}_\mathsf{K}}
\newcommand{\OX}{\mathcal{O}_\mathsf{X}}
\newcommand{\OH}{\mathcal{O}_\mathsf{H}}
\providecommand{\fr}[1]{\mathfrak{#1}}
\newcommand{\mC}{\mathbb{C}}
\newcommand{\mZ}{\mathbb{Z}}
\newcommand{\cC}{\mathcal{C}}
\newcommand{\cD}{\mathcal{D}}
\newcommand{\cB}{\mathcal{B}}
\newcommand{\cE}{\mathcal{E}}
\newcommand{\cF}{\mathcal{F}}
\newcommand{\cL}{\mathcal{L}}
\newcommand{\cM}{\mathcal{M}}
\newcommand{\cN}{\mathcal{N}}
\newcommand{\cP}{\mathcal{P}}
\newcommand{\cZ}{\mathcal{Z}}
\newcommand{\mainfun}{G}
\newcommand{\rightadj}{R}
\newcommand{\leftadj}{L}
\newcommand{\objCa}{A}
\newcommand{\objCb}{B}
\newcommand{\objDx}{X}
\newcommand{\objDy}{Y}
\newtheoremstyle{mystyle}
  {0.5cm}                   
  {0.5cm}                   
  {\normalfont}           
  {}                      
  {\itfont\bfseries} 
  {:}                     
  {0.3cm}              
  {\thmname{#1}}
\newtheoremstyle{defstyle}
  {0.5cm}                   
  {0.5cm}                   
  {\normalfont}           
  {}     
  {\normalfont\bfseries}  
  {:}                     
  {0.3cm}              
  {\thmname{#1}\thmnumber{ #2}\thmnote{ (#3)}}
\newtheorem*{rep@theorem}{\rep@title}
\newcommand{\newreptheorem}[2]{%
\newenvironment{rep#1}[1]{%
 \def\rep@title{#2 \ref{##1}}%
 \begin{rep@theorem}}%
 {\end{rep@theorem}}}
\newtheorem{theorem}{Theorem}[section]
\newtheorem{proposition}[theorem]{Proposition}
\newtheorem{corollary}[theorem]{Corollary}
\newtheorem{lemma}[theorem]{Lemma}
\newtheorem*{theorem*}{Theorem}
\theoremstyle{definition}
\newtheorem{definition}[theorem]{Definition}
\newtheorem{assumption}[theorem]{Assumption}
\theoremstyle{remark}
\newtheorem{example}[theorem]{Example}
\newtheorem{remark}[theorem]{Remark}
\newtheorem{introtheorem}{Theorem}
\newtheorem{introcorollary}[introtheorem]{Corollary}
\numberwithin{equation}{section}
\tikzset{mylabel/.style={fill=white,font=\small},font=\small}
\subjclass[2020]{Primary 18M15; Secondary 18C20, 16T05}
\keywords{Drinfeld center, monoidal adjunction, projection formula, braided (op)lax monoidal functor, monoidal Kleisli adjunction, monoidal Eilenberg--Moore adjunction, Yetter--Drinfeld module}
\begin{document}

\maketitle

\begin{abstract}
    Given a monoidal adjunction, we show that the right adjoint induces a braided lax monoidal functor between the corresponding Drinfeld centers provided that certain natural transformations, called projection formula morphisms, are invertible. We investigate these induced functors on Drinfeld centers in more detail for the monoidal adjunction of restriction and (co-)induction along morphisms of Hopf algebras. The resulting functors are applied to examples related to affine algebraic groups, quantum groups at roots of unity, and Radford--Majid biproducts of Hopf algebras. Moreover, we use the projection formula morphisms to prove a characterization theorem for monoidal Kleisli adjunctions and a crude monoidal monadicity theorem. The functor on Drinfeld centers induced by the Eilenberg--Moore adjunction is given in terms of local modules over commutative central monoids.
\end{abstract}

\setcounter{tocdepth}{3}
\tableofcontents

\section{Introduction}

\subsection{Motivation}

A morphism of monoids (or rings) $g\colon A\to B$ does in general not restrict to a morphism of monoids (or rings) on the centers $Z(g)\colon Z(A)\to Z(B)$. The concept of a \emph{monoidal category} $\cC$ gives a categorical analogue of a monoid. The center of such a monoidal category is given by the \emph{monoidal} or \emph{Drinfeld center} $\cZ(\cC)$ \cites{Maj2,JS} --- a construction widely used in quantum algebra and topological quantum field theory, see e.g.~\cites{Kas,EGNO,TV} and references therein. 
Let $G\colon \cC\to \cD$ be a strong monoidal functor. As in the case of morphisms of monoids, such a functor does not directly produce a strong monoidal functor $\cZ(\cC)\to \cZ(\cD)$. In this paper, we show that, if $G$ has a right adjoint functor $R\colon \cD\to\cC$ satisfying certain natural conditions, then $R$ induces a functor on Drinfeld centers. This functor is not strong monoidal but braided \emph{lax} monoidal. A first version of this result is the following.

\begin{introtheorem}[{See \Cref{cor:ZR-ZL-rigid-case}}]\label{thm:A}
        Let $\cC$ be a rigid monoidal category and let $G\colon \cC\to \cD$ be a strong monoidal functor.
    If $G\dashv R$ is an adjunction, then $R$ induces a braided lax monoidal functor  $\cZ(R)\colon \cZ(\cD)\to \cZ(\cC)$.
\end{introtheorem}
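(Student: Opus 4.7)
The plan is to proceed in three stages: equip $R$ with a canonical lax monoidal structure, show that the associated projection formula morphisms are invertible under the rigidity hypothesis, and then use these to transport half-braidings along $R$. By doctrinal adjunction, since $G \dashv R$ with $G$ strong monoidal, $R$ inherits a canonical lax monoidal structure with multiplicator $\mu_{X,Y} \colon R(X) \otimes R(Y) \to R(X \otimes Y)$ and unit $\one_\cC \to R(\one_\cD)$ obtained as mates of the (inverse) monoidal constraints of $G$. Combining $\mu$ with the unit $\eta$ of the adjunction defines the left and right projection formula morphisms
\[
\lambda_{X,C} \colon R(X) \otimes C \xrightarrow{\id \otimes \eta_C} R(X) \otimes RG(C) \xrightarrow{\mu_{X,G(C)}} R(X \otimes G(C)),
\]
and analogously $\rho_{C,X} \colon C \otimes R(X) \to R(G(C) \otimes X)$. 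The first key step is to show these are invertible when $\cC$ is rigid: since a strong monoidal functor preserves duals, one can exhibit an explicit inverse to $\lambda_{X,C}$ by passing through the right dual $C^\ast \in \cC$ and using the (co)evaluations in $\cC$ together with the adjunction counit, and similarly for $\rho$ via left duals.

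Given $(X, \sigma) \in \cZ(\cD)$, I would define a half-braiding $\tilde\sigma$ on $R(X) \in \cC$ by
\[
\tilde\sigma_C \;:=\; \rho_{C,X}^{-1} \circ R(\sigma_{G(C)}) \circ \lambda_{X,C} \colon R(X) \otimes C \to C \otimes R(X).
\]
Naturality in $C$ is immediate from the naturality of $\lambda$, $\rho$, and $\sigma$. The hexagon axiom for $\tilde\sigma_{C \otimes C'}$ should reduce to the hexagon for $\sigma_{G(C) \otimes G(C')}$ via the compatibility of $\lambda$ and $\rho$ with the associator and with each other, which in turn follows from the coherence axioms for the lax monoidal structure $\mu$ and the fact that $G$ is strong monoidal. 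Setting $\cZ(R)(X, \sigma) := (R(X), \tilde\sigma)$ on objects and $\cZ(R)(f) := R(f)$ on morphisms then defines a functor $\cZ(\cD) \to \cZ(\cC)$, with morphism preservation following from the naturality of $\sigma$ in the source category and naturality of $\lambda,\rho$.

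It remains to upgrade $\cZ(R)$ to a braided lax monoidal functor. The lax monoidal structure is inherited from $R$: I would verify that $\mu_{X,Y}$ defines a morphism in $\cZ(\cC)$ from the tensor-product half-braiding on $(R(X), \tilde\sigma^X) \otimes (R(Y), \tilde\sigma^Y)$ to $(R(X \otimes Y), \tilde\sigma^{X \otimes Y})$, which is another diagram chase combining the compatibilities of $\mu$, $\lambda$, $\rho$ with naturality and the hexagon for $\sigma$. Braidedness of $\cZ(R)$ should then follow essentially automatically from the symmetric roles played by $\lambda$ and $\rho$ in the definition of $\tilde\sigma$. The step I expect to be the main obstacle is verifying the hexagon axiom for $\tilde\sigma$: it requires carefully orchestrating the interaction between $\lambda$, $\rho$, $\mu$, the associator, and the original hexagon for $\sigma$, where the bookkeeping is intricate. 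The rigidity of $\cC$ enters only through the invertibility argument, but it is essential: without it, $\rho^{-1}$ need not exist, and $\tilde\sigma$ cannot even be written down.
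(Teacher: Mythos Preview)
Your proposal is correct and follows essentially the same route as the paper: equip $R$ with its lax monoidal structure via doctrinal adjunction, use rigidity of $\cC$ (and preservation of duals by $G$) to invert the projection formula morphisms, and then transport half-braidings via the formula $\tilde\sigma_C = \rho_{C,X}^{-1}\circ R(\sigma_{G(C)})\circ \lambda_{X,C}$, which is exactly the paper's $c^R$. The paper's proof of Corollary~4.4 simply cites Lemma~2.4, Corollary~3.13, and Theorem~4.3 in sequence, corresponding to your three stages.

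The one methodological difference worth noting concerns the ``main obstacle'' you flag. Rather than chasing the hexagon for $\tilde\sigma$ directly, the paper observes that the inverses of the projection formula morphisms serve as lineators making $R\colon \cD^G\to\cC$ a $\cC$-bimodule functor (Proposition~3.15), and then appeals to a general 2-functoriality result: the center construction $\cZ$ is a strict pseudofunctor on $\cC$-bimodules (Lemma~4.4), so any $\cC$-bimodule functor automatically induces a functor on centers. This packages the hexagon and related coherences once and for all into the bimodule axioms (your compatibilities of $\lambda$, $\rho$, $\mu$ with associators become the multiplicator/bimodulator coherences for the lineators). The braidedness check in the paper is still a short direct diagram chase using the identity $\lax^R = R(\counit)\circ\projlnoarg$ (Lemma~3.1). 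So your direct approach would work but is more laborious; the bimodule framework is what the paper uses to tame exactly the bookkeeping you anticipate.
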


To return to the motivating setup of the morphism of monoids $g\colon A\to B$ mentioned above, we can view a monoid as a discrete monoidal category and a strong monoidal functor corresponds to a morphism of monoids. In this classical setup, the existence of a right adjoint implies that $g$ is invertible and hence also a morphism of monoids making the centers isomorphic. 
However, at the categorical level, there exists a wealth of examples of adjoint functors that are not equivalences which will give us a wealth of lax monoidal functors on Drinfeld centers that are not equivalences. 

A key tool used to prove the general result \Cref{introthm:functorZF2} below (of which \Cref{thm:A} is a special case) are natural transformations called the \emph{projection formula morphisms}
\begin{gather}\label{eq:proj-intro}
\objCa \otimes \rightadj\objDx \xrightarrow{\projl{A}{X} } \rightadj( \mainfun\objCa \otimes \objDx )\qquad \text{and}\qquad
 \rightadj\objDx \otimes \objCa \xrightarrow{\projr{\objDx}{\objCa}} \rightadj(  \objDx \otimes \mainfun\objCa ),
\end{gather}
for $A \in \cC$, $X \in \cD$, which are constructed using the adjunction data.
When these are isomorphisms, we say that the \emph{projection formula holds} (for $R$).

The projection formula isomorphisms are a known tool in representation theory (see e.g.~\cite{Serre}*{Section 7.2}), algebraic geometry (see e.g.~\cite{Hart}*{Exercises II.5.1 and III.8.3}), K-theory (\cite{CG}*{5.3.12}), tensor triangulated geometry (\cite{BDS}*{1.3. Theorem}), and categorical logic \cite{Lawv}, and have been studied in the context of closed symmetric monoidal categories \cites{John,FHM}. 
An adjunction $G\dashv R$ satisfying the projection formula is called \emph{coHopf} in \cite{Bal}*{Section~3.1}. This condition has been linked to Frobenius-type properties of the right adjoint in \cites{Bal,Yad}.  Dually, for adjunctions $L\dashv G$, the projection formula morphisms are sometimes called Hopf operators \cite{BLV}*{Section 2.8} and their invertibility implies that the associated monad is a Hopf monad.

\smallskip

Functoriality of the Drinfeld center is not a new question and other approaches exist in the literature. The Drinfeld center has been shown to be bifunctorial on a Morita bicategory where the objects are certain monoidal categories (namely, indecomposable multitensor categories), $1$-morphisms are finite abelian categorical bimodules, and $2$-morphisms are bimodule transformations, see \cite{KZ1}*{Theorem~3.1.8} and \cite{KZ2}*{Theorem 4.12, Section 5.2}. Our construction works directly with monoidal functors between monoidal categories to, contravariantly, induce lax monoidal functors on their centers.

\subsection{Summary of results}

The main results of the paper build on consequences of the assumption that the projection formula holds (i.e., the transformations in \eqref{eq:proj-intro} are invertible). These consequences are that, first, the projection formula gives an adjunction of $\cC$-bimodules and, second, an isomorphism of monoidal monads.  To state the results, consider a \emph{monoidal adjunction} (see \Cref{definition:monoidal_adjunction})
\begin{center}
    \begin{tikzpicture}[ baseline=(A)]
          \coordinate (r) at (3,0);
          \node (A) {$\cC$};
          \node (B) at ($(A) + (r)$) {$\cD$.};
          \draw[->, out = 30, in = 180-30] (A) to node[mylabel]{$\mainfun$} (B);
          \draw[<-, out = -30, in = 180+30] (A) to node[mylabel]{$\rightadj$} (B);
          \node[rotate=90] (t) at ($(A) + 0.5*(r)$) {$\vdash$};
    \end{tikzpicture}    
\end{center}
In particular, we are given an adjunction $G\dashv R$ where $G$ is strong monoidal and $R$ lax monoidal. Thus, the composition 
$T:=RG\colon \cC\to \cC$ is a \emph{monoidal monad}, i.e., a monad that is lax monoidal in a compatible way (see \Cref{definition:monoidal_monad} and \cite{Seal}).

\subsubsection{Induced functors on Drinfeld centers}

The first consequence of the projection formula holding (see \eqref{eq:proj-intro}) is that the inverses of $\projlnoarg$ and $\projrnoarg$ serve as lineators (i.e., as coherence morphisms) to make the right adjoint $R\colon \cD^G \to \cC$ a morphism of $\cC$-bimodules, see \Cref{prop:proj-bimodule-morphism}. Here, $\cD^G$ denotes the $\cC$-bimodule obtained by restricting the regular $\cC$-bimodule along $G$. In fact, the projection formulas lift $G\dashv R$ to an adjunction 
\begin{center}
    \begin{tikzpicture}[ baseline=(A)]
          \coordinate (r) at (3,0);
          \node (A) {$\cC$};
          \node (B) at ($(A) + (r)$) {$\cD^G$};
          \draw[->, out = 30, in = 180-30] (A) to node[mylabel]{$\mainfun$} (B);
          \draw[<-, out = -30, in = 180+30] (A) to node[mylabel]{$\rightadj$} (B);
          \node[rotate=90] (t) at ($(A) + 0.5*(r)$) {$\vdash$};
    \end{tikzpicture}    
\end{center}
of $\cC$-bimodules, see \Cref{lem:Cbimodule-adj-RL}. This observation is used to prove the following main result of the paper.

\begin{introtheorem}[See \Cref{prop:functorZF2}]\label{introthm:functorZF2}\label{thm:B}
    Let $G\dashv R$ be a monoidal adjunction such that the projection formula holds. Then $R$ induces a braided lax monoidal functor
\begin{gather*}
\cZ(R)\colon \cZ(\cD)\to \cZ(\cC), \qquad (X,c)\mapsto \big(RX, c^{R}),
\end{gather*}
where the half-braiding $c^{R}$ is defined for any object $A$ of $\cC$ by:
\begin{gather*}
\xymatrix{
RX\otimes A\ar[d]_{\projr{D}{A}}\ar[rr]^{c^{R}_A}&& A\otimes RX\\
R(X\otimes GA)\ar[rr]^{R(c_{GA})} && R(GA\otimes X)\ar[u]_{\projl{A}{X}^{-1}}
}    
\end{gather*}
The lax monoidal structure of $\cZ(R)$ is inherited from that of $R$.
\end{introtheorem}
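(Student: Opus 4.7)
The plan is to verify in sequence that $(RX, c^R)$ is a well-defined object of $\cZ(\cC)$, that $\cZ(R)$ is functorial on morphisms, and that the lax monoidal structure inherited from $R$ lifts to a braided lax structure on $\cZ(R)$.

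For the object-level check, naturality and invertibility of $c^R_A$ in $A$ are immediate from the corresponding properties of $\projlnoarg$ and $\projrnoarg$ (the latter being invertible by the projection formula hypothesis) combined with the naturality and invertibility of $c$. The crucial step is the hexagon axiom relating $c^R_{A \otimes B}$ to $(\id_A \otimes c^R_B)(c^R_A \otimes \id_B)$ via associators, together with the normalization $c^R_{\one_\cC} = \id$. I would deduce these by invoking \Cref{prop:proj-bimodule-morphism} and \Cref{lem:Cbimodule-adj-RL}, which say that the inverses of the projection formulas equip $R$ with the structure of a $\cC$-bimodule morphism $\cD^G \to \cC$ and that $G \dashv R$ lifts to an adjunction of $\cC$-bimodules. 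Conceptually, a half-braiding on $X$ encodes the left $\cC$-linearity (through $G$) of the right $\cC$-module endofunctor $X \otimes (-)$ of $\cD^G$; composing with the $\cC$-bimodule functor $R$ produces a $\cC$-bimodule endonatural transformation of $RX \otimes (-)$ whose components unpack to exactly the composite defining $c^R$. The bimodule coherences then guarantee the hexagon and unit axioms without a separate diagram chase.

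For functoriality, a morphism $f \colon (X, c) \to (Y, d)$ in $\cZ(\cD)$ yields $Rf$ commuting with the half-braidings via naturality of $\projlnoarg, \projrnoarg$ in the $\cD$-variable and the defining property of $f$. For the lax monoidal structure on $\cZ(R)$, I would verify that the lax coherences $\mu_{X,Y} \colon RX \otimes RY \to R(X \otimes Y)$ and $\one_\cC \to R\one_\cD$ inherited from $R$ lift to morphisms in $\cZ(\cC)$, i.e.\ commute with the half-braidings $c^R \otimes d^R$, $(c \otimes d)^R$, and with the trivial half-braidings on units. These identities unfold to standard compatibilities between the lax coherences of $R$ and the projection formulas. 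The braided condition amounts to the equality $\mu_{Y,X} \circ c^R_{RY} = R(c_Y) \circ \mu_{X,Y}$, which reduces, via the same projection formulas, to the braiding identity for $(X, c), (Y, d)$ in $\cZ(\cD)$.

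The main obstacle is the hexagon axiom for $c^R$, which sits at the intersection of three separate coherences: the tensor-multiplicativity of the projection formulas with respect to $\otimes$ on $\cC$ (exactly the bimodule coherence supplied by \Cref{prop:proj-bimodule-morphism}), the hexagon for $c$ in $\cZ(\cD)$, and the strong monoidal coherence of $G$ relating $G(A \otimes B)$ with $GA \otimes GB$. Routing the argument through the $\cC$-bimodule adjunction lifting of $G \dashv R$ isolates the single nontrivial input as a statement already proved in the paper, and reduces the remaining verifications to bookkeeping of lax and strong coherences.
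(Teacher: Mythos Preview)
Your proposal is correct and follows essentially the same route as the paper. The paper packages your bimodule argument more tightly: it first records that $\cZ\colon \BiMod{\cC}\to\Cat$ is a strict pseudofunctor (\Cref{lem:Z-functorial}), applies it to the $\cC$-bimodule adjunction of \Cref{lem:Cbimodule-adj-RL} to get a monoidal adjunction $\cZ(G)\dashv\cZ(R)$ between $\cZ(\cC)$ and $\cZ(\cD^G)$ (\Cref{lem:ZR-on-DG}), and then precomposes with the strong monoidal inclusion $\cZ(\cD)\hookrightarrow\cZ(\cD^G)$ of \Cref{lem:ZDinZDG}; the braided check is then a short diagram chase using \Cref{lemma:lax_via_proj}. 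Your plan unpacks exactly these ingredients by hand, so the hexagon and lax-compatibility verifications you flag as ``bookkeeping'' are precisely what the pseudofunctoriality and the intermediate category $\cZ(\cD^G)$ absorb in the paper's version.
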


Note that  \Cref{thm:A} is a special case of \Cref{thm:B} because if $\cC$ is rigid, then the projection formula holds for $R$, see \Cref{corollary:proj_formula_holds_for_dual_objects}. The proof of \Cref{thm:B} uses the fact that the center of $\cC$-bimodules provides a strict pseudofunctor $\cZ\colon \BiMod{\cC}\to \Cat$ of bicategories, as already observed in \cite{Shi2}. We show that the assignments 
\begin{center}
$\cC\mapsto \cZ(\cC)$ \qquad and \qquad $(G\dashv R) \mapsto \cZ(R)$    
\end{center}
give a functor from a category of monoidal categories (with monoidal adjunctions as morphisms) to the category of braided monoidal categories (with braided lax monoidal functors as morphisms), see \Cref{rem:Z-functoriality}.

\subsubsection{Monoidal Kleisli and Eilenberg--Moore adjunctions}

The second consequence of the projection formula is explored in Sections \ref{sec:Kleisli} and \ref{sec:EM}. To state this consequence and further results, we consider the strict bicategory $\Catlax$ of monoidal categories, with lax monoidal functors as $1$-morphisms, and monoidal transformations as $2$-morphisms. A \emph{monoidal adjunction} is an adjunction internal to $\Catlax$ and a \emph{monoidal monad} is a monad internal to $\Catlax$. If the projection formula holds, then we have an isomorphism of monoidal monads 
\begin{equation}\label{eq:mon-monad-iso-intro}
(-) \otimes \rightadj\one \xrightarrow{\projl{-}{\one}} T(-),
\end{equation}
where $T=\rightadj\mainfun$ and the monoidal monad $(-)\otimes R(\one)$ is given by tensoring with the \emph{commutative central monoid} $R(\one)\in \cZ(\cC)$, see \Cref{lemma:iso_of_monoidal_monads}. 

We explore the above isomorphism of monads to study \emph{monoidal} versions of the well-known \emph{Kleisli} and \emph{Eilenberg--Moore adjunctions}. These adjunctions are of the form $\Free\dashv \Forg$ as appearing in the following diagram:
    \begin{equation}\label{eq:Kl-EM-diag}
    \begin{tikzpicture}[baseline=($(11) + 0.5*(d)$)]
          \coordinate (r) at (4,0);
          \coordinate (d) at (0,-2);
          \node (11) {$\cC$};
          \node (12) at ($(11) + (r) - (d)$) {$\EiMo{T}$};
          \node (21) at ($(11) + (r)$) {$\cD$};
          \node (31) at ($(11) + (r) + (d)$) {$\Kleisli{T}$};
          \node (22) at ($(11) + 2*(r)$) {$\cC$,};
          \draw[->] (11) to node[above]{$G$} (21);
          \draw[->] (12) to node[above,xshift=0.5em]{$\Forg$} (22);
          \draw[->] (11) to node[above,xshift=-0.5em]{$\Free$} (12);
          \draw[->] (31) to node[below,xshift=0.5em]{$\Forg$} (22);
          \draw[->] (11) to node[below]{$\Free$} (31);
          \draw[->] (21) to node[above]{$R$} (22);
          \draw[->,dashed] (21) to node[left]{$\tilde{R}$} (12);
           \draw[->,dashed] (31) to node[left]{$\tilde{G}$} (21);
    \end{tikzpicture}
    \end{equation}
The Kleisli category $\Kleisli{T}$ inherits a monoidal structure from $\cC$ without further assumptions. However, the Eilenberg--Moore category $\EiMo{T}$ inherits a monoidal structure, using \eqref{eq:mon-monad-iso-intro} and \cite{Schau}, provided that the projection formula holds, $\cC$ has reflexive coequalizers, and the tensor product preserves them in both components, see \Cref{ass:C-good}.
The following result is an analogue of the universal property of the Kleisli adjunction being \emph{initial} among adjunctions composing to a fixed monad, replacing adjunctions with monoidal adjunctions and monads with monoidal monads.

\begin{introtheorem}[See \Cref{theorem:Kleisli_monoidal_up} and  \Cref{lemma:Kleisli_projection_formula}]\label{thm:C}
    Let $T$ be a monoidal monad. Then:
    \begin{enumerate}
        \item[(1)] The Kleisli adjunction $\Free\dashv \Forg$ is a monoidal adjunction.
        \item[(2)] For any monoidal adjunction $G\dashv R$ such that $T=RG$, the unique induced functor  $\tilde{G}$ becomes strong monoidal.
      \item[(3)] If the projection formula holds for $R$, then it holds for $\Forg$.
    \end{enumerate}
\end{introtheorem}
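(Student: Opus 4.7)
My plan is to construct the monoidal structure on $\Kleisli{T}$ directly from the lax structure of the monoidal monad $T$ and then exploit the universal property of Kleisli to bootstrap (2) and (3). For part (1), I would define the tensor on $\Kleisli{T}$ by taking the tensor of objects from $\cC$ and, for Kleisli morphisms $f\colon A \to TB$ and $g\colon C \to TD$, by setting
\[
f \otimes_{\mathrm{Kl}} g := T^{\lax}_{B,D} \circ (f \otimes g)\colon A \otimes C \to T(B\otimes D),
\]
where $T^{\lax}$ is the lax structure of $T$. The associators and unitors can be inherited from $\cC$ by post-composing with $\eta$; the pentagon, triangle, and bifunctoriality axioms then reduce to naturality of $T^{\lax}$ together with its compatibility with $\mu$ and $\eta$, i.e.\ the monoidal monad axioms. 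The functor $\Free$ is strict monoidal on objects, and monoidal on morphisms because $\eta$ is a monoidal transformation, while $\Forg$ receives lax structure directly from $T^{\lax}_{A,B}\colon TA \otimes TB \to T(A\otimes B)$. The unit and counit of the Kleisli adjunction are monoidal transformations by a direct check from the monoidal monad axioms, yielding an adjunction inside $\Catlax$.

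For part (2), the universal functor $\tilde G\colon \Kleisli{T} \to \cD$ is defined by $\tilde G(A) := GA$ on objects and $\tilde G(f\colon A \to TB) := \varepsilon_{GB} \circ Gf$ on morphisms, where $\varepsilon$ is the counit of $G \dashv R$. Since $G$ is strong monoidal, the isomorphisms $GA \otimes GB \xrightarrow{\sim} G(A\otimes B)$ supply a candidate strong monoidal structure on $\tilde G$. The only nontrivial check is naturality with respect to Kleisli morphisms, which unpacks --- using the triangle identities of $G \dashv R$ and the identity $T^{\lax}_{A,B} = R(G^{\mathrm{strong}}_{A,B}) \circ R^{\lax}_{GA,GB}$ that makes $T = RG$ a monoidal monad out of a monoidal adjunction --- into a routine diagram chase.

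For part (3), I would compute the left projection-formula morphism for $\Forg \colon \Kleisli{T} \to \cC$ at $A \in \cC$ and $X \in \Kleisli{T}$ (so $X$ is an object of $\cC$) directly as
\[
A \otimes TX \xrightarrow{\eta_A \otimes \id} TA \otimes TX \xrightarrow{T^{\lax}_{A,X}} T(A\otimes X).
\]
Substituting $T = RG$, identifying $\eta$ with $R$ applied to the unit of $G \dashv R$, and expanding $T^{\lax}$ as above, this composite becomes precisely the formula for $\projl{A}{GX}^R$, which is invertible by the hypothesis that the projection formula holds for $R$. The symmetric computation for $\projr{X}{A}^{\Forg}$ completes (3).

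The main obstacle, in my view, is the coherence bookkeeping in (1): verifying with care that $T^{\lax}$, $\mu$, and $\eta$ interact so that the candidate tensor on $\Kleisli{T}$ genuinely defines a monoidal category and that the Kleisli adjunction lives in $\Catlax$. Once this is in place, (2) and (3) are largely formal consequences of the identification $T^{\lax} = R(G^{\mathrm{strong}}) \circ R^{\lax}$ inherent in the monoidal monad structure of $T = RG$.
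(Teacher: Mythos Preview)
Your approach to parts (1) and (2) is exactly what the paper does, only more explicitly: the paper simply calls these constructions ``standard and easy to verify'' (or defers to a reference), and the strong monoidality of $\tilde G$ is deduced from the fact that left adjoints in a monoidal adjunction are strong monoidal.

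For part (3), however, your route is genuinely different and more direct. The paper proves (3) by first invoking the isomorphism of monoidal monads $T = RG \cong (-)\otimes R(\one)$ (valid precisely because the projection formula holds for $R$), and then computing the projection formula morphisms for $\Forg$ explicitly in the commutative-central-monoid model, where they turn out to be the identity and the swap. Your argument bypasses this identification entirely: you observe that $\projl{A}{X}^{\Forg}$ is the composite $R(\lax^G_{A,X}) \circ \projl{A}{GX}^R$ (note: not literally $\projl{A}{GX}^R$, since the factor $R(\lax^G_{A,X})$ remains, but it is invertible because $G$ is strong monoidal), and then invertibility is inherited. This is cleaner and shows that one really only needs invertibility of $\projl{A}{Y}^R$ for $Y$ in the image of $G$. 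The paper's approach, on the other hand, yields the explicit description of $\Kleisli{T}$ in terms of the monoid $R(\one)$, which is what drives the subsequent characterization theorem. One small slip: $\eta$ \emph{is} the unit of $G \dashv R$, not ``$R$ applied to'' it.
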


Moreover, we prove a general characterization of monoidal adjunctions of Kleisli form. 

\begin{introtheorem}[See \Cref{thm:char-thm}]\label{thm:D}
Let $G\dashv R$ be a monoidal adjunction such that the projection formula holds for $R$ and $G$ is essentially surjective. Then $G \dashv R$ is equivalent as a monoidal adjunction to the Kleisli adjunction $\Free\dashv \Forg$ for $T=RG$. 
\end{introtheorem}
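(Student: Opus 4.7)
The plan is to reduce \Cref{thm:D} to the classical universal property of the Kleisli adjunction, combined with \Cref{thm:C}, which upgrades the relevant comparison functor to a strong monoidal one. Recall that for any adjunction $G \dashv R$ with monad $T = RG$, there is a canonical comparison functor $\tilde{G}\colon \Kleisli{T} \to \cD$ (the dashed arrow in diagram \eqref{eq:Kl-EM-diag}) satisfying $\tilde{G} \circ \Free = G$ and $R \circ \tilde{G} = \Forg$ on the nose. The first step is to observe that $\tilde{G}$ is always fully faithful: its action on morphisms is essentially the adjunction isomorphism,
\[
\Hom_{\Kleisli{T}}(A,B) = \Hom_\cC(A, TB) = \Hom_\cC(A, RGB) \cong \Hom_\cD(GA, GB).
\]
Since $\Free$ is the identity on objects and $G = \tilde{G} \circ \Free$ is essentially surjective by hypothesis, $\tilde{G}$ is also essentially surjective, hence an equivalence of underlying categories.

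The second step is to invoke \Cref{thm:C}(2), which provides $\tilde{G}$ with a canonical strong monoidal structure with respect to the monoidal structure on $\Kleisli{T}$ of \Cref{thm:C}(1). Combined with the first step, $\tilde{G}$ is then a strong monoidal equivalence. To conclude that the two monoidal adjunctions themselves are equivalent, one uses that $\tilde{G}$ strictly intertwines the left adjoints ($\tilde{G}\Free = G$) and the right adjoints ($R\tilde{G} = \Forg$), and that the units and counits of $\Free \dashv \Forg$ correspond to those of $G \dashv R$ by the classical universal property of the Kleisli construction as the initial adjunction producing the monad $T$. Choosing a strong monoidal pseudo-inverse to $\tilde{G}$ then transfers these identifications to an equivalence of monoidal adjunctions.

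The main obstacle is the coherence bookkeeping in this final step: one must verify that the monoidal, adjunction, and projection-formula coherence data all correspond under $\tilde{G}$. Most of this is automatic once $\tilde{G}$ is known to be strong monoidal and to strictly commute with $(\Free, G)$ and $(\Forg, R)$, because the coherence morphisms of $\tilde{G}$ are uniquely determined by these commutativities. The projection formula hypothesis plays two roles in this verification: through \Cref{thm:C}(3) it ensures that the projection formula property descends to $\Forg$, so that the equivalence stays within the same class of monoidal adjunctions; and it further guarantees that the projection formula morphisms on each side are matched by $\tilde{G}$. With these ingredients in place, the theorem follows by stringing the non-monoidal Kleisli universal property together with the monoidal strengthening supplied by \Cref{thm:C}.
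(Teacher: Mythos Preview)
Your core argument is correct and coincides with the paper's: the comparison functor $\tilde{G}\colon\Kleisli{T}\to\cD$ is always fully faithful and strong monoidal (\Cref{theorem:Kleisli_monoidal_up}, \Cref{remark:Kleisli_induced_is_strong}), essential surjectivity of $G$ makes it a monoidal equivalence, and this yields the equivalence of monoidal adjunctions. Two points deserve comment.

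First, your account of the projection formula's role is off. The notion of ``equivalent as monoidal adjunctions'' (\Cref{definition:equivalent_as_monoidal_adj}) makes no reference to projection formulas, so there is nothing that needs to ``descend to $\Forg$'' or be ``matched by $\tilde{G}$'' for the conclusion to hold. In fact your direct argument with $T=RG$ never uses the hypothesis. The paper, by contrast, does use it: it passes through the commutative central monoid $M=R(\one)$, invoking the projection formula to obtain the isomorphism of monoidal monads $(-)\otimes R(\one)\cong RG$ (\Cref{lemma:iso_of_monoidal_monads}), and then identifies $\cD$ with $\Kleisli{T_M}$ rather than $\Kleisli{RG}$. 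That more explicit form is what \Cref{thm:char-thm} actually states, and it genuinely requires the hypothesis; for the intro formulation you are proving, your route shows the hypothesis is in fact redundant.

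Second, the ``coherence bookkeeping'' you flag as the main obstacle is handled in the paper without choosing a pseudo-inverse or checking units and counits by hand. The bicategory $\Adj_{\Catlax}$ developed in \Cref{subsection:bicat_of_adjunctions} reduces everything, via \Cref{corollary:easy_characterization_of_adj_equivalences}, to exhibiting equivalences $U,V$ and a single invertible $2$-cell $G\circ U\cong V\circ\Free$. Here $U=\id_\cC$, $V=\tilde{G}$, and the $2$-cell is the identity (since $\tilde{G}\circ\Free=G$ strictly), so nothing further needs to be verified.
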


In particular, if the assumptions of \Cref{thm:D} hold, $\cD\simeq \Kleisli{T}$ are equivalent monoidal categories and, hence, $\cZ(\cD)\simeq \cZ(\Kleisli{T})$ are equivalent braided monoidal categories. This describes the Drinfeld center of the full image of the functor $G$ which can be identified with $\Kleisli{T}$. The notion of an equivalence of monoidal adjunctions used in the theorem's statement is as one would expect, but hard to find in the literature and thus carefully worked out in \Cref{subsection:bicat_of_adjunctions}. 

\smallskip

Next, we use the isomorphism of monoidal monads from \eqref{eq:mon-monad-iso-intro} to make $\EiMo{T}$, which is equivalent to the category $\rModint{\cC}{R(\one)}$ of right modules over $R(\one)$ internal to $\cC$,  a monoidal category following \cite{Schau}. We then prove the  universal property of the Eilenberg--Moore adjunction (see also \cites{Szl,BLV}) being \emph{terminal} among monoidal adjunctions composing to a fixed monoidal monad $T$. 

\begin{introtheorem}[See \Cref{prop:EM-mon-adjunction}, \Cref{prop:EL-proj-formulas}, and \Cref{thm:EM-monoidal-universal}]\label{thm:E}
Assume that $G\dashv R$ is a monoidal adjunction such that the projection formula holds for $R$, $\cC$ has reflexive coequalizers, and these are preserved by the tensor product. Then:
\begin{enumerate}
    \item[(1)] The Eilenberg--Moore adjunction $\Free\dashv\Forg$ is a monoidal adjunction. 
    \item[(2)] The canonical functor $\tilde{R}$ is lax monoidal.
    \item[(3)] The projection formula holds for $\Forg$.
\end{enumerate}
\end{introtheorem}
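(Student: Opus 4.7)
My plan is to reduce all three statements to known facts about modules over a commutative algebra in the Drinfeld center, using the isomorphism of monoidal monads $T \cong (-) \otimes R(\one)$ from \eqref{eq:mon-monad-iso-intro}.

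First I would establish the identification $\EiMo{T} \simeq \rModint{\cC}{R(\one)}$. Because $R$ is lax monoidal, $R(\one)$ acquires a canonical monoid structure in $\cC$; moreover, by \Cref{thm:B} it is the image under $\cZ(R)$ of the unit of $\cZ(\cD)$, so it is a commutative algebra in $\cZ(\cC)$, that is, a commutative central monoid. Under our hypotheses on $\cC$, the results of \cite{Schau} endow $\rModint{\cC}{R(\one)}$ with a monoidal structure given by the relative tensor product $\otimes_{R(\one)}$, and the free/forgetful pair
\[
A \longmapsto A \otimes R(\one), \qquad M \longmapsto M,
\]
forms a monoidal adjunction with strong monoidal left adjoint and lax monoidal right adjoint. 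Transporting this structure along the equivalence above yields part (1).

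For part (2), the comparison functor is $\tilde{R}(X) = (RX, R\epsilon_X)$, where $\epsilon$ denotes the counit of $G\dashv R$. Its lax monoidal coherence data I would read off from those of $R\colon \cD \to \cC$: the multiplicator $R(X)\otimes R(Y) \to R(X\otimes Y)$ and the unitor $\one \to R(\one)$ of $R$ are morphisms of $T$-algebras, which follows from naturality of the multiplicator together with compatibility of the counit with the monoidal structures of $G$ and $R$. The coherence axioms for $\tilde{R}$ then hold because they already hold downstairs for $R$.

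Part (3) becomes essentially tautological under the identification of part (1): since $\Free(A) = A\otimes R(\one)$, for any right $R(\one)$-module $M$ one has
\[
\Forg\bigl(\Free(A)\otimes_{R(\one)} M\bigr) \;=\; (A\otimes R(\one))\otimes_{R(\one)} M \;\cong\; A\otimes M \;=\; A\otimes\Forg(M),
\]
and analogously on the other side using the central structure of $R(\one)$. The main obstacle will be bookkeeping: one must verify that this canonical isomorphism coming from the relative tensor product agrees with the projection formula morphism produced by the monoidal adjunction data of $\Free\dashv\Forg$ in the sense of \eqref{eq:proj-intro}. This requires tracing coherence data across the equivalence $\EiMo{T}\simeq \rModint{\cC}{R(\one)}$ and unwinding the monoidal structure of \cite{Schau}, but amounts to a diagram chase with no conceptual difficulty.
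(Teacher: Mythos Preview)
Part (1) follows the paper's approach: both use Schauenburg's monoidal structure on $\rModint{\cC}{R(\one)}$ and show $\Free$ is strong monoidal.

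Part (2) has a genuine gap. You want the lax structure of $\tilde{R}$ to be ``read off'' from that of $R$, but the types do not match: $\lax^R_{X,Y}$ is a morphism $R(X)\otimes R(Y)\to R(X\otimes Y)$ in $\cC$, whereas $\lax^{\tilde{R}}_{X,Y}$ must be a morphism $\tilde{R}(X)\otimes_M \tilde{R}(Y)\to \tilde{R}(X\otimes Y)$ in $\EiMo{T_M}$, and its source is the \emph{quotient} of $R(X)\otimes R(Y)$ by the coequalizer defining $\otimes_M$. Saying that $\lax^R$ is a morphism of $T$-algebras (even if you specify the $T$-algebra structure on $R(X)\otimes R(Y)$, which you do not) does not produce a map out of that quotient. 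What is needed is to show that $\lax^R_{X,Y}$ coequalizes the parallel pair
\[
R(X)\otimes M\otimes R(Y)\rightrightarrows R(X)\otimes R(Y)
\]
given by the right $M$-action on $R(X)$ and the left $M$-action on $R(Y)$ (via the central structure). The paper proves exactly this factorization in \Cref{thm:EM-monoidal-universal}, via a diagram chase using \Cref{lemma:lax_via_proj} and associativity of $\lax^R$; your sketch skips it entirely.

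For Part (3) your approach is valid but different from the paper's. You propose to identify the projection formula morphism with the canonical isomorphism $(A\otimes M)\otimes_M X\cong A\otimes X$ and acknowledge the bookkeeping this entails. The paper instead reduces to the Kleisli case: the projection formula morphisms for the Kleisli adjunction are computed explicitly as isomorphisms in \Cref{lemma:Kleisli_projection_formula}, and since every object of $\EiMo{T_M}$ is a reflexive coequalizer of free objects (\Cref{lem:EM-cocompletion}) and $\Free$, $\Forg$, $\otimes$, $\otimes_M$ all preserve reflexive coequalizers, the isomorphism propagates. This avoids your coherence verification at the cost of invoking the Kleisli computation.
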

Moreover, we prove an analogue of the crude monadicity theorem for monoidal adjunctions in \Cref{thm:crude-mon-monadicity}. 

\begin{introtheorem}[Crude monoidal monadicity theorem]\label{thm:F}
Under the assumptions of \Cref{thm:E} suppose that, in addition, $R$ reflects isomorphisms and preserves reflexive coequalizers. Then $G\dashv R$ is equivalent as a monoidal adjunction to the Eilenberg--Moore adjunction $\Free\dashv \Forg$. \end{introtheorem}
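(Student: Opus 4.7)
The plan is to follow the blueprint established by \Cref{thm:char-thm} for the Kleisli case, replacing the essentially-surjective hypothesis by a classical monadicity argument. I would proceed in three stages: first obtain an equivalence of the underlying categories via the crude form of Beck's monadicity theorem, next show that the canonical comparison functor $\tilde R \colon \cD \to \EiMo T$ is not merely lax but strong monoidal, and finally assemble this into an equivalence inside the bicategory of monoidal adjunctions introduced in \Cref{subsection:bicat_of_adjunctions}.

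The first stage is a purely category-theoretic input. The hypotheses that $R$ reflects isomorphisms and preserves reflexive coequalizers, together with the existence of reflexive coequalizers in $\cC$ guaranteed by \Cref{ass:C-good}, are tailored precisely to the crude form of Beck's monadicity theorem. They yield that $\tilde R$ is an equivalence of ordinary categories and identify $G \dashv R$ with $\Free \dashv \Forg$ after transport through $\tilde R$ at the level of plain adjunctions.

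The second and most substantive stage is to upgrade this to a strong monoidal equivalence. By \Cref{thm:E}(2), $\tilde R$ carries a canonical lax monoidal structure; the task is to show its coherence morphisms are invertible. Since $\tilde R \circ G$ agrees with $\Free$ and $\Free$ is strong monoidal by \Cref{thm:E}(1), the lax structure morphism $\tilde R X \otimes \tilde R Y \to \tilde R(X \otimes Y)$ is an isomorphism whenever $X, Y$ lie in the essential image of $G$. To extend this to arbitrary objects, I would use the Beck presentation: every $T$-algebra is the canonical reflexive coequalizer of two parallel maps between free $T$-algebras, and hence, under the equivalence $\tilde R$, every object of $\cD$ is a reflexive coequalizer of objects of the form $GA$. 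Since the tensor product on $\cC$ preserves reflexive coequalizers in each variable by \Cref{ass:C-good}, and the monoidal structure on $\EiMo T$ is constructed via \cite{Schau} so that $\Forg$ preserves and creates them, the tensor products on $\EiMo T$ and on $\cD$ preserve reflexive coequalizers in each variable as well. Naturality of the lax structure morphism of $\tilde R$ in both slots, combined with invertibility on the free objects, then forces invertibility on every object.

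The third stage is largely organizational: once $\tilde R$ is a strong monoidal equivalence, the identifications $\Forg \circ \tilde R \cong R$ and $\tilde R \circ G \cong \Free$, together with compatibility with the units and counits which follows from the very construction of $\tilde R$ from the adjunction data, furnish the required equivalence in the bicategory of monoidal adjunctions of \Cref{subsection:bicat_of_adjunctions}. The main obstacle I anticipate is the reflexive-coequalizer argument of the second stage: one has to verify carefully that the lax structure morphism of $\tilde R$ fits naturally into the coequalizer diagrams coming from the Beck presentation, and that two-variable preservation of reflexive coequalizers transports cleanly from $\cC$ to $\EiMo T$ under \cite{Schau}'s construction. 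The remaining verifications are routine bicategorical bookkeeping.
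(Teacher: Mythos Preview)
Your proposal is correct and follows essentially the same route as the paper: apply the classical crude monadicity theorem to get that $\tilde R$ is an equivalence, then upgrade its lax structure to a strong one by checking invertibility on objects in the image of $G$ and propagating to all of $\cD$ via the Beck presentation as reflexive coequalizers of free objects (this is isolated as \Cref{lem:G-ref-coeq} in the paper). The only minor packaging difference is that the paper phrases the extension step in terms of $\tilde R$ preserving reflexive coequalizers (\Cref{lem:tildeR-preserves-coref}) rather than directly invoking two-variable preservation of the tensor products, but the content is the same.
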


This theorem generalizes and interprets in terms of monoidal adjunctions a known result from the literature of finite abelian tensor categories, namely, that for a monoidal functor $G\colon \cC\to \cD$ with faithful exact right adjoint $R$, $\cD$ is equivalent to $\rModint{\cC}{R(\one)}$ \cite{BN}*{Proposition~6.1}.

\subsubsection{Applications to categories of local modules}

Using the isomorphism \eqref{eq:mon-monad-iso-intro}, the Ei\-len\-berg--Moore category of the monad $T$ is equivalent to $\rModint{\cC}{M}$ for the commutative central monoid $M=R(\one)\in \cZ(\cC)$. By \cite{Schau}, the Drinfeld center of $\rModint{\cC}{M}$ is equivalent to 
$\rModloc{\cZ(\cC)}{M},$
the category of local modules over $M$ in $\cZ(\cC)$. Such categories of local modules \cites{Par,Schau,DMNO} have applications in rational conformal field theory, as they describe the extension theory of VOAs \cites{KO,HKL}. Under Schauenburg's equivalence, the induced functor $\cZ(\Forg)\colon \cZ(\rModint{\cC}{M})\to \cZ(\cC)$ can be recognized as follows.

\begin{introcorollary}[See \Cref{cor:Z(R)-locmod}]\label{cor:Z(R)-locmod-intro}
Under the assumptions of \Cref{thm:E}, the right adjoint $\Forg\colon \rModint{\cC}{M}\to \cC$ induces a braided lax monoidal functor which, under Schauenburg's equivalence, corresponds to the forgetful functor 
$$\Forgloc\colon \rModloc{\cZ(\cC)}{M}\to \cZ(\cC).$$
\end{introcorollary}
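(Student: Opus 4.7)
The plan is to obtain $\cZ(\Forg)$ by applying \Cref{introthm:functorZF2} to the monoidal Eilenberg--Moore adjunction and then to match it with $\Forgloc$ under Schauenburg's equivalence. By \Cref{thm:E}(1) and (3), the adjunction $\Free \dashv \Forg$ for the monoidal monad $T=RG$ is a monoidal adjunction for which the projection formula holds, so \Cref{introthm:functorZF2} directly produces a braided lax monoidal functor $\cZ(\Forg)\colon \cZ(\EiMo{T})\to \cZ(\cC)$. Composing with the monoidal equivalence $\EiMo{T}\simeq \rModint{\cC}{M}$ with $M=R(\one)$ yields a braided lax monoidal functor from $\cZ(\rModint{\cC}{M})$ to $\cZ(\cC)$.

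Next, I would spell out Schauenburg's equivalence $\cZ(\rModint{\cC}{M}) \simeq \rModloc{\cZ(\cC)}{M}$ in explicit form. An object of the left-hand side is a triple $(X,\rho,c)$ consisting of an $M$-module $(X,\rho)$ in $\cC$ together with a half-braiding $c_N\colon X\otimes_M N \to N\otimes_M X$ for every $M$-module $N$. The equivalence retains the $M$-action $\rho$ and extracts a half-braiding $c^{\cZ(\cC)}_A\colon X\otimes A\to A\otimes X$ for $A\in\cC$ by composing $c_{A\otimes M}$ with the canonical identifications $X\otimes A \cong X\otimes_M(A\otimes M)$ and $(A\otimes M)\otimes_M X \cong A\otimes X$. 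The functor $\Forgloc$ then simply forgets the $M$-action, leaving $(X, c^{\cZ(\cC)}) \in \cZ(\cC)$.

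The comparison step amounts to identifying the projection formula morphism $\projr{X}{A}$ for the Eilenberg--Moore adjunction with the canonical isomorphism $\Forg(X)\otimes A \xrightarrow{\sim} \Forg(X\otimes_M(A\otimes M))$ provided by the coequalizer defining $\otimes_M$. Once this identification is carried out --- by unwinding the unit of $\Free\dashv\Forg$ (which on $A$ is $\id_A\otimes \unit_M$) together with the module structure of $\Free(A)=A\otimes M$ --- substituting into the formula for $c^{\Forg}_A$ from \Cref{thm:B} gives exactly Schauenburg's $c^{\cZ(\cC)}_A$ on the underlying object. Both assignments therefore agree on half-braidings, and their lax monoidal structures coincide since both are inherited from $\Forg\colon \rModint{\cC}{M}\to \cC$ together with the canonical identifications $\Forg(X)\otimes_M \Forg(Y) \to \Forg(X\otimes_M Y)$.

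The main obstacle is the careful identification of the projection formula morphism $\projr{X}{A}$ with the structural isomorphism $\Forg(X)\otimes A \cong \Forg(X\otimes_M(A\otimes M))$; this involves a direct computation with the reflexive coequalizer defining the relative tensor product, the unit of the Eilenberg--Moore adjunction, and the action of $T$ on $A\otimes M$. All remaining verifications --- agreement on morphisms, on the lax monoidal constraints, and on the locality condition (which is automatic once the functors agree on underlying objects and half-braidings) --- are then essentially formal consequences of the construction of Schauenburg's equivalence and the naturality built into \Cref{introthm:functorZF2}.
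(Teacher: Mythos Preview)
Your proposal is correct and follows essentially the same approach as the paper: apply \Cref{introthm:functorZF2} to the monoidal Eilenberg--Moore adjunction (using \Cref{thm:E}) and then compare the resulting half-braiding with the one used in Schauenburg's equivalence. The paper's proof differs only in that it describes Schauenburg's equivalence from the outset \emph{using} the projection formula isomorphism $\projr{X}{B}\colon \Forg(X)\otimes B\cong \Forg(X\otimes_M \Free(B))$, so what you call ``the main obstacle'' becomes an immediate observation rather than a separate computation with coequalizers; also note that the comparison involves both $\projr{X}{B}$ and $(\projl{B}{X})^{-1}$, and your remark on the lax structure should read $\Forg(X)\otimes \Forg(Y)\to \Forg(X\otimes_M Y)$ rather than $\Forg(X)\otimes_M \Forg(Y)$.
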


A consequence of \Cref{thm:B} is that, under assumptions implying \Cref{thm:F}, we have that $\cZ(\cD)$ is equivalent to $\rModloc{\cZ(\cC)}{M}$, with $M=R(\one)$, as a braided monoidal category, see \Cref{cor:monadicity-center}. 

\smallskip

A further application of this article's results arises from the fact that braided lax monoidal functors preserve commutative monoids (also called commutative algebra objects in the literature). Indeed, from a commutative central monoid $N$ in $\cD$ we obtain a commutative central monoid $\cZ(R)(N)$ in $\cC$. Commutative central monoids in monoidal categories, often satisfying further non-degeneracy conditions that make them (separable) Frobenius algebras, are of interest to conformal field theory, see e.g.~\cite{FFRS}. In the case of categories $\cZ(\lmod{\Bbbk\sfG})$ or Dijkgraaf--Witten categories $\cZ(\vect^\omega_\sfG)$, for $\omega\in H^3(\sfG,\Bbbk)$, lax monoidal functors between Drinfeld centers are a key tool for the classification of so-called connected \'etale algebras (or, rigid Frobenius algebras) in these categories \cites{Dav3,DS,LW3,HLRC}. 
The functors constructed in this paper provide a new tool to construct commutative central monoids in Drinfeld centers of more general monoidal categories. 

\subsubsection{Applications to categories of Yetter--Drinfeld modules over Hopf algebras}

To obtain concrete examples, we investigate functors of Drinfeld centers, resulting from \Cref{thm:A} or \ref{thm:B}, between categories of Yetter--Drinfeld modules over Hopf algebras in  \Cref{sec:YD} and \Cref{sec:examples}. The category of \emph{Yetter--Drinfeld modules} (or \emph{crossed modules}) over a Hopf algebra has objects given by simultaneous modules and comodules satisfying a compatibility condition and is equivalent to the Drinfeld center $\cZ(\lMod{H})$ of the category of modules over a Hopf algebra $H$ (see e.g.~\cite{Maj} or \cite{Kas}*{Section XIII.5}).

Given a morphism of Hopf algebras $\varphi\colon K\to H$ we consider the strong monoidal restriction functors of module and comodule categories and their right adjoints:
\begin{center}
    $ \begin{tikzpicture}[mylabel/.style={fill=white}, baseline=(A)]
          \coordinate (r) at (3,0);
          \node (A) {$\lMod{H}$};
          \node (B) at ($(A) + (r)$) {$\lMod{K}$};
          \draw[->, out = 20, in = 180-20] (A) to node[above]{$\Res_\varphi$} (B);
          \draw[<-, out = -20, in = 180+20] (A) to node[below]{$\CoInd_\varphi$} (B);
          \node[rotate=90] (t) at ($(A) + 0.5*(r)$) {$\vdash$};
    \end{tikzpicture} $
   \qquad 
$ \begin{tikzpicture}[mylabel/.style={fill=white}, baseline=(A)]
          \coordinate (r) at (3,0);
          \node (A) {$\lcomod{K}$};
          \node (B) at ($(A) + (r)$) {$\lcomod{H}$};
          \draw[->, out = 20, in = 180-20] (A) to node[above]{$\Res^\varphi$} (B);
          \draw[<-, out = -20, in = 180+20] (A) to node[below]{$\Ind^\varphi$} (B);
          \node[rotate=90] (t) at ($(A) + 0.5*(r)$) {$\vdash$};
    \end{tikzpicture} $   
\end{center}
We observe that the projection formula holds in the following situations:
\begin{itemize}
\item Comodule induction $\Ind^\varphi(V)=H\Box_K V$ \emph{always} satisfies the projection formula (see \Cref{sec:comod-ind}).
\item Module coinduction $\CoInd^\varphi(V)=\Hom_K(H, V)$ satisfies the projection formula if $H$ is finitely generated projective as a left $K$-module (see \Cref{sec:YD-coind}).
\end{itemize}
This gives the following induced functors on categories of Yetter--Drinfeld modules via \Cref{thm:B}: 

\begin{introcorollary}[See \Cref{cor:IndZ-comod} and \Cref{cor:YDcoind}]\label{cor:YD-intro}
Let $\varphi\colon K\to H$ be a morphism of Hopf algebras.
    \begin{enumerate} 
\item[(1)]
    The functor $\Ind^\varphi$ induces a braided lax monoidal functor $$\cZ(\Ind^\varphi)\colon\lYD{H}\to \lYD{K}.$$
\item [(2)]
Assume that $H$ is finitely generated projective as a left $K$-module.
Then the functor $\CoInd_\varphi$ induces a braided lax monoidal functor $$\cZ(\CoInd_\phi)\colon \lYD{K}\to \lYD{H}.$$
    \end{enumerate}
\end{introcorollary}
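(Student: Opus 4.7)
The plan is to apply \Cref{thm:B} to the two monoidal adjunctions recalled in the excerpt immediately before the statement, and then identify the resulting Drinfeld centers with categories of Yetter--Drinfeld modules. For part (1), I take $\cC = \lcomod{K}$, $\cD = \lcomod{H}$, with the strong monoidal functor $G = \Res^\varphi$ and its lax monoidal right adjoint $R = \Ind^\varphi$. The bullet point preceding the corollary (whose justification lies in \Cref{sec:comod-ind}) asserts that $\Ind^\varphi$ satisfies the projection formula unconditionally, so \Cref{thm:B} immediately yields a braided lax monoidal functor $\cZ(\Ind^\varphi)\colon \cZ(\lcomod{H}) \to \cZ(\lcomod{K})$. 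The comodule analogue of the equivalence $\cZ(\lMod{H})\simeq \lYD{H}$ recalled at the start of \Cref{sec:YD} then identifies this as the desired functor $\lYD{H} \to \lYD{K}$.

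For part (2), I proceed dually with $\cC = \lMod{H}$, $\cD = \lMod{K}$, $G = \Res_\varphi$ strong monoidal, and $R = \CoInd_\varphi$ its lax monoidal right adjoint. Under the hypothesis that $H$ is finitely generated projective as a left $K$-module, the second bullet point (established in \Cref{sec:YD-coind}) guarantees that $\CoInd_\varphi$ satisfies the projection formula, so \Cref{thm:B} produces a braided lax monoidal functor $\cZ(\CoInd_\varphi)\colon \cZ(\lMod{K}) \to \cZ(\lMod{H})$. The equivalence $\cZ(\lMod{H})\simeq \lYD{H}$ then yields the desired functor $\lYD{K} \to \lYD{H}$.

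The principal obstacle --- verification of the projection formula in each case --- is deferred to \Cref{sec:comod-ind} and \Cref{sec:YD-coind} and need not be redone at the corollary level. Heuristically, for comodule induction $\Ind^\varphi(V) = H \Box_K V$, the projection formula morphism should be an isomorphism by the universal property of the cotensor, since the comodule structure on the target involves only the $H$-factor and the cotensor commutes with tensoring by an arbitrary comodule. For module coinduction $\CoInd_\varphi(V) = \Hom_K(H, V)$, the projection formula morphism is the standard evaluation comparison $W \otimes \Hom_K(H,V) \to \Hom_K(H, \Res_\varphi(W) \otimes V)$, which is an isomorphism exactly when $H$ is dualizable as a left $K$-module, matching the finitely generated projective hypothesis. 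Once these facts are in place, the corollary itself is a direct instance of \Cref{thm:B}.
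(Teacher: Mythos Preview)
Your proposal is correct and follows essentially the same approach as the paper: apply \Cref{prop:functorZF2} (the main induction theorem, your \Cref{thm:B}) to the monoidal adjunctions $\Res^\varphi \dashv \Ind^\varphi$ and $\Res_\varphi \dashv \CoInd_\varphi$, invoking the projection formula verifications from \Cref{prop:comod-Ind-proj} and \Cref{lem:proj-iso-R-Hopf}, and then transport along the standard identifications of Drinfeld centers with Yetter--Drinfeld categories. The only detail you gloss over is that in the comodule case the paper uses the equivalence $\cZ(\lcomod{H})^{\brop}\simeq \lYD{H}$ of \Cref{lem:YD-comod}, so one must note that a braided lax monoidal functor remains braided after passing to reverse braidings on source and target; this is harmless but worth making explicit.
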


As examples, we apply \Cref{cor:YD-intro} to the following morphisms of Hopf algebras.
\begin{itemize}
    \item The inclusion of the group algebra $\Bbbk \mathsf{C}_n$ of a cyclic group into the Taft Hopf algebra $T_n(q)$ \cite{Taft}, for $q$ an $n$-th root of unity (see \Cref{ex:Taft2}), which yields a lax monoidal functor via coinduction by \Cref{cor:YD-intro}, (2).
    \item The morphism of Hopf algebras $\varphi=\phi^*\colon \OG\to \OK$ determining a morphism of affine algebraic groups $\phi\colon \sfK\to \sfG$ and gives a braided lax monoidal functor,
    $$\cZ(\Ind^{\phi^*})\colon \QCoh{\sfK/^{\ad}\sfK}\to \QCoh{\sfG/^{\ad}\sfG},$$
    by \Cref{cor:YD-intro}, (1).
    Here, $$\cZ(\Rep \sfG)=\cZ(\lcomod{\OG})\simeq \QCoh{\sfG/^{\ad}\sfG},$$
    which interprets the Drinfeld center as quasi-coherent sheaves on the quotient stack $\sfG/^{\ad} \sfG$ of $\sfG$ acting on itself via adjoint action, cf.~\cite{BFN} and \Cref{sec:Zgroup}.
    \item For the Kac--De Concini quantum group $U_\epsilon(\fr{g})$ at an odd root of unity $\epsilon$, there is a central Hopf subalgebra $\OH$, for the algebraic group $\sfH=(\sfN^-\times \sfN^+)\rtimes \sfT$ (see, \cite{DeCK}, \cite{CP}*{Section 9.1} and \cite{BG}*{Section III.6}). The inclusion $\iota\colon \OH\hookrightarrow U_\epsilon(\fr{g})$, induces a braided lax monoidal functor 
$$\cZ(\CoInd_\iota)\colon \QCoh{\sfH/^{\ad} \sfH}\longrightarrow \lYD{U_\epsilon(\fr{g})},
$$
by \Cref{cor:YD-intro}, (2), see \Cref{cor:Q-group-ind}, (2). This functor, in particular, induces commutative central monoids over quantum groups.
\item The \emph{Radford--Majid biproduct}, or \emph{bosonization}, $H=B\rtimes K$  \cites{Rad, Maj1} for $B$ a Hopf algebra of finite-dimensional Yetter--Drinfeld modules over $K$, for which we show that  by \Cref{thm:D} and \Cref{thm:F}, 
$$\cZ(\lMod{K})\simeq\rModloc{\lYD{H}}{A_{H|K}},$$
for a commutative central algebra $A_{H|K}$ defined on $B^*$, see \Cref{sec:biproduct}. This general class of examples includes, e.g., the Taft algebra $H=T_n(q)$ mentioned above or the Borel part $H=u_\epsilon(\fr{b}^-)$ of a small quantum group or other biproducts of Nichols algebras \cite{AS}.
\end{itemize}

\subsubsection{Braided oplax monoidal functors from opmonoidal adjunctions}

Dually, we may consider a \emph{left} adjoint  $L\colon \cD\to \cC$ to a strong monoidal functor $G\colon \cC\to\cD$. We formalize this situation with the definition of an \emph{opmonoidal adjunction}, see \Cref{subsection:results_by_duality}. If the projection formula holds for the left adjoint $L$, we obtain a braided \emph{oplax} monoidal functor
$$\cZ(L)\colon \cZ(\cD)\to\cZ(\cC).$$

A class of examples of such oplax monoidal functors is obtained from induction functors of module categories of Hopf algebras.

\begin{introcorollary}[See \Cref{cor:YDind}]\label{cor:Ind-intro}
Any morphism of Hopf algebras $\varphi\colon K\to H$ induces a braided oplax monoidal functor
$$\cZ(\Ind_\varphi)\colon \lYD{K}\to \lYD{H},$$
which sends a Yetter--Drinfeld module $V$ with coaction $\delta\colon V\to K\otimes V, v\mapsto v^{(-1)}\otimes v^{(0)}$ to the induced module $\Ind_\varphi(V)=H\otimes_KV$ with coaction 
$$\delta^{\Ind_\varphi(V)}(h\otimes v)=h_{(1)}v^{(-1)}S(h_{(3)})\otimes h_{(2)}\otimes v^{(0)},$$
where $\Delta(h)=h_{(1)}\otimes h_{(2)}$ is the coproduct and $S\colon H\to H$ is the antipode of $S$.
\end{introcorollary}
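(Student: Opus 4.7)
The plan is to recognize this as an instance of the dualized version of \Cref{thm:B} applied to the module categories of $H$ and $K$. Specifically, I view $\Res_\varphi\colon \lMod{H}\to\lMod{K}$ as a strong monoidal functor and $\Ind_\varphi = H\otimes_K(-)$ as its left adjoint, producing an opmonoidal adjunction $\Ind_\varphi\dashv\Res_\varphi$ in the sense of \Cref{subsection:results_by_duality}. Provided the projection formula holds for the left adjoint $\Ind_\varphi$, the dual version of \Cref{thm:B} immediately produces a braided oplax monoidal functor $\cZ(\Ind_\varphi)\colon \cZ(\lMod{K})\to\cZ(\lMod{H})$. Composing with the standard equivalences $\cZ(\lMod{H})\simeq \lYD{H}$ and $\cZ(\lMod{K})\simeq \lYD{K}$ (cf.~\Cref{sec:YD}) yields the claimed functor; the task is then to verify the projection formula and to compute the half-braiding explicitly.

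First I would check that the projection formula morphisms for $\Ind_\varphi$, namely
\begin{gather*}
H\otimes_K(V\otimes M)\longrightarrow (H\otimes_K V)\otimes M,\qquad h\otimes v\otimes m\longmapsto (h_{(1)}\otimes v)\otimes h_{(2)}m,\\
H\otimes_K(M\otimes V)\longrightarrow M\otimes (H\otimes_K V),\qquad h\otimes m\otimes v\longmapsto h_{(1)}m\otimes (h_{(2)}\otimes v),
\end{gather*}
are well-defined $H$-module maps and are isomorphisms, for all $V\in\lMod{K}$ and $M\in\lMod{H}$. Two-sided inverses are supplied by the standard antipode formulas, e.g.\ $(h\otimes v)\otimes m\mapsto h_{(1)}\otimes v\otimes S(h_{(2)})m$ for the first map, and a routine computation using the antipode and counit axioms shows that they descend to the $K$-balanced tensor product and invert the projection morphism. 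This step is entirely analogous to the classical projection formula for induced representations.

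With the projection formula at hand, I then compute the half-braiding on $\cZ(\Ind_\varphi)(V,c)$ via the dual of the formula in \Cref{thm:B}: the half-braiding on $LX$ is $\projlnoarg^{L}\circ L(c_{GA})\circ (\projrnoarg^{L})^{-1}$. Starting from a Yetter--Drinfeld module $V$ over $K$ with coaction $v\mapsto v^{(-1)}\otimes v^{(0)}$, the corresponding half-braiding on $V$ in $\cZ(\lMod{K})$ sends $v\otimes m\mapsto v^{(-1)}\cdot m\otimes v^{(0)}$ (for $m\in\Res_\varphi(M)$ acted on via $\varphi$). Chasing an element $(h\otimes v)\otimes m$ through the three maps, one obtains
\[
(h\otimes v)\otimes m \;\longmapsto\; h_{(1)}\cdot v^{(-1)}\cdot S(h_{(3)})\cdot m \;\otimes\; (h_{(2)}\otimes v^{(0)}),
\]
which is precisely the half-braiding associated to the Yetter--Drinfeld coaction stated in the corollary. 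The oplax monoidal coherence is inherited formally from the dual of \Cref{thm:B}, and functoriality of $\cZ(\Ind_\varphi)$ follows from the functoriality of the induced-functor-on-centers construction.

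The main obstacle I anticipate is purely bookkeeping: matching the sign conventions for half-braidings and for the equivalence $\cZ(\lMod{H})\simeq \lYD{H}$ used in the paper so that the final formula appears in the exact form $h_{(1)}v^{(-1)}S(h_{(3)})\otimes h_{(2)}\otimes v^{(0)}$, and similarly verifying that the two projection formula morphisms are genuinely those produced from the adjunction unit and counit (so that the general theorem applies). Once these conventions are pinned down, each computation reduces to a short antipode manipulation.
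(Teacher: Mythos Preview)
Your proposal is correct and follows essentially the same route as the paper: the paper establishes the projection formula for $\Ind_\varphi$ via the explicit antipode-based inverses you describe (\Cref{lem:proj-iso-L-Hopf}), then invokes the $\oop$-version of the main theorem (\Cref{prop:functorZF2_op_version}) and the identification $\cZ(\lMod{H})\simeq \lYD{H}$ to obtain the stated coaction formula. Your element-chase for the half-braiding and your remark about pinning down conventions are exactly the content of the paper's short proof of \Cref{cor:YDind}.
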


Unlike for coinduction, the assumption that $H$ is finitely generated projective as a left $K$-module is not needed for the projection formula to hold for induction, but note that we assume that Hopf algebras have invertible antipodes. As an example, one derives from \Cref{cor:Ind-intro}
that for $\varphi\colon K=\Bbbk\hookrightarrow H$, the inclusion of the ground field, $H=\Ind_\varphi(\Bbbk)$ is a cocommutative coalgebra in $\lYD{H}$ with the coadjoint coaction and regular action.

\subsubsection{Further directions}

This work was originally motivated by attempts to generalize the following \emph{Frobenius} monoidal functors (in the sense of, e.g., \cite{AM}*{Section~3.5}),
\begin{itemize}
    \item[(1)] $\cZ(\lMod{\Bbbk \sfH})\to \cZ(\lMod{\Bbbk \sfG})$, see \cite{FHL}*{Appendix~B},
    \item[(2)]  $\cZ(\vect_{\Bbbk \sfH}^{\phi^*\omega})\to \cZ(\vect_{\Bbbk \sfG}^\omega)$, see \cite{HLRC}*{Propositions 3.9, 3.10},
\end{itemize}
for finite groups $\phi\colon \sfH\subseteq \sfG$, $\omega\in H^3(\sfG,\Bbbk^\times)$ and 
\begin{itemize}
    \item[(3)] $\uInd\colon \cZ(\Rep  S_n)\longrightarrow \cZ(\uRep S_t)$, see \cite{FHL}*{Theorem~3.3},
\end{itemize}
for $t\in \mC$ and Deligne's interpolation categories $\uRep(S_t)$ \cite{Del},
to more general Drinfeld centers.
In a forthcoming article, we plan to investigate Frobenius monoidal functors on Drinfeld centers in this context. 

\subsection{Structure of the paper}

We start by  introducing a framework for adjoint functors between monoidal categories, including oplax-lax and monoidal adjunctions, in \Cref{sec:prelim}, building on results from \cite{AM}. Next, we introduce the projection formula morphisms in \Cref{sec:proj-form} and prove all required coherences. This section also  interprets the projection formulas as morphisms of monads (\Cref{sec:proj-monad-mor}) and as lineators of a $\cC$-bimodule morphism (\Cref{sec:bimodulefunctorsZ}). Here, we prove results on when the projection formula morphisms are invertible, i.e., the projection formula holds (\Cref{sec:proj-iso}).

\Cref{sec:Z-functors} includes the results on induced braided lax monoidal functors of Drinfeld centers, after a discussion on centers of bimodule categories in \Cref{sec:ZC-def}, and gives the analogue results on oplax monoidal functors, obtained by duality, in \Cref{subsection:results_by_duality}.

\Cref{sec:Kleisli} and \Cref{sec:EM} contain our results on monoidal Kleisli and Eilenberg--Moore categories. For this, Sections~\ref{sec:com-cent-mon}--\ref{section:monoidal-monad} define commutative central monoids and show how they induce monoidal monads. Monoidal Kleisli adjunctions are constructed in \Cref{sec:mon-Kleisli} and shown to be given by commutative central monoids provided that the projection formula holds (\Cref{subsection:kleisli_adjunction_by_central_monoids}). The characterization theorem for Kleisli adjunctions is stated and proved in \Cref{sec:Kleisli-charact}. We develop monoidal analogues of Eilenberg--Moore adjunctions only under additional assumptions on $\cC$ concerning reflexive coequalizers and that the projection formula holds, see \Cref{sec:EM-basics}--\ref{sec:mon-EM}. Here, the projection formula enables us to work with commutative central monoids (rather than general monoidal monads) and to draw from results of \cite{Schau}. In particular, the monoidal crude monadicity theorem and the connection to local modules are proved in \Cref{sec:monadicity} and \ref{sec:loc-mod}, respectively.

The induced functors on Drinfeld centers are given more explicitly in \Cref{sec:YD} in the case of (co-)induction functors of Hopf algebra morphisms. The paper is concluded with \Cref{sec:examples} discussing examples related to affine algebraic groups (\Cref{sec:Zgroup}), quantum groups at roots of unity (\Cref{sec:Uepsilon}), and Radford--Majid biproducts (\Cref{sec:biproduct}). 

\Cref{the-appendix} contains general bicategorical results that may be of independent interest. First, strictification results for pseudofunctors of bicategories are used to provide strictification results for categorical modules in \Cref{subsection:modules_over_a_bicategory}--\ref{subsection:bimodules_over_a_bicategory}. 
Second, in \Cref{subsection:bicat_of_adjunctions}, we define a bicategory of adjunctions internal to a strict bicategory using the calculus of mates from \cite{KellyStreet} in order to provide a rigorous notion of equivalence of monoidal adjunctions. 

\subsection*{Acknowledgments}

J.~F.~thanks the Max Planck Institute for Mathematics Bonn for the excellent conditions provided while most of this article was written. 
R.~L.~thanks Azat Gainutdinov, Gregor Schaumann, and Catharina Stroppel for interesting discussions that brought to light some references related to this work.


\section{Preliminaries}\label{sec:prelim}

In this section, we collect preliminary results on adjunctions involving (lax and oplax) monoidal functors and fix terminology. We follow \cite{AM} but many results go back to \cite{KelDoc}.

\subsection{Lax and oplax monoidal functors}\label{sec:mon-adj}

Let $\cC$ and $\cD$ be monoidal categories.
A \emph{lax monoidal functor} is a functor $R: \cD \rightarrow \cC$ together with a morphism
\begin{equation}
    \lax_{X,Y}\colon RX \otimes RY \longrightarrow R(X \otimes Y)
\end{equation}
natural in $X,Y \in \cD$ and a morphism 
\begin{equation}
    \lax_{0}\colon \one_\cC\rightarrow R(\one_\cD)
\end{equation}
such that the associativity and the unitality constraints hold for $X,Y,Z \in \cD$:
\begin{equation}\label{eq:associativity}
    \begin{tikzpicture}[baseline=($(11) + 0.5*(d)$)]
          \coordinate (r) at (7,0);
          \coordinate (d) at (0,-2);
          \node (11) {$RX \otimes RY \otimes RZ$};
          \node (12) at ($(11) + (r)$) {$RX \otimes R(Y \otimes Z)$};
          \node (21) at ($(11) + (d)$) {$R(X \otimes Y) \otimes RZ$};
          \node (22) at ($(11) + (d) + (r)$) {$R(X \otimes Y \otimes Z)$};
          \draw[->] (11) to node[above]{$\id \otimes \lax_{Y,Z}$} (12);
          \draw[->] (21) to node[below]{$\lax_{X \otimes Y, Z}$} (22);
          \draw[->] (11) to node[left]{$\lax_{X, Y} \otimes \id$} (21);
          \draw[->] (12) to node[right]{$\lax_{X, Y \otimes Z}$} (22);
    \end{tikzpicture}    
\end{equation}

\begin{equation}\label{eq:unitality_left}
   \begin{tikzpicture}[baseline=($(11) + 0.5*(d)$)]
      \coordinate (r) at (4,0);
      \coordinate (d) at (0,-2);
      \node (11) {$RX$};
      \node (12) at ($(11) + (r)$) {$R\one \otimes RX$};
      \node (22) at ($(11) + (d) + (r)$) {$RX$};
      \draw[->] (11) to node[above]{$\lax_0 \otimes \id$} (12);
      \draw[->] (11) to node[left,yshift=-0.5em]{$\id$} (22);
      \draw[->] (12) to node[right]{$\lax_{\one, X}$} (22);
    \end{tikzpicture} 
\end{equation}

\begin{equation}\label{eq:unitality_right}
   \begin{tikzpicture}[baseline=($(11) + 0.5*(d)$)]
      \coordinate (r) at (4,0);
      \coordinate (d) at (0,-2);
      \node (11) {$RX$};
      \node (12) at ($(11) + (r)$) {$RX \otimes R\one$};
      \node (22) at ($(11) + (d) + (r)$) {$RX$};
      \draw[->] (11) to node[above]{$\id \otimes \lax_0$} (12);
      \draw[->] (11) to node[left,yshift=-0.5em]{$\id$} (22);
      \draw[->] (12) to node[right]{$\lax_{X, \one}$} (22);
    \end{tikzpicture} 
\end{equation}

We also refer to the unique morphism of \eqref{eq:associativity} by
\[
    \lax_{X,Y,Z}\colon RX \otimes RY \otimes RZ \longrightarrow R( X \otimes Y \otimes Z ).
\]

Dually, an \emph{oplax monoidal functor} is a functor $G: \cC \rightarrow \cD$ together with a morphism
\begin{equation}
    \oplax_{A,B}\colon G(A \otimes B) \longrightarrow GA \otimes GB
\end{equation}
natural in $A,B \in \cC$ and a morphism
\begin{equation}
    \oplax_{0}\colon G(\one_\cC) \rightarrow \one_\cD 
\end{equation}
such that the analogues of the associativity and the unitality constraints hold. See \cite{AM}*{Definition 3.1, Definition 3.2} for details on the constraints\footnote{Such functors are called \emph{colax} in \cite{AM}.}.

A \emph{strong monoidal functor} can either be defined as a lax monoidal functor or an oplax monoidal functor such that the structure morphisms are isomorphisms.

\subsection{Adjoint functors between monoidal categories}

Let $\cC$ and $\cD$ be monoidal categories.
Let $\mainfun: \cC \rightarrow \cD$ be a functor together with a right adjoint $\rightadj$. For $\objCa \in \cC$ and $\objDx \in \cD$, we write 
\[
\unit_{\objCa}: \objCa \rightarrow \rightadj\mainfun(\objCa) \hspace{3em} \counit_{\objDx}: \mainfun\rightadj(\objDx) \rightarrow \objDx
\]
for the unit and counit of the adjunction. We depict this setup as follows:
\begin{equation}\label{eq:adj_setup}
    \begin{tikzpicture}[ baseline=(A)]
          \coordinate (r) at (3,0);
          \node (A) {$\cC$};
          \node (B) at ($(A) + (r)$) {$\cD$};
          \draw[->, out = 30, in = 180-30] (A) to node[mylabel]{$\mainfun$} (B);
          \draw[<-, out = -30, in = 180+30] (A) to node[mylabel]{$\rightadj$} (B);
          \node[rotate=90] (t) at ($(A) + 0.5*(r)$) {$\vdash$};
    \end{tikzpicture}    
\end{equation}

\subsubsection{Oplax-lax adjoint functors}\label{subsubsection:oplax_lax}

\begin{definition}
Suppose given an oplax monoidal structure on $\mainfun$ and a lax monoidal structure on $\rightadj$. Then $G \dashv R$ is called a \emph{pair of oplax-lax adjoint functors} if the following two diagrams commute (see \cite{AM}*{Definition 3.81, Proposition 3.82}):
\begin{equation}\label{eq:oplax_lax}
    \begin{tikzpicture}[baseline=($(11) + 0.5*(d)$)]
          \coordinate (r) at (7,0);
          \coordinate (d) at (0,-2);
          \node (11) {$\objCa \otimes \objCb$};
          \node (12) at ($(11) + (r)$) {$\rightadj\mainfun(\objCa \otimes \objCb)$};
          \node (21) at ($(11) + (d)$) {$\rightadj\mainfun(\objCa) \otimes \rightadj\mainfun(\objCb)$};
          \node (22) at ($(11) + (d) + (r)$) {$\rightadj(\mainfun(\objCa) \otimes \mainfun(\objCb))$};
          \draw[->] (11) to node[above]{$\unit_{\objCa \otimes \objCb}$} (12);
          \draw[->] (21) to node[below]{$\lax_{\mainfun(\objCa), \mainfun(\objCb)}$} (22);
          \draw[->] (11) to node[left]{$\unit_{\objCa} \otimes \unit_{\objCb}$} (21);
          \draw[->] (12) to node[right]{$\rightadj( \oplax_{\objCa, \objCb} )$} (22);
    \end{tikzpicture}    
\end{equation}
and
\begin{equation}\label{eq:unit_lax}
    \begin{tikzpicture}[baseline=($(11) + 0.5*(d)$)]
          \coordinate (r) at (4,0);
          \coordinate (d) at (0,-2);
          \node (11) {$\one$};
          \node (12) at ($(11) + (r)$) {$\rightadj\mainfun(\one)$};
          \node (21) at ($(11) + (d)$) {$\one$};
          \node (22) at ($(11) + (d) + (r)$) {$\rightadj(\one)$};
          \draw[->] (11) to node[above]{$\unit_{\one}$} (12);
          \draw[->] (21) to node[below]{$\lax_{0}$} (22);
          \draw[->] (11) to node[left]{$\id$} (21);
          \draw[->] (12) to node[right]{$\rightadj( \oplax_{0} )$} (22);
    \end{tikzpicture}    
\end{equation}
\end{definition}

\begin{proposition}[\cite{AM}*{Proposition 3.84}]\label{prop:oplax-lax-adj}
Suppose given a pair of adjoints $G \dashv R$ as in \Cref{eq:adj_setup}.
\begin{enumerate}
    \item An oplax structure on $G$ induces a unique lax structure on $R$ such that $G \dashv R$ is a pair of oplax-lax adjoint functors.
    \item A lax structure on $R$ induces a unique oplax structure on $G$ such that $G \dashv R$ is a pair of oplax-lax adjoint functors.
\end{enumerate}
\end{proposition}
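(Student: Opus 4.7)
The plan is to exploit the calculus of mates under the adjunction $\mainfun\dashv\rightadj$. Given an oplax structure $(\oplax_{\objCa,\objCb},\oplax_0)$ on $\mainfun$, I would take the lax structure on $\rightadj$ to be its mate, explicitly
\[
\lax_{\objDx,\objDy} := \rightadj(\counit_{\objDx}\otimes\counit_{\objDy})\circ\rightadj(\oplax_{\rightadj\objDx,\rightadj\objDy})\circ\unit_{\rightadj\objDx\otimes\rightadj\objDy},\qquad \lax_0 := \rightadj(\oplax_0)\circ\unit_{\one}.
\]
Naturality of $\lax_{\objDx,\objDy}$ in $\objDx,\objDy$ is immediate from naturality of $\unit$, $\counit$, and $\oplax$.

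To verify the lax associativity axiom~\eqref{eq:associativity}, I would expand both composites $\lax_{\objDx\otimes\objDy,\objDz}\circ(\lax_{\objDx,\objDy}\otimes\id)$ and $\lax_{\objDx,\objDy\otimes\objDz}\circ(\id\otimes\lax_{\objDy,\objDz})$ using the defining formula, use naturality to slide adjacent $\unit$--$\counit$ pairs next to each other, cancel each such pair via the triangle identities $\rightadj\counit\circ\unit\rightadj=\id_{\rightadj}$ and $\counit\mainfun\circ\mainfun\unit=\id_{\mainfun}$, and then appeal to the oplax associativity of $\mainfun$. The unitality axioms follow the same pattern. For the compatibility diagram~\eqref{eq:oplax_lax}, I would substitute the definition of $\lax_{\mainfun\objCa,\mainfun\objCb}$ into $\lax_{\mainfun\objCa,\mainfun\objCb}\circ(\unit_{\objCa}\otimes\unit_{\objCb})$, apply naturality of $\unit$ to rewrite the rightmost two factors as $\rightadj\mainfun(\unit_{\objCa}\otimes\unit_{\objCb})\circ\unit_{\objCa\otimes\objCb}$, then naturality of $\oplax$ to pull $\oplax_{\objCa,\objCb}$ through $\rightadj\mainfun(\unit_{\objCa}\otimes\unit_{\objCb})$, and finally the triangle identity $\counit_{\mainfun\objCa}\circ\mainfun\unit_{\objCa}=\id_{\mainfun\objCa}$ (and its counterpart for $\objCb$) to collapse the result to $\rightadj(\oplax_{\objCa,\objCb})\circ\unit_{\objCa\otimes\objCb}$. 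Diagram~\eqref{eq:unit_lax} holds by the very definition of $\lax_0$. Uniqueness follows by running the same diagrams in reverse: for any lax structure $\lax'$ satisfying~\eqref{eq:oplax_lax}, specializing to $\objCa=\rightadj\objDx$, $\objCb=\rightadj\objDy$ and postcomposing with $\rightadj(\counit_{\objDx}\otimes\counit_{\objDy})$ yields $\lax'_{\objDx,\objDy}$ on the left (by naturality of $\lax'$ followed by a triangle identity) and exactly our $\lax_{\objDx,\objDy}$ on the right; similarly $\lax'_0=\lax_0$ is forced directly by~\eqref{eq:unit_lax}.

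Part~(2) is entirely dual: I would set
\[
\oplax_{\objCa,\objCb}:=(\counit_{\mainfun\objCa}\otimes\counit_{\mainfun\objCb})\circ\mainfun(\lax_{\mainfun\objCa,\mainfun\objCb})\circ\mainfun(\unit_{\objCa}\otimes\unit_{\objCb}),\qquad \oplax_0:=\counit_{\one}\circ\mainfun(\lax_0),
\]
and run the analogous verifications with the roles of the adjunction data swapped. The main obstacle in both parts is not conceptual but combinatorial: the associativity check involves three copies each of $\unit$ and $\counit$ together with two oplax-structure morphisms, and one must sequence the naturality moves so that exactly the right pairs $\unit\rightadj\cdot\rightadj\counit$ (respectively $\mainfun\unit\cdot\counit\mainfun$) become adjacent and cancellable before the oplax associativity of $\mainfun$ is invoked. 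Translating the calculation into string-diagram pasting, or abstracting it as the functoriality of the mate bijection with respect to horizontal composition, renders this bookkeeping essentially automatic.
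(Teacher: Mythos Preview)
Your proposal is correct and matches the paper's approach: the paper does not give its own proof but cites \cite{AM}*{Proposition 3.84}, and the explicit formula it records for the induced lax structure in \eqref{eq:construct_lax} and \eqref{eq:unit_lax} is precisely the mate construction you write down. Your verification sketch via naturality and the triangle identities is the standard argument behind that citation.
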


The concrete construction of the lax structure on $\rightadj$ in \Cref{prop:oplax-lax-adj} is given by taking \Cref{eq:unit_lax} as a definition of $\lax_0$ and by defining $\lax_{\objDx, \objDy}$ via the following diagram:
\begin{equation}\label{eq:construct_lax}
    \begin{tikzpicture}[baseline=($(11) + 0.5*(d)$)]
          \coordinate (r) at (7,0);
          \coordinate (d) at (0,-2);
          \node (11) {$\rightadj(\objDx) \otimes \rightadj(\objDy)$};
          \node (12) at ($(11) + (r)$) {$\rightadj(\objDx \otimes \objDy)$};
          \node (21) at ($(11) + (d)$) {$\rightadj\mainfun(\rightadj(\objDx) \otimes \rightadj(\objDy))$};
          \node (22) at ($(11) + (d) + (r)$) {$\rightadj(\mainfun\rightadj(\objDx) \otimes \mainfun\rightadj(\objDy))$};
          \draw[->] (11) to node[above]{$\lax_{\objDx, \objDy}$} (12);
          \draw[->] (21) to node[below]{$\rightadj( \oplax_{\rightadj(\objDx), \rightadj(\objDy)})$} (22);
          \draw[->] (11) to node[left]{$\unit_{\rightadj(\objDx) \otimes \rightadj(\objDy)}$} (21);
          \draw[<-] (12) to node[right]{$\rightadj( \counit_{\objDx} \otimes \counit_{\objDy})$} (22);
    \end{tikzpicture}    
\end{equation}

\subsubsection{Braided oplax-lax adjoint functors}\label{subsubsection:braided_oplax_lax}

Next, we discuss the case when $\cC$ and $\cD$ are \emph{braided} monoidal categories.

\begin{definition}\label{def:braiding-lax-oplax}
Let $\cC,\cD$ be braided monoidal categories with braidings $\Psi^\cC$, $\Psi^\cD$, respectively.
We say that a lax (or, oplax) monoidal functor $F\colon \cC\to \cD$ is \emph{braided} if the diagram \eqref{eq:Frob-braided-lax} below (respectively, \eqref{eq:Frob-braided-oplax} below) commutes for all objects $A,B\in \cC$.
\begin{align}\label{eq:Frob-braided-lax}
    &\vcenter{\hbox{\xymatrix{
    F(A)\otimes F(B)\ar[rr]^{\Psi^\cD_{F(A),F(B)}}\ar[d]_{\lax_{A,B}}&& F(B)\otimes F(A)\ar[d]_{\lax_{B,A}}\\
    F(A\otimes B)\ar[rr]^{F(\Psi^\cC_{A,B})}&& F(B\otimes A)
    }}}\\ \label{eq:Frob-braided-oplax}
    &\vcenter{\hbox{\xymatrix{
    F(A\otimes B)\ar[rr]^{F(\Psi^\cC_{A,B})}\ar[d]_{\oplax_{A,B}}&& F(B\otimes A)\ar[d]_{\oplax_{B,A}}\\
    F(A)\otimes F(B)\ar[rr]^{\Psi^\cD_{F(A),F(B)}}&& F(B)\otimes F(A)
    }}}
\end{align}
In this case, we also say that $F$ has a \emph{braided lax structure} (respectively, \emph{braided oplax structure}).
\end{definition}

\begin{definition} \label{def::braided-pair}
A pair of oplax-lax adjoint functors $G \dashv R$ is called \emph{braided} if $G$ is braided and $R$ is braided (see \cite{AM}*{Definition 3.81 and the text below that definition}).
\end{definition}

\begin{proposition}[\cite{AM}*{Proposition 3.85}]\label{prop:oplax-lax-adj-braided}
Suppose given a pair of adjoints $G \dashv R$ as in \Cref{eq:adj_setup}.
\begin{enumerate}
    \item A braided oplax structure on $G$ induces a unique braided lax structure on $R$ such that $G \dashv R$ is a pair of braided oplax-lax adjoint functors.
    \item A braided lax structure on $R$ induces a unique braided oplax structure on $G$ such that $G \dashv R$ is a pair of braided oplax-lax adjoint functors.
\end{enumerate}
\end{proposition}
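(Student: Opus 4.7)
The plan is to reduce everything to Proposition \ref{prop:oplax-lax-adj}, which already produces the unique (op)lax structure on the partner functor, and then to check that the braided compatibility of the given side is automatically inherited by the constructed side. It suffices to prove part (1), since part (2) is obtained by a completely parallel argument (interchanging the roles of $\lax$ and $\oplax$, $\unit$ and $\counit$, and using \Cref{prop:oplax-lax-adj}\,(2) in place of (1)).

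For (1), let $G$ carry a braided oplax structure. \Cref{prop:oplax-lax-adj}\,(1) produces the unique lax structure on $R$ making $G \dashv R$ a pair of oplax-lax adjoint functors, and its constituents are given explicitly by \eqref{eq:unit_lax} and \eqref{eq:construct_lax}. Uniqueness of a braided lax structure extending this is therefore automatic: any two braided lax structures making $G\dashv R$ oplax-lax must already coincide at the level of lax structures. So the only content of the claim is that this $\lax$ on $R$ satisfies the braided compatibility \eqref{eq:Frob-braided-lax}.

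To verify this, I would fix $X,Y\in\cD$ and chase the pentagon whose outer edges are the two composites appearing in \eqref{eq:Frob-braided-lax} for $F=R$ (with source braiding $\Psi^{\cD}$ and target braiding $\Psi^{\cC}$), expanding $\lax_{X,Y}$ and $\lax_{Y,X}$ via \eqref{eq:construct_lax}. Starting from $R(X)\otimes R(Y)$, I precompose with $\Psi^{\cC}_{R X,R Y}$ and use, in order: naturality of $\Psi^{\cC}$ to slide it past $\unit_{R X \otimes R Y}$ and obtain an instance of $\Psi^{\cC}_{RGRX,RGRY}$ after $\unit_{RY\otimes RX}\circ \Psi^{\cC}_{RX,RY}$; the fact that $R$ is applied to $G$ of this braiding, so $R(G(\Psi^{\cC}_{RX,RY}))$ intervenes between the unit and $R(\oplax)$; the braided oplax condition \eqref{eq:Frob-braided-oplax} for $G$, which rewrites $\oplax_{RY,RX}\circ G(\Psi^{\cC}_{RX,RY})$ as $\Psi^{\cD}_{GRX,GRY}\circ \oplax_{RX,RY}$; and finally naturality of $\Psi^{\cD}$ with respect to $\counit_X\otimes \counit_Y$, after which $R$ applied to $\Psi^{\cD}_{X,Y}$ appears on the outside, yielding exactly the right-hand composite $R(\Psi^{\cD}_{X,Y})\circ \lax_{X,Y}$.

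A conceptually cleaner but equivalent way to organise the same argument, which I would mention rather than expand, is the mate calculus of \cite{KellyStreet}: the lax structure of $R$ is the mate of the oplax structure of $G$ under the two-variable adjunction $G(-)\otimes G(-)\dashv R(-)\otimes R(-)$, and the naturality of $\Psi^{\cC}$ and $\Psi^{\cD}$ makes the mate correspondence intertwine the two braided diagrams \eqref{eq:Frob-braided-lax} and \eqref{eq:Frob-braided-oplax}. The only real obstacle is bookkeeping: keeping track of which braiding appears at each step and ensuring that the naturality squares for $\Psi^{\cC}$, $\Psi^{\cD}$, and the (co)unit are pasted in the correct order. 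Once this is done carefully for part (1), part (2) follows by the same diagram chase read in the opposite direction, using \eqref{eq:construct_lax} dualised to define $\oplax$ on $G$ from $\lax$ on $R$.
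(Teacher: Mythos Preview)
The paper does not actually prove this proposition; it simply cites \cite{AM}*{Proposition 3.85} and moves on. Your argument is a correct and complete proof: reducing to \Cref{prop:oplax-lax-adj} for existence and uniqueness of the lax structure, then verifying \eqref{eq:Frob-braided-lax} by expanding $\lax$ via \eqref{eq:construct_lax} and using naturality of the unit, the braided oplax condition \eqref{eq:Frob-braided-oplax} for $G$, and naturality of $\Psi^{\cD}$ with respect to the counits, is exactly the right chain. One small wording issue: when you slide the braiding past the unit you are using naturality of $\unit$ (applied to $\Psi^{\cC}_{RX,RY}$), not naturality of $\Psi^{\cC}$, and what appears afterwards is $RG(\Psi^{\cC}_{RX,RY})$ rather than a component $\Psi^{\cC}_{RGRX,RGRY}$; you correct yourself in the next clause, but tightening this would make the write-up cleaner.
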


\subsubsection{Monoidal adjunctions}\label{subsubsection:monoidal_adjunction}
We recall the notion of monoidal adjunctions.
They turn out to be pairs of oplax-lax monoidal functors $G \dashv R$ such that $G$ is strong monoidal.

First, a \emph{monoidal transformation} between lax monoidal functors $F,G\colon \cC\to \cD$ is a natural transformation $\eta\colon F\to G$ such that the following diagrams commute for $A,B\in \cC$:
\begin{gather}\label{eq:mon-trans-1}
     \begin{tikzpicture}[baseline=($(11) + 0.5*(d)$)]
          \coordinate (r) at (7,0);
          \coordinate (d) at (0,-2);
          \node (11) {$F(\objCa) \otimes F(\objCa)$};
          \node (12) at ($(11) + (r)$) {$\mainfun(\objCa) \otimes \mainfun(\objCb)$};
          \node (21) at ($(11) + (d)$) {$F(\objCa \otimes \objCb)$};
          \node (22) at ($(11) + (d) + (r)$) {$\mainfun(\objCa \otimes \objCb)$};
          \draw[->] (11) to node[left]{$\lax^F_{\objCa, \objCb}$} (21);
          \draw[->] (12) to node[right]{$\lax^\mainfun_{\objCa, \objCb}$} (22);
          \draw[->] (11) to node[above]{$\eta_\objCa\otimes \eta_\objCb$} (12);
          \draw[->] (21) to node[above]{$\eta_{\objCa\otimes \objCb}$} (22);
    \end{tikzpicture} 
    \\\label{eq:mon-trans-2}
    \begin{tikzpicture}[baseline=($(11) + 0.5*(d)$)]
          \coordinate (r) at (4,0);
          \coordinate (d) at (0,-2);
          \node (11) {$\one$};
          \node (21) at ($(11) + 0.5*(r)+(d)$) {$G\one $};
          \node (12) at ($(11) - 0.5*(r) + (d)$) {$F\one$};
          \draw[->] (11) to node[left,xshift=-0.2em]{$\lax_0^F$} (12);
          \draw[->] (11) to node[right,xshift=0.2em]{$\lax^G_{0}$} (21);
          \draw[->] (12) to node[above]{$\eta_\one$} (21);
    \end{tikzpicture}    
\end{gather}

We denote by $\Catlax$ the strict bicategory of monoidal categories, lax monoidal functors, and monoidal transformations, see \Cref{subsection:modules_over_a_bicategory} for a brief explanation of bicategories. Moreover, see \Cref{definition:internal_adjunction} for the notion of an internal adjunction.

\begin{definition}\label{definition:monoidal_adjunction}
An adjunction $G \dashv R$ internal to $\Catlax$ is called a \emph{monoidal adjunction}.
This means that $G, R$ come equipped with lax monoidal structures, and the unit and counit of the adjunction are monoidal transformations.
\end{definition}

\begin{lemma}\label{lemma:strong_and_monoidal_adj}
Let $G \dashv R$ be an adjunction. 
\begin{enumerate}
    \item If $G \dashv R$ is a monoidal adjunction, then $G$ is a strong monoidal functor.
    \item If $G$ is a strong monoidal functor, then there is a unique lax structure on $R$ such that $G \dashv R$ is a monoidal adjunction.
\end{enumerate}

\end{lemma}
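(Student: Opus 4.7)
The plan is to reduce both parts to the mate correspondence of \Cref{prop:oplax-lax-adj}, which bijects oplax structures on $G$ with lax structures on $R$ making the pair an oplax-lax adjunction.

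For part (1), assume $G \dashv R$ is a monoidal adjunction with lax structures $\lax^G$ and $\lax^R$. By \Cref{prop:oplax-lax-adj}(2), the lax structure $\lax^R$ determines a canonical oplax structure $\oplax^G$ on $G$ characterized by \eqref{eq:oplax_lax}. The key claim is that $\oplax^G_{A,B}$ is a two-sided inverse to $\lax^G_{A,B}$, which establishes that $G$ is strong monoidal. For $\lax^G \circ \oplax^G = \id$, I would combine the monoidality equation for $\unit$,
\[
R(\lax^G_{A,B}) \circ \lax^R_{GA, GB} \circ (\unit_A \otimes \unit_B) = \unit_{A \otimes B},
\]
with the defining equation \eqref{eq:oplax_lax} of $\oplax^G$ to obtain
\[
R(\lax^G_{A,B} \circ \oplax^G_{A,B}) \circ \unit_{A \otimes B} = \unit_{A \otimes B},
\]
and then invoke the universal property of $\unit_{A \otimes B}$ as the unit of the adjunction. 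For $\oplax^G \circ \lax^G = \id$, I would first derive the dual characterization
\[
\oplax^G_{A,B} = \counit_{GA \otimes GB} \circ G(\lax^R_{GA, GB}) \circ G(\unit_A \otimes \unit_B),
\]
by applying $G$ to \eqref{eq:oplax_lax}, composing with $\counit_{GA \otimes GB}$, and using naturality of $\counit$ together with the triangle identities; then apply naturality of $\lax^G$, monoidality of $\counit$, and the triangle identities to finish. The unit-object statements (i.e., invertibility of $\lax^G_0$) follow analogously from \eqref{eq:unit_lax}.

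For part (2), suppose $G$ is strong monoidal and set $\oplax^G := (\lax^G)^{-1}$. By \Cref{prop:oplax-lax-adj}(1), there is a unique lax structure $\lax^R$ on $R$ making $(G, R)$ an oplax-lax adjoint pair. To upgrade this to a monoidal adjunction, I would verify monoidality of $\unit$ and $\counit$: reversing the computation of part (1), monoidality of $\unit$ follows by composing \eqref{eq:oplax_lax} on the left with $R(\lax^G_{A,B})$ and using $\lax^G \circ \oplax^G = \id$; monoidality of $\counit$ follows by the dual argument, combining naturality of $\counit$ with the triangle identities and $\oplax^G \circ \lax^G = \id$. For uniqueness, any alternative lax structure $(\lax^R)'$ making $G \dashv R$ a monoidal adjunction would, by part (1), induce the same oplax structure $(\lax^G)^{-1}$ on $G$ and hence coincide with $\lax^R$ by the bijection of \Cref{prop:oplax-lax-adj}.

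The main technical obstacle is deriving the dual characterization of $\oplax^G$ in terms of $\counit$, which is not stated explicitly in the excerpt but is the central identity enabling the argument that $\oplax^G \circ \lax^G = \id$; once this formula is in hand, the remaining verifications reduce to transparent chains of substitutions combined with the triangle identities and the universal properties of $\unit$ and $\counit$.
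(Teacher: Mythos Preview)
Your argument is correct. The paper's own proof is simply a citation to \cite{AM}*{Propositions 3.94 and 3.96}, so there is no in-paper argument to compare against; your proposal supplies the actual content behind that citation. The reduction to the mate correspondence of \Cref{prop:oplax-lax-adj} is precisely the standard route, and your derivation of the dual characterization $\oplax^G_{A,B} = \counit_{GA \otimes GB} \circ G(\lax^R_{GA,GB}) \circ G(\unit_A \otimes \unit_B)$ together with the naturality-and-triangle-identity computation for $\oplax^G \circ \lax^G = \id$ is exactly what is needed. One minor remark on uniqueness in part~(2): your appeal to part~(1) works because the argument there shows not merely that $G$ is strong monoidal but specifically that the oplax structure induced by $(\lax^R)'$ via \Cref{prop:oplax-lax-adj}(2) equals $(\lax^G)^{-1}$; you state this implicitly, but it is worth making explicit.
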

\begin{proof}
The first part is \cite{AM}*{Proposition 3.96} and the second part is \cite{AM}*{Proposition 3.94}, where the authors use the terminology \emph{lax-lax adjunction} for monoidal adjunction.
\end{proof}

\begin{lemma}
Let $G \dashv R$ be a monoidal adjunction. Then $G \dashv R$ is a pair of oplax-lax adjoint functors.
\end{lemma}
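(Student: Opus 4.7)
The plan is to equip $G$ with the oplax structure obtained by inverting its (strong) lax structure, and then derive the oplax-lax compatibility diagrams \eqref{eq:oplax_lax} and \eqref{eq:unit_lax} directly from the fact that the unit $\eta$ is a monoidal transformation.

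First, by \Cref{lemma:strong_and_monoidal_adj}(1), $G$ is strong monoidal, so both $\lax^G_{A,B}$ and $\lax^G_0$ are invertible. I would set $\oplax^G_{A,B} := (\lax^G_{A,B})^{-1}$ and $\oplax^G_0 := (\lax^G_0)^{-1}$; inverting the defining diagrams \eqref{eq:associativity}--\eqref{eq:unitality_right} for the lax structure of $G$ immediately yields the dual associativity and unitality axioms of \Cref{sec:mon-adj}, so $G$ becomes a genuine oplax monoidal functor.

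Next, since $G \dashv R$ is an adjunction internal to $\Catlax$, the unit $\eta\colon \id_\cC \to RG$ is a monoidal transformation, where $\id_\cC$ carries the identity lax structure and $RG$ carries the composite lax structure
$$\lax^{RG}_{A,B} = R(\lax^G_{A,B}) \circ \lax^R_{GA,GB}, \qquad \lax^{RG}_0 = R(\lax^G_0) \circ \lax^R_0.$$
The monoidal-transformation axiom \eqref{eq:mon-trans-1} applied to $\eta$ then reads
$$\eta_{A \otimes B} = R(\lax^G_{A,B}) \circ \lax^R_{GA,GB} \circ (\eta_A \otimes \eta_B).$$
Composing on the left with $R(\oplax^G_{A,B}) = R((\lax^G_{A,B})^{-1})$ and using functoriality of $R$, one obtains
$$R(\oplax^G_{A,B}) \circ \eta_{A \otimes B} = \lax^R_{GA,GB} \circ (\eta_A \otimes \eta_B),$$
which is precisely the commutativity of \eqref{eq:oplax_lax}. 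The unital coherence \eqref{eq:unit_lax} follows by the same argument from \eqref{eq:mon-trans-2} together with the invertibility of $\lax^G_0$.

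There is no substantive obstacle: once the canonical oplax structure on $G$ is identified with the inverse of its lax structure, the oplax-lax axioms of \Cref{subsubsection:oplax_lax} become formal rewritings of the monoidal-transformation axioms for the unit in $\Catlax$. The only care required is in unpacking the composite lax structure on $RG$ and handling the inversions consistently; the counit condition would give a dual verification but is not needed for this direction.
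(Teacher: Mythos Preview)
Your argument is correct. The paper's own proof consists entirely of a citation to \cite{AM}*{Proposition 3.93}, whereas you unpack the verification directly: invert the lax structure on $G$ to obtain the oplax structure, then read off \eqref{eq:oplax_lax} and \eqref{eq:unit_lax} from the monoidal-transformation axioms \eqref{eq:mon-trans-1} and \eqref{eq:mon-trans-2} for the unit. This is exactly the content behind the cited result, so the approaches coincide in substance; yours is simply self-contained rather than deferred to the reference.
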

\begin{proof}
This follows from \cite{AM}*{Proposition 3.93}.
\end{proof}

\section{The projection formula}\label{sec:proj-form}

Let $G \dashv R$ be a pair of oplax-lax adjoint functors as in \Cref{eq:adj_setup}. This setup gives rise to the following natural transformation:

\begin{equation}\label{eq:proj}
\projl{\objCa}{\objDx} := \projl{\objCa}{\objDx}^R\colon \objCa \otimes \rightadj\objDx \xrightarrow{\unit_{\objCa} \otimes \id} \rightadj\mainfun(\objCa) \otimes \rightadj\objDx \xrightarrow{\lax_{\mainfun\objCa,\objDx}} \rightadj( \mainfun\objCa \otimes \objDx )
\end{equation}
We call $\projl{\objCa}{\objDx}$ the \emph{(left) projection formula morphism}. 

\subsection{Elementary identities}

In the following, we prove elementary identities for the projection formula morphism that will be used later on.  Some of these properties can be found in \cite{Bal}*{Lemma 3} in the dual case, when the left adjoint is strong monoidal.

The following lemma shows that we may reconstruct $\lax_{\objDx, \objDy}$ from the projection formula morphism.

\begin{lemma}\label{lemma:lax_via_proj}
The following diagram commutes for all $\objDx, \objDy \in \cD$:
\begin{center}
   \begin{tikzpicture}[baseline=($(11) + 0.5*(d)$)]
      \coordinate (r) at (6,0);
      \coordinate (d) at (0,-3);
      \node (11) {$\rightadj(\objDx) \otimes \rightadj(\objDy)$};
      \node (12) at ($(11) + (r)$) {$\rightadj( \mainfun\rightadj(\objDx) \otimes \objDy )$};
      \node (22) at ($(11) + (d) + (r)$) {$\rightadj( \objDx \otimes \objDy )$};
      \draw[->] (11) to node[above]{$\projl{\rightadj(\objDx)}{\objDy}$} (12);
      \draw[->] (11) to node[left,yshift=-0.5em]{$\lax_{\objDx, \objDy}$} (22);
      \draw[<-] (22) to node[right]{$\rightadj( \counit_{\objDx} \otimes \objDy )$} (12);
    \end{tikzpicture} 
\end{center}
\end{lemma}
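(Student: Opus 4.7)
The plan is to unfold the definition of $\projl{R\objDx}{\objDy}$ from \eqref{eq:proj} and then chase through naturality of $\lax$ combined with one of the triangle identities of the adjunction.

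Concretely, by definition
\[
\projl{R\objDx}{\objDy} = \lax_{GR\objDx,\,\objDy} \circ (\unit_{R\objDx} \otimes \id_{R\objDy}),
\]
so the composite in question is
\[
R(\counit_{\objDx} \otimes \id_{\objDy}) \circ \lax_{GR\objDx,\,\objDy} \circ (\unit_{R\objDx} \otimes \id_{R\objDy}).
\]
First I would apply naturality of $\lax$ in its first slot to the morphism $\counit_{\objDx}\colon GR\objDx \to \objDx$, which rewrites $R(\counit_{\objDx}\otimes \id_{\objDy}) \circ \lax_{GR\objDx,\,\objDy}$ as $\lax_{\objDx,\objDy} \circ (R(\counit_{\objDx}) \otimes \id_{R\objDy})$. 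This reduces the composite to
\[
\lax_{\objDx,\objDy} \circ \bigl( (R(\counit_{\objDx}) \circ \unit_{R\objDx}) \otimes \id_{R\objDy} \bigr).
\]
Then the triangle identity $R(\counit_{\objDx}) \circ \unit_{R\objDx} = \id_{R\objDx}$ collapses the parenthesized factor to the identity, yielding $\lax_{\objDx,\objDy}$ as required.

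There is no real obstacle here; the only thing to be careful about is that we invoke naturality of $\lax$ in the correct (first) variable and that the triangle identity used is the one expressing $R\dashv$-compatibility at $R\objDx$ rather than at $GR\objDx$. I would present the argument as a short pasting diagram combining the naturality square for $\lax_{-,\objDy}$ along $\counit_{\objDx}$ with the triangle identity, rather than as a line of equations, since the visual form matches the statement of the lemma and makes it immediately clear how \Cref{eq:construct_lax} for $\lax$ is being recovered from $\projl{}{}$.
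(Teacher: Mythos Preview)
Your proof is correct and follows exactly the same route as the paper: unfold $\projl{R\objDx}{\objDy}$ via \eqref{eq:proj}, apply naturality of $\lax$ in the first variable to $\counit_{\objDx}$, and finish with the zigzag identity $R(\counit_{\objDx})\circ \unit_{R\objDx}=\id_{R\objDx}$. The paper merely packages these three ingredients into a single commutative diagram rather than a chain of equalities; your closing remark about \eqref{eq:construct_lax} is tangential (that equation is not actually used here), but this does not affect the argument.
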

\begin{proof}
We embed the diagram into the following larger diagram:
\begin{center}
   \begin{tikzpicture}[baseline=($(11) + 0.5*(d)$)]
      \coordinate (r) at (6,0);
      \coordinate (d) at (0,-3);
      \node (11) {$\rightadj(\objDx) \otimes \rightadj(\objDy)$};
      \node (13) at ($(11) + 2*(r)$) {$\rightadj(\objDx) \otimes \rightadj(\objDy)$};
      \node (02) at ($(11) + (r) - (d)$) {$\rightadj\mainfun\rightadj(\objDx) \otimes \rightadj(\objDy)$};
      \node (12) at ($(11) + (r)$) {$\rightadj( \mainfun\rightadj(\objDx) \otimes \objDy )$};
      \node (22) at ($(11) + (d) + (r)$) {$\rightadj( \objDx \otimes \objDy )$};
      
      \draw[->] (11) to node[mylabel]{$\projl{\rightadj(\objDx)}{\objDy}$} (12);
      \draw[->] (11) to node[mylabel]{$\lax_{\objDx, \objDy}$} (22);
      \draw[<-] (22) to node[mylabel]{$\rightadj( \counit_{\objDx} \otimes \id )$} (12);
      \draw[->] (11) to node[mylabel]{$\unit_{\rightadj\objDx} \otimes \id$} (02);
      \draw[->] (02) to node[mylabel]{$\lax_{\mainfun\rightadj\objDx, \objDy}$} (12);
      \draw[->] (02) to node[mylabel]{$\rightadj(\counit_{\objDx}) \otimes \id$} (13);
      \draw[->] (13) to node[mylabel]{$\lax_{\objDx,\objDy}$} (22);
      
      \draw[->,out=90,in=90] (11) to node[mylabel]{$\id$} (13);
    \end{tikzpicture} 
\end{center}
The top part commutes by the zigzag identities of the adjunction $\mainfun\dashv \rightadj$.
The upper left triangle commutes by the definition of the projection formula morphism in \Cref{eq:proj}.
The right triangle commutes by the naturality of $\lax$ applied to $\counit_{\objDx}$. It follows that the lower triangle commutes as well.
\end{proof}

The following lemma shows a compatibility between the projection formula morphism and the lax structure morphism.

\begin{lemma}\label{lemma:compatibility_lax_projection}
The following diagram commutes for $A \in \cC$, $X,Y \in \cD$:
\begin{center}
    \begin{tikzpicture}[baseline=($(11) + 0.5*(d)$)]
          \coordinate (r) at (7,0);
          \coordinate (d) at (0,-2);
          \node (11) {$A \otimes RX \otimes RY$};
          \node (12) at ($(11) + (r)$) {$R(GA \otimes X) \otimes RY$};
          \node (21) at ($(11) + (d)$) {$A \otimes R(X \otimes Y)$};
          \node (22) at ($(11) + (d) + (r)$) {$R(GA \otimes X \otimes Y)$};
          \draw[->] (11) to node[above]{$\projl{A}{X} \otimes RY$} (12);
          \draw[->] (21) to node[below]{$\projl{A}{X \otimes Y}$} (22);
          \draw[->] (11) to node[left]{$A \otimes \lax_{X,Y}$} (21);
          \draw[->] (12) to node[right]{$\lax_{GA \otimes X, Y}$} (22);
    \end{tikzpicture}  
\end{center}
\end{lemma}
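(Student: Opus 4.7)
The plan is to unfold the definition of the projection formula morphism on both sides and reduce the claim to the associativity axiom \eqref{eq:associativity} of the lax monoidal structure on $R$, together with functoriality of the tensor product.

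Concretely, I would expand the top--right composite $\lax_{GA\otimes X,Y}\circ(\projl{A}{X}\otimes RY)$ using \eqref{eq:proj}, obtaining
\[
\lax_{GA\otimes X,Y}\circ\bigl(\lax_{GA,X}\otimes RY\bigr)\circ\bigl(\unit_A\otimes RX\otimes RY\bigr).
\]
By the associativity constraint \eqref{eq:associativity} applied to the triple $(GA,X,Y)$, the first two arrows compose to $\lax_{GA,X\otimes Y}\circ(RGA\otimes \lax_{X,Y})$. So the composite rewrites as
\[
\lax_{GA,X\otimes Y}\circ\bigl(RGA\otimes\lax_{X,Y}\bigr)\circ\bigl(\unit_A\otimes RX\otimes RY\bigr).
\]

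Next, I would use the interchange law (bifunctoriality of $\otimes$) to identify $(RGA\otimes\lax_{X,Y})\circ(\unit_A\otimes RX\otimes RY)$ with $\unit_A\otimes\lax_{X,Y}$, which in turn equals $(\unit_A\otimes R(X\otimes Y))\circ(A\otimes\lax_{X,Y})$ by another application of interchange. Plugging this back gives
\[
\lax_{GA,X\otimes Y}\circ\bigl(\unit_A\otimes R(X\otimes Y)\bigr)\circ\bigl(A\otimes\lax_{X,Y}\bigr) \;=\; \projl{A}{X\otimes Y}\circ\bigl(A\otimes\lax_{X,Y}\bigr),
\]
which is precisely the left--bottom composite. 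There is no real obstacle here; the only point to be careful about is the bookkeeping of the interchange law (the placement of $\unit_A$ on a one-object factor and of $\lax_{X,Y}$ on the two-object factor), so I would present the argument as a single commuting diagram with three cells: a triangle for \eqref{eq:proj} on each of the two sides and a central quadrilateral that is the associativity pentagon \eqref{eq:associativity} tensored on the left with $\unit_A$.
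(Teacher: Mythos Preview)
Your proof is correct and essentially identical to the paper's: both unfold the two projection formula morphisms via \eqref{eq:proj}, then use one application of the interchange law and one application of the associativity constraint \eqref{eq:associativity} for the triple $(GA,X,Y)$. The paper presents this as a diagram with four cells (two triangles for the definitions, a left rectangle for interchange, a right rectangle for associativity), whereas you give the same steps as a chain of equalities; your closing remark about ``three cells'' slightly undercounts, but the substance is the same.
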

\begin{proof}
We split the diagram in the statement into the following parts and show that each small part commutes:
\begin{center}
    \begin{tikzpicture}[baseline=($(11) + 0.5*(d)$)]
          \coordinate (r) at (11,0);
          \coordinate (d) at (0,-2);
          \node (11) {$A \otimes RX \otimes RY$};
          \node (12) at ($(11) + (r)$) {$R(GA \otimes X) \otimes RY$};
          \node (21) at ($(11) + 3*(d)$) {$A \otimes R(X \otimes Y)$};
          \node (22) at ($(11) + 3*(d) + (r)$) {$R(GA \otimes X \otimes Y)$};
          \node (m1) at ($(11) + (d) + 0.5*(r)$) {$RGA \otimes RX \otimes RY$};
          \node (m2) at ($(11) + 2*(d) + 0.5*(r)$) {$RGA \otimes R(X \otimes Y)$};
          
          \draw[->] (11) to node[above]{$\projl{A}{X} \otimes RY$} (12);
          \draw[->] (21) to node[below]{$\projl{A}{X \otimes Y}$} (22);
          \draw[->] (11) to node[left]{$A \otimes \lax_{X,Y}$} (21);
          \draw[->] (12) to node[right]{$\lax_{GA \otimes X, Y}$} (22);
          \draw[->] (11) to node[above right]{$\unit_A \otimes \id$}(m1);
          \draw[->] (m1) to node[above left]{$\lax_{GA,X} \otimes \id$}(12);
          \draw[->] (m1) to node[right]{$\id \otimes \lax_{X,Y}$}(m2);
          \draw[->] (21) to node[above left]{$\unit_A \otimes \id$}(m2);
          \draw[->] (m2) to node[above right]{$\lax_{GA,X\otimes Y}$}(22);
    \end{tikzpicture}  
\end{center}
The left rectangle commutes by the interchange law.
The upper and the lower triangle commute by definition.
The right rectangle commutes by the associativity of the lax structure \eqref{eq:associativity}.
\end{proof}

Next, we prove compatibility conditions for the projection formula that are relevant in the definition of a bimodule functor in  \Cref{sec:bimodulefunctorsZ}.

\begin{lemma}\label{lemma:proj_formula_coh_unit}
The following diagram commutes for all $\objDx \in \cD$:
\begin{center}
   \begin{tikzpicture}[baseline=($(11) + 0.5*(d)$)]
      \coordinate (r) at (4,0);
      \coordinate (d) at (0,-2);
      \node (11) {$\one \otimes \rightadj\objDx$};
      \node (12) at ($(11) + (r)$) {$\rightadj(\mainfun\one \otimes \objDx)$};
      \node (22) at ($(11) + (d) + (r)$) {$\rightadj\objDx$};
      \draw[->] (11) to node[above]{$\projl{\one}{\objDx}$} (12);
      \draw[->] (12) to node[right]{$\rightadj( \oplax_0 \otimes \id)$} (22);
      \draw[->] (11) to node[below]{$\id$} (22);
    \end{tikzpicture} 
\end{center}
\end{lemma}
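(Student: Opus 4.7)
The plan is to unpack the definition of $\projl{\one}{\objDx}$ and then combine two facts already available: the naturality of the lax structure constraint $\lax$, and the oplax-lax compatibility diagram \eqref{eq:unit_lax}. All unitors are suppressed in the statement, so throughout we silently identify $\one\otimes R\objDx$ with $R\objDx$ and $\one\otimes\objDx$ with $\objDx$.

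First, I would expand $\projl{\one}{\objDx}$ according to its definition in \eqref{eq:proj} as the composite
\[
\one\otimes R\objDx \xrightarrow{\unit_\one\otimes\id} RG\one\otimes R\objDx \xrightarrow{\lax_{G\one,\objDx}} R(G\one\otimes \objDx),
\]
and then post-compose with $R(\oplax_0\otimes\id_\objDx)$. Using naturality of $\lax_{-,\objDx}$ applied to the morphism $\oplax_0\colon G\one\to\one$ in the first argument, the morphism $R(\oplax_0\otimes\id_\objDx)$ can be moved across $\lax_{G\one,\objDx}$, producing
\[
\lax_{\one,\objDx}\circ\bigl((R(\oplax_0)\circ\unit_\one)\otimes\id_{R\objDx}\bigr).
\]

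At this point the compatibility diagram \eqref{eq:unit_lax} rewrites $R(\oplax_0)\circ\unit_\one$ as $\lax_0$. Thus the total composite equals $\lax_{\one,\objDx}\circ(\lax_0\otimes\id_{R\objDx})$, which collapses to $\id_{R\objDx}$ by the left unitality axiom \eqref{eq:unitality_left} for the lax monoidal structure on $R$. This chain of reductions is routine and should fit in a single pasting diagram; no step looks like a genuine obstacle. The only point requiring mild care is bookkeeping of unitors when the ambient categories are not strict, but this is dispatched by the coherence theorem for monoidal categories.
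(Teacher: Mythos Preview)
Your proof is correct and follows essentially the same route as the paper's: unpack $\projl{\one}{\objDx}$ via \eqref{eq:proj}, use naturality of $\lax$ to pull $\rightadj(\oplax_0)$ past $\lax_{\mainfun\one,\objDx}$, invoke \eqref{eq:unit_lax} to rewrite $\rightadj(\oplax_0)\circ\unit_\one$ as $\lax_0$, and finish with left unitality \eqref{eq:unitality_left}. The paper presents the same steps as a single commuting diagram rather than an equation chain, but the content is identical.
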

\begin{proof}
We take a look at the following diagram:
\begin{center}
   \begin{tikzpicture}[baseline=($(11) + 0.5*(d)$)]
      \coordinate (r) at (10,0);
      \coordinate (d) at (0,-3);
      \node (11) {$\one \otimes \rightadj\objDx$};
      \node (12) at ($(11) + (r)$) {$\rightadj(\mainfun\one \otimes \objDx)$};
      \node (22) at ($(11) + (d) + (r)$) {$\rightadj\objDx$};
      \node (21) at ($(11) + (d)$) {$\rightadj\one \otimes \rightadj\objDx$};
      \node (m) at ($(11) + 0.5*(d) + 0.5*(r)$) {$\rightadj\mainfun\one \otimes \rightadj\objDx$};
      
      \draw[->] (11) to node[above]{$\projl{\one}{\objDx}$} (12);
      \draw[->] (12) to node[right]{$\rightadj( \oplax_0 \otimes \id)$} (22);
      \draw[->] (11) to node[mylabel]{$\unit_{\one} \otimes \id$} (m);
      \draw[->] (m) to node[mylabel]{$\lax_{\mainfun\one,\objDx}$} (12);
      \draw[->] (m) to node[mylabel]{$\rightadj(\oplax_0 \otimes \id)$} (21);
      \draw[->] (21) to node[below]{$\lax_{\one,\objDx}$} (22);
      \draw[->] (11) to node[left]{$\lax_0 \otimes \id$} (21);
    \end{tikzpicture} 
\end{center}
Following the outer path from the upper left to the lower right clockwise yields the identity by the left unitality of a lax monoidal functor, see \Cref{eq:unitality_left}. Thus, this diagram extends the diagram of the statement, and showing its commutativity means proving the lemma.
The upper triangle commutes by the definition of the projection formula morphism in \Cref{eq:proj}.
The triangle on the left side commutes since we have a pair of oplax-lax adjoint functors, see \Cref{eq:unit_lax}.
The triangle on the right commutes by the naturality of $\lax$ applied to the morphisms $\oplax_0$ and $\id_{\objDx}$.
\end{proof}

\begin{lemma}\label{lem:projl-tensor-coh}
The following diagrams commutes for all $\objCa, \objCb \in \cC$ and $\objDx \in \cD$:
\begin{center}
    \begin{tikzpicture}[baseline=($(11) + 0.5*(d)$)]
          \coordinate (r) at (7,0);
          \coordinate (d) at (0,-2);
          \node (11) {$\objCa \otimes \objCb \otimes \rightadj\objDx$};
          \node (12) at ($(11) + (r)$) {$\rightadj( \mainfun(\objCa \otimes \objCb) \otimes \objDx )$};
          \node (21) at ($(11) + (d)$) {$\objCa \otimes \rightadj( \mainfun(\objCb) \otimes \objDx)$};
          \node (22) at ($(11) + (d) + (r)$) {$\rightadj( \mainfun\objCa \otimes \mainfun\objCb \otimes \objDx )$};
          \draw[->] (11) to node[above]{$\projl{\objCa \otimes \objCb}{\objDx}$} (12);
          \draw[->] (21) to node[below]{$\projl{\objCa}{\mainfun(\objCb) \otimes \objDx}$} (22);
          \draw[->] (11) to node[left]{$\id \otimes \projl{\objCb}{\objDx}$} (21);
          \draw[->] (12) to node[right]{$\rightadj( \oplax_{\objCa, \objCb} \otimes \id )$} (22);
    \end{tikzpicture}  
\end{center}

\end{lemma}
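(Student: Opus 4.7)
My plan is to do a diagram chase after expanding both the upper and lower paths via the definition \eqref{eq:proj} of the projection formula morphism. Writing $\projl{\objCa}{\objDx} = \lax_{G\objCa,\objDx} \circ (\unit_\objCa \otimes \id)$ throughout, the outer square becomes a hexagonal diagram whose vertices involve only the tensor products $\objCa \otimes \objCb \otimes \rightadj\objDx$, $\rightadj\mainfun\objCa \otimes \rightadj\mainfun\objCb \otimes \rightadj\objDx$, and their images under various applications of $\lax$ and $\rightadj(\oplax_{\objCa,\objCb} \otimes \id)$. I would subdivide it into three elementary pieces corresponding to the three coherence tools already available in the excerpt.

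First I would insert the intermediate object $\rightadj\mainfun\objCa \otimes \rightadj\mainfun\objCb \otimes \rightadj\objDx$ together with the morphism $\lax_{G\objCa,G\objCb} \otimes \id$ into it, and the object $\rightadj\mainfun\objCa \otimes \rightadj(\mainfun\objCb \otimes \objDx)$ together with $\id \otimes \lax_{G\objCb,\objDx}$. The triangle on the lower--left side comparing the two ways of producing $\rightadj\mainfun\objCa \otimes \rightadj\mainfun\objCb \otimes \rightadj\objDx$ from $\objCa \otimes \objCb \otimes \rightadj\objDx$ commutes by the interchange law (both composites equal $\unit_\objCa \otimes \unit_\objCb \otimes \id$). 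The central region comparing the two ways to land in $\rightadj(\mainfun\objCa \otimes \mainfun\objCb \otimes \objDx)$ commutes by the associativity of the lax structure \eqref{eq:associativity} applied to the triple $(\mainfun\objCa, \mainfun\objCb, \objDx)$.

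The remaining piece is the one featuring $\rightadj(\oplax_{\objCa,\objCb} \otimes \id)$ on the right edge. To close it, I would further factor the top arrow $\projl{\objCa \otimes \objCb}{\objDx}$ through $\rightadj\mainfun(\objCa \otimes \objCb) \otimes \rightadj\objDx$ via $\unit_{\objCa \otimes \objCb} \otimes \id$, and then observe that the resulting square splits into two: one square commutes by the naturality of $\lax$ applied to the pair $(\oplax_{\objCa,\objCb}, \id_\objDx)$, and the triangle that remains is precisely the oplax--lax compatibility \eqref{eq:oplax_lax} tensored with the identity on $\rightadj\objDx$.

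I expect no serious obstacle beyond bookkeeping: the three ingredients (naturality of $\lax$, associativity of $\lax$, and the oplax--lax compatibility of the unit) are exactly what is needed, and the delicate point is only to pick the right intermediate objects so that each subdiagram is a single invocation of one of these axioms. The argument is completely parallel to the one already used in \Cref{lemma:compatibility_lax_projection}, with the role of $\lax_{\objDx,\objDy}$ replaced by the oplax structure $\oplax_{\objCa,\objCb}$ of $\mainfun$ via \eqref{eq:oplax_lax}.
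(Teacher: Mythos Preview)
Your proposal is correct and essentially matches the paper's proof: both arguments reduce the square to the common composite $\lax_{GA,GB,X} \circ (\unit_A \otimes \unit_B \otimes \id)$ and close the remaining pieces using exactly the three ingredients you name (interchange law, associativity \eqref{eq:associativity}, and the oplax--lax compatibility \eqref{eq:oplax_lax} together with naturality of $\lax$ in $\oplax_{A,B}$). The only cosmetic difference is that the paper explicitly introduces this common morphism first and then shows each path equals it in two separate diagrams, whereas you subdivide a single large diagram; the content is the same.
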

\begin{proof}
We refer to the diagram in the statement of this lemma as the main diagram.
We prove that both paths from the upper left to the lower right of our main diagram are equal to the following morphism:
\begin{equation}\label{eq::third-morphism}
A \otimes B \otimes RX \xrightarrow{\unit_A \otimes \unit_B \otimes \id} RGA \otimes RGB \otimes RX \xrightarrow{\lax_{GA, GB, X}} R(GA \otimes GB \otimes X)
\end{equation}

First, we show that the lower path of our main diagram is equal to the morphism in \Cref{eq::third-morphism}.
For this, consider the following diagram, whose outer part encodes our desired equality.
\begin{center}
    \resizebox{\textwidth}{!}{
    \begin{tikzpicture}[baseline=($(11) + 0.5*(d)$)]
          \coordinate (r) at (5.8,0);
          \coordinate (d) at (0,-3.5);
          \node (11) {$\objCa \otimes \objCb \otimes \rightadj\objDx$};
          \node (21) at ($(11) + (d)$) {$\objCa \otimes \rightadj\mainfun\objCb \otimes \rightadj\objDx$};
          \node (31) at ($(11) + 2*(d)$) {$\objCa \otimes \rightadj( \mainfun(\objCb) \otimes \objDx)$};
          \node (12) at ($(11) + (r)$) {$\rightadj\mainfun\objCa \otimes \rightadj\mainfun\objCb \otimes \rightadj\objDx$};
          \node (32) at ($(11) + 2*(d) + (r)$) {$\rightadj\mainfun\objCa \otimes \rightadj( \mainfun(\objCb) \otimes \objDx)$};
          \node (33) at ($(11) + 2*(d) + 2*(r)$) {$\rightadj( \mainfun\objCa \otimes \mainfun\objCb \otimes \objDx )$};
          
          \draw[->,out=-25,in=180+25] (31) to node[below]{$\projl{\objCa}{\mainfun(\objCb) \otimes \objDx}$} (33);
          \draw[->,out=180+25,in=180-25] (11) to node[mylabel,rotate=-90]{$\id \otimes \projl{\objCb}{\objDx}$} (31);
          
          \draw[->] (11) to node[mylabel]{$\id \otimes \unit_{\objCb} \otimes \id$} (21);
          \draw[->] (21) to node[mylabel]{$\id \otimes \lax_{\mainfun\objCb, \objDx}$} (31);
          
          \draw[->] (11) to node[above]{$\unit_\objCa \otimes \unit_{\objCb} \otimes \id$} (12);
          \draw[->] (21) to node[mylabel]{$\unit_\objCa \otimes \id$} (12);
          
          \draw[->] (12) to node[mylabel]{$\id \otimes \lax_{\mainfun\objCb, \objDx}$} (32);
          \draw[->] (31) to node[above]{$\unit_\objCa \otimes \id$} (32);

          \draw[->] (32) to node[above]{$\lax_{\mainfun\objCa, \mainfun\objCb \otimes \objDx}$} (33);
          \draw[->] (12) to node[right]{$\lax_{GA,GB,X}$} (33);
    \end{tikzpicture}
    }
\end{center}
We show that all small inner parts of this diagram commute.
The two inner parts which involve curved arrows commute by the definition of the projection formula morphism in \Cref{eq:proj}. The upper left triangle and the lower left rectangle commute by the interchange law. The triangle on the right commutes by the associativity constraint in \Cref{eq:associativity}.

Second, we show that the upper path of our main diagram is equal to the morphism in \Cref{eq::third-morphism}.
For this, consider the following diagram, whose outer part encodes our desired equality.
\begin{center}
\resizebox{\textwidth}{!}{
    \begin{tikzpicture}[baseline=($(11) + 0.5*(d)$)]
          \coordinate (r) at (6.25,0);
          \coordinate (d) at (0,-2);
          \node (11) {$\objCa \otimes \objCb \otimes \rightadj\objDx$};
          \node (13) at ($(11) + 2*(r)$) {$\rightadj( \mainfun(\objCa \otimes \objCb) \otimes \objDx )$};
          \node (21) at ($(11) + 2*(d)$) {$\rightadj\mainfun\objCa \otimes \rightadj \mainfun\objCb \otimes \rightadj\objDx$};
          
          \node (22) at ($(21) + (r)$) {$\rightadj( \mainfun\objCa \otimes \mainfun\objCb ) \otimes \rightadj\objDx$};
          
          \node (m) at ($(11) + (r) + (d)$) {$\rightadj\mainfun(\objCa \otimes \objCb) \otimes \rightadj\objDx$};
          
          \node (23) at ($(11) + 2*(d) + 2*(r)$) {$\rightadj( \mainfun\objCa \otimes \mainfun\objCb \otimes \objDx )$};
          \draw[->] (11) to node[above]{$\projl{\objCa \otimes \objCb}{\objDx}$} (13);

          \draw[->] (11) to node[above,rotate=90]{$\unit_\objCa \otimes \unit_\objCb \otimes \id$} (21);
          \draw[->] (13) to node[above,rotate=-90]{$\rightadj( \oplax_{\objCa, \objCb} \otimes \id )$} (23);
          
          \draw[->] (11) to node[mylabel]{$\unit_{\objCa \otimes \objCb} \otimes \id$} (m);
          \draw[->] (m) to node[mylabel]{$\lax_{\mainfun(\objCa \otimes \objCb),\objDx}$} (13);
          \draw[->] (m) to node[left]{$\rightadj( \oplax_{\objCa, \objCb} ) \otimes \id$} (22);
          \draw[->] (21) to node[below]{$\lax_{\mainfun\objCa, \mainfun\objCb} \otimes \id$} (22);
          \draw[->] (22) to node[below]{$\lax_{\mainfun\objCa \otimes \mainfun\objCb, \objDx}$} (23);
    \end{tikzpicture}
    }
\end{center}
We show that all small inner parts of this diagram commute.
The top triangle commutes by the definition of the projection formula morphism in \Cref{eq:proj}.
The left rectangle commutes since we have a pair of oplax-lax adjoint functors, see \Cref{eq:oplax_lax}.
The right rectangle commutes by applying the naturality of $\lax$ to the morphisms $\oplax_{\objCa, \objCb}$ and $\id_{\objDx}$.
This completes the proof.
\end{proof}

Our focus on the projection formula morphism where $A \in \cC$ acts from the left was arbitrary. Indeed, we have the following version for $X \in \cD$ where $A$ acts from the right:
\begin{equation}\label{eq:projr}
\projr{\objDx}{\objCa} := \projr{\objDx}{\objCa}^R\colon \rightadj\objDx \otimes \objCa \xrightarrow{\id \otimes \unit_{\objCa} } \rightadj\objDx \otimes \rightadj\mainfun(\objCa)  \xrightarrow{\lax_{X,GA}} \rightadj(  \objDx \otimes \mainfun\objCa )
\end{equation}

Coherence conditions for $\projrnoarg$ are derived from those for $\projlnoarg$ by duality, replacing a monoidal category $\cC$ by its $\otimes$-opposite $\cC^{\rev}$, the monoidal category that has the same underlying category as $\cC$ and whose tensor product is given by 
$A \otimes_{\cC^{\rev}} B := B \otimes_{\cC} A.$

\begin{lemma}\label{lem:proj-lr-coh}
The following diagram commutes for all $\objCa, \objCb \in \cC$ and $\objDx \in \cD$:
\begin{center}
    \begin{tikzpicture}[baseline=($(11) + 0.5*(d)$)]
          \coordinate (r) at (7,0);
          \coordinate (d) at (0,-2);
          \node (11) {$\objCa \otimes \rightadj\objDx \otimes \objCb$};
          \node (12) at ($(11) + (r)$) {$\rightadj( \mainfun\objCa \otimes \objDx ) \otimes \objCb$};
          \node (21) at ($(11) + (d)$) {$\objCa \otimes \rightadj( \objDx \otimes \mainfun\objCb)$};
          \node (22) at ($(11) + (d) + (r)$) {$\rightadj( \mainfun\objCa \otimes \objDx \otimes \mainfun\objCb )$};
          \draw[->] (11) to node[above]{$\projl{\objCa}{\objDx} \otimes \id$} (12);
          \draw[->] (21) to node[below]{$\projl{\objCa}{\objDx \otimes \mainfun(\objCb) }$} (22);
          \draw[->] (11) to node[left]{$\id \otimes \projr{\objDx}{\objCb}$} (21);
          \draw[->] (12) to node[right]{$\projr{\mainfun\objCa \otimes \objDx}{\objCb}$} (22);
    \end{tikzpicture}    
\end{center}
We refer to the resulting morphism by
\[
\projrl{A}{X}{B} := \projrl{A}{X}{B}^R \colon A \otimes RX \otimes B \longrightarrow R(GA \otimes X \otimes GB ).
\]
Moreover, this resulting morphism can be described by a symmetrized version of \eqref{eq:proj}, i.e., the following diagram commutes:
\begin{center}
    \begin{tikzpicture}[baseline=($(11) + 0.5*(d)$)]
          \coordinate (r) at (10,0);
          \coordinate (d) at (0,-2);
          \node (11) {$A \otimes RX \otimes B$};
          \node (12) at ($(11) + (r)$) {$R(GA \otimes X \otimes GB)$};
          \node (t) at ($(11) + 0.5*(r) + (d)$) {$RGA \otimes RX \otimes RGB$};
          \draw[->] (11) to node[above]{$\projrl{A}{X}{B}$}(12);
          \draw[->] (11) to node[left]{$\unit_A \otimes RX \otimes \unit_B$}(t);
          \draw[->] (t) to node[right,xshift=2em]{$\lax_{GA,X,GB}$}(12);
    \end{tikzpicture}    
\end{center}
\end{lemma}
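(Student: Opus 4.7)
The plan is to show that both composites in the square agree with the single morphism
\[
A \otimes RX \otimes B \xrightarrow{\unit_A \otimes \id_{RX} \otimes \unit_B} RGA \otimes RX \otimes RGB \xrightarrow{\lax_{GA,X,GB}} R(GA \otimes X \otimes GB),
\]
which simultaneously establishes commutativity of the square, defines $\projrl{A}{X}{B}$ unambiguously, and proves the second (``symmetrized'') assertion.

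First I would unfold the upper-then-right composite by inserting the definitions \eqref{eq:proj} and \eqref{eq:projr}. The morphism $\id_A \otimes \projr{X}{B}$ decomposes as $(\id_A \otimes \lax_{X,GB}) \circ (\id_A \otimes \id_{RX} \otimes \unit_B)$, while $\projl{A}{X \otimes GB}$ equals $\lax_{GA, X \otimes GB} \circ (\unit_A \otimes \id_{R(X \otimes GB)})$. Using the interchange law to slide $\unit_A$ past $\id \otimes \lax_{X,GB}$, this composite rearranges as
\[
\lax_{GA, X \otimes GB} \circ (\id_{RGA} \otimes \lax_{X,GB}) \circ (\unit_A \otimes \id_{RX} \otimes \unit_B).
\]
The associativity axiom \eqref{eq:associativity} identifies $\lax_{GA, X \otimes GB} \circ (\id \otimes \lax_{X,GB})$ with the triple lax constraint $\lax_{GA,X,GB}$, yielding the claimed symmetrized form.

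Symmetrically, for the left-then-bottom composite, the same unfolding together with the interchange law collects the units into $\unit_A \otimes \id_{RX} \otimes \unit_B$ and leaves $\lax_{GA \otimes X, GB} \circ (\lax_{GA, X} \otimes \id_{RGB})$ on the $\lax$-side, which again equals $\lax_{GA,X,GB}$ by \eqref{eq:associativity}. Hence both composites coincide with the symmetrized morphism, proving both parts of the lemma at once.

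I do not expect a serious obstacle: the argument is essentially an unwinding of the definitions of $\projlnoarg$ and $\projrnoarg$ combined with two applications of the associativity constraint for the lax structure on $R$. The only real care needed is bookkeeping the tensor factors when applying the interchange law, so that the insertions of $\unit_A$ and $\unit_B$ can be combined into a single morphism \emph{before} composing with the $\lax$-maps.
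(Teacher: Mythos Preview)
Your argument is correct and matches the paper's approach: both unfold the definitions of $\projlnoarg$ and $\projrnoarg$, use the interchange law to gather $\unit_A$ and $\unit_B$ into a single step, and invoke the associativity constraint \eqref{eq:associativity} to identify the two resulting composites with $\lax_{GA,X,GB}$. The paper carries this out via two separate diagram chases (the second one using \Cref{lemma:compatibility_lax_projection}), whereas you more efficiently prove both statements at once; one minor slip is that the composite you call ``upper-then-right'' is actually the left-then-bottom path in the square as drawn.
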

\begin{proof}
We split the diagram in the first statement into the following parts and show that each small part commutes:
\begin{center}
\resizebox{\textwidth}{!}{
    \begin{tikzpicture}
          \coordinate (r) at (3,0);
          \coordinate (d) at (0,-2.5);
          \node (11) {$\objCa \otimes \rightadj\objDx \otimes \objCb$};
          \node (15) at ($(11) + 4*(r)$) {$\rightadj( \mainfun\objCa \otimes \objDx ) \otimes \objCb$};
          \node (51) at ($(11) + 4*(d)$) {$\objCa \otimes \rightadj( \objDx \otimes \mainfun\objCb)$};
          \node (55) at ($(11) + 4*(d) + 4*(r)$) {$\rightadj( \mainfun\objCa \otimes \objDx \otimes \mainfun\objCb )$};
          \node (2m) at ($(11) + (d) + 2*(r)$) {$\rightadj\mainfun\objCa \otimes \rightadj\objDx \otimes \objCb$};
          \node (3l) at ($(11) + 2*(d) + 0.5*(r)$) {$\objCa \otimes \rightadj\objDx \otimes \rightadj\mainfun\objCb$};
          \node (3m) at ($(11) + 2*(d) + 2*(r)$) {$\rightadj\mainfun\objCa \otimes \rightadj\objDx \otimes \rightadj\mainfun\objCb$};
          \node (3r) at ($(11) + 2*(d) + 3.5*(r)$) {$\rightadj(\mainfun\objCa \otimes \objDx) \otimes \rightadj\mainfun\objCb$};
          \node (4m) at ($(11) + 3*(d) + 2*(r)$) {$\rightadj\mainfun\objCa \otimes \rightadj( \objDx \otimes \mainfun\objCb)$};

          \draw[->] (11) to node[above]{$\projl{\objCa}{\objDx} \otimes \id$} (15);
          \draw[->] (51) to node[below]{$\projl{\objCa}{\objDx \otimes \mainfun(\objCb) }$} (55);
          \draw[->,out=-90-10,in=90+10] (11) to node[below,rotate=-90]{$\id \otimes \projr{\objDx}{\objCb}$} (51);
          \draw[->,out=-90+10,in=90-10] (15) to node[rotate=-90,above]{$\projr{\mainfun\objCa \otimes \objDx}{\objCb}$} (55);
          \draw[->] (11) to node[mylabel]{$\unit_\objCa \otimes \id$} (2m);
          \draw[->] (2m) to node[mylabel]{$\lax_{\mainfun\objCa,\objDx} \otimes \id$} (15);
          \draw[->] (11) to node[right]{$\id \otimes \unit_{\objCb}$} (3l);
          \draw[->] (3l) to node[above,yshift=0.2em]{$\unit_{\objCa} \otimes \id$} (3m);
          \draw[->] (3l) to node[right]{$\id \otimes \lax_{\objDx, \mainfun\objCb}$} (51);
          \draw[->] (51) to node[mylabel]{$\unit_{\objCa} \otimes \id$} (4m);
          \draw[->] (4m) to node[mylabel]{$\lax_{\mainfun\objCa, \objDx \otimes \mainfun\objCb}$} (55);
          \draw[->] (2m) to node[mylabel]{$\id \otimes \unit_{\objCb}$} (3m);
          \draw[->] (3m) to node[mylabel]{$\id \otimes \lax_{\objDx,GB}$} (4m);
          \draw[->] (3m) to node[above,yshift=0.2em]{$\lax_{\mainfun\objCa,\objDx} \otimes \id$} (3r);
          \draw[->] (15) to node[left]{$\id \otimes \unit_{B}$} (3r);
          \draw[->] (3r) to node[left]{$\lax_{\mainfun\objCa \otimes \objDx, \mainfun\objCb}$} (55);
    \end{tikzpicture}
    }
\end{center}
Each of the four triangles commutes by the definition of the left and right versions of the projection formula morphisms.
The lower right rectangle commutes by the associativity constraint in \Cref{eq:associativity}. The remaining three rectangles all commute by the interchange law.
It follows that the outer rectangle commutes, which yields the first claim.

Next, we split the diagram in the second statement into the following parts and show that each small part commutes:

\begin{center}
    \begin{tikzpicture}[baseline=($(11) + 0.5*(d)$)]
          \coordinate (r) at (4,0);
          \coordinate (d) at (0,-2.5);
          \node (11) {$A \otimes RX \otimes B$};
          \node (12) at ($(11) + 3*(r)$) {$R(GA \otimes X \otimes GB)$};
          \node (t) at ($(11) + 1.5*(r) + 3*(d)$) {$RGA \otimes RX \otimes RGB$};
          \node (a) at ($(11) + 0.75*(r) + (d)$) {$A \otimes R(X \otimes GB)$};
          \node (b) at ($(11) + 0.75*(r) + 2*(d)$) {$A \otimes RX \otimes RGB$};
          \node (c) at ($(11) + 2*(r) + 2*(d)$) {$R(GA \otimes X) \otimes RGB$};
          
          \draw[->] (11) to node[above]{$\projrl{A}{X}{B}$}(12);
          \draw[->,out=-90,in=180] (11) to node[below,rotate=-60]{$\unit_A \otimes RX \otimes \unit_B$}(t);
          \draw[->,out=0,in=-90] (t) to node[below,rotate=60]{$\lax_{GA,X,GB}$}(12);

          \draw[->] (11) to node[right]{$\id \otimes \projr{X}{B}$}(a);
          \draw[->] (11) to node[below,rotate=-55]{$\id \otimes \unit_B$}(b);
          \draw[->] (a) to node[above left]{$\projl{A}{X \otimes GB}$}(12);
          \draw[->] (b) to node[right]{$\id \otimes \lax_{X,GB}$}(a);
          \draw[->] (b) to node[below]{$\projl{A}{X} \otimes \id$}(c);
          \draw[->] (b) to node[left,xshift=-0.5em]{$\unit_A \otimes \id$}(t);
          \draw[->] (c) to node[left]{$\lax_{GA \otimes X} \otimes \id$}(12);
          \draw[->] (t) to node[right,xshift=0.5em]{$\lax_{GA,X} \otimes \id$}(c);
    \end{tikzpicture}    
\end{center}
The shape on the outer left side commutes by the interchange law.
The shape on the outer right side commutes by the associativity \eqref{eq:associativity}.
The rectangle in the middle commutes by \Cref{lemma:compatibility_lax_projection}.
The remaining small triangles commute by definition. Thus, the second claim follows.
\end{proof}

\begin{lemma}\label{lemma:sym_compatibility_lax_projection}
A symmetrized version of \Cref{lemma:compatibility_lax_projection} holds, i.e., the following diagram commutes for all $A,B \in \cC$, $X,Y \in \cD$:
\begin{center}
    \begin{tikzpicture}
          \coordinate (r) at (4,0);
          \coordinate (d) at (0,-1);
          \node (11) {$A \otimes RX \otimes RY \otimes B$};
          \node (13) at ($(11) + 2*(r)$) {$R(GA \otimes X) \otimes R(Y \otimes GB)$};
          \node (21) at ($(11) + 2*(d)$) {$A \otimes R(X \otimes Y) \otimes B$};
          \node (23) at ($(11) + 2*(d) + 2*(r)$) {$R(GA \otimes X \otimes Y \otimes GB)$};
          
          \draw[->] (11) to node[above]{$\projl{A}{X} \otimes \projr{Y}{B}$} (13);
          \draw[->] (11) to node[left]{$A \otimes \lax_{X,Y} \otimes B$}(21);
          \draw[->] (13) to node[left]{$\lax_{GA \otimes X, Y \otimes GB}$} (23);
          \draw[->] (21) to node[below]{$\projrl{A}{X\otimes Y}{B}$} (23);
    \end{tikzpicture}    
\end{center}
\end{lemma}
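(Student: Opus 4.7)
The plan is to reduce the claim to the asymmetric compatibility of \Cref{lemma:compatibility_lax_projection} together with its right-handed analogue (obtained by duality through $\cC^{\rev}$), using \Cref{lem:proj-lr-coh} to unfold $\projrl{A}{X \otimes Y}{B}$ into a composite of a left and a right projection formula morphism. I would draw the diagram as a hexagon and insert the intermediate object $A \otimes RX \otimes R(Y \otimes GB)$ along one route and $R(GA \otimes X) \otimes RY \otimes B$ along another; it turns out that only one of these insertions is needed, which keeps the diagram chase short.

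Concretely, I would start from the bottom path. By \Cref{lem:proj-lr-coh},
\[
\projrl{A}{X \otimes Y}{B} = \projl{A}{X \otimes Y \otimes GB} \circ (\id_A \otimes \projr{X \otimes Y}{B}),
\]
so the bottom composite becomes
\[
\projl{A}{X \otimes Y \otimes GB} \circ (\id \otimes \projr{X \otimes Y}{B}) \circ (\id_A \otimes \lax_{X,Y} \otimes \id_B).
\]
Next, I would apply the right-handed analogue of \Cref{lemma:compatibility_lax_projection} (which holds by the symmetry argument already invoked in the proof of \Cref{lem:proj-lr-coh}) to rewrite
\[
\projr{X \otimes Y}{B} \circ (\lax_{X,Y} \otimes \id_B) = \lax_{X, Y \otimes GB} \circ (\id_{RX} \otimes \projr{Y}{B}).
\]
This substitutes a $\projr{Y}{B}$ for the original $\projr{X \otimes Y}{B}$ and introduces a $\lax_{X, Y \otimes GB}$.

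Finally, I would apply \Cref{lemma:compatibility_lax_projection} itself, with its $Y$ replaced by $Y \otimes GB$, to migrate $\projl{A}{X}$ across the lax structure:
\[
\projl{A}{X \otimes Y \otimes GB} \circ (\id_A \otimes \lax_{X, Y \otimes GB}) = \lax_{GA \otimes X,\, Y \otimes GB} \circ (\projl{A}{X} \otimes \id_{R(Y \otimes GB)}).
\]
The remaining factor $(\projl{A}{X} \otimes \id_{R(Y \otimes GB)}) \circ (\id_{A \otimes RX} \otimes \projr{Y}{B})$ collapses to $\projl{A}{X} \otimes \projr{Y}{B}$ by the interchange law, giving exactly the top path of the asserted diagram.

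The only mildly subtle point is the invocation of the right-handed variant of \Cref{lemma:compatibility_lax_projection}; one must convince oneself that passing to $\cC^{\rev}$ (and $\cD^{\rev}$) turns $\projlnoarg$ into $\projrnoarg$ and preserves the lax structure $\lax_{X,Y}$ up to reindexing, so that the lemma transports without any sign or orientation issue. This is the same duality principle already used tacitly in establishing the two versions of the projection formula morphism in \eqref{eq:proj} and \eqref{eq:projr}, so no real obstacle arises — the proof is a pure diagram chase assembling \Cref{lemma:compatibility_lax_projection}, its dual, \Cref{lem:proj-lr-coh}, and the interchange law.
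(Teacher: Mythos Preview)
Your proof is correct, but it takes a different route from the paper's own argument. The paper does not reuse \Cref{lemma:compatibility_lax_projection} at all; instead it unfolds everything down to units and lax structure morphisms. Concretely, the paper inserts the two intermediate nodes $RGA \otimes RX \otimes RY \otimes RGB$ and $RGA \otimes R(X \otimes Y) \otimes RGB$, uses the definitions of $\projlnoarg$ and $\projrnoarg$ (plus the interchange law) for the upper triangle, the interchange law for the left rectangle, the associativity of $\lax$ for the right rectangle, and the \emph{second} part of \Cref{lem:proj-lr-coh} (the symmetrized description $\projrl{A}{X\otimes Y}{B} = \lax_{GA,X\otimes Y,GB} \circ (\unit_A \otimes \id \otimes \unit_B)$) for the lower triangle.

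Your approach is more modular: you stay at the level of projection formula morphisms, use the \emph{first} part of \Cref{lem:proj-lr-coh} to split $\projrl{A}{X\otimes Y}{B}$, and then invoke \Cref{lemma:compatibility_lax_projection} and its $\rev$-dual as black boxes. This avoids re-deriving the associativity step and makes the dependence on the one-sided lemma explicit. The cost is that you must appeal to the right-handed version of \Cref{lemma:compatibility_lax_projection}, which is not stated separately in the paper; but as you note, this is covered by the $\rev$-duality principle announced immediately after \eqref{eq:projr}, so there is no gap.
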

\begin{proof}
We split the diagram in the statement into the following parts
and show that each small part commutes:
\begin{center}
    \begin{tikzpicture}
          \coordinate (r) at (5.5,0);
          \coordinate (d) at (0,-3);
          \node (11) {$A \otimes RX \otimes RY \otimes B$};
          \node (13) at ($(11) + 2*(r)$) {$R(GA \otimes X) \otimes R(Y \otimes GB)$};
          \node (21) at ($(11) + 3*(d)$) {$A \otimes R(X \otimes Y) \otimes B$};
          \node (23) at ($(11) + 3*(d) + 2*(r)$) {$R(GA \otimes X \otimes Y \otimes GB)$};

          \node (a) at ($(11) + (d) + (r)$) {$RGA \otimes RX \otimes RY \otimes RGB$};
          \node (b) at ($(11) + 2*(d) + (r)$) {$RGA \otimes R(X \otimes Y) \otimes RGB$};
          \draw[->] (11) to node[above]{$\projl{A}{X} \otimes \projr{Y}{B}$} (13);
          \draw[->] (11) to node[below, rotate=-90]{$A \otimes \lax_{X,Y} \otimes B$}(21);
          \draw[->] (13) to node[above, rotate=-90]{$\lax_{GA \otimes X, Y \otimes GB}$} (23);
          \draw[->] (21) to node[below]{$\projrl{A}{X\otimes Y}{B}$} (23);

          \draw[->] (11) to node[mylabel]{$\unit_A \otimes \id \otimes \unit_B$} (a);
          \draw[->] (a) to node[mylabel]{$\lax_{GA,X} \otimes \lax_{Y,GB}$} (13);
          \draw[->] (a) to node[right]{$\id \otimes \lax_{X,Y} \otimes \id$} (b);
          \draw[->] (21) to node[mylabel]{$\unit_A \otimes \id \otimes \unit_B$} (b);
          \draw[->] (b) to node[mylabel]{$\lax_{GA, X\otimes Y, GB}$} (23);
    \end{tikzpicture}    
\end{center}
The upper triangle commutes by definition and the interchange law.
The left rectangle commutes by the interchange law.
The right rectangle commutes by the associativity of the lax structure.
The lower triangle commutes by the second part of \Cref{lem:proj-lr-coh}.
\end{proof}

Finally, we give an alternative expression for the projection formula. See also \Cref{example:proj_as_a_mate} for an interpretation of this alternative expression which uses the language of mates.

\begin{lemma}\label{lem:alt-proj}
 The following diagrams for the projection formula morphisms of \Cref{eq:proj} and \Cref{eq:projr} commute for $A \in \cC$, $X \in \cD$:
 \begin{equation}\label{eq:projl-alt}
    \begin{tikzpicture}[baseline=($(A) + (d)$)]
          \coordinate (r) at (4,0);
          \coordinate (d) at (0,-1);
          \node (A) {$A \otimes RX$};
          \node (B) at ($(A) + 2*(r)$) {$R(GA \otimes X)$};
          \node (C) at ($(A) + 2*(d)$) {$RG( A \otimes RX )$};
          \node (D) at ($(A) + 2*(d) + 2*(r)$) {$R( GA \otimes GRX )$};
          
          \draw[->] (A) to node[above]{$\projl{A}{X}$} (B);
          \draw[->] (A) to node[left]{$\unit_{A \otimes RX}$}(C);
          \draw[->] (C) to node[above]{$R( \oplax^G_{A,RX})$} (D);
          \draw[->] (D) to node[right]{$R( GA \otimes \counit_X )$} (B);
    \end{tikzpicture}    
\end{equation}

\end{lemma}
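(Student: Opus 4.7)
The plan is to show that the alternative expression in \eqref{eq:projl-alt} equals the defining formula \eqref{eq:proj} of $\projl{A}{X}$ by unwinding the construction of $\lax_{GA,X}$ from the oplax structure on $G$ via \eqref{eq:construct_lax}, and then collapsing the resulting diagram using naturality plus one of the triangle identities of the adjunction $G\dashv R$.

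Concretely, I would first substitute into the definition $\projl{A}{X} = \lax_{GA,X}\circ(\unit_A\otimes \id_{RX})$ the explicit formula \eqref{eq:construct_lax} for $\lax_{GA,X}$, yielding the composite
\begin{equation*}
A\otimes RX \xrightarrow{\unit_A\otimes \id} RGA\otimes RX \xrightarrow{\unit_{RGA\otimes RX}} RG(RGA\otimes RX) \xrightarrow{R(\oplax^G_{RGA,RX})} R(GRGA\otimes GRX) \xrightarrow{R(\counit_{GA}\otimes \counit_X)} R(GA\otimes X).
\end{equation*}
Next, I would apply naturality of $\unit$ along the morphism $\unit_A\otimes\id_{RX}\colon A\otimes RX\to RGA\otimes RX$ to rewrite the first two arrows as $RG(\unit_A\otimes\id_{RX})\circ\unit_{A\otimes RX}$, and then naturality of the oplax structure morphism $\oplax^G$ along the same morphism to commute the factor $G(\unit_A\otimes\id)$ past $\oplax^G_{RGA,RX}$, producing $(G\unit_A\otimes G\id_{RX})\circ\oplax^G_{A,RX}$.

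After these two naturality steps, the composite becomes
\begin{equation*}
A\otimes RX \xrightarrow{\unit_{A\otimes RX}} RG(A\otimes RX) \xrightarrow{R(\oplax^G_{A,RX})} R(GA\otimes GRX) \xrightarrow{R(G\unit_A\otimes \id_{GRX})} R(GRGA\otimes GRX) \xrightarrow{R(\counit_{GA}\otimes \counit_X)} R(GA\otimes X).
\end{equation*}
The final two arrows combine, by functoriality of $R$ and the interchange law, into $R\bigl((\counit_{GA}\circ G\unit_A)\otimes \counit_X\bigr)$, which by the triangle identity $\counit_{GA}\circ G\unit_A = \id_{GA}$ collapses to $R(\id_{GA}\otimes \counit_X) = R(GA\otimes \counit_X)$. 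This is precisely the composite along the left-bottom-right path of \eqref{eq:projl-alt}. The right-handed version for $\projrnoarg$ is obtained by the same argument after replacing $\cC$ and $\cD$ by their $\otimes$-opposites, as discussed just before \Cref{lem:proj-lr-coh}, or equivalently by a directly symmetric diagram chase.

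There is no real obstacle beyond diagram bookkeeping: the entire argument is a naturality chase combined with one application of a triangle identity. The only point requiring care is that the formula \eqref{eq:construct_lax} for $\lax_{GA,X}$ uses $\unit$ at the object $RGA\otimes RX$ and $\counit$ at both $GA$ and $X$, so one must track that it is the $\counit_{GA}$-factor (and not the $\counit_X$-factor) that is cancelled against the $G\unit_A$ produced by naturality of $\oplax^G$; the $\counit_X$ survives to give the right-hand arrow of \eqref{eq:projl-alt}.
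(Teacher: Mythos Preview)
Your proof is correct and is essentially the same argument as the paper's: both expand $\projl{A}{X}$ via \eqref{eq:proj} and \eqref{eq:construct_lax}, then use naturality of $\unit$, naturality of $\oplax^G$, and one zigzag identity. The only difference is presentation: the paper packages these steps into a single commutative diagram with inner cells, while you carry out the same manipulations as a linear chain of equalities.
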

\begin{proof}
We split the diagram in the statement into the following parts
and show that each small part commutes:
\begin{center}
    \begin{tikzpicture}
          \coordinate (r) at (4,0);
          \coordinate (d) at (0,-3);
          \node (A) {$A \otimes RX$};
          \node (B) at ($(A) + 3*(r)$) {$R(GA \otimes X)$};
          \node (C) at ($(A) + 3*(d)$) {$RG( A \otimes RX )$};
          \node (D) at ($(A) + 3*(d) + 3*(r)$) {$R( GA \otimes GRX )$};

          \node (E) at ($(A) + 0.5*(d) + 1.5*(r)$) {$RGA \otimes RX$};
          \node (F) at ($(A) + 1.5*(d) + 0.5*(r)$) {$RG( RGA \otimes RX )$};
          \node (G) at ($(A) + 1.5*(d) + 2*(r)$) {$R( GRGA \otimes GRX )$};
          
          \draw[->] (A) to node[above]{$\projl{A}{X}$} (B);
          \draw[->] (A) to node[below, rotate=-90]{$\unit_{A \otimes RX}$}(C);
          \draw[->] (C) to node[above]{$R( \oplax^G_{A,RX})$} (D);
          \draw[->] (D) to node[above, rotate=-90]{$R( GA \otimes \counit_X )$} (B);

          \draw[->] (A) to node[above, xshift=1em] {$\unit_A \otimes RX$} (E);
          \draw[->] (E) to node[above] {$\lax^R_{GA,X}$} (B);
          \draw[->] (E) to node[left,yshift=0.1em] {$\unit_{RGA \otimes RX}$} (F);
          \draw[->] (F) to node[above] {$R( \oplax^G_{RGA, RX} )$} (G);
          \draw[->] (C) to node[right] {$RG( \unit_A \otimes RX )$} (F);
          \draw[->] (D) to node[left] {$R( G\unit_A \otimes GRX )$} (G);
          \draw[->] (G) to node[left,xshift=-0.1em] {$R( \counit_{GA} \otimes \counit_X )$} (B);
    \end{tikzpicture}
\end{center}
The top triangle commutes by \eqref{eq:proj}.
The upper inner rectangle commutes by \eqref{eq:construct_lax}.
The lower inner rectangle commutes by the naturality of $R\oplax^G$ applied to $\unit_A$ and $\id_{RX}$.
The right triangle commutes by the zigzag identities.
The left inner rectangle commutes by the naturality of $\unit$ applied to $\unit_A \otimes \id_{RX}$.
\end{proof}

\subsection{The projection formula morphism is a morphism of monads}
\label{sec:proj-monad-mor}

Let $G \dashv R$ be a pair of oplax-lax adjoint functors as in \Cref{eq:adj_setup}.
If we set $\objDx = \one$ in \Cref{eq:proj}, we get a natural transformation
\begin{equation}\label{eq:morphism_of_monads}
\projl{\objCa}{\one}\colon \objCa \otimes \rightadj\one \longrightarrow \rightadj\mainfun(\objCa)
\end{equation}
that will turn out to be a morphism of monads (see \Cref{corollary:morphism_of_monads}).

\begin{remark}\label{remark:monads}
We briefly recall the theory of monads relevant to our purpose. For any category $\cC$, let $\End( \cC )$ denote the strict monoidal category whose objects are all functors of the form $\cC \rightarrow \cC$, morphisms are natural transformations, and the tensor product is given by composition of functors.
Recall that a \emph{monad} on the category $\cC$ is a monoid internal to $\End( \cC )$. If $\mainfun: \cC \rightarrow \cD$ is a functor with right adjoint $\rightadj: \cD \rightarrow \cC$, then $T := \rightadj\mainfun: \cC \rightarrow \cC$ gives rise to a monad: the unit of $T$ is the unit $\id_{\cC} \rightarrow T$ of the adjunction, the multiplication is given by 
\[
\rightadj\counit_\mainfun: T^2 \rightarrow T.
\]
Moreover, if $(\cC, \otimes)$ is a monoidal category, then
\[
\cC \rightarrow \End( \cC ): A \mapsto (- \otimes A)
\]
is a strong monoidal functor. In particular, it sends monoids in $\cC$ to monads on $\cC$.
\end{remark}

In our setup, where $G\dashv R$ is an oplax-lax-adjunction between monoidal categories $\cC$ and $\cD$, the tensor unit $\one$ equipped with trivial structure morphisms is a monoid in $\cC$. Since $\rightadj$ is a lax monoidal functor, it sends monoids to monoids. Thus, $\rightadj\one$ is a monoid with structure morphisms $\lax_{0}: \one \rightarrow \rightadj( \one )$ and $\lax_{\one,\one}: \rightadj\one \otimes \rightadj\one \rightarrow \rightadj\one$.
Moreover, $(- \otimes \rightadj\one)$ becomes a monad by \Cref{remark:monads} with the following structure morphisms natural in $A \in \cC$:
\[
\objCa \xrightarrow{\id \otimes \lax_0} \objCa \otimes \rightadj\one
\qquad \qquad
\objCa \otimes \rightadj\one \otimes \rightadj\one \xrightarrow{\id \otimes \lax_{\one,\one}} \objCa \otimes \rightadj\one.
\]

The next lemma shows that the projection formula morphism respects the units of the monads $(- \otimes \rightadj\one)$ and $\rightadj\mainfun$.

\begin{lemma}\label{lemma:proj_resp_monad_unit}
The following diagram commutes for all $\objCa \in \cC$:
\begin{center}
   \begin{tikzpicture}[baseline=($(11) + 0.5*(d)$)]
      \coordinate (r) at (4,0);
      \coordinate (d) at (0,-2);
      \node (11) {$\objCa$};
      \node (12) at ($(11) + (r)$) {$\rightadj\mainfun(\objCa)$};
      \node (22) at ($(11) + (d) + (r)$) {$\objCa \otimes \rightadj\one$};
      \draw[->] (11) to node[above]{$\unit_{\objCa}$} (12);
      \draw[->] (11) to node[left,yshift=-0.5em]{$\id \otimes \lax_0$} (22);
      \draw[->] (22) to node[right]{$\projl{\objCa}{\one}$} (12);
    \end{tikzpicture} 
\end{center}
\end{lemma}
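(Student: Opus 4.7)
The plan is to expand $\projl{A}{\one}$ via its definition in \eqref{eq:proj} and then reduce the resulting composite to $\unit_A$ using the right unitality axiom of the lax monoidal functor $R$.

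By definition of the projection formula morphism, the claim amounts to showing that the composite
\[
A \xrightarrow{\id_A \otimes \lax_0} A \otimes R\one \xrightarrow{\unit_A \otimes \id_{R\one}} RGA \otimes R\one \xrightarrow{\lax_{GA,\one}} RG(A)
\]
equals $\unit_A$, where we tacitly identify $A$ with $A \otimes \one$ and $RG(A \otimes \one)$ with $RGA$ via the unitor of $\cC$ and the strong (indeed, this is only a monoidal structure on $G$ via strong unitality; more generally, we use the right unitor on $GA \otimes \one$ inside $R$) structure on $G$.

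Applying the interchange law, the first two morphisms compose to $\unit_A \otimes \lax_0$, which factors as $(\id_{RGA} \otimes \lax_0) \circ (\unit_A \otimes \id_\one)$. Substituting this decomposition, the composite becomes
\[
A \xrightarrow{\unit_A \otimes \id_\one} RGA \otimes \one \xrightarrow{\id_{RGA} \otimes \lax_0} RGA \otimes R\one \xrightarrow{\lax_{GA,\one}} RGA.
\]
The latter two morphisms in this chain form exactly the top and right edges of the right unitality square \eqref{eq:unitality_right} for $R$ with $X = GA$, and so their composite is $\id_{RGA}$. Thus the entire composite reduces to $\unit_A$ (up to the right unitor of $\cC$), which is precisely the assertion.

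There is no real obstacle; the only point requiring mild care is suppressing the unit isomorphisms of $\cC$ consistently, as is standard in such diagrammatic computations in a (non-strict) monoidal category.
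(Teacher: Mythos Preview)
Your proof is correct and follows essentially the same approach as the paper: expand $\projl{A}{\one}$ via its definition \eqref{eq:proj}, apply the interchange law to reorder $\unit_A$ and $\lax_0$, and then invoke the right unitality constraint \eqref{eq:unitality_right} for $R$ at $X = GA$. The only cosmetic difference is that the paper runs the same chain of equalities starting from $\unit_A$ and building up to the composite, whereas you start from the composite and reduce to $\unit_A$; your aside about the strong monoidal structure of $G$ is unnecessary (the identification $R(GA \otimes \one) \cong RGA$ uses only the unitor of $\cD$), but it does no harm.
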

\begin{proof} The following equations of morphisms hold:
\begin{align*}
    \unit_{\objCa} &= \objCa \xrightarrow{\unit_{\objCa}}  \rightadj\mainfun\objCa \xrightarrow{\id} \rightadj\mainfun\objCa &\\
    &= \objCa \xrightarrow{\unit_{\objCa}} \rightadj\mainfun\objCa \xrightarrow{\id \otimes \lax_{0}} \rightadj\mainfun\objCa \otimes \rightadj\one \xrightarrow{\lax_{\mainfun\objCa, \one}} \rightadj\mainfun\objCa & \!\!\text{(unitality, \Cref{eq:unitality_right})} \\
    &= \objCa \xrightarrow{\id \otimes \lax_{0}} \objCa \otimes \rightadj\one \xrightarrow{\unit_{\objCa} \otimes \id} \rightadj\mainfun\objCa \otimes \rightadj\one \xrightarrow{\lax_{\mainfun\objCa, \one}} \rightadj\mainfun\objCa & \text{(interchange law)} \\
    &= \objCa \xrightarrow{\id \otimes \lax_{0}} \objCa \otimes \rightadj\one \xrightarrow{\projl{\objCa}{\one}} \rightadj\mainfun\objCa & \text{(\Cref{eq:proj})}
\end{align*}
\end{proof}

The next lemma shows that the projection formula morphism respects the multiplication of the monads $(- \otimes \rightadj\one)$ and $\rightadj\mainfun$. Note that the top morphism from $A \otimes R\one \otimes R\one$ to $RGRG(A)$ in the diagram of \Cref{lemma:proj_resp_monad_mul} is one of the two canonical ways to express the natural transformation
\[
(- \otimes \rightadj\one)^{\otimes 2} \xrightarrow{(\projl{-}{\one})^{\otimes 2}} (\rightadj\mainfun)^{\otimes 2}
\]
within $\End( \cC )$.

\pagebreak[3]

\begin{lemma}\label{lemma:proj_resp_monad_mul}
The following diagram commutes for all $\objCa \in \cC$:
\begin{center}
    \begin{tikzpicture}[baseline=($(11) + 0.5*(d)$)]
          \coordinate (r) at (11,0);
          \coordinate (d) at (0,-2);
          \node (11) {$\objCa \otimes \rightadj\one \otimes \rightadj\one$};
          \node (12) at ($(11) + 0.5*(r)$) {$\rightadj\mainfun(\objCa) \otimes \rightadj\one$};
          \node (13) at ($(11) + (r)$) {$\rightadj\mainfun\rightadj\mainfun(\objCa)$};
          \node (21) at ($(11) + (d)$) {$\objCa \otimes \rightadj\one$};
          \node (22) at ($(11) + (d) + (r)$) {$\rightadj(\mainfun(\objCa))$};
          \draw[->] (11) to node[above]{$\projl{\objCa}{\one} \otimes \id$} (12);
          \draw[->] (12) to node[above]{$\projl{\rightadj\mainfun\objCa}{\one}$} (13);
          \draw[->] (21) to node[below]{$\projl{\objCa}{\one}$} (22);
          \draw[->] (11) to node[left]{$\objCa \otimes \lax_{\one,\one}$} (21);
          \draw[->] (13) to node[right]{$\rightadj\counit_{\mainfun\objCa}$} (22);
    \end{tikzpicture}    
\end{center}
\end{lemma}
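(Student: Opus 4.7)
The plan is to rewrite the outer clockwise path using \Cref{lemma:lax_via_proj}, after which the diagram collapses to an instance of \Cref{lemma:compatibility_lax_projection}. For clarity, I will use that $\one \otimes \one = \one$ and $\mainfun\objCa \otimes \one = \mainfun\objCa$ (which can be arranged by MacLane's strictification, or by chasing the suppressed unitors; all relevant morphisms are compatible with these unitors by naturality).

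First, I would instantiate \Cref{lemma:lax_via_proj} at $\objDx = \mainfun\objCa$ and $\objDy = \one$. This yields the identity
\[
\lax_{\mainfun\objCa,\,\one} \;=\; \rightadj(\counit_{\mainfun\objCa} \otimes \id_{\one}) \circ \projl{\rightadj\mainfun\objCa}{\one},
\]
and under the unit identification the morphism $\rightadj(\counit_{\mainfun\objCa} \otimes \id_{\one})$ becomes $\rightadj\counit_{\mainfun\objCa}$. Therefore the composite along the top and right side of the square in the statement equals
\[
\bigl(\rightadj\counit_{\mainfun\objCa}\bigr) \circ \projl{\rightadj\mainfun\objCa}{\one} \circ \bigl(\projl{\objCa}{\one} \otimes \id\bigr) \;=\; \lax_{\mainfun\objCa,\,\one} \circ \bigl(\projl{\objCa}{\one} \otimes \id_{\rightadj\one}\bigr).
\]

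Second, I would apply \Cref{lemma:compatibility_lax_projection} with $\objDx = \objDy = \one$. That lemma gives the commutative square
\[
\lax_{\mainfun\objCa,\,\one} \circ \bigl(\projl{\objCa}{\one} \otimes \rightadj\one\bigr) \;=\; \projl{\objCa}{\one \otimes \one} \circ \bigl(\objCa \otimes \lax_{\one,\one}\bigr),
\]
and since $\one \otimes \one = \one$ the right-hand side equals $\projl{\objCa}{\one} \circ (\objCa \otimes \lax_{\one,\one})$, which is precisely the composite along the left and bottom of the square. Combining the two steps yields the desired equality.

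The only slightly delicate point is the bookkeeping of unitors in the identifications $\one \otimes \one = \one$ and $\mainfun\objCa \otimes \one = \mainfun\objCa$; this is routine, as naturality of the unitor together with the unitality axioms \eqref{eq:unitality_left}--\eqref{eq:unitality_right} for $\rightadj$ ensures that $\rightadj(\counit_{\mainfun\objCa} \otimes \id_{\one})$ matches $\rightadj\counit_{\mainfun\objCa}$ and that $\projl{\objCa}{\one \otimes \one}$ matches $\projl{\objCa}{\one}$ after the identification. Once this is dispatched, the main argument is a two-step diagram chase using exactly the two preceding lemmas, so no genuine obstacle is expected.
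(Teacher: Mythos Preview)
Your proof is correct. Both your argument and the paper's begin identically: split off a triangle using \Cref{lemma:lax_via_proj} at $\objDx=\mainfun\objCa$, $\objDy=\one$, reducing the claim to the commutativity of the rectangle
\[
\lax_{\mainfun\objCa,\one}\circ(\projl{\objCa}{\one}\otimes\id_{\rightadj\one}) \;=\; \projl{\objCa}{\one}\circ(\objCa\otimes\lax_{\one,\one}).
\]
Where the paper then re-expands this rectangle from scratch (inserting the intermediate node $\rightadj\mainfun\objCa\otimes\rightadj\one\otimes\rightadj\one$ and invoking the definition of $\projlnoarg$, the interchange law, and associativity of $\lax$), you observe that this rectangle is literally the $\objDx=\objDy=\one$ instance of \Cref{lemma:compatibility_lax_projection}. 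This is a genuine economy: the paper's four-piece decomposition of the rectangle is exactly the proof of \Cref{lemma:compatibility_lax_projection} specialized to this case, so you avoid repeating that work. Your remark about unitor bookkeeping is appropriate and your handling of it is adequate for the level of rigor in the paper.
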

\begin{proof}
We may split the diagram into a rectangle and a triangle as follows:
\begin{center}
    \begin{tikzpicture}[baseline=($(11) + 0.5*(d)$)]
          \coordinate (r) at (11,0);
          \coordinate (d) at (0,-2);
          \node (11) {$\objCa \otimes \rightadj\one \otimes \rightadj\one$};
          \node (12) at ($(11) + 0.5*(r)$) {$\rightadj\mainfun(\objCa) \otimes \rightadj\one$};
          \node (13) at ($(11) + (r)$) {$\rightadj\mainfun\rightadj\mainfun(\objCa)$};
          \node (21) at ($(11) + (d)$) {$\objCa \otimes \rightadj\one$};
          \node (22) at ($(11) + (d) + (r)$) {$\rightadj(\mainfun(\objCa))$};
          \draw[->] (11) to node[above]{$\projl{\objCa}{\one} \otimes \id$} (12);
          \draw[->] (12) to node[above]{$\projl{\rightadj\mainfun\objCa}{\one}$} (13);
          \draw[->] (21) to node[below]{$\projl{\objCa}{\one}$} (22);
          \draw[->] (11) to node[left]{$\objCa \otimes \lax_{\one,\one}$} (21);
          \draw[->] (13) to node[right]{$\rightadj\counit_{\mainfun\objCa}$} (22);
          \draw[->] (12) to node[mylabel]{$\lax_{\mainfun\objCa,\one}$} (22);
    \end{tikzpicture}    
\end{center}
The triangle commutes since we can express $\lax$ in terms of the projection formula morphism by \Cref{lemma:lax_via_proj}. We split the remaining rectangle into the following parts:

\begin{center}
    \begin{tikzpicture}[baseline=($(11) + 0.5*(d)$)]
          \coordinate (r) at (11,0);
          \coordinate (d) at (0,-3);
          \node (11) {$\objCa \otimes \rightadj\one \otimes \rightadj\one$};
          \node (12) at ($(11) + (r)$) {$\rightadj\mainfun(\objCa) \otimes \rightadj\one$};
          
          \node (z) at ($(11) + 0.5*(r)$) {$\rightadj\mainfun\objCa \otimes \rightadj\one \otimes \rightadj\one$};
          
          \node (21) at ($(11) + (d)$) {$\objCa \otimes \rightadj\one$};
          \node (22) at ($(11) + (d) + (r)$) {$\rightadj(\mainfun(\objCa))$};
          
          \node (m) at ($(11) + 0.5*(r) + 0.5*(d)$) {$\rightadj\mainfun\objCa \otimes \rightadj\one$};
          
          \draw[->,out=25,in=180-25] (11) to node[above]{$\projl{\objCa}{\one} \otimes \id$} (12);
          \draw[->] (21) to node[below]{$\projl{\objCa}{\one}$} (22);
          \draw[->] (11) to node[left]{$\objCa \otimes \lax_{\one,\one}$} (21);
          \draw[->] (12) to node[right]{$\lax_{\mainfun\objCa,\one}$} (22);
          
          \draw[->] (11) to node[above]{$\unit_{\objCa} \otimes \id$} (z);
          \draw[->] (z) to node[above]{$\lax_{\mainfun\objCa,\one} \otimes \id$} (12);
          
          \draw[->] (z) to node[left]{$\id \otimes \lax_{\one,\one}$} (m);
          \draw[->] (21) to node[mylabel]{$\unit_{\objCa} \otimes \id$} (m);
          \draw[->] (m) to node[mylabel]{$\lax_{\mainfun\objCa,\one}$} (22);
    \end{tikzpicture}    
\end{center}
Both the upper and the lower part of this diagram commute by definition of the projection formula morphism in \Cref{eq:proj}.
The rectangle on the left commutes by the interchange law.
The rectangle on the right commutes by the associativity constraint in \Cref{eq:associativity}.
\end{proof}

\begin{corollary}\label{corollary:morphism_of_monads}
The natural transformation 
\[
\objCa \otimes \rightadj\one \xrightarrow{\projl{\objCa}{\one}} \rightadj\mainfun(\objCa)
\]
is a morphism of monads, i.e., a morphism of monoids in $\End(\cC)$.
\end{corollary}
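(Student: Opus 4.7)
The plan is to observe that the two preceding lemmas are essentially the two axioms required for a morphism of monoids in $\End(\cC)$, once we identify the relevant composites correctly. A morphism $f\colon S \to T$ between monads in $\End(\cC)$ is a natural transformation satisfying two conditions: first, $f \circ \eta^S = \eta^T$, where $\eta^S, \eta^T$ are the units; second, $f \circ \mu^S = \mu^T \circ (f \ast f)$, where $\mu^S, \mu^T$ are the multiplications and $f \ast f$ denotes horizontal composition (the Godement product), viewed as a natural transformation $S \circ S \Rightarrow T \circ T$.

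The unit axiom is precisely the content of \Cref{lemma:proj_resp_monad_unit}: the left-hand side of the triangle there is the unit of the monad $(- \otimes R\one)$ applied at $A$, the top of the triangle is the unit of the monad $RG$ at $A$, and the right side is $\projl{A}{\one}$. For the multiplication axiom, the multiplications of the two monads are $(A \otimes \lax_{\one,\one})$ and $R\counit_{GA}$ respectively, which are the left and right vertical arrows in the diagram of \Cref{lemma:proj_resp_monad_mul}. The bottom arrow is $\projl{A}{\one}$, so it remains only to check that the top composite in that diagram represents $\projl{-}{\one} \ast \projl{-}{\one}$ at $A$.

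I would verify this by recalling that for natural transformations $\alpha\colon S \Rightarrow T$ and $\beta\colon S \Rightarrow T$ of endofunctors, the horizontal composite $\beta \ast \alpha\colon S \circ S \Rightarrow T \circ T$ at $A$ can be computed either as $\beta_{TA} \circ S(\alpha_A)$ or as $T(\alpha_A) \circ \beta_{SA}$, with the two being equal by naturality. Applied to $\alpha = \beta = \projl{-}{\one}$ with $S = (- \otimes R\one)$ and $T = RG$, one expression gives the composite $\projl{RGA}{\one} \circ (\projl{A}{\one} \otimes \id_{R\one})$ appearing along the top of the diagram in \Cref{lemma:proj_resp_monad_mul}. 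The main point, which is not an obstacle but worth stating, is that this composite genuinely represents the horizontal composition in $\End(\cC)$; after that identification, the corollary follows immediately from \Cref{lemma:proj_resp_monad_unit} and \Cref{lemma:proj_resp_monad_mul}.

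In summary, the proof is a bookkeeping argument: assemble the two lemmas as the unit and multiplication axioms, and note (as the paper already does in the sentence preceding \Cref{lemma:proj_resp_monad_mul}) that the top morphism in that lemma is one of the two canonical presentations of $(\projl{-}{\one})^{\otimes 2}$ as a horizontal composition in $\End(\cC)$. No further calculation is needed.
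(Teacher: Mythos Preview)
Your proposal is correct and takes essentially the same approach as the paper: the paper's proof simply cites \Cref{lemma:proj_resp_monad_unit} and \Cref{lemma:proj_resp_monad_mul}, and the identification of the top composite with the horizontal square $(\projl{-}{\one})^{\otimes 2}$ is exactly what the paper records in the sentence preceding \Cref{lemma:proj_resp_monad_mul}. Your write-up just makes that bookkeeping explicit.
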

\begin{proof}
The statement follows from \Cref{lemma:proj_resp_monad_unit} and \Cref{lemma:proj_resp_monad_mul}.
\end{proof}

\subsection{When is the projection formula morphism an isomorphism?}
\label{sec:proj-iso}

Let $G \dashv R$ be a pair of oplax-lax adjoint functors as in \Cref{eq:adj_setup}.

\begin{definition}\label{def:proj-formulas-hold}
We say that \emph{$R$ satisfies the left (right, resp.) projection formula} or that \emph{the left (right, resp.) projection formula holds (for $R$)} if the natural transformation $\projlnoarg$ from \Cref{eq:proj} ($\projrnoarg$ from \Cref{eq:projr}, resp.) is an isomorphism.
Moreover, we say that \emph{$R$ satisfies the projection formula} or that \emph{the projection formula holds (for $R$)} if $R$ satisfies both the left and the right projection formula.
\end{definition}

In this subsection, we give answers within different contexts to the following question: when does $R$ satisfy the projection formula?
The next lemma shows that in the braided case, only a one-sided version of the projection formula morphism needs to be tested for being an isomorphism.

\begin{lemma}\label{lem:braided-proj-holds}
Let $G \dashv R$ be a braided pair of oplax-lax adjoint functors (see \Cref{subsubsection:braided_oplax_lax}). 
The following are equivalent:
\begin{enumerate}
    \item The projection formula holds for $R$.
    \item The left projection formula holds for $R$.
    \item The right projection formula holds for $R$.
\end{enumerate}
\end{lemma}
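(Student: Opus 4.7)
The plan is to show that the left and right projection formula morphisms differ only by pre- and post-composition with braiding isomorphisms, so that one is invertible if and only if the other is. Since (1) is by definition the conjunction of (2) and (3), once we establish (2)$\iff$(3) the three-way equivalence is immediate.

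Concretely, I would prove the identity
\begin{equation*}
R(\Psi^\cD_{GA,\,X}) \circ \projl{A}{X} \;=\; \projr{X}{A} \circ \Psi^\cC_{A,\,RX}
\end{equation*}
for all $A\in\cC$ and $X\in\cD$. Starting from the left-hand side, I would expand $\projl{A}{X}=\lax_{GA,X}\circ(\unit_A\otimes\id_{RX})$ using the definition \eqref{eq:proj}. Then I would apply the braided lax axiom \eqref{eq:Frob-braided-lax} for $R\colon\cD\to\cC$ (with the pair $GA,X$), which gives
\begin{equation*}
R(\Psi^\cD_{GA,X})\circ \lax_{GA,X} \;=\; \lax_{X,GA}\circ \Psi^\cC_{RGA,\,RX}.
\end{equation*}
Next, I would use naturality of the braiding $\Psi^\cC$ applied to $\unit_A\colon A\to RGA$ (in the first tensor slot), namely
\begin{equation*}
\Psi^\cC_{RGA,\,RX}\circ(\unit_A\otimes\id_{RX})\;=\;(\id_{RX}\otimes\unit_A)\circ \Psi^\cC_{A,\,RX},
\end{equation*}
to push $\unit_A$ to the right-hand tensor factor. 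Recognizing $\lax_{X,GA}\circ(\id_{RX}\otimes\unit_A)=\projr{X}{A}$ via \eqref{eq:projr} yields the claimed identity.

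Since braidings are always isomorphisms, both $R(\Psi^\cD_{GA,X})$ and $\Psi^\cC_{A,RX}$ are invertible, so the identity exhibits $\projl{A}{X}$ and $\projr{X}{A}$ as conjugates up to isomorphisms. Hence $\projl{A}{X}$ is invertible if and only if $\projr{X}{A}$ is invertible, which gives (2)$\iff$(3); combined with the definition of the projection formula holding (\Cref{def:proj-formulas-hold}), this establishes (1)$\iff$(2)$\iff$(3). There is no genuine obstacle here beyond correctly chasing the two coherence identities (braided lax compatibility and naturality of the braiding); the argument is essentially forced by the definitions once one writes down the correct diagram.
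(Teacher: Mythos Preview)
Your proof is correct and follows essentially the same approach as the paper: both establish a commuting square relating $\projl{A}{X}$ and $\projr{X}{A}$ via braiding isomorphisms, using naturality of the braiding and the braided lax axiom for $R$, then conclude invertibility transfers. The only cosmetic difference is that the paper orients its square with braidings $\Psi^{\cC}_{RX,A}$ and $R(\Psi^{\cD}_{X,GA})$ rather than your $\Psi^{\cC}_{A,RX}$ and $R(\Psi^{\cD}_{GA,X})$; this amounts to applying the braided lax axiom at the pair $(X,GA)$ instead of $(GA,X)$, and both are equally valid.
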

\begin{proof}
Let $\Psi^{\cC}$ and $\Psi^{\cD}$ denote the braidings of $\cC$, $\cD$, respectively.
The left rectangle in the following diagram commutes by the naturality of the braiding, and the right rectangle commutes by $R$ being braided:
\begin{center}
    \begin{tikzpicture}[baseline=($(11) + 0.5*(d)$)]
          \coordinate (r) at (5,0);
          \coordinate (d) at (0,-2);
          \node (11) {$RX \otimes A$};
          \node (12) at ($(11) + (r)$) {$RX \otimes RGA$};
          \node (21) at ($(11) + (d)$) {$A \otimes RX$};
          \node (22) at ($(11) + (d) + (r)$) {$RGA \otimes RX$};
          \node (13) at ($(12) + (r)$) {$R(X \otimes GA)$};
          \node (23) at ($(12) + (d) + (r)$) {$R(GA \otimes X)$};

          \draw[->] (11) to node[above]{$RX \otimes \unit_A$} (12);
          \draw[->] (21) to node[below]{$\unit_A \otimes RX$} (22);
          \draw[->] (11) to node[left]{$\Psi^{\cC}_{RX,A}$} (21);
          \draw[->] (12) to node[right]{$\Psi^{\cC}_{RX, RGA}$} (22);

          \draw[->] (12) to node[above]{$\lax_{X,GA}$} (13);
          \draw[->] (22) to node[below]{$\lax_{GA,X}$} (23);
          \draw[->] (13) to node[right]{$R(\psi^{\cD}_{X,GA})$} (23);

          \draw[->,out=25,in=180-25] (11) to node[mylabel]{$\projr{\objDx}{\objCa}$} (13);
          \draw[->,out=-25,in=180+25] (21) to node[mylabel]{$\projl{\objCa}{\objDx}$} (23);
    \end{tikzpicture}    
\end{center}
Since all vertical arrows are isomorphisms, the claim follows.
\end{proof}

The next lemma deals with the special case where $R$ is a strong monoidal functor.

\begin{lemma}\label{lemma:R_strong_monoidal_proj}
Let $G \dashv R$ be a pair of oplax-lax adjoint functors such that $R$ is strong monoidal\footnote{This situation is called an opmonoidal adjunction, see \Cref{subsection:results_by_duality}}.
Then the projection formula holds for $R$ if and only if $G$ is fully faithful.
\end{lemma}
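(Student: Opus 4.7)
The plan is to exploit the factorization built into the definition of the projection formula morphism, namely
\[
\projl{A}{X} = \lax_{GA, X} \circ (\unit_A \otimes \id_{RX}),
\]
together with the standard characterization of fully faithful left adjoints by invertibility of the unit. Since $R$ is assumed strong monoidal, $\lax_{GA, X}$ is invertible, so $\projl{A}{X}$ is invertible if and only if $\unit_A \otimes \id_{RX}$ is invertible, and analogously on the right.

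For the ``if'' direction, assume $G$ is fully faithful. Then a standard general fact about adjunctions says that the unit $\unit_A \colon A \to RGA$ is an isomorphism for every $A \in \cC$. Consequently $\unit_A \otimes \id_{RX}$ is an isomorphism for every $X \in \cD$, so by the factorization above $\projl{A}{X}$ is an isomorphism. The dual argument, starting from the expression $\projr{X}{A} = \lax_{X, GA} \circ (\id_{RX} \otimes \unit_A)$ of \eqref{eq:projr}, shows $\projr{X}{A}$ is also an isomorphism. Hence the projection formula holds for $R$.

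For the ``only if'' direction, the key input is \Cref{lemma:proj_resp_monad_unit}, which gives the commuting triangle
\[
\unit_A = \projl{A}{\one} \circ (\id_A \otimes \lax_0).
\]
Now $R$ strong monoidal means $\lax_0 \colon \one \to R\one$ is invertible, so $\id_A \otimes \lax_0$ is invertible. If the (left) projection formula holds, then $\projl{A}{\one}$ is invertible as well. Composing two isomorphisms, $\unit_A$ is an isomorphism for every $A \in \cC$, which by the same standard adjunction fact implies that $G$ is fully faithful.

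No real obstacle is anticipated; the argument is a direct combination of the factorization of the projection formula morphism, the strong monoidality of $R$, and \Cref{lemma:proj_resp_monad_unit}. The only mild care needed is the standard identification $R(GA \otimes \one) \cong RGA$ via the unitor in $\cD$ when setting $X = \one$, which is harmless given the coherence theorem.
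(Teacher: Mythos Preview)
Your proof is correct and follows essentially the same approach as the paper. Both arguments use the factorization $\projl{A}{X} = \lax_{GA,X}\circ(\unit_A\otimes\id_{RX})$ together with invertibility of $\lax$ (since $R$ is strong monoidal) and the standard equivalence between $G$ being fully faithful and $\unit$ being invertible; the only cosmetic difference is that for the ``only if'' direction you invoke \Cref{lemma:proj_resp_monad_unit} to obtain $\unit_A = \projl{A}{\one}\circ(\id_A\otimes\lax_0)$, whereas the paper phrases the same specialization to $X=\one$ directly via the isomorphism $R\one\cong\one$.
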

\begin{proof}
By the definition of $\projlnoarg$ in \Cref{eq:proj} and by the isomorphism $R\one \cong \one$, we have that the left projection formula holds for $R$ if and only if $\unit$ is an isomorphism, which in turn is equivalent to $G$ being fully faithful. The same argument is valid for the right projection formula.
\end{proof}

We can interpret \Cref{lemma:R_strong_monoidal_proj} as follows: if $R$ is strong monoidal, the question whether $R$ satisfies the projection formula is usually not as interesting as the dual question whether $G$ satisfies the projection formula, see \Cref{subsection:results_by_duality}.
Thus, we need to describe more general criteria for $R$ satisfying the projection formula without the assumption that $R$ is strong monoidal.

\begin{lemma}\label{lemma:adjoints_to_tensors_and_proj_formula}
Let $G \dashv R$ be a pair of oplax-lax adjoint functors and $B \in \cC$. If
\begin{itemize}
    \item $(B \otimes -): \cC \rightarrow \cC$ has a left adjoint $\langle B, -\rangle: \cC \rightarrow \cC$ and
    \item $(GB \otimes -): \cD \rightarrow \cD$ has a left adjoint $\langle GB, -\rangle: \cD \rightarrow \cD$,
\end{itemize}
then we have a morphism
\begin{equation}\label{eq:compatibility_int_hom}
    \langle GB, GA \rangle \rightarrow G\langle B, A \rangle
\end{equation}
natural in $A \in \cC$.
Moreover, this morphism is a natural isomorphism if and only if 
\[
\projl{B}{X}: B \otimes RX \rightarrow R(GB \otimes X)
\]
is an isomorphism for all $X \in \cD$.
\end{lemma}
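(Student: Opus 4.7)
The plan is to first construct the natural morphism $\alpha_A\colon \langle GB, GA\rangle \to G\langle B, A\rangle$ explicitly, and then use Yoneda to translate its invertibility into that of $\projl{B}{X}$ for all $X \in \cD$.

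For the construction, let $\eta^B\colon \id_\cC \to B \otimes \langle B, -\rangle$ denote the unit of the adjunction $\langle B,-\rangle \dashv (B \otimes -)$. I would form the composite
\[
GA \xrightarrow{G\eta^B_A} G(B \otimes \langle B, A\rangle) \xrightarrow{\oplax_{B,\langle B, A\rangle}} GB \otimes G\langle B, A\rangle
\]
and transpose it under $\langle GB,-\rangle \dashv (GB \otimes -)$ to obtain $\alpha_A$. Naturality in $A$ is immediate from naturality of $\eta^B$ and $\oplax$, together with naturality of the adjunction bijections.

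For the characterization, the plan is to apply $\Hom_\cD(-, X)$ and stitch together the hom-set bijections coming from the three adjunctions at hand. Using $G \dashv R$ and $\langle B,-\rangle \dashv (B \otimes -)$, I obtain
\[
\Hom_\cD(G\langle B, A\rangle, X) \cong \Hom_\cC(\langle B, A\rangle, RX) \cong \Hom_\cC(A, B \otimes RX),
\]
and, using $\langle GB,-\rangle \dashv (GB \otimes -)$ together with $G \dashv R$,
\[
\Hom_\cD(\langle GB, GA\rangle, X) \cong \Hom_\cD(GA, GB \otimes X) \cong \Hom_\cC(A, R(GB \otimes X)).
\]
Under these bijections, precomposition by $\alpha_A$ should match postcomposition by $\projl{B}{X}$. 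Two applications of the Yoneda lemma then finish the argument: $\alpha_A$ is invertible for all $A$ iff the induced map between the bottom hom-sets is a bijection for all $A$, which is equivalent to $\projl{B}{X}$ being an isomorphism; varying $X$ yields the claim.

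The main obstacle is verifying that, under the combined hom-set bijections above, $(\alpha_A)^*$ really does correspond to $(\projl{B}{X})_*$. This is essentially a mate calculation, since $\alpha$ and $\projl{B}{-}$ are both mates of $\oplax_{B,-}\colon G(B \otimes -) \to (GB \otimes -)G$, but with respect to different adjunctions. I expect to discharge this by unfolding the transposition defining $\alpha_A$, invoking the alternative description of $\projl{B}{X}$ from \Cref{lem:alt-proj} (its factorization through $R\oplax$ and the counit), and tracking the result through the adjunction bijections; the remaining check reduces to standard naturality and the triangle identities for the three adjunctions.
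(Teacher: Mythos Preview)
Your proposal is correct and uses the same hom-set bijections and Yoneda argument as the paper. The one difference is in how the morphism $\langle GB, GA\rangle \to G\langle B, A\rangle$ is obtained. The paper simply \emph{defines} it via Yoneda: it observes that $(\projl{B}{X})_*$ transports along the two chains of adjunction isomorphisms to a map $\Hom_\cD(G\langle B,A\rangle, X)\to \Hom_\cD(\langle GB,GA\rangle, X)$ natural in $X$, and then invokes Yoneda to produce the morphism. With this definition the equivalence of invertibility is automatic, and the ``main obstacle'' you flag disappears entirely. Your route instead constructs $\alpha_A$ explicitly as the mate of $\oplax_{B,-}$ under $\langle GB,-\rangle\dashv(GB\otimes-)$ and $\langle B,-\rangle\dashv(B\otimes-)$, and must then check that this agrees with the Yoneda-induced map, i.e., that $(\alpha_A)^*$ matches $(\projl{B}{X})_*$; this is the mate-compatibility computation you outline, and it goes through (both are mates of the same $\oplax_{B,-}$, cf.\ \Cref{example:proj_as_a_mate}). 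The paper's approach is more economical; yours has the advantage of yielding an explicit formula for the morphism up front, which the paper only works out later in the special case of duals (\Cref{proposition:left_duals_proj_formula}).
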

\begin{proof}
We have isomorphisms
\begin{align*}
\Hom_{\cC}(A, B \otimes RX) \cong \Hom_{\cC}(\langle B,A \rangle, RX) \cong \Hom_{\cD}(G\langle B,A \rangle, X)
\end{align*}
and
\begin{align*}
\Hom_{\cC}(A, R(GB \otimes X)) \cong \Hom_{\cD}(GA, GB \otimes X) \cong \Hom_{\cD}(\langle GB, GA \rangle,X)
\end{align*}
natural in $A \in \cC$, $X \in \cD$.
Thus, the natural morphism
\[
\Hom_{\cC}(A, B \otimes RX) \xrightarrow{\Hom_{\cC}(A, \projl{B}{X})}
\Hom_{\cC}(A, R(GB \otimes X))
\]
corresponds to a natural morphism
\[
\Hom_{\cD}(G\langle B,A \rangle, X) \longrightarrow
\Hom_{\cD}(\langle GB, GA \rangle,X)
\]
which by Yoneda's lemma is given by a uniquely determined morphism as in \eqref{eq:compatibility_int_hom}.
The statement about the isomorphisms also follows from Yoneda's lemma.
\end{proof}

We refer to \cite{BLV}*{Section~3.2} for an interpretation of \Cref{lemma:adjoints_to_tensors_and_proj_formula} in the language of so-called closed functors.

\begin{remark}
Recall that a \emph{left dual} of $A \in \cC$ consists of
\begin{itemize}
    \item an object $A^*$,
    \item a morphism $\ev_A: A^{\ast} \otimes A \rightarrow \one$,
    \item a morphism $\coev_A: \one \rightarrow A \otimes A^{\ast}$
\end{itemize}
such that the following triangles commute:
\begin{center}
\begin{tabular}{ccccc}
   \begin{tikzpicture}[baseline=($(11) + 0.5*(d)$)]
      \coordinate (r) at (4,0);
      \coordinate (d) at (0,-2);
      \node (11) {$A$};
      \node (12) at ($(11) + (r)$) {$A \otimes A^{\ast} \otimes A$};
      \node (22) at ($(11) + (d) + (r)$) {$A$};
      \draw[->] (11) to node[above]{$\coev_A \otimes A$} (12);
      \draw[->] (11) to node[left,yshift=-0.5em]{$\id$} (22);
      \draw[->] (12) to node[right]{$A \otimes \ev_A$} (22);
    \end{tikzpicture}
& &
and 
& &
\begin{tikzpicture}[baseline=($(11) + 0.5*(d)$)]
      \coordinate (r) at (4,0);
      \coordinate (d) at (0,-2);
      \node (11) {$A^{\ast}$};
      \node (12) at ($(11) + (r)$) {$A^{\ast} \otimes A \otimes A^{\ast}$};
      \node (22) at ($(11) + (d) + (r)$) {$A^{\ast}$};
      \draw[->] (11) to node[above]{$A^{\ast} \otimes \coev_A$} (12);
      \draw[->] (11) to node[left,yshift=-0.5em]{$\id$} (22);
      \draw[->] (12) to node[right]{$\ev_A\otimes A^{\ast} $} (22);
    \end{tikzpicture} 
\end{tabular}
\end{center}

It follows that a left adjoint of $(B \otimes -)$ is given by $(B^{\ast} \otimes -)$ with unit
\[
A \xrightarrow{\coev_B \otimes A} B \otimes B^{\ast} \otimes A
\]
and counit
\[
B^{\ast} \otimes B \otimes A \xrightarrow{\ev_B \otimes A} A.
\]
Analogously, we can define right duals.
\end{remark}
\begin{remark}\label{remark:comp_G_left_duals}
A strong monoidal functor $G: \cC \rightarrow \cD$ respects left duals in the sense that
\begin{itemize}
    \item the object $G(A^*)$,
    \item $\ev_{GA} := G(A^{\ast}) \otimes G(A) \xrightarrow{\lax^G_{A^{\ast},A}} G(A^{\ast} \otimes A) \xrightarrow{G(\ev_A)} G(\one) \xrightarrow{\oplax^G_0} \one$,
    \item $\coev_{GA} := \one \xrightarrow{\lax^G_0} G\one \xrightarrow{G(\coev_A)} G(A \otimes A^{\ast}) \xrightarrow{\oplax^G_{A,A^{\ast}}} G(A) \otimes G(A^{\ast})$
\end{itemize}
form a left dual of $G(A)$.
In particular, $(G(A^{\ast}) \otimes -)$ is a left adjoint of $(GA \otimes -)$.
\end{remark}

\begin{proposition}\label{proposition:left_duals_proj_formula}
Let $G \dashv R$ be a monoidal adjunction (in particular, $G$ is strong monoidal, see \Cref{subsubsection:monoidal_adjunction}).
Further, let $B \in \cC$ have a left dual $B^{\ast}$.
Then the morphism in \eqref{eq:compatibility_int_hom} is given by
\[
\lax^G_{B^{\ast},A}: G(B^{\ast}) \otimes GA \rightarrow G(B^{\ast} \otimes A).
\]
\end{proposition}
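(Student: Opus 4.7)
My plan is to verify the identity $\phi = \lax^G_{B^*, A}$ (where $\phi$ denotes the morphism \eqref{eq:compatibility_int_hom}) by transposing both sides through the dual adjunction $(G(B^*) \otimes -) \dashv (GB \otimes -)$ and checking that the resulting morphisms $GA \to GB \otimes G(B^* \otimes A)$ coincide.

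\textbf{Setup.} Since $G \dashv R$ is monoidal, $G$ is strong monoidal by \Cref{lemma:strong_and_monoidal_adj}, so \Cref{remark:comp_G_left_duals} makes $G(B^*)$ a left dual of $GB$ with $\coev_{GB} = \oplax^G_{B, B^*} \circ G(\coev_B) \circ \lax^G_0$, providing both left adjoints $\langle B, -\rangle = B^* \otimes -$ and $\langle GB, -\rangle = G(B^*) \otimes -$ needed to invoke \Cref{lemma:adjoints_to_tensors_and_proj_formula}.

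\textbf{Computing the transpose of $\phi$.} By Yoneda, $\phi$ is the image of $\id_{G(B^*\otimes A)}$ under the chain of bijections in the proof of \Cref{lemma:adjoints_to_tensors_and_proj_formula} with $X = G(B^*\otimes A)$. Tracing $\id$ through these bijections, and using naturality of $\projl{B}{-}$ in the third step to pull the image of $\id$ back to outside the projection formula morphism, the transpose $\phi^{\sharp}\colon GA \to GB \otimes G(B^*\otimes A)$ of $\phi$ under $(G(B^*)\otimes -) \dashv (GB \otimes -)$ is
\[
\phi^{\sharp} = \counit_{GB \otimes G(B^*\otimes A)} \circ G\bigl( \projl{B}{G(B^*\otimes A)} \circ (B \otimes \unit_{B^*\otimes A}) \circ (\coev_B \otimes A) \bigr).
\]
Substituting the alternative description of the projection morphism from \Cref{lem:alt-proj} and pushing the inner unit morphisms together via naturality of $\unit$ gives an expression of the form $R(\cdots) \circ \unit_A$ inside $G(\cdots)$. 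After applying $G$ and composing with $\counit$, pushing $\counit$ outward by naturality and cancelling each $\counit \circ G\unit$ pair via the zigzag identity for $G \dashv R$, the expression collapses to
\[
\phi^{\sharp} = \oplax^G_{B, B^*\otimes A} \circ G(\coev_B \otimes A).
\]

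\textbf{Matching with $\lax^G_{B^*, A}$ and main obstacle.} The transpose of $\lax^G_{B^*, A}$ under the same adjunction is $(GB \otimes \lax^G_{B^*, A}) \circ (\coev_{GB} \otimes GA)$. I would expand $\coev_{GB}$ using \Cref{remark:comp_G_left_duals}, then use the coassociativity of $\oplax^G$ to rewrite $\oplax^G_{B, B^*} \otimes GA$ via $\oplax^G_{B, B^*\otimes A}$ and $(GB \otimes \oplax^G_{B^*, A})$, use $\lax^G_{B^*, A} \circ \oplax^G_{B^*, A} = \id$ from strong monoidality, apply naturality of $\lax^G$ in the first argument to $\coev_B\colon \one \to B \otimes B^*$, and invoke the left unitality coherence $\lax^G_{\one, A}\circ (\lax^G_0 \otimes GA) = \id$ to obtain exactly $\oplax^G_{B, B^*\otimes A} \circ G(\coev_B \otimes A) = \phi^{\sharp}$. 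Since the two sides have the same transpose, uniqueness yields $\phi = \lax^G_{B^*, A}$. The only difficulty is the careful bookkeeping of coherence isomorphisms: no new idea is needed beyond combining the zigzag identities for $G\dashv R$, the duality triangles for $B, B^*$, strong-monoidal coherence, and naturality of $\projlnoarg, \unit, \counit, \oplax^G, \lax^G$ in the right order.
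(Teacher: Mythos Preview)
Your proof is correct and uses essentially the same ingredients as the paper's: the alternative description of $\projlnoarg$ from \Cref{lem:alt-proj}, the zigzag identities for $G\dashv R$, the formula for $\coev_{GB}$ from \Cref{remark:comp_G_left_duals}, and coassociativity/unitality of the strong monoidal structure. The organization differs slightly: the paper inserts the candidate $\lax^G_{B^*,A}$ into the chain of bijections from \Cref{lemma:adjoints_to_tensors_and_proj_formula}, chases $\id_{B\otimes RX}$ forward, and checks the result is $\projl{B}{RX}$; you instead chase $\id_{G(B^*\otimes A)}$ backward to compute $\phi^\sharp$ directly and then match it against the transpose of $\lax^G_{B^*,A}$. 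These are two orderings of the same Yoneda argument. (One small remark: your phrase ``using naturality of $\projl{B}{-}$ in the third step'' is unclear---no such naturality is actually needed to reach your displayed formula for $\phi^\sharp$, which follows by straightforwardly composing the adjunction bijections---but this does not affect the argument.)
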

\begin{proof}
We need to prove that the following string of natural isomorphisms
\begin{align*}
    \Hom_{\cC}(A, B\otimes RX)&\isomorph \Hom_{\cC}(B^*\otimes A, RX) & (\alpha \mapsto (\ev_B \otimes RX)\circ (B^{\ast} \otimes \alpha))\\
    &\isomorph \Hom_{\cD}(G(B^*\otimes A), X) & (\alpha \mapsto \counit_X \circ G\alpha)\\
    &\isomorph \Hom_{\cD}(G(B^*)\otimes GA, X)& (\alpha \mapsto \alpha \circ \lax^G_{B^{\ast},A})\\
    &\isomorph \Hom_{\cD}(GA, GB\otimes X)& (\alpha \mapsto (GB \otimes \alpha) \circ (\coev_{GB} \otimes GA))\\
    &\isomorph \Hom_{\cC}(A, R(GB\otimes X))& (\alpha \mapsto R\alpha \circ \unit_A)
\end{align*}
is given by $(\alpha \mapsto \projl{B}{RX} \circ \alpha)$.
By Yoneda's Lemma, it suffices to show that $\id_{B \otimes RX}$ is mapped to $\projl{B}{RX}$ via this string of isomorphisms.

If we chase $\id_{B \otimes RX}$ through this string of isomorphisms, we obtain the outer clockwise path from $B \otimes RX$ to $R(GB \otimes X)$ of the following diagram:
\begin{center}
    \begin{tikzpicture}
          \coordinate (r) at (5.5,0);
          \coordinate (d) at (0,-1.5);
          \node (11) {$B \otimes RX$};
          \node (31) at ($(11) + 2*(d)$) {$R( GB \otimes X )$};
          \node (12) at ($(11) + 0.8*(r)$) {$RG(B \otimes RX)$};
          \node (32) at ($(11) + 2*(d) + 0.8*(r)$) {$R( GB \otimes GRX )$};
          \node (33) at ($(11) + 2*(d) + 2*(r)$) {$R( GB \otimes G(B^{\ast} \otimes B \otimes RX))$};
          \node (13) at ($(11) + 2*(r)$) {$R(GB \otimes G(B^{\ast}) \otimes G(B \otimes RX))$};
          
          \draw[->] (11) to node[below,rotate=-90]{$\projl{B}{RX}$} (31);
          \draw[->] (11) to node[above]{$\unit_{B \otimes RX}$} (12);
          \draw[->] (12) to node[left]{$R( \oplax^G_{B,RX} )$} (32);
          \draw[<-] (31) to node[below,yshift=-0.3em]{$R( GB \otimes \counit_X )$} (32);
          \draw[->] (12) to node[above,yshift=0.3em]{$R( \coev_{GB} \otimes G( B \otimes RX ) )$} (13);
          \draw[->] (13) to node[left]{$R( GB \otimes \lax^G_{B^{\ast},B \otimes RX})$} (33);
          \draw[<-] (32) to node[below,yshift=-0.3em]{$R( GB \otimes G( \ev_B \otimes RX ) )$} (33);
    \end{tikzpicture}    
\end{center}
We need to show that the outer rectangle commutes.
The left inner rectangle commutes by \Cref{lem:alt-proj}.
The right inner rectangle commutes due to the compatibility of $G$ with left duals. More precisely, the right inner rectangle is given by the application of $R$ to the outer rectangle of the following diagram:
\begin{center}
    \begin{tikzpicture}[baseline=($(11) + 0.5*(d)$)]
          \coordinate (r) at (11,0);
          \coordinate (d) at (0,-3);
          \node (11) {$G( B \otimes RX )$};
          \node (12) at ($(11) + (r)$) {$GB \otimes G(B^{\ast}) \otimes G(B \otimes RX)$};
          \node (21) at ($(11) + 3*(d)$) {$GB \otimes GRX$};
          \node (22) at ($(11) + 3*(d) + (r)$) {$GB \otimes G(B^{\ast} \otimes B \otimes RX)$};
          \node (m1) at ($(11) + 0.5*(d) + 0.35*(r)$) {$GB \otimes G(B^{\ast}) \otimes GB \otimes GRX$};
          \node (m2) at ($(11) + 1.5*(d) + 0.65*(r)$) {$GB \otimes G(B^{\ast} \otimes B) \otimes GRX$};
          
          \draw[->] (11) to node[above]{$\coev_{GB} \otimes G(B \otimes RX)$} (12);
          \draw[<-] (21) to node[below]{$GB \otimes G( \ev_B \otimes RX)$} (22);
          \draw[->] (11) to node[below, rotate=-90]{$\oplax^G_{B,RX}$} (21);
          \draw[->] (12) to node[above,rotate=-90]{$GB \otimes \lax^G_{B^{\ast}, B \otimes RX}$} (22);
          \draw[->] (11) to node[right,yshift=0.2em]{$\coev_{GB} \otimes \oplax^G_{B,RX}$}(m1);
          \draw[->] (m1) to node[below,xshift=0.3em]{$\id \otimes \lax^G_{B,RX}$}(12);
          \draw[->] (m1) to node[right]{$GB \otimes \lax_{B^{\ast},B} \otimes GRX$}(m2);
          \draw[<-] (21) to node[below right,yshift=0.2em]{$GB \otimes G(\ev_B) \otimes GRX$}(m2);
          \draw[->] (m2) to node[above, rotate=-44]{$GB \otimes \lax^G_{B^{\ast} \otimes B, RX}$}(22);
          \draw[->] (m1) to node[below,rotate=59]{$GB \otimes \ev_{GB} \otimes GRX$} (21);
    \end{tikzpicture}  
\end{center}
Thus, if we show that this diagram commutes, then we are done.
The top triangle commutes since $G$ is strong monoidal.
The left triangle commutes by the triangle identities for left duals.
The right rectangle commutes by the associativity of the lax structure of $G$.
The bottom triangle commutes by the naturality of $\lax^G$ applied to $\ev_B$ and $\id_{RX}$.
Last, the inner triangle commutes by \Cref{remark:comp_G_left_duals}.
\end{proof}

We obtain a corollary that also occurs in different contexts (cf.~\cite{FHM}*{Proposition 3.2}, \cite{John}*{Section A.1.5}, and \cite{nlab1} for an overview).

\begin{corollary}\label{corollary:proj_formula_holds_for_dual_objects}
Let $G \dashv R$ be a monoidal adjunction.
If $B \in \cC$ has a left dual, then $\projl{B}{-}$
is a natural isomorphism.
Dually, if $B \in \cC$ has a right dual, then $\projr{-}{B}$ is a natural isomorphism.
\end{corollary}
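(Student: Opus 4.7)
The plan is to derive the corollary directly from the preceding two results, Lemma~\ref{lemma:adjoints_to_tensors_and_proj_formula} and Proposition~\ref{proposition:left_duals_proj_formula}, with no new computation required.

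First I would verify the hypotheses of Lemma~\ref{lemma:adjoints_to_tensors_and_proj_formula} when $B \in \cC$ has a left dual $B^{\ast}$. In this case $(B \otimes -)\colon \cC \to \cC$ has the left adjoint $(B^{\ast} \otimes -)$, with unit and counit built from $\coev_B$ and $\ev_B$ in the standard way. Moreover, by Remark~\ref{remark:comp_G_left_duals}, the strong monoidal functor $G$ sends left duals to left duals, so $G(B^{\ast})$ is a left dual of $GB$ in $\cD$ and hence $(GB \otimes -)\colon \cD \to \cD$ admits the left adjoint $(G(B^{\ast}) \otimes -)$. Thus both adjoint functors required by the lemma exist and we obtain the comparison morphism $\langle GB, GA\rangle \to G\langle B, A\rangle$ natural in $A \in \cC$.

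Next, Proposition~\ref{proposition:left_duals_proj_formula} identifies this comparison morphism with the lax structure morphism
\[
\lax^G_{B^{\ast},A}\colon G(B^{\ast}) \otimes GA \longrightarrow G(B^{\ast} \otimes A).
\]
Since $G \dashv R$ is a monoidal adjunction, \Cref{lemma:strong_and_monoidal_adj} ensures $G$ is strong monoidal, so $\lax^G_{B^{\ast},A}$ is an isomorphism for every $A$. By the equivalence in Lemma~\ref{lemma:adjoints_to_tensors_and_proj_formula}, this is precisely the condition that $\projl{B}{X}\colon B \otimes RX \to R(GB \otimes X)$ be an isomorphism for all $X \in \cD$, which is the first claim.

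The statement for right duals follows by the evident duality: replacing $\cC$ and $\cD$ by their $\otimes$-opposite monoidal categories $\cC^{\rev}$ and $\cD^{\rev}$ turns a right dual of $B$ into a left dual in $\cC^{\rev}$, turns $\projrnoarg$ into $\projlnoarg$, and preserves the monoidal adjunction $G \dashv R$. Applying the left-dual case in this opposite setting gives the right-dual conclusion with no further work. The only step that needs any care is checking that $G$ being strong monoidal is what makes $\lax^G_{B^{\ast},A}$ invertible, but this is immediate; I do not foresee any real obstacle, as the substantive work has already been carried out in \Cref{lemma:adjoints_to_tensors_and_proj_formula} and \Cref{proposition:left_duals_proj_formula}.
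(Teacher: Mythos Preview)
Your proposal is correct and follows exactly the same route as the paper: invoke \Cref{lemma:adjoints_to_tensors_and_proj_formula} and \Cref{proposition:left_duals_proj_formula}, observe that $\lax^G$ is invertible because $G$ is strong monoidal, and handle the right-dual case by $\otimes$-opposite duality. The paper's proof is a one-line version of precisely this argument; your write-up simply unpacks the verification of hypotheses more explicitly.
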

\begin{proof}
Since $\lax^G$ is a natural isomorphism, the claim for left duals follows from \Cref{proposition:left_duals_proj_formula} and \Cref{lemma:adjoints_to_tensors_and_proj_formula}.
The statement for right duals follows by $\otimes\text{-}\oop$-duality.
\end{proof}

Recall that a monoidal category $\cC$ is called \emph{rigid} if every object in $\cC$ has a left and a right dual.

\begin{corollary}\label{corollary:rigid_proj_formula}
Let $G \dashv R$ be a monoidal adjunction.
If $\cC$ is rigid, then the projection formula holds for $R$.
\end{corollary}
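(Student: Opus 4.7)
The plan is to deduce this immediately from the preceding \Cref{corollary:proj_formula_holds_for_dual_objects}. Since $\cC$ is rigid, every object $B \in \cC$ possesses both a left dual and a right dual. Thus, for every $B \in \cC$, the first part of \Cref{corollary:proj_formula_holds_for_dual_objects} yields that $\projl{B}{-}$ is a natural isomorphism, and the second part yields that $\projr{-}{B}$ is a natural isomorphism.

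Consequently, the natural transformations $\projlnoarg$ and $\projrnoarg$ from \Cref{eq:proj} and \Cref{eq:projr} are isomorphisms in both arguments. By \Cref{def:proj-formulas-hold}, this means $R$ satisfies both the left and the right projection formula, i.e., the projection formula holds for $R$. No additional argument is required, so there is no real obstacle; the work has already been done in establishing \Cref{corollary:proj_formula_holds_for_dual_objects} via \Cref{proposition:left_duals_proj_formula} and \Cref{lemma:adjoints_to_tensors_and_proj_formula}.
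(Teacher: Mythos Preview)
Your proof is correct and follows exactly the same approach as the paper, which simply states that the result follows from \Cref{corollary:proj_formula_holds_for_dual_objects}. Your version just spells out explicitly how rigidity supplies the hypotheses needed for both parts of that corollary.
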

\begin{proof}
Follows from \Cref{corollary:proj_formula_holds_for_dual_objects}.
\end{proof}

A related result in the theory of tensor triangulated categories appears in \cite{BDS}*{1.3. Theorem} where it is shown that if $\cC$ is, in particular, rigidly-compactly generated and $G$ a triangulated symmetric monoidal functor preserving arbitrary coproducts, then the projection formula holds.

\subsection{The projection formula morphism is the lineator of a module morphism} 
\label{sec:bimodulefunctorsZ}

Let $G \dashv R$ be a monoidal adjunction as described in \Cref{subsubsection:monoidal_adjunction}.
Since $G$ is strong monoidal, we can work with the $\cC$-bimodule $\cD^G$, i.e., the $\cC$-bimodule obtained by restricting the regular $\cC$-bimodule along $G$, see \Cref{example:D_as_a_C_bimodule}.
In this subsection, we show that the adjunction $G \dashv R$ can be lifted to the level of $\cC$-bimodules 
\begin{center}
    \begin{tikzpicture}[ baseline=(A)]
          \coordinate (r) at (3,0);
          \node (A) {$\cC$};
          \node (B) at ($(A) + (r)$) {$\cD^G$};
          \draw[->, out = 30, in = 180-30] (A) to node[mylabel]{$\mainfun$} (B);
          \draw[<-, out = -30, in = 180+30] (A) to node[mylabel]{$\rightadj$} (B);
          \node[rotate=90] (t) at ($(A) + 0.5*(r)$) {$\vdash$};
    \end{tikzpicture}    
\end{center}
provided that the projection formula holds.
For more information on modules and bimodules over a monoidal category, see \Cref{subsection:modules_over_monoidal_cat}, respectively, \Cref{subsection:bimodules_over_a_bicategory}.

\begin{proposition}\label{prop:proj-left-module-morphism}
Let $G \dashv R$ be a monoidal adjunction.
If the left projection formula holds, then $R\colon\cD\to\cC$ equipped with $(\mathrm{proj}^l)^{-1}$ as a lineator gives rise to a morphism of left $\cC$-modules
\[
R\colon\cD^G\to\cC.
\]
\end{proposition}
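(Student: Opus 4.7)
The plan is to exhibit $(\projl{A}{X})^{-1}$ as the lineator turning $R$ into a strong morphism of left $\cC$-modules $\cD^G \to \cC$. Since the left projection formula is assumed to hold, $\projl{A}{X}\colon A \otimes RX \to R(GA \otimes X)$ is a natural isomorphism for every $A \in \cC$ and $X \in \cD$, so its inverse is well-defined and again natural in both arguments. Recall from \Cref{example:D_as_a_C_bimodule} that the left action of $\cC$ on $\cD^G$ is $A \cdot X := GA \otimes X$, with associativity and unit constraints built from $\oplax^G_{A,B}$ and $\oplax^G_0$; since $G$ is strong monoidal (\Cref{lemma:strong_and_monoidal_adj}) these constraints are isomorphisms, so $\cD^G$ is a genuine left $\cC$-module in the strong sense.

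First I would check naturality of $(\projl{A}{X})^{-1}$, which is immediate from the naturality of $\projl{A}{X}$ in both arguments; this in turn follows from the naturality of the unit $\unit$ of the adjunction and of the lax structure $\lax^R$. Given that both $\projlnoarg$ and its inverse are natural, it suffices to verify that $\projlnoarg$ itself satisfies the two coherence axioms for a module morphism (now with the reverse orientation), after which the same diagrams for $(\projlnoarg)^{-1}$ follow by inverting each arrow.

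For the unit axiom, one must show that the composite $\one \otimes RX \xrightarrow{\projl{\one}{X}} R(G\one \otimes X) \xrightarrow{R(\oplax^G_0 \otimes \id)} R(\one \otimes X)$ equals the canonical unitor isomorphism $\one \otimes RX \cong RX \cong R(\one \otimes X)$. This is exactly the content of \Cref{lemma:proj_formula_coh_unit} (modulo the strict unitors, which can be inserted without issue). For the associativity axiom, one must show that the two ways from $(A \otimes B) \otimes RX$ to $R(GA \otimes GB \otimes X)$ — one going $\projl{A \otimes B}{X}$ followed by $R(\oplax^G_{A,B} \otimes \id)$, the other going $\id \otimes \projl{B}{X}$ followed by $\projl{A}{GB \otimes X}$ — coincide. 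This is precisely the commutative diagram of \Cref{lem:projl-tensor-coh}.

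The main obstacle, which is entirely bookkeeping, is matching the direction of the lineator and the orientation of the module constraints on $\cD^G$: the lemmas \Cref{lem:projl-tensor-coh} and \Cref{lemma:proj_formula_coh_unit} are stated with $\oplax^G$ morphisms, which appear in the definition of the module structure of $\cD^G$, so the diagrams align automatically. Since $G$ is strong monoidal, $\oplax^G_{A,B}$ and $\oplax^G_0$ are invertible, and inverting the composite diagrams turns the compatibility of $\projlnoarg$ into the required compatibility of $(\projlnoarg)^{-1}$ with the module constraints. No further computation is needed.
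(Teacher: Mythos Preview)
Your proposal is correct and follows essentially the same approach as the paper: the paper's proof simply observes that the multiplicator coherence is \Cref{lem:projl-tensor-coh} and the unitor coherence is \Cref{lemma:proj_formula_coh_unit}, which is exactly what you do. Your additional remarks about naturality and the direction-of-arrows bookkeeping are sound but more detailed than what the paper records.
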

\begin{proof}
We need to show that
\[
(\projl{A}{X})^{-1}: R(A \triangleright X) = R( GA \otimes X) \xrightarrow{\sim} A \otimes R(X) = A \triangleright R(X)
\]
for $A \in \cC$, $X \in \cD$ satisfies the coherences of \Cref{definition:module_functor}.
The coherence for the multiplicator and lineator is given by \Cref{lem:projl-tensor-coh}.
The coherence for the unitor and lineator is given by \Cref{lemma:proj_formula_coh_unit}.
\end{proof}

\begin{proposition}\label{prop:proj-bimodule-morphism}
Let $G \dashv R$ be a monoidal adjunction
If the projection formula holds, then $R\colon\cD\to\cC$ equipped with $(\mathrm{proj}^l)^{-1}$ and $(\mathrm{proj}^r)^{-1}$ as lineators gives rise to a morphism of $\cC$-bimodules
\[
R\colon\cD^G\to\cC.
\]
\end{proposition}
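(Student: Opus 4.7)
The plan is to assemble the bimodule morphism structure by verifying the three families of coherence conditions required of a $\cC$-bimodule morphism (see \Cref{definition:module_functor} and the bimodule analogue in \Cref{subsection:bimodules_over_a_bicategory}): namely, the left lineator must be coherent with the left module associator and unitor; the right lineator must be coherent with the right module associator and unitor; and the two lineators must be compatible with the bimodulator that witnesses the commutation of left and right actions.

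First, the left module coherences are exactly the content of \Cref{prop:proj-left-module-morphism}. So $R$ equipped with $(\projlnoarg)^{-1}$ already underlies a morphism of left $\cC$-modules $\cD^G \to \cC$. For the right module coherences, I would invoke $\otimes\text{-}\oop$-duality: replacing $\cC$ and $\cD$ by $\cC^{\rev}$ and $\cD^{\rev}$ converts the monoidal adjunction $G \dashv R$ into another monoidal adjunction for which $\projrnoarg$ plays the role of $\projlnoarg$, and the (symmetric) versions of \Cref{lem:projl-tensor-coh} and \Cref{lemma:proj_formula_coh_unit} then give precisely the right module morphism coherences for $(\projrnoarg)^{-1}$.

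The nontrivial part is the compatibility between the left and right lineators with the bimodulator, which in $\cD^G$ is inherited (up to $\lax^G$, which is invertible since $G$ is strong monoidal) from the associator of $\cD$. Concretely, for $A, B \in \cC$ and $X \in \cD$, I need to check that the two ways of passing from $R(A \triangleright X \triangleleft B) = R(GA \otimes X \otimes GB)$ to $A \triangleright R(X) \triangleleft B = A \otimes RX \otimes B$, namely via $(\projl{A}{X \otimes GB})^{-1}$ followed by $(A \otimes \projr{X}{B})^{-1}$, or via $(\projr{GA \otimes X}{B})^{-1}$ followed by $((\projl{A}{X}) \otimes B)^{-1}$, agree. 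But this is exactly the commuting square of \Cref{lem:proj-lr-coh} inverted, so it holds immediately under our assumption that both projection formula morphisms are invertible.

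The main obstacle, if any, is bookkeeping: one must translate each of the coherence diagrams for a $\cC$-bimodule morphism into its counterpart with $(\projlnoarg)^{-1}$ and $(\projrnoarg)^{-1}$, then invert the diagrams from \Cref{lem:projl-tensor-coh}, \Cref{lemma:proj_formula_coh_unit}, and \Cref{lem:proj-lr-coh} (and their right-sided duals). No further work is required beyond this translation because every needed compatibility has already been proved for $\projlnoarg$ and $\projrnoarg$, and all arrows involved become isomorphisms precisely under the projection formula hypothesis.
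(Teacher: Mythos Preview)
Your proposal is correct and follows essentially the same approach as the paper: use \Cref{prop:proj-left-module-morphism} for the left module structure, its $\rev$-dual for the right module structure, and \Cref{lem:proj-lr-coh} for the bimodulator compatibility. The paper's proof is more terse but structurally identical; your remark about $\lax^G$ in the bimodulator is unnecessary since for $\cD^G$ the bimodulator is simply the associator of $\cD$ (cf.\ \Cref{example:D_as_a_C_bimodule}), but this does not affect the argument.
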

\begin{proof}
We need to show that
\begin{gather*}
(\projl{A}{X})^{-1}: R(A \triangleright X) = R( GA \otimes X) \xrightarrow{\sim} A \otimes R(X) = A \triangleright R(X),\\
(\projr{X}{A})^{-1}: R(X \triangleleft A) = R( X \otimes GA) \xrightarrow{\sim}  R(X) \otimes A = R(X) \triangleleft A,
\end{gather*}
for $A \in \cC$, $X \in \cD$, satisfy the conditions in \Cref{definition:functor_of_bimodules}.
By \Cref{prop:proj-left-module-morphism} and its $\rev$-version, we obtain a morphism of left and right $\cC$-modules.
Last, the required compatibility of left and right actions follows from \Cref{lem:proj-lr-coh}.
\end{proof}

Recall that a strong monoidal functor $G\colon\cC\to\cD$ naturally gives rise to a $\cC$-bimodule functor $G\colon \cC\to \cD^G$ (see \Cref{example:G_as_bimodule_functor}).
Together with the $\cC$-bimodule functor of \Cref{prop:proj-bimodule-morphism}, we obtain a so-called adjunction of $\cC$-bimodules.

\begin{definition}\label{def:adj-C-bimodules}
Let $\cC$ be a monoidal category.
An \emph{adjunction of  $\cC$-bimodules} is an adjunction internal to the bicategory of $\cC$-bimodules (see \Cref{definition:internal_adjunction}), i.e., it consists of the following data:
\begin{itemize}
    \item a $\cC$-bimodule functor $G: \cN \rightarrow \cM$,
    \item a $\cC$-bimodule functor $R: \cM \rightarrow \cN$,
    \item $\cC$-bimodule transformations
        \begin{align*}
         \unit\colon &\id_{\cD}\to RG, &  \counit\colon& GR\to \id_{\cC}
        \end{align*}
        which make $G$ a left adjoint to $R$.
\end{itemize}
\end{definition}

\begin{lemma}\label{lem:Cbimodule-adj-RL}
Let $G \dashv R$ be a monoidal adjunction.
If the projection formula holds, then
\begin{center}
    \begin{tikzpicture}[ baseline=(A)]
          \coordinate (r) at (3,0);
          \node (A) {$\cC$};
          \node (B) at ($(A) + (r)$) {$\cD^G$};
          \draw[->, out = 30, in = 180-30] (A) to node[mylabel]{$\mainfun$} (B);
          \draw[<-, out = -30, in = 180+30] (A) to node[mylabel]{$\rightadj$} (B);
          \node[rotate=90] (t) at ($(A) + 0.5*(r)$) {$\vdash$};
    \end{tikzpicture}    
\end{center}
is an adjunction of $\cC$-bimodules.
\end{lemma}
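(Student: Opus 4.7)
The plan is to establish the three ingredients needed for an adjunction internal to $\BiMod{\cC}$: that both $G$ and $R$ are $\cC$-bimodule functors, and that $\unit$ and $\counit$ are $\cC$-bimodule transformations. The first ingredient is already in hand: $R\colon \cD^G\to \cC$ is a $\cC$-bimodule functor by \Cref{prop:proj-bimodule-morphism}, with lineators $(\projl{A}{X})^{-1}$ and $(\projr{X}{A})^{-1}$, and $G\colon \cC\to \cD^G$ carries its canonical $\cC$-bimodule structure from the strong monoidal structure of $G$, with lineators $(\lax^G_{A,X})^{-1}=\oplax^G_{A,X}$. So the substance of the proof is to verify that $\unit$ and $\counit$ are compatible with the left and right $\cC$-actions through these lineators.

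It suffices to verify the left $\cC$-module transformation condition in each case, since the right version follows by the $\rev$-duality argument used throughout the paper, swapping $\projlnoarg$ for $\projrnoarg$. For the unit, the left module compatibility unpacks to the identity $\projl{A}{GX}\circ(A\otimes \unit_X) = R(\oplax^G_{A,X})\circ \unit_{A\otimes X}$. I would expand the left-hand side via \eqref{eq:proj} and the interchange law to obtain $\lax^R_{GA,GX}\circ(\unit_A\otimes \unit_X)$, and then invoke the monoidal transformation condition on $\unit\colon \id_\cC\to RG$ (where $RG$ carries the lax structure $R(\lax^G_{A,X})\circ \lax^R_{GA,GX}$) to finish.

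For the counit, the left module compatibility becomes $\counit_{GA\otimes X}\circ G(\projl{A}{X}) = (GA\otimes \counit_X)\circ \oplax^G_{A,RX}$. My plan here is to expand $G(\projl{A}{X})$ using \eqref{eq:proj} and move $G$ past the tensor by naturality of $\oplax^G$, thereby producing the composite lax structure on $GR$ applied to $G(\unit_A)\otimes \id_{GRX}$. Applying the monoidal transformation condition on $\counit\colon GR\to \id_\cD$ collapses this lax structure into $\counit_{GA}\otimes \counit_X$, and the zigzag identity $\counit_{GA}\circ G(\unit_A)=\id_{GA}$ then yields the claimed equation.

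The main obstacle is purely organizational rather than conceptual: the bimodule transformation conditions for $\unit$ and $\counit$ with respect to the lineators built from $\projlnoarg$, $\projrnoarg$, and $\oplax^G$ turn out to be reformulations of the monoidal transformation conditions already provided by the hypothesis that $G\dashv R$ is an adjunction internal to $\Catlax$. Once the definitions of $\projlnoarg$ and of the composite lax structures on $RG$ and $GR$ are unfolded, the required identities reduce to short diagram chases using only naturality, the invertibility of $\lax^G$, and the zigzag identities.
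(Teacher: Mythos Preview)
Your proposal is correct and follows the same overall structure as the paper: establish $G$ and $R$ as $\cC$-bimodule functors (citing the same results), then verify that $\unit$ and $\counit$ are $\cC$-bimodule transformations, treating only the left side and invoking $\rev$-duality for the right.

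Where you differ is in the tool used for the diagram chase. You unfold $\projl{A}{X}$ via its defining formula \eqref{eq:proj}, namely $\lax^R_{GA,X}\circ(\unit_A\otimes\id)$, and then appeal directly to the fact that $\unit$ and $\counit$ are \emph{monoidal transformations} (the defining property of a monoidal adjunction in $\Catlax$), together with a single zigzag identity for the counit case. The paper instead unfolds $\projl{A}{X}$ via the alternative expression of \Cref{lem:alt-proj}, i.e.\ $R(GA\otimes\counit_X)\circ R(\oplax^G_{A,RX})\circ\unit_{A\otimes RX}$, and closes the diagrams using naturality of $\unit$, naturality of $R(\oplax^G)$, and the zigzag identities. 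Your route is arguably cleaner: it invokes the monoidal-adjunction hypothesis in its most transparent form and avoids passing through \Cref{lem:alt-proj}. The paper's route has the minor advantage that it relies only on the oplax-lax adjunction axioms rather than the composite lax structure on $RG$ and $GR$, but since these are equivalent here the distinction is cosmetic.
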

\begin{proof}
Since $G$ is strong monoidal, it gives rise to a $\cC$-bimodule functor by \Cref{example:G_as_bimodule_functor}.
Since the projection formula holds, $R$ gives rise to a $\cC$-bimodule functor by \Cref{prop:proj-bimodule-morphism}.

We check that the unit of $G \dashv R$ is a $\cC$-bimodule transformation (see \Cref{definition:transformation_of_bimodule_functors}). 
For this, we first check that the unit is a left $\cC$-module transformation, i.e., we need to check \Cref{equation:coherence_left_module_transformation}.
This translates to commutativity of the top right square in the diagram
\begin{align*}
\xymatrix@R=50pt{
&&A\otimes B\ar[rr]^{\unit_{A\otimes B}}\ar@/_2pc/[dll]_{\unit_{A\otimes B}}\ar[d]_{A\otimes \unit_{B}}&&RG(A\otimes B)\\
RG(A\otimes B)\ar[drr]_{RG(A\otimes \unit_B)}\ar[d]_{R(\oplax^G_{A,B})}&&A\otimes RG(B)\ar[rr]_{\projl{A}{GB}} \ar[d]_{\unit_{A\otimes RG(B)}}&&R(GA\otimes GB)\ar[u]_{R(\lax^G_{A,B})}\\
R(GA\otimes GB)\ar@/_2pc/[rrrr]_{R(GA\otimes G(\unit_B))}&&RG(A\otimes RGB)\ar[rr]^{R(\oplax^G_{A,RGB})}&&R(GA\otimes GRGB)\ar[u]_{R(GA\otimes \counit_{GB})}.
}
\end{align*}
since the inverse of the lineator of the composite $RG$ is given by $R( \lax^G_{A,B} ) \circ \projl{A}{GB}$, see \Cref{remark:composite_of_left_module_functors}.
We prove that the top right square commutes by showing that all other inner diagrams and the outer diagram commute. In fact, the right lower square commutes by \Cref{lem:alt-proj}, and the left squares commute by naturality of the unit (applied to $\id_A \otimes \unit_B$) and naturality of $R(\oplax^G)$ (applied to $\id_A$ and $\unit_B$). The outer diagram commutes as by the adjunction axioms, $\counit_{GB}G(\unit_B)=\id$. The unit being a morphism with respect to the right $\cC$-bimodule structure is proved the same way.

Next, we check that the counit of $G \dashv R$ is a $\cC$-bimodule transformation.
This corresponds to commutativity of the bottom right square in the following diagram:
\begin{align*}
    \xymatrix@R=50pt{G(A\otimes RX)\ar[rrr]^{\oplax^G_{A,RX}}&&& GA\otimes GRX\ar[rrd]|-{GA\otimes \counit_X}&&\\
   GR(GA\otimes GRX)\ar[rrru]|-{\counit_{GA\otimes GRB}}\ar[rrr]^{GR(GA\otimes \counit_X)} &&&GR(GA\otimes X)\ar[rr]^{\counit_{GA\otimes X}}&&GA\otimes X\\
   GRG(A\otimes RX)\ar@/^4pc/[uu]|(.8){\counit_{G(A\otimes RX)}}\ar[u]_{GR(\oplax^G_{A,RX})} &&&G(A\otimes RX)\ar[lll]^{G(\unit_{A\otimes RX})}\ar[u]^{G(\projl{A}{X})}&&\ar[ll]^{\lax^G_{A,RX}}GA\otimes GRX\ar[u]_{GA\otimes \counit_X}
}
\end{align*}
All other squares in this diagram, including the outer diagram, commute using naturality of the counit, the alternative expressions for $\projlnoarg$ from \Cref{lem:alt-proj}, and the adjunction axioms. Thus, the bottom right square commutes. Again, the counit being a morphism with respect to the right $\cC$-bimodule structure is proved the same way. 
\end{proof}

\section{Induced functors on centers}\label{sec:Z-functors}

This section contains our results on induced functors between centers.
First, we show how to induce functors between centers of bimodule categories in \Cref{sec:ZC-def}.
We show how to induce braided lax monoidal functors between Drinfeld centers of monoidal categories in \Cref{subsection:induced_functors_on_drinfield_centers} and give the analogue results on braided oplax monoidal functors, obtained by duality, in \Cref{subsection:results_by_duality}.

\subsection{Centers of bimodule categories}\label{sec:ZC-def}
In this subsection, we fix a monoidal category $\cC$ and a $\cC$-bimodule $\cM$, see \Cref{def:bimodule-cat}, with left and right $\cC$-actions denoted by $\triangleright$ and $\triangleleft$.
See \Cref{subsection:modules_over_monoidal_cat} and \Cref{subsection:bimodules_over_a_bicategory} for an overview of the notions in the realm of module and bimodule categories relevant in this section.

\begin{definition}\label{def::Z}
The \emph{center} $\cZ(\cM)$ of a $\cC$-bimodule $\cM$ is the following category.
Its objects are given by pairs $(M,c^M)$, where $M\in\cM$ and $c^M$ is a \emph{half-braiding}, i.e., a natural isomorphism 
$$c=c^M=(c_A=c^M_A \colon M\triangleleft A\isomorph A\triangleright M)_{A\in\cC}$$
satisfying the  coherences that the diagram 
\begin{equation}\label{eq:Ztensorcomp}
\vcenter{\hbox{\xymatrix{
M\triangleleft (A\otimes B) \ar[rrr]^{c_{A\otimes B}}\ar[d]_{\multiplicator_{M,A,B}}&&&(A\otimes B) \triangleright M\ar[d]^{\multiplicator_{A,B,M}}\\
(M\triangleleft A)\triangleleft B\ar[d]_{c_A\triangleleft B}&&&A\triangleright (B\triangleright M)\\
(A\triangleright M)\triangleleft B\ar[rrr]^{\bimodulator_{A,M,B}}&&&A\triangleright (M\triangleleft B)\ar[u]_{A\triangleright c_B}
}}}
\end{equation}
commutes for all $A,B\in\cC$ and that the diagram
\begin{equation}\label{eq:Zunitcomp}
    \xymatrix{
    M\triangleleft \one\ar[drr]_{\unitor_{M}}\ar[rrrr]^{c_\one} &&&& \one \triangleright M,\ar[dll]^{\unitor_{M}} \\
    &&M&&
    }
\end{equation}
for unitors of the left and right action commutes.

A morphism $f\colon (M,c^M)\to(N,c^N)$ in $\cZ(\cM)$ is given by a morphism $f\in\Hom_\cM(M,N)$ which commute with the half-braidings, i.e., for all objects $A$ of $\cC$,
\begin{align}
\xymatrix{
M\triangleleft A\ar[d]^{f\triangleleft A}\ar[rrr]^{c^M_A} &&& A\triangleright M\ar[d]^{A\triangleright f}\\
N\triangleleft A\ar[rrr]^{c^N_A}  &&& A\triangleright N.
}
\end{align}
\end{definition}
Centers of bimodule categories appear, e.g., in \cite{GNN}*{Section~2B} or \cite{Gre}*{Section~2.3} where \Cref{eq:Zunitcomp} is omitted as it follows from the other axiom.

\smallskip

We now show that taking the center of bimodule categories is functorial. 

\begin{lemma}\label{lem:ZBimod}
Let $F\colon \cM\to \cN$ be a morphism of $\cC$-bimodules with structure morphisms $l_{A,M}: F(A \triangleright M) \xrightarrow{\sim} A \triangleright F(M)$ and $r_{M,A}:F(M \triangleleft A) \xrightarrow{\sim} F(M) \triangleleft A$ for $A \in \cC$, $M \in \cM$.
Then $F$ induces a functor
\[\cZ(F)\colon \cZ(\cM)\to \cZ(\cN),\qquad \cZ(F)(M,c^M)=\left(F(M), c^{F(M)}\right),\]
where the half-braiding $c^{F(M)}$ is defined by 
\begin{align}
    \xymatrix{
    F(M) \triangleleft A\ar[rrr]^{c^{F(M)}_A}\ar[d]^{r_{M,A}^{-1}}&&&A\triangleright F(M)\\
 F(M\triangleleft A)\ar[rrr]^{F(c_A^M)}&&&F(A\triangleright M). \ar[u]^{l_{A,M}}
 }
\end{align}
\end{lemma}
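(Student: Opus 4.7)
My plan is to verify that the proposed $c^{F(M)}$ is indeed a half-braiding on $F(M)$ in $\cN$, then show morphisms go to morphisms, and finally check functoriality. Most of the verification reduces to pasting coherence diagrams: the data of $F$ being a $\cC$-bimodule morphism gives the coherences of its lineators $l$ and $r$ against the $\cC$-bimodule structure of $\cN$ and $\cM$ (see \Cref{definition:functor_of_bimodules}), and the data of $(M,c^M)$ being an object of $\cZ(\cM)$ gives coherences of $c^M$ against the $\cC$-bimodule structure of $\cM$. Pasting these should yield the corresponding coherences for $c^{F(M)}$.

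More concretely, I would proceed as follows. First, $c^{F(M)}_A$ is a natural isomorphism in $A$: each of the three constituent morphisms $r^{-1}_{M,A}$, $F(c^M_A)$, $l_{A,M}$ is a natural isomorphism in $A$ (the outer two because $l$ and $r$ are lineators of a bimodule functor, the middle one by naturality of $c^M$ and functoriality of $F$). Next, to verify the pentagonal coherence \eqref{eq:Ztensorcomp} for $c^{F(M)}$, I would draw the diagram and subdivide it using the definition $c^{F(M)}_A = l_{A,M} \circ F(c^M_A) \circ r^{-1}_{M,A}$. The resulting hexagon decomposes into: (i) the coherence of $r$ with the right-action multiplicator, (ii) the image under $F$ of the coherence \eqref{eq:Ztensorcomp} for $c^M$ itself, (iii) the coherence of $F$ relating $l$ and $r$ through the bimodulator $\bimodulator$, and (iv) the coherence of $l$ with the left-action multiplicator. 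Each of these subdiagrams is available from the hypotheses. The unit coherence \eqref{eq:Zunitcomp} is analogous but easier: it follows from the unitor coherences of $l$ and $r$ together with the unit coherence \eqref{eq:Zunitcomp} for $c^M$.

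Once $(F(M), c^{F(M)})$ is a well-defined object of $\cZ(\cN)$, I must check that if $f \colon (M,c^M) \to (N, c^N)$ is a morphism in $\cZ(\cM)$, then $F(f)$ commutes with $c^{F(M)}$ and $c^{F(N)}$. This is an immediate diagram chase: the square for $F(f)$ splits into three squares, the outer two given by naturality of $l$ and $r$ in $M$, and the middle one the image under $F$ of the square witnessing that $f$ lies in $\cZ(\cM)$. Functoriality of $\cZ(F)$ is then inherited from functoriality of $F$, since the assignment on morphisms is simply $f \mapsto F(f)$.

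The main obstacle is purely bookkeeping in the hexagon verifying \eqref{eq:Ztensorcomp}: one needs to track which direction of the lineators and their inverses appears and ensure the bimodulator coherence of $F$ is oriented correctly to connect the $l$-side and the $r$-side. There are no conceptual difficulties provided the bimodule functor axioms are stated in the form of \Cref{definition:functor_of_bimodules}.
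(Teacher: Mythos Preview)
Your proposal is correct and follows essentially the same approach as the paper: verify the two half-braiding coherences for $c^{F(M)}$ by subdividing into the lineator coherences of $F$ (with multiplicators, bimodulator, and unitors) together with $F$ applied to the coherences of $c^M$, then check morphisms via naturality of $l$ and $r$ in $M$. The only minor point is that your list (i)--(iv) for the hexagon omits two small naturality squares (naturality of $r$ applied to $c^M_A$ and of $l$ applied to $c^M_B$), which the paper does invoke; but you already flag this as bookkeeping, and it causes no difficulty.
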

\begin{proof}
The following diagrams show that $(F(M), c^{F(M)})$ satisfies the coherences \Cref{eq:Ztensorcomp} and \Cref{eq:Zunitcomp} of an object in $\cZ(\cN)$.
\begin{align*}
   \resizebox{\textwidth}{!}{
   \xymatrix@R=40pt{
F(M)\triangleleft (A\otimes B) \ar@/^3pc/[rrrr]^{c^{F(M)}_{A\otimes B}}\ar[d]^{\sim}\ar[r]^\sim&F(M\triangleleft (A\otimes B))\ar[rr]^{F(c^M_{A\otimes B})}\ar[dd]^{\sim}&&F((A\otimes B)\triangleright M)\ar[dd]^{\sim}\ar[r]^\sim&(A\otimes B) \triangleright F(M)\\
(F(M)\triangleleft A)\triangleleft B\ar@/_4pc/[ddd]^{c^{F(M)}_A\triangleleft B}\ar[d]^\sim&&&&A\triangleright (B\triangleright F(M))\ar[u]^{\sim}\\
F(M\triangleleft A)\triangleleft B\ar[d]^{F(c^M_A)\triangleleft B}\ar[r]^\sim&F((M\triangleleft A)\triangleleft B)\ar[d]^{F(c^M_A\triangleleft B)}&&F(A\triangleright (B\triangleright M))\ar[r]^\sim&A\triangleright F(B\triangleright M)\ar[u]^\sim
\\
F(A\triangleright M)\triangleleft B\ar[d]^\sim\ar[r]^\sim&F((A\triangleright M)\triangleleft B)\ar[rr]^\sim&&F(A\triangleright (M\triangleleft B))\ar[u]^{F(A\triangleright c^M_B)}\ar[r]^\sim&A\triangleright F(M\triangleleft B)\ar[u]^{A\triangleright F(c^M_B)}
\\
(A\triangleright F(M))\triangleleft B\ar[rrrr]^{\sim}&&&&A\triangleright (F(M)\triangleleft B)\ar@/_4pc/[uuu]^{A\triangleright c^{F(M)}_B}\ar[u]^\sim
}}
\end{align*}
Here, the diagram in the center commutes by \Cref{eq:Ztensorcomp} of $c^M$. Diagrams only involving labels $\sim$ commute by the coherence axioms for the lineators of $F$, see \Cref{definition:functor_of_bimodules} and \Cref{definition:module_functor}. The small rectangles involving $F(c^M_A)$ and $F(c^M_B)$ commute by naturality of the left and right lineators. The bend arrows invoke the definition of $c^{F(M)}$.  Moreover, consider the following diagram:
\begin{align*}
  \xymatrix{
    F(M)\triangleleft \one\ar@/_2pc/[ddrr]_{\unitor_{F(M)}}\ar@/^3pc/[rrrr]^{c^{F(M)}_\one} \ar[r]^{r_{M,\one}^{-1}}&F(M\triangleleft \one)\ar[ddr]_{F(\unitor_M)}\ar[rr]^{F(c^M_\one)}&&F(\one \triangleright M)\ar[ddl]^{F(\unitor_{M})}\ar[r]^{l_{\one,M}}& \one \triangleright F(M),\ar@/^2pc/[ddll]^{\unitor_{F(M)}} \\ \\
    &&F(M)&&
    }
\end{align*}
This diagram commutes using the coherences for the unitor from \Cref{definition:module_functor}, (2). Hence, $(M,c^{F(M)})$ indeed defines an object in $\cZ(\cN)$.

Naturality of $l_{M,A}, r_{M,A}$ in $M$ implies that a morphism $f$ in $\cZ(\cM)$ induces a morphism $\cZ(F)(f)=F(f)$ in $\cZ(\cN)$.  Thus, $\cZ(F)$ is a functor. 
\end{proof}

In fact, we observe that the center of $\cC$-bimodule categories is $2$-functorial, see \cite{Shi2}*{Section~3.6}.

\begin{lemma}\label{lem:Z-functorial}
    The assignments 
    $$\cM\mapsto \cZ(\cM), \qquad \left(\cM\xrightarrow{F} \cN\right)\mapsto \left(\cZ(F)\colon \cZ(\cM)\to \cZ(\cN)\right),$$
    where $\cZ(F)$ was defined in \Cref{lem:ZBimod}, extend to a strict pseudofunctor between strict bicategories
    \[
    \cZ\colon \BiMod{\cC} \rightarrow \Cat.
    \]
\end{lemma}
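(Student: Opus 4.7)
The plan is to extend the assignment to the $2$-morphism layer and then verify strict compatibility with all identities and compositions. Given a bimodule transformation $\eta\colon F \Rightarrow F'$ between morphisms $F,F'\colon \cM \to \cN$ of $\cC$-bimodules, I would set
\[
\cZ(\eta)_{(M,c^M)} := \eta_M \colon F(M) \to F'(M).
\]
The immediate obligation is to check that $\eta_M$ is a morphism in $\cZ(\cN)$, i.e., that it intertwines the half-braidings $c^{F(M)}$ and $c^{F'(M)}$ constructed in \Cref{lem:ZBimod}. I would prove this by writing out the hexagon expressing the intertwining and splitting it into three cells: two outer cells corresponding to the left- and right-lineator axioms that $\eta$ satisfies as a bimodule transformation (see \Cref{definition:transformation_of_bimodule_functors}), and one inner naturality square of $\eta$ evaluated at the morphism $c^M_A \colon M \triangleleft A \to A \triangleright M$ of $\cM$.

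For the $1$-morphism layer, strictness follows from chasing the formula for $c^{F(M)}$. The identity bimodule functor has identity lineators, so the formula collapses to $c^{F(M)} = c^M$ and hence $\cZ(\id_\cM) = \id_{\cZ(\cM)}$. For a composite $G \circ F$, the lineators of $GF$ are the expected composites of those of $F$ and $G$ (cf.~\Cref{remark:composite_of_left_module_functors} and its right-handed analogue). Substituting these into the definition of $c^{(GF)(M)}$, the resulting diagram factors as a copy of the construction from \Cref{lem:ZBimod} applied inside $G$ (producing the half-braiding of $F(M)$) followed by a second outer application for $G$ itself. Thus $c^{(GF)(M)} = c^{G(F(M))}$ on the nose, giving $\cZ(G \circ F) = \cZ(G) \circ \cZ(F)$ strictly.

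For $2$-morphisms, vertical and horizontal composition as well as identity transformations are preserved automatically, because $\cZ(\eta)$ is defined by taking components and all three operations in $\BiMod{\cC}$ are componentwise the corresponding operations of natural transformations in $\Cat$. The main obstacle will be the very first step, namely confirming that $\eta_M$ intertwines the half-braidings; the technical care required is to match the correct lineator axiom of $\eta$ with each of the inverse lineators appearing in the formula for $c^{F(M)}$, and to glue these with the naturality square at $c^M_A$. Once this compatibility is established, the remaining verifications reduce to observing that composition in $\BiMod{\cC}$ is, by design, componentwise identical to composition in $\Cat$, so $\cZ$ is a strict $2$-functor, i.e., a strict pseudofunctor of strict bicategories.
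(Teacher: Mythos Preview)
Your proposal is correct and follows essentially the same approach as the paper's proof: both verify $\cZ(GF)=\cZ(G)\cZ(F)$ and $\cZ(\id_\cM)=\id_{\cZ(\cM)}$ by unwinding the lineator formula for the induced half-braiding, and both check that $\eta_M$ is a morphism in $\cZ(\cN)$ via exactly the three-cell decomposition you describe (two lineator-compatibility squares for $\eta$ plus the naturality square at $c^M_A$). The paper draws the diagrams out explicitly while you describe them in words, but the argument is the same.
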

\begin{proof}
Consider two functors of $\cC$-bimodules
$$\xymatrix{\cM\ar[r]^F& \cN\ar[r]^G & \cP}.$$
Given an object $(M,c^M)\in \cZ(\cM)$, consider the half-braiding of the object $$\cZ(G)\cZ(F)(M,c^M)=\cZ(G)\left(F(M),c^{F}\right)=\left(GF(M), \tilde{c}\right)\in \cZ(\cP).$$
By definition, the half-braiding $\tilde{c}$ is given by 
\begin{align*}
     \xymatrix{
    GF(M) \triangleleft A\ar[rrr]^{\tilde{c}_A}\ar[d]^{(r^G_{F(M),A})^{-1}}\ar@/_3pc/[dd]_{(r^{GF}_{M,A})^{-1}}&&&A\triangleright GF(M)\\
 G(F(M)\triangleleft A)\ar[rrr]^{G(c^{F(M)}_A)}\ar[d]^{G(r^F_{M,A})^{-1}}&&&G(A\triangleright F(M))\ar[u]^{l^G_{A,F(M)}}\\
 GF(M\triangleleft A)\ar[rrr]^{GF(c^M_A)}&&&GF(A\triangleright M)\ar[u]^{G(l^F_{A,M})}\ar@/_3pc/[uu]_{l^{GF}_{A,M}}
 }
\end{align*}
Left and right vertical compositions give the lineators of the composite functor of $\cC$-bimodules, $GF\colon \cM \to \cP$, cf.~\Cref{remark:composite_of_left_module_functors}. This shows that $\tilde{c}_A=c^{GF}_A$. Thus, the identity gives a natural isomorphism 
$$\cZ(G)\cZ(F)\to \cZ(GF).$$
Moreover, the identity $\id_\cM\colon \cM\to\cM$ is a $\cC$-bimodule functor with lineators given by identity natural isomorphisms. It follows that $\cZ(\id_\cM)=\id_{\cZ(\cM)}$.

Now consider a natural transformation of $\cC$-bimodules $\eta\colon F_1\to F_2$, where $F_1,F_2\colon \cM\to \cN$ are functors of $\cC$-bimodules. By \Cref{definition:transformation_of_bimodule_functors}, this amounts to $\eta$ being a natural transformation of left and right $\cC$-module functors.  Then, $\eta$ induces a natural transformation $\cZ(F_1)\to \cZ(F_2)$. Indeed, given a morphism $f\colon (M,c^M)\to (N,c^N)$ in $\cZ(\cM)$, we know that $F_1(f)$ and $F_2(f)$ are morphisms in $\cZ(\cN)$.
Moreover, the diagram 
$$\xymatrix{
F_1(M)\triangleleft A\ar[d]^{\eta_M\triangleleft A}\ar@/^3pc/[rrrr]^{c^{F_1(M)}_A}\ar[r]^\sim&F_1(M\triangleleft A)\ar[rr]^{F_1(c^M_A)}\ar[d]^{\eta_{M\triangleleft A}}&&F_1(A\triangleright M)\ar[r]^\sim\ar[d]^{\eta_{A\triangleright M}}&A\triangleright F_1(M)\ar[d]^{A\triangleleft\eta_M}\\
F_2(M)\triangleleft A\ar@/_3pc/[rrrr]^{c^{F_2(M)}_A}\ar[r]^\sim&F_2(M\triangleleft A)\ar[rr]^{F_2(c^M_A)}&&F_2(A\triangleright M)\ar[r]^\sim&A\triangleright F_2(M)
}$$
commutes. Indeed, the left and right squares commute since $\eta$ is a left and right $\cC$-module natural  transformation, see \Cref{definition:left_C_module_transformation}. The central square commutes by naturality of $\eta$ applied to the half-braiding $c^M_A$.  This shows that 
$$\cZ(\eta)_{(M,c^M)}:=\eta_M \colon (F_1(M),c^{F_1(M)})\to (F_2(M),c^{F_2(M)})$$
is a morphism in $\cZ(\cN)$. Further,
$\cZ(\eta)$ is a natural transformation by naturality of $\eta$, applied to morphisms in $\cZ(\cM)$.

As we have seen that $\cZ(-)$ is the identity on the component data of natural transformations, it is clear that the lax functoriality constraint and lax unity constraint, in the terminology of \cite{JY21}*{Section 4.1}, can be chosen to be identities and thus satisfy all coherences, and are invertible, as required for a strict pseudofunctor.
\end{proof}

For certain classes of bimodule categories, the bimodule center is a monoidal, and even a braided monoidal, category. Recall that if $G: \cC \rightarrow \cD$ is a strong monoidal functor, we can work with the $\cC$-bimodule $\cD^G$, i.e., the $\cC$-bimodule obtained by restricting the regular $\cC$-bimodule along $G$, see \Cref{example:D_as_a_C_bimodule}.

\begin{lemma}[{\cite{Maj2}*{Definition 4.2}}] If $G\colon\cC\to\cD$ is a strong monoidal functor, then $\cZ(\cD^G)$ has a monoidal structure given by $(X,c^X)\otimes(Y,c^Y)=(X\otimes Y, c^{X\otimes Y})$, where
$$
c^{X\otimes Y}_A
:=
X\otimes Y \otimes G(A)
\xrightarrow{(X\otimes c^Y_A)} X\otimes G(A)\otimes Y
\xrightarrow{(c^X_A \otimes Y)} G(A)\otimes X\otimes Y
$$
for $A \in \cC$, $(X,c^X),(Y,c^Y) \in \cZ(\cD^G)$.
\end{lemma}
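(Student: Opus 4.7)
The plan is to verify the axioms of a monoidal category on $\cZ(\cD^G)$ by lifting the monoidal structure from $\cD$ along the (to-be-proved faithful) forgetful functor $\cZ(\cD^G)\to\cD$, $(X,c^X)\mapsto X$. The first and main step is to check that $(X\otimes Y, c^{X\otimes Y})$ is an object of $\cZ(\cD^G)$, i.e., that $c^{X\otimes Y}$ satisfies \Cref{eq:Ztensorcomp} and \Cref{eq:Zunitcomp}. Recall that the $\cC$-bimodule structure on $\cD^G$ has actions $A\triangleright X = GA\otimes X$ and $X\triangleleft A = X\otimes GA$, and that its multiplicator $\multiplicator_{A,B,X}\colon G(A\otimes B)\otimes X\to GA\otimes GB\otimes X$ and bimodulator $\bimodulator_{A,X,B}$ are given by $\lax^G_{A,B}\otimes X$ together with the associator of $\cD$ (see \Cref{example:D_as_a_C_bimodule}). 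Thus \Cref{eq:Ztensorcomp} for $c^{X\otimes Y}_{A\otimes B}$ becomes a large hexagon that one splits into four subdiagrams: two invoking \Cref{eq:Ztensorcomp} for $c^X$ and $c^Y$ (at the argument $A\otimes B$, which is then translated via $\lax^G_{A,B}$), one invoking naturality of the associator of $\cD$ to move the middle tensor factor past, and one that is the interchange law. Coherence \Cref{eq:Zunitcomp} reduces similarly to the unital coherence for $c^X$ and $c^Y$ combined with $\lax^G_0$ being the structural isomorphism identifying $\one_\cD$ with $G(\one_\cC)$.

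Second, I would define the unit of $\cZ(\cD^G)$ as $(\one_\cD, c^{\one})$ where $c^{\one}_A\colon \one\otimes GA\to GA\otimes \one$ is given by the unitors of $\cD$, and check \Cref{eq:Ztensorcomp} and \Cref{eq:Zunitcomp}; both reduce to the triangle axiom of $\cD$ (together with $\lax^G$-coherence). Third, I would show that $\otimes$ is functorial on $\cZ(\cD^G)$: for morphisms $f\colon (X,c^X)\to (X',c^{X'})$ and $g\colon (Y,c^Y)\to (Y',c^{Y'})$ in the bimodule center, the tensor $f\otimes g$ intertwines $c^{X\otimes Y}$ and $c^{X'\otimes Y'}$ by the interchange law applied to each of the two factors in the defining composite of $c^{X\otimes Y}_A$.

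Fourth, I would define the associator and unitors of $\cZ(\cD^G)$ as the underlying associators/unitors of $\cD$ and verify that each such component is a morphism in $\cZ(\cD^G)$. For the associator, this amounts to observing that both $c^{(X\otimes Y)\otimes Z}_A$ and $c^{X\otimes(Y\otimes Z)}_A$ expand to the same composite $c^X_A\otimes Y\otimes Z$, $X\otimes c^Y_A\otimes Z$, $X\otimes Y\otimes c^Z_A$ (modulo associators of $\cD$), which is immediate from the two-step definition of $c^{X\otimes Y}$ iterated. For the unitors, one uses the defining formula of $c^{\one}$ and the triangle axiom in $\cD$. Once this data is in place, the pentagon and triangle axioms of $\cZ(\cD^G)$ follow from those of $\cD$, because the forgetful functor $\cZ(\cD^G)\to\cD$ is faithful.

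The main obstacle is the bookkeeping in the hexagon verifying \Cref{eq:Ztensorcomp} for $c^{X\otimes Y}_{A\otimes B}$: one must carefully interleave the four hexagons/naturality squares listed above so that the $\lax^G_{A,B}$ morphisms appear in the right positions to match the multiplicator and bimodulator of $\cD^G$. All other steps are routine diagram chases reducing to coherences in $\cD$ and in the half-braidings of $X$ and $Y$.
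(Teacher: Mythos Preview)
Your outline is correct and is the standard direct verification. Note, however, that the paper does not actually give a proof of this lemma: it simply cites \cite{Maj2}*{Definition~4.2} as the source and states the result. So there is no ``paper's own proof'' to compare against beyond the attribution. Your explicit verification---checking \eqref{eq:Ztensorcomp} and \eqref{eq:Zunitcomp} for $c^{X\otimes Y}$ by splitting the hexagon into the two hexagons for $c^X$, $c^Y$ plus naturality/interchange squares, then lifting the associator and unitors of $\cD$ and invoking faithfulness of the forgetful functor---is exactly what one would do to unpack the cited reference, and all the steps you list go through as described.
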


The special case where $\cC$ is considered as a bimodule over itself (or, $G=\id_\cC$), is of particular interest as it recovers the Drinfeld center, see \cite{Maj2}*{Example~3.4} and \cite{JS}*{Definition~3}.

\begin{definition}\label{def:DrinZ}
The monoidal category $\cZ(\cC):=\cZ(\cC^{\id_\cC})$ is the \emph{monoidal center} (or \emph{Drinfeld center}) of $\cC$.
\end{definition}

\begin{lemma}[{\cite{JS}*{Proposition~4}}]For any monoidal category $\cC$, the Drinfeld center $\cZ(\cC)$ has a  braided monoidal structure given by 
$$
\Psi_{(A,c^A),(B,c^B)}
:= c^A_B \colon A\otimes B \isomorph B\otimes A.
$$
\end{lemma}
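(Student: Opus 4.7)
The plan is to verify four things in turn: (i) that for each pair of objects $(A,c^A),(B,c^B)\in\cZ(\cC)$, the morphism $\Psi_{(A,c^A),(B,c^B)}:=c^A_B$ is a morphism in $\cZ(\cC)$ from $(A,c^A)\otimes(B,c^B)$ to $(B,c^B)\otimes(A,c^A)$; (ii) that $\Psi$ is natural in both arguments; (iii) the two hexagon axioms; and (iv) compatibility with the unit. Since the Drinfeld center is the special case of \Cref{lem:ZBimod} with $\cM=\cC$ acting on itself, the left and right actions are given by the tensor product of $\cC$, and (modulo associators and unitors which I will suppress) the half-braiding on a tensor product object reads $c^{A\otimes B}_X = (c^A_X\otimes B)\circ(A\otimes c^B_X)$ by the monoidal structure recalled just before the statement.

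To establish (i), I need to check that $(X\otimes c^A_B)\circ c^{A\otimes B}_X = c^{B\otimes A}_X\circ (c^A_B\otimes X)$ for every $X\in\cC$. Expanding both sides via the formula for half-braidings on tensor products reduces this to the identity $(c^B_X\otimes A)\circ(B\otimes c^A_X)\circ(c^A_B\otimes X) = (X\otimes c^A_B)\circ(c^A_X\otimes B)\circ(A\otimes c^B_X)$, which is precisely the naturality of the natural transformation $c^A$ applied to the morphism $c^B_X\colon B\otimes X\to X\otimes B$, combined with the tensor product coherence \eqref{eq:Ztensorcomp} for $c^A$ (expressing $c^A_{X\otimes B}$ and $c^A_{B\otimes X}$ in terms of $c^A_X$ and $c^A_B$). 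Naturality (ii) is then a short calculation: given morphisms $f\colon (A,c^A)\to(A',c^{A'})$ and $g\colon (B,c^B)\to(B',c^{B'})$ in $\cZ(\cC)$, apply naturality of $c^{A}$ in the second variable to $g$, and use that $f$ being a morphism in $\cZ(\cC)$ means $(B'\otimes f)\circ c^A_{B'}=c^{A'}_{B'}\circ(f\otimes B')$ to chain the two identities and obtain $c^{A'}_{B'}\circ(f\otimes g)=(g\otimes f)\circ c^A_B$.

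For the hexagon axioms (iii), the key observation is that both are built into the definitions. The first hexagon, $\Psi_{A,B\otimes C}=(B\otimes\Psi_{A,C})\circ(\Psi_{A,B}\otimes C)$, is exactly the tensor product coherence \eqref{eq:Ztensorcomp} applied to the half-braiding $c^A$ at $B$ and $C$. The second hexagon, $\Psi_{A\otimes B,C}=(\Psi_{A,C}\otimes B)\circ(A\otimes\Psi_{B,C})$, is exactly the definition of the half-braiding on the tensor product object $(A,c^A)\otimes(B,c^B)$ recalled above. Unit compatibility (iv) is the unit coherence \eqref{eq:Zunitcomp}, which forces $\Psi_{A,\one}=c^A_\one=\id_A$ and, by the analogous argument with $\one$ on the left, $\Psi_{\one,A}=\id_A$.

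No real obstacle arises beyond bookkeeping: every identity needed is among the defining axioms of a half-braiding or is an instance of naturality. The only step that requires any care is (i), where one has to unwind the two composite half-braidings $c^{A\otimes B}$ and $c^{B\otimes A}$ and recognize the resulting equality as naturality of $c^A$ in the variable $X$ applied to the morphism $c^B_X$; this is the prototypical "hexagon as naturality" move. The associators and unitors, which I suppressed for readability, can be inserted using the general coherences of \eqref{eq:Ztensorcomp} and \eqref{eq:Zunitcomp} without changing the structure of the argument.
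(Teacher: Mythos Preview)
The paper does not supply its own proof of this lemma; it is simply recalled with a citation to \cite{JS}*{Proposition~4}. Your argument is correct and is essentially the standard verification: step~(i) is the key computation, and your reduction to naturality of $c^A$ in the variable applied to the morphism $c^B_X$, combined with two instances of the tensor coherence \eqref{eq:Ztensorcomp} for $c^A$, is exactly right. One small inaccuracy: in~(iv), the identity $\Psi_{\one,A}=\id_A$ is not ``the analogous argument with $\one$ on the left''; rather, $\Psi_{\one,A}=c^{\one}_A$ uses the half-braiding of the tensor unit $(\one,c^{\one})\in\cZ(\cC)$, which is the identity by definition of the monoidal unit in $\cZ(\cC)$. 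This is harmless, and in any case the unit compatibilities follow formally from the hexagons, so~(iv) is redundant.
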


\begin{lemma}\label{lem:ZDinZDG}
For any strong monoidal functor $G\colon\cC\to\cD$, there is a faithful strong monoidal functor
$$F^G\colon \cZ(\cD)\to \cZ(\cD^G), \qquad (X,c^X)\mapsto (X,c^X_{G(-)})$$
which is the identity on morphisms.
\end{lemma}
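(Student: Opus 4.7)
The plan is to set $F^G(X, c^X) := (X, \tilde{c}^X)$ with $\tilde{c}^X_A := c^X_{GA}$ for $A \in \cC$, define $F^G$ to act as the identity on underlying morphisms, and then verify the half-braiding axioms, well-definedness on morphisms, faithfulness, and the monoidal structure.

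The main verification is that $\tilde{c}^X$ is a half-braiding for $X$ viewed as an object of $\cZ(\cD^G)$. Naturality of $\tilde{c}^X$ in $A \in \cC$ follows immediately by whiskering naturality of $c^X$ with $G$. For the hexagonal coherence \eqref{eq:Ztensorcomp}, I would unpack that the multiplicators $\multiplicator_{A,B,X}$ and $\multiplicator_{X,A,B}$ of the bimodule $\cD^G$ are built from the associators of $\cD$ together with the (invertible) monoidal structure morphism $\lax^G_{A,B}\colon GA \otimes GB \xrightarrow{\sim} G(A \otimes B)$, while the bimodulator $\bimodulator_{A,X,B}$ is simply an associator in $\cD$. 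The coherence then follows by combining the naturality square of $c^X$ applied to $\lax^G_{A,B}$ with the standard identity $c^X_{D \otimes E} = (c^X_D \otimes E) \circ (D \otimes c^X_E)$ that holds for any object of $\cZ(\cD)$. The unit coherence \eqref{eq:Zunitcomp} is analogous, using naturality against $\lax^G_0\colon \one_\cD \xrightarrow{\sim} G(\one_\cC)$ and the identity $c^X_{\one_\cD} = \id_X$.

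Once $\tilde{c}^X$ is in place, a morphism $f\colon (X, c^X) \to (Y, c^Y)$ in $\cZ(\cD)$ satisfies $(D \otimes f) \circ c^X_D = c^Y_D \circ (f \otimes D)$ for every $D \in \cD$, and specializing to $D = GA$ shows that $f$ is automatically a morphism $(X, \tilde{c}^X) \to (Y, \tilde{c}^Y)$ in $\cZ(\cD^G)$. This makes $F^G$ a well-defined functor which is the identity on hom-sets, and hence faithful.

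For the monoidal structure, I would observe that
\[
\tilde{c}^{X \otimes Y}_A = c^{X \otimes Y}_{GA} = (c^X_{GA} \otimes Y) \circ (X \otimes c^Y_{GA}) = (\tilde{c}^X_A \otimes Y) \circ (X \otimes \tilde{c}^Y_A),
\]
which matches the tensor product formula on $\cZ(\cD^G)$ recalled just before the lemma, while $F^G$ clearly sends the tensor unit to the tensor unit on the nose. Thus $F^G$ is strict monoidal with identity coherence morphisms, and in particular strong monoidal. The only technical obstacle I anticipate in a full write-up is bookkeeping the precise $\lax^G$ and associator factors appearing in the multiplicators of $\cD^G$; but once this is laid out, every diagram reduces to naturality of $c^X$ combined with the hexagon/unit axioms already satisfied in $\cZ(\cD)$, so no genuinely new coherence argument is required.
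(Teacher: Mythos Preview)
Your proposal is correct and follows the same approach as the paper: define $F^G$ by restricting the half-braiding along $G$, act as the identity on morphisms, and equip $F^G$ with identity coherence morphisms as the strong monoidal structure. The paper's proof is terser---it simply asserts that well-definedness and faithfulness are ``clear'' and then checks that $\id_{X\otimes Y}$ intertwines the half-braidings---while you spell out the verifications (naturality, hexagon, unit) that the paper leaves implicit; your anticipated bookkeeping with $\lax^G$ in the multiplicators of $\cD^G$ is exactly what is needed, and as you say it reduces immediately to naturality of $c^X$ combined with the axioms in $\cZ(\cD)$.
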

\begin{proof} 
It is clear that $F^G$ is a well-defined faithful functor to $\cZ(\cD^G)$ as it simply restricts the half-braiding $c^X$ to objects of the form $Z=G(A)$ for $A \in \cC$ and is the identity on morphisms. To show $F^G$ is a strong monoidal functor, the lax monoidal structure consists of identities,
$$\lax_{(X,c^X),(Y,c^Y)}^{F^G}=\id_{X\otimes Y}\colon F^G(X)\otimes F^G(Y)\to F^G(X\otimes Y),$$
which trivially satisfy the required coherences.
This is a well-defined morphism in $\cZ(\cC^G)$ 
by commutativity of the following diagram 
$$
\xymatrix@R=50pt{
X\otimes Y\otimes G(A)\ar@{=}[d]\ar@/^3pc/[rrrr]^{c^{F^G(X)\otimes F^G(Y)}_{G(A)}}\ar[rr]^{X\otimes c^Y_{G(A)}}&&X\otimes G(A)\otimes Y\ar[rr]^{c^X_{G(A)}\otimes Y}&&G(A)\otimes X\otimes Y\ar@{=}[d]\\
X\otimes Y\otimes G(A)\ar[rrrr]^-{c^{F^G(X\otimes Y)}_{G(A)}=c^{X\otimes Y}_{G(A)}}&&&&G(A)\otimes X\otimes Y.
}
$$
This diagram commutes as $c^{X\otimes Y}_Z=(X\otimes c^{Y}_Z)(c^M_Z\otimes Y)$ for any object $Z$ of $\cD$, in particular, for $Z=G(A)$. Thus, $F^G$ is a strict strong monoidal functor.
\end{proof}

\subsection{Induced monoidal adjunctions between centers of bimodule categories}

We can now apply $2$-functoriality to obtain an adjunction of functors between centers of bimodule categories. 

\pagebreak[2]

\begin{proposition}\label{lem:ZR-on-DG}
Suppose given a monoidal adjunction
\begin{center}
    \begin{tikzpicture}[ baseline=(A)]
          \coordinate (r) at (3,0);
          \node (A) {$\cC$};
          \node (B) at ($(A) + (r)$) {$\cD$};
          \draw[->, out = 30, in = 180-30] (A) to node[mylabel]{$\mainfun$} (B);
          \draw[<-, out = -30, in = 180+30] (A) to node[mylabel]{$\rightadj$} (B);
          \node[rotate=90] (t) at ($(A) + 0.5*(r)$) {$\vdash$};
    \end{tikzpicture}    
\end{center}
such that the projection formula holds for $R$. Then
\begin{equation}\label{equation:center_adjunction}
\begin{tikzpicture}[baseline=(A)]
      \coordinate (r) at (3,0);
      \node (A) {$\cZ(\cC)$};
      \node (B) at ($(A) + (r)$) {$\cZ(\cD^G)$};
      \draw[->, out = 30, in = 180-30] (A) to node[mylabel]{$\cZ(G)$} (B);
      \draw[<-, out = -30, in = 180+30] (A) to node[mylabel]{$\cZ(R)$} (B);
      \node[rotate=90] (t) at ($(A) + 0.5*(r)$) {$\vdash$};
\end{tikzpicture}
\end{equation}
is a monoidal adjunction such that the projection formula holds.
Moreover, the structural morphisms of $\cZ(G)\dashv \cZ(R)$ are directly inherited from the corresponding structural morphisms of $G\dashv R$, i.e., they are given by
\begin{equation}\label{equation:Z_of_unit}
\unit^{\cZ( G )\dashv \cZ( R ) }_{ (B,c^B) } = \unit^{G\dashv R}_B    
\end{equation}
\begin{equation}\label{equation:Z_of_counit}
\counit^{ \cZ( G )\dashv \cZ( R ) }_{ (X,c^X) } = \counit^{G\dashv R}_X
\end{equation}
\begin{equation}\label{equation:Z_of_oplaxG}
\oplax^{\cZ(G)}_{ (B,c^B), (C,c^C) } = \oplax^G_{B,C}  \qquad \qquad \oplax^{\cZ(G)}_0 = \oplax^G_0 
\end{equation}
\begin{equation}\label{equation:Z_of_laxR}
\lax^{\cZ(R)}_{ (X,c^X), (Y,c^Y) } = \lax^R_{X,Y} \qquad \qquad\lax^{\cZ(R)}_0 = \lax^R_0    
\end{equation}
\begin{equation}\label{equation:Z_of_proj}
\projl{(B,c^B)}{(X,c^X)} = \projl{B}{X} \qquad \qquad \projr{(B,c^B)}{(X,c^X)} = \projr{B}{X}
\end{equation}
for $(B,c^B), (C, c^C) \in \cZ( \cC )$ and $(X,c^X), (Y,c^Y) \in \cZ(\cD^G)$.
\end{proposition}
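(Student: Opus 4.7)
The plan is to exhibit every piece of structure of $\cZ(G)\dashv\cZ(R)$ as inherited from the corresponding piece of structure of $G\dashv R$ and then deduce all coherences from those of $G\dashv R$. First, since the projection formula holds, \Cref{lem:Cbimodule-adj-RL} lifts $G\dashv R$ to an adjunction of $\cC$-bimodules $G\colon\cC\rightleftarrows\cD^G\colon R$ whose unit and counit are given by the unit and counit of $G\dashv R$ reinterpreted as $\cC$-bimodule transformations. Applying the strict pseudofunctor $\cZ\colon\BiMod{\cC}\to\Cat$ of \Cref{lem:Z-functorial} to this $\cC$-bimodule adjunction yields the underlying adjunction $\cZ(G)\dashv\cZ(R)$ in $\Cat$, with unit and counit as in \Cref{equation:Z_of_unit} and \Cref{equation:Z_of_counit}.

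Next, I would equip $\cZ(R)$ with the candidate lax monoidal structure $\lax^{\cZ(R)}:=\lax^R$ of \Cref{equation:Z_of_laxR} and $\cZ(G)$ with the candidate strong monoidal structure $\oplax^{\cZ(G)}:=\oplax^G$ of \Cref{equation:Z_of_oplaxG}. The core task is to verify that these morphisms actually live in the respective centers. For $\lax^R_{X,Y}\colon RX\otimes RY\to R(X\otimes Y)$ the half-braiding on the source $RX\otimes RY$ is the tensor product half-braiding in $\cZ(\cC)$ built from $c^{RX}$ and $c^{RY}$, each computed via \Cref{lem:ZBimod} as a composite of $(\projrnoarg)^{-1}$, $R(c^{(-)}_{G(-)})$ and $\projlnoarg$, while the target half-braiding is assembled analogously from $c^{X\otimes Y}_{GA}=(c^X_{GA}\otimes Y)(X\otimes c^Y_{GA})$. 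Expanding both sides, the required compatibility reduces to a diagram chase whose central ingredient is the symmetrized compatibility \Cref{lemma:sym_compatibility_lax_projection} of $\lax^R$ with $\projrl{A}{-}{B}$. The corresponding checks for $\lax^R_0$, $\oplax^G_{B,C}$, and $\oplax^G_0$ are analogous and in fact simpler, since $G$ has identity lineators as a $\cC$-bimodule functor, so half-braidings pass through $\oplax^G$ by naturality. Once these checks are in place, the associativity and unitality axioms for $\lax^{\cZ(R)}$ and $\oplax^{\cZ(G)}$ hold automatically: the forgetful functors $\cZ(\cC)\to\cC$ and $\cZ(\cD^G)\to\cD$ are faithful and the corresponding diagrams commute in $\cC$ and $\cD$.

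To conclude, I would check that the unit and counit from \Cref{equation:Z_of_unit}, \Cref{equation:Z_of_counit} are monoidal transformations in the sense of \Cref{eq:mon-trans-1} and \Cref{eq:mon-trans-2}; after forgetting half-braidings, the required diagrams reduce to the monoidality of $\unit$ and $\counit$ of $G\dashv R$, which holds by assumption. Finally, a direct unwinding of \Cref{eq:proj} applied to $\cZ(G)\dashv\cZ(R)$ using the formulas \Cref{equation:Z_of_unit} and \Cref{equation:Z_of_laxR} shows that its left projection formula morphism at $((B,c^B),(X,c^X))$ is given on underlying morphisms by $\projl{B}{X}$, which is invertible by hypothesis; the right version is handled identically. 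The principal obstacle in this program is the compatibility of $\lax^R$ with half-braidings in the previous paragraph, where unraveling the tensor product half-braidings in $\cZ(\cC)$ and $\cZ(\cD^G)$ produces a sizeable diagram that must be funnelled through \Cref{lemma:sym_compatibility_lax_projection}.
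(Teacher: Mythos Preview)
Your outline is correct and begins the same way as the paper: lift $G\dashv R$ to an adjunction of $\cC$-bimodules via \Cref{lem:Cbimodule-adj-RL}, apply the strict pseudofunctor $\cZ$ of \Cref{lem:Z-functorial}, and read off \Cref{equation:Z_of_unit} and \Cref{equation:Z_of_counit}. The divergence is in how the monoidal structure is handled. You propose to verify directly that \emph{both} $\oplax^G$ and $\lax^R$ are morphisms in the respective centers, and you correctly flag the $\lax^R$ compatibility as the hardest step, to be attacked via \Cref{lemma:sym_compatibility_lax_projection}. The paper instead checks only the easier half: it shows that $\oplax^G_{B,C}$ commutes with the half-braidings (a diagram whose inner pieces are just associativity of the strong structure of $G$ and naturality of $\oplax^G$), so that $\cZ(G)$ is strong monoidal. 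Then \Cref{lemma:strong_and_monoidal_adj} furnishes a \emph{unique} lax structure on $\cZ(R)$ making $\cZ(G)\dashv\cZ(R)$ monoidal, and plugging \Cref{equation:Z_of_unit}, \Cref{equation:Z_of_counit}, \Cref{equation:Z_of_oplaxG} into the formula \eqref{eq:construct_lax} immediately yields \Cref{equation:Z_of_laxR}. In other words, the paper never touches your ``principal obstacle'': the compatibility of $\lax^R$ with half-braidings comes for free once $\cZ(G)$ is strong monoidal. Your route works, but the paper's is shorter and bypasses the large diagram you anticipate.
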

\begin{proof}
By \Cref{lem:Cbimodule-adj-RL}, we obtain an adjunction of $\cC$-bimodules $(\cC \xrightarrow{G} \cD^G) \dashv (\cD^G \xrightarrow{R} \cC)$.
If we apply $\cZ$ (see \Cref{lem:Z-functorial}) to this adjunction, we obtain an adjunction of categories as depicted in \eqref{equation:center_adjunction}, since internal adjunctions are mapped to internal adjunctions via pseudofunctors (\Cref{proposition:internal_adj_respected_by_pseudofunctor}).
Since $\cZ$ is a strict pseudofunctor, the unit and counit of the adjunction in \eqref{equation:center_adjunction} are given by $\cZ(\unit)$ and $\cZ(\counit)$, which are exactly the morphisms in \eqref{equation:Z_of_unit} and \eqref{equation:Z_of_counit}, respectively.

Next, we check that \eqref{equation:Z_of_oplaxG} gives rise to an oplax monoidal structure on $\cZ(G)$.
For this, let $(B,c^B), (C,c^C) \in \cZ(\cC)$.
The computation of $\cZ(G)( (B,c^B) \otimes (C,c^C) )$ gives a half-braiding on $G(B \otimes C)$, and the computation of $\cZ(G)( (B,c^B) ) \otimes \cZ( (C,c^C) )$ gives a half-braiding on $GB \otimes GC$. We show that $\oplax^G_{B,C}: G(B \otimes C) \rightarrow GB \otimes GC$ commutes with these half-braidings.
This amounts to the commutativity of the outer rectangle of the following diagram:
\begin{center}
    \begin{tikzpicture}
          \coordinate (r) at (10,0);
          \coordinate (d) at (0,-1.3);
          \node (A1) {$G(B \otimes C) \otimes GA$};
          \node (A2) at ($(A1) + (d)$) {$G(B \otimes C \otimes A)$};
          \node (A3) at ($(A1) + 2*(d)$) {$G(B \otimes A \otimes C)$};
          \node (A4) at ($(A1) + 5*(d)$) {$G(A \otimes B \otimes C)$};
          \node (A5) at ($(A1) + 6*(d)$) {$GA \otimes G(B \otimes C)$};
          
          \node (B1) at ($(A1) + (r)$) {$GB \otimes GC \otimes GA$};
          \node (B2) at ($(A1) + (d) + (r)$) {$GB \otimes G(C \otimes A)$};
          \node (B3) at ($(A1) + 2*(d) + (r)$) {$GB \otimes G(A \otimes C)$};
          \node (B4) at ($(A1) + 3*(d) + (r)$) {$GB \otimes GA \otimes GC$};
          \node (B5) at ($(A1) + 4*(d) + (r)$) {$G(B \otimes A) \otimes GC$};
          \node (B6) at ($(A1) + 5*(d) + (r)$) {$G(A \otimes B) \otimes GC$};
          \node (B7) at ($(A1) + 6*(d) + (r)$) {$GA \otimes GB \otimes GC$};
          
          \draw[->] (A1) to node[left,scale=0.9]{$\lax^G_{B \otimes C, A}$}(A2);
          \draw[->] (A2) to node[left,scale=0.9]{$G(B \otimes c^C_A)$}(A3);
          \draw[->] (A3) to node[left,scale=0.9]{$G(c^B_A \otimes C)$}(A4);
          \draw[->] (A4) to node[left,scale=0.9]{$\oplax^G_{A,B\otimes C}$}(A5);
          
          \draw[->] (B1) to node[right,scale=0.9]{$GB \otimes \lax^G_{C,A}$}(B2);
          \draw[->] (B2) to node[right,scale=0.9]{$GB \otimes G(c^C_A)$}(B3);
          \draw[->] (B3) to node[right,scale=0.9]{$GB \otimes \oplax^G_{A,C}$}(B4);
          \draw[->] (B4) to node[right,scale=0.9]{$\lax^G_{B,A} \otimes GC$}(B5);
          \draw[->] (B5) to node[right,scale=0.9]{$G(c^B_A) \otimes GC$}(B6);
          \draw[->] (B6) to node[right,scale=0.9]{$\oplax^G_{A,B} \otimes GC$}(B7);

          \draw[->] (A1) to node[above,scale=0.9]{$\oplax^G_{B,C} \otimes GA$}(B1);
          \draw[->] (A2) to node[above,scale=0.9]{$\oplax^G_{B,C \otimes A}$}(B2);
          \draw[->] (A3) to node[above,scale=0.9]{$\oplax^G_{B,A \otimes C}$}(B3);
          \draw[->] (A3) to node[below, xshift=-0.3em,scale=0.9]{$\oplax^G_{B \otimes A,C}$}(B5);
          \draw[->] (A4) to node[above,scale=0.9]{$\oplax^G_{A \otimes B,C}$}(B6);
          \draw[->] (A5) to node[above,scale=0.9]{$GA \otimes \oplax^G_{B,C}$}(B7);
    \end{tikzpicture} 
\end{center}
All inner shapes commute either by the associativity of the strong monoidal structure on $G$ or by the naturality of $\oplax^G$. Thus, the outer rectangle commutes.
Moreover, $\lax_0^G$ is compatible with the half-braidings on $\one_{\cD}$ and $\cZ(G)(\one_{\cC})$.
It follows that \eqref{equation:Z_of_oplaxG} gives rise to an oplax monoidal structure on $\cZ(G)$.

By \Cref{lemma:strong_and_monoidal_adj}, we obtain a lax monoidal structure on $\cZ(R)$ and the adjunction $\cZ(G)\dashv \cZ(R)$ is monoidal.
This lax monoidal structure on $\cZ(R)$ can be computed explicitly: if we insert $\unit^{ \cZ( G )\dashv \cZ( R ) }$, $\counit^{\cZ( G )\dashv \cZ( R ) }$, and $\oplax^{\cZ(G)}$ into \eqref{eq:construct_lax}, we obtain \eqref{equation:Z_of_laxR}.

Last, since the left and right projection formula morphisms of the monoidal adjunction $\cZ(G)\dashv \cZ(R)$ are build up by $\unit^{\cZ( G )\dashv \cZ( R ) }$ and $\lax^{\cZ(R)}$ (see \eqref{eq:proj} and \eqref{eq:projr}), they are given by \eqref{equation:Z_of_proj}.
\end{proof}

\begin{example}\label{ex:shimizu}
    Following \cite{Shi2}*{Section~3}, let $\cM$ be a left $\cC$-module and consider $\cF:=\End(\cM)$, the category of endofunctors of $\cM$, which obtains a $\cC$-bimodule structure by 
    $$(A\triangleright F)(M)=A\triangleright F(M), \qquad (F\triangleleft A)(M)=F(A\triangleright M).$$
    It follows that $\cZ(\cF)\simeq \End_\cC(\cM)$,
    the category of $\cC$-module endofunctors of $\cM$. Now, the action functor 
    $$G=\triangleright\colon \cC\to \End(\cM), \quad A\mapsto A\triangleright (-)$$
    is a $\cC$-bimodule functor and a strong monoidal functor. The $\cC$-bimodule structure discussed above is precisely that of $\End(\cM)^G$ from \Cref{example:D_as_a_C_module}. If $G$ has a right adjoint $R$, then $G\dashv R$ has a unique structure of a monoidal adjunction by \Cref{lemma:strong_and_monoidal_adj}. Thus, if the projection formula holds for $R$, then by \Cref{lem:ZR-on-DG}, we obtain a monoidal adjunction 
\begin{center}
\begin{tikzpicture}[baseline=(A)]
      \coordinate (r) at (6,0);
      \node (A) {$\cZ(\cC)$};
      \node (B) at ($(A) + (r)$) {$\cZ(\End(\cM)^G)=\End_\cC(\cM)$};
      \draw[->, out = 20, in = 180-20] (A) to node[mylabel]{$\cZ(G)$} (B);
      \draw[<-, out = -20, in = 180+20] (A) to node[mylabel]{$\cZ(R)$} (B);
      \node[rotate=90] (t) at ($(A) + 0.45*(r)$) {$\vdash$};
\end{tikzpicture}
\end{center}
The lax monoidal functor 
    $\cZ(R)\colon \End_\cC(\cM)\to \cZ(\cC),$
    was already studied in \cite{Shi2}. Note that Shimizu works with $\Bbbk$-linear abelian categories $\cC$ and $\cM$ which allows replacing $\End(\cM)$ by the subcategory of right exact endofunctors.
Since \cite{Shi2} works with monoidal categories that are, in particular, rigid, the right adjoint satisfies the projection formula by \Cref{corollary:rigid_proj_formula} and the projection formula morphisms are displayed in \cite{Shi2}*{Section~3.4}. 
The right $R$ adjoint has been constructed in the case of finite $\Bbbk$-linear abelian tensor categories in \cite{Shi2}*{Theorem~3.11}.
\end{example}

\subsection{Induced functors on Drinfeld centers}\label{subsection:induced_functors_on_drinfield_centers}

We now prove the main result of this section. 
\begin{theorem}[Main induction theorem]\label{prop:functorZF2}
Suppose given a monoidal adjunction
\begin{center}
    \begin{tikzpicture}[ baseline=(A)]
          \coordinate (r) at (3,0);
          \node (A) {$\cC$};
          \node (B) at ($(A) + (r)$) {$\cD$};
          \draw[->, out = 30, in = 180-30] (A) to node[mylabel]{$\mainfun$} (B);
          \draw[<-, out = -30, in = 180+30] (A) to node[mylabel]{$\rightadj$} (B);
          \node[rotate=90] (t) at ($(A) + 0.5*(r)$) {$\vdash$};
    \end{tikzpicture}    
\end{center}
such that the projection formula holds for $R$. Then $R$ induces a lax monoidal functor
\begin{equation}\label{gather:main_induced_functor}
\cZ(R)\colon \cZ(\cD)\to \cZ(\cC), \qquad (X,c)\mapsto (RX, c^R)
\end{equation}
where
\[
c^R=\left(RX\otimes A\xrightarrow{\projr{X}{A}}R(X\otimes GA)\xrightarrow{R(c_{GA})}R(GA\otimes X)\xrightarrow{(\projl{A}{X})^{-1}}A\otimes RX \right)_{A\in \cC}.
\]
The lax monoidal structure is given by
\begin{equation}\label{equation:lax_on_ZR}
\lax^{\cZ(R)}_{(X,c),(Y,d)}=\lax^R_{X,Y} \qquad \qquad \lax_0^{\cZ(R)}=\lax_0^R
\end{equation}
for any objects $(X,c)$, $(Y,d)$ in $\cZ(\cD)$. Moreover, $\cZ(R)$ is braided.
\end{theorem}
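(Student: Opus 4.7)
The strategy is to exhibit $\cZ(R)\colon \cZ(\cD)\to\cZ(\cC)$ as the composite
\begin{equation*}
\cZ(\cD) \xrightarrow{F^G} \cZ(\cD^G) \xrightarrow{\cZ(R)} \cZ(\cC),
\end{equation*}
where $F^G$ is the faithful strong monoidal functor from \Cref{lem:ZDinZDG} that restricts a half-braiding on $X\in\cD$ to arguments of the form $GA$, and the second arrow is the lax monoidal right adjoint of the induced monoidal adjunction $\cZ(G)\dashv\cZ(R)$ on bimodule centers supplied by \Cref{lem:ZR-on-DG}. The existence of the latter already uses the projection formula, so the present hypotheses provide everything needed.

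My first step is to compute this composite on objects and morphisms. On an object $(X,c)\in\cZ(\cD)$ one has $F^G(X,c)=(X,c_{G(-)})$, and applying $\cZ(R)$ via \Cref{lem:ZBimod} with lineators $(\projlnoarg)^{-1}$ and $(\projrnoarg)^{-1}$ coming from \Cref{prop:proj-bimodule-morphism} yields $(RX,c^R)$ with the half-braiding displayed in the statement. On morphisms, both $F^G$ and $\cZ(R)$ act as $R$, so the composite is precisely \eqref{gather:main_induced_functor}. The lax monoidal structure is then automatic: $F^G$ is strong monoidal with identity coherence data, and \Cref{lem:ZR-on-DG} provides $\lax^{\cZ(R)}=\lax^R$ via \eqref{equation:Z_of_laxR}, so the composite inherits exactly \eqref{equation:lax_on_ZR}.

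The only substantive step is verifying that $\cZ(R)$ is braided in the sense of \Cref{def:braiding-lax-oplax}. Unfolding the braidings of $\cZ(\cD)$ and $\cZ(\cC)$, this reduces to the identity
\begin{equation*}
\lax^R_{Y,X}\circ c^R_{RY} \;=\; R(c_Y)\circ \lax^R_{X,Y}
\end{equation*}
for $(X,c),(Y,d)\in\cZ(\cD)$. Substituting the definition of $c^R_{RY}$ and applying \Cref{lemma:lax_via_proj} in the form $\lax^R_{Y,X}=R(\counit_Y\otimes X)\circ\projl{RY}{X}$, the left-hand side collapses to $R(\counit_Y\otimes X)\circ R(c_{GRY})\circ\projr{X}{RY}$. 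Naturality of $c$ applied to $\counit_Y\colon GRY\to Y$ then rewrites $(\counit_Y\otimes X)\circ c_{GRY}$ as $c_Y\circ(X\otimes\counit_Y)$, and the $\otimes\text{-}\oop$-dual of \Cref{lemma:lax_via_proj} identifies $R(X\otimes\counit_Y)\circ\projr{X}{RY}$ with $\lax^R_{X,Y}$, giving the right-hand side.

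The bulk of the work is already done in Sections 3 and 4; I expect the main obstacle to be purely bookkeeping, namely keeping $\projlnoarg$ versus $\projrnoarg$ straight and making precise the $\otimes\text{-}\oop$-dual of \Cref{lemma:lax_via_proj} needed at the last step. No further technology is required.
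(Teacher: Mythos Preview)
Your proposal is correct and follows essentially the same route as the paper: the same factorization $\cZ(\cD)\xrightarrow{F^G}\cZ(\cD^G)\xrightarrow{\cZ(R)}\cZ(\cC)$ via \Cref{lem:ZDinZDG} and \Cref{lem:ZR-on-DG}, the same identification of the lax structure, and the same verification of the braiding axiom using \Cref{lemma:lax_via_proj}, its $\rev$-analogue for $\projrnoarg$, and naturality of $c$ applied to $\counit_Y$. The paper packages the braiding check as a single commutative diagram rather than an equation chain, but the content is identical.
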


\begin{proof} Consider the composition
$$\cZ(\cD)\hookrightarrow \cZ(\cD^G)\xrightarrow{\cZ(R)}\cZ(\cC),$$
where $\cZ(\cD)\hookrightarrow \cZ(\cD^G)$ is the faithful strong monoidal of \Cref{lem:ZDinZDG} and where $\cZ(\cD^G)\xrightarrow{\cZ(R)}\cZ(\cC)$ is the lax monoidal functor of \Cref{lem:ZR-on-DG}.
By abuse of notation, we also denote this composition by $\cZ(R)$, which is hence a lax monoidal functor. It is clear that this functor is the one described in \eqref{gather:main_induced_functor}. We recall that the lax structure on $\cZ(\cD)\hookrightarrow \cZ(\cD^G)$ is given by the identity (see the proof of \Cref{lem:ZDinZDG}) and the lax structure on $\cZ(\cD^G)\xrightarrow{\cZ(R)}\cZ(\cC)$ is given by \eqref{equation:Z_of_laxR}. From this, \eqref{equation:lax_on_ZR} follows.

Next, we show that $\cZ(R)$ is braided.
We have to verify the braiding compatibility condition
\Cref{eq:Frob-braided-lax}.  This condition follows from the commutativity of the outer diagram in
\begin{align*}
\xymatrix@R=45pt{
RX\otimes RY\ar[ddd]^{c_{R(Y)}^{R}}
\ar@/^0pc/[rrd]|-{\projr{X}{RY}}\ar@/^0pc/[rrrr]_{\lax_{X,Y}}&&&&R(X\otimes Y)\ar[ddd]_{R(c_Y)}\\
&&R(X\otimes GR(Y))\ar[urr]|-{R(X\otimes \counit_Y)}\ar[d]_{R(c_{GR(Y)})}& &\\
&&R(GR(Y)\otimes X)\ar[rrd]|-{R(\counit_Y\otimes X)}&&\\
RY\otimes RX
\ar@/_0pc/[rru]|-{\projl{RY}{X}}\ar@/_0pc/[rrrr]^{\lax_{Y,X}}&&&&R(Y\otimes X)
}
\end{align*}
The left middle square commutes by definition of $c^{R}$. The right middle square commutes by naturality of $c$ applied to $\counit_Y$. The top and bottom triangles follow from \Cref{lemma:lax_via_proj} and its analogue for $\projrnoarg$.  Thus, the lax monoidal structure is compatible with the braiding.
\end{proof}

In the rigid case, the main \Cref{prop:functorZF2} can be restated in the following simplified way.

\begin{corollary}\label{cor:ZR-ZL-rigid-case}
    Let $\cC$ be a rigid monoidal category and let $G\colon \cC\to \cD$ be a strong monoidal functor.
    If $G\dashv R$ is an adjunction, then $R$ induces a braided lax monoidal functor  $\cZ(R)\colon \cZ(\cD)\to \cZ(\cC)$.
    
\end{corollary}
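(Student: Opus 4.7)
The plan is to assemble this corollary directly from three results already established in the paper. First, since $G\colon \cC\to\cD$ is strong monoidal and admits a right adjoint $R$, I would invoke \Cref{lemma:strong_and_monoidal_adj}(2) to promote $G\dashv R$ to a monoidal adjunction: there is a unique lax monoidal structure on $R$ (constructed explicitly via \eqref{eq:construct_lax}) making the unit and counit into monoidal transformations. Thus, without loss of generality, $G\dashv R$ is an adjunction internal to $\Catlax$.

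Next, because $\cC$ is rigid, every object of $\cC$ has both a left and a right dual, so \Cref{corollary:proj_formula_holds_for_dual_objects} ensures that both $\projl{A}{X}$ and $\projr{X}{A}$ are natural isomorphisms for every $A\in\cC$ and $X\in\cD$. Equivalently (as packaged in \Cref{corollary:rigid_proj_formula}), the projection formula holds for $R$ in the sense of \Cref{def:proj-formulas-hold}.

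Having verified both hypotheses of the main induction theorem, I would then simply invoke \Cref{prop:functorZF2}: it produces the braided lax monoidal functor
\[
\cZ(R)\colon \cZ(\cD)\longrightarrow \cZ(\cC),\qquad (X,c)\mapsto (RX, c^R),
\]
with half-braiding and lax structure as described in the statement of that theorem. There is no substantial obstacle here, since the deep input (the coherence of $c^R$, compatibility with the lax structure, and the braided property) has already been done in the proof of \Cref{prop:functorZF2}; the only role of rigidity is to supply the projection formula for free.
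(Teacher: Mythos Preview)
Your proposal is correct and follows essentially the same approach as the paper's own proof: promote $G\dashv R$ to a monoidal adjunction via \Cref{lemma:strong_and_monoidal_adj}, use rigidity of $\cC$ via \Cref{corollary:rigid_proj_formula} to obtain the projection formula, and then apply \Cref{prop:functorZF2}.
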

\begin{proof}
By \Cref{lemma:strong_and_monoidal_adj}, the adjunction $G\dashv R$ can be regarded as a monoidal adjunction.
By \Cref{corollary:rigid_proj_formula}, the rigidity of $\cC$ implies that the projection formula holds. We can now apply \Cref{prop:functorZF2} to derive the result.
\end{proof}

\begin{remark}\label{rem:Z-functoriality}
Building on \Cref{lem:Z-functorial}, one can show that the assignment that sends a monoidal adjunction $G\dashv R$, where $R$ satisfies the projection formula, to $Z(R)\colon \cZ(\cD)\to\cZ(\cC)$ is functorial in the sense that 
$$\cZ(R_1R_2)=\cZ(R_1)\cZ(R_2),$$
whenever we are given two such monoidal adjunctions
\begin{center}
    \begin{tikzpicture}[ baseline=(A)]
          \coordinate (r) at (3,0);
          \node (A) {$\cC$};
          \node (B) at ($(A) + (r)$) {$\cD$};
          \node (C) at ($(B) + (r)$) {$\cE$};
          \draw[->, out = 30, in = 180-30] (A) to node[mylabel]{$G_1$} (B);
          \draw[<-, out = -30, in = 180+30] (A) to node[mylabel]{$R_1$} (B);
          \node[rotate=90] (t) at ($(A) + 0.5*(r)$) {$\vdash$};
            \draw[->, out = 30, in = 180-30] (B) to node[mylabel]{$G_2$} (C);
          \draw[<-, out = -30, in = 180+30] (B) to node[mylabel]{$R_2$} (C);
                    \node[rotate=90] (t) at ($(B) + 0.5*(r)$) {$\vdash$};
    \end{tikzpicture}    
\end{center}
and $G_2G_1\dashv R_1R_2$ is the monoidal adjunction obtained by composition. Indeed, we have the following commutative diagram of functors: 
$$
\xymatrix@R=45pt{
\cZ(\cE^{G_2G_1})\ar@/^2pc/[rrrr]^{\cZ(R_1R_2)}\ar[rr]^{\cZ(R_2)}&&\cZ(\cD^{G_1})\ar[rr]^{\cZ(R_1)}&&\cZ(\cC)\\
\cZ(\cE^{G_2})\ar@{^{(}->}[u]\ar[rr]^{\cZ(R_2)}&&\cZ(\cD)\ar@{^{(}->}[u]\ar[rru]_{\cZ(R_1)}&&\\
\cZ(\cE)\ar@{^{(}->}[u]\ar[rru]_{\cZ(R_2)}\ar@/^2pc/@{^{(}->}[uu]&&&&
}
$$
This assignment can be viewed as a functor from the category whose objects are monoidal categories, morphisms $\cD\to \cC$ are monoidal adjunctions $G\dashv R$ such that $R$ satisfies the projection formula, with $G\colon \cC\to \cD$, to the category whose objects are braided monoidal categories and morphisms are braided lax monoidal functors. 
\end{remark}

\subsection{Results by duality}\label{subsection:results_by_duality}

In this subsection, we state the $\oop$-versions of our main results. For this, we consider a pair of oplax-lax adjoint functors
\begin{center}
    \begin{tikzpicture}[ baseline=(A)]
          \coordinate (r) at (3,0);
          \node (A) {$\cC$};
          \node (B) at ($(A) + (r)$) {$\cD$};
          \draw[->, out = 30, in = 180-30] (A) to node[mylabel]{$G$} (B);
          \draw[<-, out = -30, in = 180+30] (A) to node[mylabel]{$L$} (B);
          \node[rotate=-90] (t) at ($(A) + 0.5*(r)$) {$\vdash$};
    \end{tikzpicture}    
\end{center}
in which $G$ now has a left adjoint $L$.
We define the projection formula morphisms for $L$ by the $\oop$-versions of \Cref{eq:proj} and \Cref{eq:projr} for $A \in \cC$, $X \in \cD$:
\begin{gather}\label{eq:iproj}
\iprojl{\objCa}{\objDx}\colon \leftadj( \mainfun\objCa \otimes \objDx ) \xrightarrow{\oplax_{\mainfun\objCa,\objDx}}\leftadj\mainfun(\objCa) \otimes \leftadj\objDx \xrightarrow{\counit_{\objCa} \otimes \id}  \objCa \otimes \leftadj\objDx,\\\label{eq:iprojr}
\iprojr{\objDx}{\objCa}\colon \leftadj(  \objDx \otimes \mainfun\objCa )  \xrightarrow{\oplax_{\mainfun\objCa,\objDx}}  \leftadj\objDx \otimes \leftadj\mainfun(\objCa) \xrightarrow{\id \otimes \counit_{\objCa} } \leftadj\objDx \otimes \objCa.
\end{gather}

\begin{definition}[$\oop$-version of \Cref{def:proj-formulas-hold}]\label{def:iproj-formulas-hold}
We say that \emph{$L$ satisfies the projection formula} or that \emph{the projection formula holds (for $L$)} if the natural transformations from \Cref{eq:iproj} and \Cref{eq:iprojr} are isomorphisms.
\end{definition}

The $\oop$-version of a monoidal adjunction is an \emph{opmonoidal adjunction}.
By the $\oop$-version of \Cref{lemma:strong_and_monoidal_adj}, an opmonoidal adjunction is an oplax-lax adjunction $L \dashv G$ such that $G$ is strong monoidal. The projection formula morphisms of an opmonoidal adjunction (also called comonoidal adjunction) plays a key role in the theory of Hopf monads \cite{BLV}*{Section 2.8} where they are called Hopf operators, see also \Cref{rem:Hopfmonads}.

\begin{example}
Assume $\cC$ is a category with finite products. Then $\cC$ obtains the structure of a symmetric monoidal category with tensor product given by 
$$A\otimes B=A\times B,$$
for all objects $A,B$ in $\cC$. The tensor unit is the terminal object. In this case, we call $\cC$ a \emph{cartesian} monoidal category.

A finite product preserving functor 
$G\colon \cC\to \cD$
between two cartesian monoidal categories has the structure of a strong monoidal functor.
The lax monoidal structure is given by the unique factorization in the following diagram, where $\pi$ denotes the product projections:
$$
\xymatrix{
&GB&\\
GA\times GB\ar[dr]_{\pi_{GA}}\ar[ur]^{\pi_{GB}}\ar[rr]^{\lax^G_{A,B}} && G(A\times B)\ar[dl]^{G(\pi_A)}\ar[ul]_{G(\pi_B)}\\
&GA&
}
$$
Consider the oplax monoidal structure on $L$ given by the canonical maps
$$\oplax_{X,Y}^L=(L(\pi_X),L(\pi_Y))\colon L(X\times Y)\to LX\times LY, \qquad \oplax_0^L\colon L\one \to \one.$$
We first check that $L\dashv G$ is an oplax-lax adjunction by verifying the diagrams dual to \Cref{eq:oplax_lax} and \Cref{eq:unit_lax}. 
First, we verify the dual equation to \Cref{eq:oplax_lax}. Consider the following two diagrams:
\begin{gather*}
    \xymatrix{
L(GA\times GB)\ar[rrd]_{L(\pi_{GA})}\ar[rr]^{L(\lax^G_{A,B})}&& LG(A\times B)\ar[rrr]^{\counit_{A\times B}}\ar[d]^{LG(\pi_A)} &&& A\times B\ar[d]^{\pi_A}    \\
&& LGA\ar[rrr]^{\counit_A} &&& A
    }
\\
\xymatrix{
L(GA\times GB)\ar[rrd]_{L(\pi_{GA})}\ar[rr]^{\oplax^L_{A,B}}&& LGA\times LGB\ar[rrr]^-{\counit_A\times\counit_B}\ar[d]^{\pi_{LGA}} &&& A\times B\ar[d]^{\pi_A}    \\
&& LGA\ar[rrr]^{\counit_A} &&& A
    }
\end{gather*}
The triangles commute by definition of $\lax^G$ and $\oplax^L$, the rectangles commute by naturality.
Commutativity of the other diagrams shows that both 
$$\counit_{A\times B}L(\lax^G_{A,B})\quad \text{ and }\quad (\counit_A\times \counit_B)\oplax^L_{A,B}$$
compose to the same morphism, $\counit_AL(\pi_{GA})$, with $\pi_A$. The same holds for composing with $\pi_B$ by symmetry. Thus, the two displayed compositions are equal.

Moreover, we have 
$$\counit_\one L(\lax_{0}^G)=\oplax_0^L\colon L(\one)\to \one$$
since $\one$ is the terminal object. Hence, $L\dashv G$ is an oplax-lax adjunction. Since $G$ is strong monoidal, $L\dashv G$ is an opmonoidal adjunction.

We will now give a necessary and sufficient condition for the projection formula to hold for $L$ from the above opmonoidal adjunction. 
For this, a category $\cC$ is called \emph{cartesian closed} if, for every object $A$ of $\cC$, the functor $(-)\times A$ has a right adjoint $(-)^A$. We denote the unit and counit of this adjunction by $\coev^A$ and $\ev^A$. Now assume that $\cC$ and $\cD$ are cartesian closed. Recall that a \emph{cartesian closed functor} is a functor such that
$$\theta_{B,A}:=\left(G(B^A)\xrightarrow{\coev^{GA}_{G(B^A)}}(G(B^A)\times GA)^{GA}\xrightarrow{\lax^G_{B^A,A}} G(B^A\times A)^{GA}\xrightarrow{G(\ev^A_B)^{GA}} GB^{GA}\right)$$
is an isomorphism for any objects $A,B$ in $\cC$.
This isomorphism is of the form in  \eqref{eq:compatibility_int_hom} under $\circ^\oop$-duality.

Consider a monoidal adjunction $L\dashv G$ as described above, for $\cC$ and $\cD$ cartesian closed.
By \cite{John}*{Lemma A.1.5.8},  the projection formula holds for $L$ if and only if $G$ is a cartesian closed functor. Here, the right projection formula morphism is given by
$$\projr{X}{A}=(L\pi_X,\counit_A L\pi_A)\colon L(X\times GA)\longrightarrow LX\times A.$$
By \Cref{lem:braided-proj-holds}, $\projrnoarg$ is invertible if and only if $\projlnoarg$ is invertible since $\cC$ is symmetric monoidal.  
The dual result to \Cref{lemma:adjoints_to_tensors_and_proj_formula} generalizes this example.

The concept of cartesian closed categories has been extended to general symmetric monoidal categories, and closed functors are defined in this context, see \cite{FHM} and references therein.
\end{example}

\begin{theorem}[$\oop$-version of \Cref{prop:functorZF2}]\label{prop:functorZF2_op_version}
Let $L\dashv G$ be an opmonoidal adjunction such that the projection formula holds for $L$. Then L induces an oplax monoidal functor
\begin{gather*}
\cZ(L)\colon \cZ(\cD)\to \cZ(\cC), \qquad (D,c)\mapsto (LD, c^L)\\
c^L_A=\left(LD\otimes A\xrightarrow{(\iprojr{D}{A})^{-1}}L(D\otimes GA)\xrightarrow{L(c_{GA})}L(GA\otimes D)\xrightarrow{\iprojl{A}{D}}A\otimes LD \right).
\end{gather*}
Its oplax monoidal structure is given by 
$$\oplax^{\cZ(L)}_{(D,c),(E,d)}=\oplax^L_{D,E} \qquad \qquad \oplax_0^{\cZ(L)}=\oplax_0^L$$
for any objects $(D,c)$, $(E,d)$ in $\cZ(\cD)$. Moreover, $\cZ(L)$ is braided. 
\end{theorem}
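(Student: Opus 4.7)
The plan is to derive this statement from \Cref{prop:functorZF2} by passing to opposite categories. First, an opmonoidal adjunction $L \dashv G$ with $L\colon\cD\to\cC$ oplax monoidal and $G\colon\cC\to\cD$ strong monoidal corresponds, upon reversing arrows in both $\cC$ and $\cD$, to a monoidal adjunction $G^\oop \dashv L^\oop$ with $G^\oop\colon\cC^\oop\to\cD^\oop$ strong monoidal and $L^\oop\colon\cD^\oop\to\cC^\oop$ lax monoidal: reversing arrows exchanges lax and oplax structures, leaves $G^\oop$ strong, and translates the diagrams \eqref{eq:oplax_lax} and \eqref{eq:unit_lax} into their counterparts for $G^\oop \dashv L^\oop$. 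Comparing \eqref{eq:iproj}--\eqref{eq:iprojr} with \eqref{eq:proj}--\eqref{eq:projr} shows that $\iprojl{A}{D}$ and $\iprojr{D}{A}$, read as arrows in $\cC^\oop$, are exactly the left and right projection formula morphisms for the right adjoint $L^\oop$. Hence the projection formula holds for $L$ if and only if it holds for $L^\oop$, so \Cref{prop:functorZF2} applies and yields a braided lax monoidal functor $\cZ(L^\oop)\colon \cZ(\cD^\oop)\to\cZ(\cC^\oop)$.

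Second, I would transport this across the canonical isomorphism of categories $\cZ(\cE^\oop)\cong\cZ(\cE)^\oop$ (for any monoidal $\cE$) which is the identity on underlying objects, sends a half-braiding $c$ on $\cZ(\cE^\oop)$ to its inverse $c^{-1}$ on $\cZ(\cE)$ (well-defined because half-braidings are invertible, cf.~\Cref{def::Z}), and is contravariant on morphisms. Under this identification, lax monoidal functors between centers of opposite categories correspond to oplax monoidal functors between the original centers, and the braided property survives because the coherence \eqref{eq:Frob-braided-oplax} is symmetric under simultaneously replacing all braidings by their inverses. This produces the desired functor $\cZ(L)\colon\cZ(\cD)\to\cZ(\cC)$, with oplax structure and half-braiding inherited from those of $\cZ(L^\oop)$.

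Third, I would verify that the resulting formulas match those in the statement. The identity $\oplax^{\cZ(L)}=\oplax^L$ follows from $\lax^{\cZ(R)}=\lax^R$ of \eqref{equation:lax_on_ZR} upon dualization. For the half-braiding, applying the formula for $c^R$ from \Cref{prop:functorZF2} to $L^\oop$ and reading the result back in $\cC$ (with the inversion dictated by the correspondence of $c$ on $\cZ(\cD)$ with $c^{-1}$ on $\cZ(\cD^\oop)$) produces exactly the composite $\iprojl{A}{D}\circ L(c_{GA})\circ(\iprojr{D}{A})^{-1}$ as claimed. The main obstacle will be the careful bookkeeping of directions and inversions across the two simultaneous dualities — opposite categories and opposite centers — in particular to recover the precise formula for $c^L$ with the non-inverted $\iprojl{A}{D}$ appearing after $L(c_{GA})$; once these conventions are fixed, the theorem follows formally from \Cref{prop:functorZF2}.
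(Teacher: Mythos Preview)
Your proposal is correct and follows exactly the approach the paper intends: the theorem is stated in the section ``Results by duality'' as the $\oop$-version of \Cref{prop:functorZF2} without an explicit proof, so the paper's own argument is precisely the duality passage you outline. Your careful tracking of the identification $\cZ(\cE^\oop)\cong\cZ(\cE)^\oop$ and the inversion of half-braidings fills in the bookkeeping that the paper leaves implicit.
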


\begin{corollary}[$\oop$-version of \Cref{cor:ZR-ZL-rigid-case}]\label{cor:ZR-ZL-rigid-case_op_version}
Let $\cC$ be a rigid monoidal category and let $G\colon \cC\to \cD$ be a strong monoidal functor.
If $L\dashv G$ is an adjunction, then $L$ induces a braided oplax monoidal functor $\cZ(L)\colon \cZ(\cD)\to \cZ(\cC)$.
\end{corollary}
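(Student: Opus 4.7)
The plan is to mirror the proof of \Cref{cor:ZR-ZL-rigid-case} by systematically applying the $\oop$-versions of the ingredients that went into it. First, since $G\colon\cC\to\cD$ is strong monoidal and $L\dashv G$ is an adjunction, the $\oop$-version of \Cref{lemma:strong_and_monoidal_adj} equips $L$ with a unique oplax monoidal structure so that $L\dashv G$ becomes an opmonoidal adjunction (equivalently, an adjunction internal to the bicategory of monoidal categories with oplax monoidal functors and monoidal transformations). In particular, $L\dashv G$ is automatically a pair of oplax-lax adjoint functors in the sense of \Cref{subsubsection:oplax_lax}, so the projection formula morphisms $\iprojl{A}{X}$ and $\iprojr{X}{A}$ from \eqref{eq:iproj} and \eqref{eq:iprojr} are defined.

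Next, I would verify that the projection formula holds for $L$ in the sense of \Cref{def:iproj-formulas-hold}. This is the $\oop$-version of \Cref{corollary:rigid_proj_formula}: since $\cC$ is rigid, every object of $\cC$ admits both a left and a right dual, and the $\oop$-version of \Cref{corollary:proj_formula_holds_for_dual_objects} then guarantees that $\iprojl{A}{-}$ and $\iprojr{-}{A}$ are natural isomorphisms for every $A\in\cC$. Concretely, this dualization amounts to reversing all arrows in \Cref{proposition:left_duals_proj_formula} and \Cref{lemma:adjoints_to_tensors_and_proj_formula}, using that a strong monoidal functor also preserves duals and that having a dual in $\cC$ yields a right adjoint to the tensor product functors against which the projection formula can be tested via a Yoneda argument.

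With both hypotheses of \Cref{prop:functorZF2_op_version} verified, the conclusion follows directly: $L$ induces a braided oplax monoidal functor $\cZ(L)\colon\cZ(\cD)\to\cZ(\cC)$ with the explicit half-braiding and oplax structure morphisms given there. I do not expect any real obstacle in this proof; the content is entirely in transporting the $\oop$-duality through the chain \Cref{lemma:strong_and_monoidal_adj} $\Rightarrow$ \Cref{corollary:rigid_proj_formula} $\Rightarrow$ \Cref{prop:functorZF2} that was used to prove \Cref{cor:ZR-ZL-rigid-case}. The only bookkeeping worth flagging is to check that the uniqueness clause in the $\oop$-version of \Cref{lemma:strong_and_monoidal_adj} picks out exactly the oplax structure for which the counit $\counit\colon LG\to \id_\cC$ and unit $\unit\colon\id_\cD\to GL$ are monoidal transformations, so that the resulting opmonoidal adjunction is canonical and the induced functor $\cZ(L)$ is well-defined independently of any choices.
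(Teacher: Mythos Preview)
Your proposal is correct and follows exactly the approach the paper intends: the corollary is stated without proof as the $\oop$-version of \Cref{cor:ZR-ZL-rigid-case}, and your dualization of the chain \Cref{lemma:strong_and_monoidal_adj} $\Rightarrow$ \Cref{corollary:rigid_proj_formula} $\Rightarrow$ \Cref{prop:functorZF2} into their $\oop$-counterparts is precisely what is meant.
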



\section{Kleisli adjunctions}
\label{sec:Kleisli}
Let $\cC$ be a strict monoidal category.
In this section, we want to apply our main theorem \Cref{prop:functorZF2} to the Kleisli adjunction of a monad $T$ on $\cC$. For this, we first need to ensure that we obtain a monoidal adjunction: Kleisli categories have a canonical monoidal structure whenever $T$ is a monoidal monad (\Cref{sec:mon-Kleisli}). Moreover, whenever $T$ is induced by a monoid $M$, then $T$ is a monoidal monad if $M$ is a commutative monoid in $\cZ(\cC)$ (see \Cref{theorem:commutative_central_monoid_gives_monoidal_monad}). We call such monoids \emph{commutative central monoids}.
In \Cref{subsection:kleisli_adjunction_by_central_monoids}, we investigate the monoidal Kleisli adjunctions given by commutative central monoids: we show that the projection formula always holds for them (\Cref{lemma:Kleisli_projection_formula}) and reformulate our main theorem \Cref{prop:functorZF2} in the particular instance of this adjunction in \Cref{corollary:main_theorem_for_Kleisli}. Last, we characterize all monoidal Kleisli adjunctions which are induced by commutative central monoids: they are exactly those monoidal adjunctions for which the left adjoint is essentially surjective and for which the projection formula holds (\Cref{thm:char-thm}).

\subsection{Commutative central monoids}\label{sec:com-cent-mon}

\begin{definition}\label{def:central-comm-mon}
A \emph{commutative central monoid} in $\cC$ is a commutative monoid internal to $\cZ(\cC)$.
Explicitly, it consists of the following data:
\begin{enumerate}
    \item A monoid internal to $\cC$ with underlying object $M \in \cC$, a multiplication morphism
    \[ 
        \mult := \mult^M: M \otimes M \rightarrow M 
    \] 
    and a unit morphism
    \[ 
        \unitm := \unitm^M: \one \rightarrow M. 
    \]
    \item An isomorphism
    \[
        \rswap: M \otimes A \rightarrow A \otimes M
    \]
    natural in $A \in \cC$.
    \item $\rswap$ defines a half-braiding, i.e., the following diagrams commute for all $A,B \in \cC$:
    \begin{equation}\label{eq:HB1}
        \begin{tikzpicture}[ baseline=(A)]
              \coordinate (r) at (3,0);
              \node (A) {$M$};
              \node (B) at ($(A) + (r)$) {$M$};
              \draw[->, out = 30, in = 180-30] (A) to node[above]{$\id_M$} (B);
              \draw[->, out = -30, in = 180+30] (A) to node[below]{$\rswap_{\one}$} (B);
        \end{tikzpicture}    
    \end{equation}
    \begin{equation}\label{eq:HB2}
        \begin{tikzpicture}[ baseline=($(A) + 0.5*(d)$)]
              \coordinate (r) at (4,0);
              \coordinate (d) at (0,-1.5);
              \node (A) {$M \otimes A \otimes B$};
              \node (B) at ($(A) + 2*(r)$) {$A \otimes B \otimes M$};
              \node (C) at ($(A) + (r) + (d)$) {$A \otimes M \otimes B$};
              \draw[->] (A) to node[below left]{$\rswap_A \otimes B$} (C);
              \draw[->] (C) to node[below right]{$A \otimes \rswap_B$} (B);
              \draw[->] (A) to node[above]{$\rswap_{A \otimes B}$} (B);
        \end{tikzpicture}    
    \end{equation}
    \item Multiplication and unit are morphisms in $\cZ(\cC)$, i.e., the following diagrams commute for all $A \in \cC$:
    \begin{equation}\label{eq:IM1}
        \begin{tikzpicture}[ baseline=($(A) + 0.5*(d)$)]
              \coordinate (r) at (4,0);
              \coordinate (d) at (0,-1.5);
              \node (A) {$A$};
              \node (B) at ($(A) + 2*(r)$) {$A \otimes M$};
              \node (C) at ($(A) + (r) + (d)$) {$M \otimes A$};
              \draw[->] (A) to node[below left]{$\unit \otimes A$} (C);
              \draw[->] (C) to node[below right]{$\rswap_A$} (B);
              \draw[->] (A) to node[above]{$A \otimes \unit$} (B);
        \end{tikzpicture}    
    \end{equation}
    \begin{equation}\label{eq:IM2}
        \begin{tikzpicture}[baseline=($(11) + 0.5*(d)$)]
          \coordinate (r) at (4,0);
          \coordinate (d) at (0,-1.5);
          \node (11) {$M \otimes A \otimes M$};
          \node (21) at ($(11) +(d) + (r)$) {$A \otimes M \otimes M$};
          \node (22) at ($(11) + (d) - (r)$) {$M \otimes M \otimes A$};
          \node (31) at ($(21) + (d)$) {$A \otimes M$};
          \node (32) at ($(22) + (d)$) {$M \otimes A$};
          \draw[->] (11) to node[above right]{$\rswap_A \otimes M$} (21);
          \draw[->] (22) to node[above left]{$M \otimes \rswap_A$} (11);
          \draw[->] (21) to node[right]{$A \otimes \mult$} (31);
          \draw[->] (22) to node[left]{$\mult \otimes A$} (32);
          \draw[->] (32) to node[below]{$\rswap_A$} (31);
        \end{tikzpicture}
    \end{equation}
    \item The monoid is commutative in $\cZ(\cC)$, i.e., the following diagram commutes:
    \begin{equation}\label{eq:CM}
        \begin{tikzpicture}[baseline=($(11) + 0.5*(d)$)]
          \coordinate (r) at (4,0);
          \coordinate (d) at (0,-1.5);
          \node (11) {$M \otimes M$};
          \node (12) at ($(11) + 2*(r)$) {$M \otimes M$};
          \node (2) at ($(11) + (d) + (r)$) {$M$};
          \draw[->] (11) to node[above]{$\rswap_M$} (12);
          \draw[->] (11) to node[below left]{$\mult$} (2);
          \draw[->] (12) to node[below right]{$\mult$} (2);
        \end{tikzpicture} 
    \end{equation}
\end{enumerate}
\end{definition}

\begin{example}\label{example:unit_as_commutative_central_monoid}
In any braided monoidal category, the tensor unit can be regarded as a commutative monoid in a canonical way.
This implies that in any monoidal category, the tensor unit can be regarded as a commutative central monoid in a canonical way.
\end{example}

\begin{proposition}\label{proposition:commutative_central_monoid_R1}
Let $G\dashv R$ be a monoidal adjunction such that the projection formula holds.
Then $R(\one_\cD)$ is a commutative central monoid.
\end{proposition}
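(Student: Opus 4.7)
The plan is to derive this as a direct consequence of the main induction theorem \Cref{prop:functorZF2}. That theorem produces a \emph{braided} lax monoidal functor $\cZ(R)\colon \cZ(\cD) \to \cZ(\cC)$ whose underlying functor is $R$ and whose lax structure, by \eqref{equation:lax_on_ZR}, coincides with the lax structure of $R$ itself. Since $R(\one_\cD)$ is the underlying object of $\cZ(R)(\one_{\cZ(\cD)})$, it suffices to show that this latter object is a commutative monoid in the braided monoidal category $\cZ(\cC)$.

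First I would observe that in any braided monoidal category $\cB$, the tensor unit $\one_\cB$ is canonically a commutative monoid (with multiplication given by the unitor, unit given by the identity, and commutativity following from coherence together with the fact that the braiding is the identity on $\one \otimes \one$); cf.\ \Cref{example:unit_as_commutative_central_monoid}. Applied to $\cB = \cZ(\cD)$, this produces a commutative monoid $\one_{\cZ(\cD)} = (\one_\cD, c^\one)$, where $c^\one$ is the half-braiding built from the unitors of $\cD$.

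Next I would invoke the standard fact that any braided lax monoidal functor $F\colon \cA \to \cB$ between braided monoidal categories sends commutative monoids to commutative monoids: given a commutative monoid $(M,\mu,\eta)$ in $\cA$, its image $F(M)$ carries a commutative monoid structure with multiplication $F(\mu) \circ \lax^F_{M,M}$ and unit $F(\eta) \circ \lax^F_0$. Associativity and unitality follow from the corresponding coherences of $\lax^F$, while commutativity uses precisely the braided compatibility \eqref{eq:Frob-braided-lax} between $\lax^F$ and the two braidings.

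Applying this principle to the braided lax monoidal functor $\cZ(R)$, it follows that $\cZ(R)(\one_{\cZ(\cD)})$ is a commutative monoid in $\cZ(\cC)$, which by \Cref{def:central-comm-mon} is exactly a commutative central monoid in $\cC$, with underlying object $R(\one_\cD)$. The argument is purely formal, so no serious obstacle arises once \Cref{prop:functorZF2} is in hand; in particular, the monoid structure on $R(\one_\cD)$ is given by $\lax^R_{\one,\one}$ and $\lax^R_0$, while its half-braiding is obtained by specializing $c^R$ to $(\one_\cD, c^\one)$.
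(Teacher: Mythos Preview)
Your proposal is correct and follows essentially the same argument as the paper: apply the braided lax monoidal functor $\cZ(R)$ from \Cref{prop:functorZF2} to the tensor unit of $\cZ(\cD)$, using that braided lax monoidal functors preserve commutative monoids and that the tensor unit is canonically such a monoid (\Cref{example:unit_as_commutative_central_monoid}). The paper's proof is just a terser version of what you wrote.
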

\begin{proof}
Any braided lax monoidal functor sends internal commutative monoids to internal commutative monoids.
Since the functor $\cZ(R): \cZ( \cD ) \rightarrow \cZ( \cC )$ of \Cref{prop:functorZF2} is braided lax monoidal, it sends $\one_{\cD}$ equipped with its structure of an internal commutative monoid (\Cref{example:unit_as_commutative_central_monoid}) to an internal commutative monoid in $\cZ( \cC )$.
\end{proof}

Note that a version of \Cref{proposition:commutative_central_monoid_R1}, in the dual case, appears in \cite{BLV}*{Corollary~6.7}.

\subsection{Monoidal monads}\label{section:monoidal-monad}

Recall that we denote by $\Catlax$ the strict bicategory of monoidal categories, lax monoidal functors, and monoidal transformations.

\begin{definition}\label{definition:monoidal_monad}
A \emph{monoidal monad} is a monad in $\Catlax$, i.e., it is a monad $T$ on a monoidal category and $T$ comes equipped with a lax structure.
Moreover, the unit and multiplication of $T$ are compatible with the lax structure, i.e., they are monoidal transformations. 
\end{definition}

\begin{example}\label{ex:adj-mon-monad}
Given a monoidal adjunction $G \dashv R$, the corresponding monad $RG$ becomes a monoidal monad.
\end{example}

\begin{theorem}\label{theorem:commutative_central_monoid_gives_monoidal_monad}
Let $(M, \unit^M, \mult^M, \rswap)$ be a commutative central monoid in $\cC$.
Then the following data give rise to a monoidal monad:
\begin{enumerate}
    \item Underlying functor:
    \[
    T := \monmon{M} := (- \otimes M): \cC \rightarrow \cC
    \]
    \item Unit morphism for $A \in \cC$:
    \[
    \unit^T_A := A \xrightarrow{A \otimes \unit^M} A \otimes M
    \]
    \item Multiplication morphism for $A \in \cC$:
    \[
    \mult^T_A := A \otimes M \otimes M \xrightarrow{A \otimes \mult^M} A \otimes M
    \]
    \item Lax structure for $A,B \in \cC$:
    \begin{itemize}
        \item $\lax^T_0 := \one \xrightarrow{\unit^M} M$
        \item $\lax^T_{A,B} := A \otimes M \otimes B \otimes M \xrightarrow{A \otimes \rswap_B \otimes M} A \otimes B \otimes M \otimes M \xrightarrow{A \otimes B \otimes \mult^M} A \otimes B \otimes M$
    \end{itemize}
\end{enumerate}
\end{theorem}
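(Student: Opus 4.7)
The plan is to verify in turn the four conditions in the definition of a monoidal monad: that $(T,\unit^T,\mult^T)$ is a monad, that $(T,\lax^T_0,\lax^T_{A,B})$ is a lax monoidal functor, and that $\unit^T$ and $\mult^T$ are monoidal transformations. Since $T=(-\otimes M)$ acts by applying the monoid structure of $M$ in the rightmost tensor factor, the monad axioms for $(T,\unit^T,\mult^T)$ reduce directly to the unit and associativity axioms of the underlying monoid $(M,\unit^M,\mult^M)$. This step is immediate and uses none of the half-braiding data.

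Next, I would check the lax monoidal axioms. Left unitality \eqref{eq:unitality_left} follows by inserting \eqref{eq:IM1} to replace $\rswap_A\circ(\unit^M\otimes A)$ by $A\otimes\unit^M$ and then applying the left unit axiom of $M$. Right unitality \eqref{eq:unitality_right} uses \eqref{eq:HB1} to collapse $\rswap_\one$ to $\id_M$, combined with the right unit axiom of $M$. The associativity pentagon \eqref{eq:associativity} is the first genuine diagram chase: after unfolding, both composites send $A\otimes M\otimes B\otimes M\otimes C\otimes M$ to $A\otimes B\otimes C\otimes M$ by first shuffling the three copies of $M$ to the right and then multiplying them. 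Two applications of the hexagon \eqref{eq:HB2}, together with naturality of $\rswap$, equate the two shuffles, after which associativity of $\mult^M$ equates the two triple products. Checking that $\unit^T$ is a monoidal transformation is in a similar spirit: compatibility with $\lax^T_0$, diagram \eqref{eq:mon-trans-2}, is the tautology $\unit^T_\one=\unit^M=\lax^T_0$, and compatibility with $\lax^T_{A,B}$, diagram \eqref{eq:mon-trans-1}, unfolds to an instance of \eqref{eq:IM1} combined with the unit axioms of $M$.

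The main obstacle is showing that $\mult^T$ is a monoidal transformation. The unit compatibility reduces to the left unit axiom of $M$, but the tensor compatibility \eqref{eq:mon-trans-1} is the most intricate diagram of the proof: it compares two morphisms
\[
A\otimes M\otimes M\otimes B\otimes M\otimes M \longrightarrow A\otimes B\otimes M,
\]
one going via $\mult^T_A\otimes\mult^T_B$ followed by $\lax^T_{A,B}$, the other via $\lax^T_{TA,TB}$, then $T(\lax^T_{A,B})$, then $\mult^T_{A\otimes B}$. I would handle this in three stages. First, use \eqref{eq:HB2} and naturality of $\rswap$ to normalize both composites into a form where all four copies of $M$ are collected to the right of $A\otimes B$. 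Second, apply the compatibility \eqref{eq:IM2} of the half-braiding with $\mult^M$ to commute $\rswap$ past multiplication wherever needed. The two resulting maps $M^{\otimes 4}\to M$ then differ only by the transposition of the two inner copies of $M$, and the final step is to absorb that transposition using the commutativity axiom \eqref{eq:CM} together with associativity of $\mult^M$. This last step is precisely where commutativity of $M$ is essential: without it the two composites would differ by a nontrivial swap and $\mult^T$ would fail to be monoidal. Once this is established, the four verifications combine to yield the theorem.
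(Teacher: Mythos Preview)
Your proposal is correct in strategy and matches the paper's approach: verify the lax monoidal axioms by direct computation, then check that $\unit^T$ and $\mult^T$ are monoidal transformations, with the commutativity axiom \eqref{eq:CM} entering precisely in the last step. Your account of the unitality constraints, of $\unit^T$ being monoidal, and of the $\mult^T$ compatibility is accurate and uses the same ingredients as the paper.

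There is one omission in your associativity argument. You write that ``two applications of the hexagon \eqref{eq:HB2}, together with naturality of $\rswap$, equate the two shuffles.'' But neither side of the associativity pentagon is actually of the form ``shuffle all copies of $M$ to the right, then multiply'': in both $\lax^T_{A\otimes B,C}\circ(\lax^T_{A,B}\otimes TC)$ and $\lax^T_{A,B\otimes C}\circ(TA\otimes\lax^T_{B,C})$, a multiplication $\mult^M$ occurs \emph{before} a swap $\rswap_C$ (respectively $\rswap_{B\otimes C}$). To reorganize each side into ``shuffle, then multiply'' form you must commute that $\mult^M$ past the swap, and this is exactly the content of \eqref{eq:IM2}, the compatibility of multiplication with the half-braiding. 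Naturality of $\rswap$ alone cannot do this, since $\mult^M$ is not a morphism in the variable component of $\rswap$. The paper's diagram makes this explicit: one of its inner rectangles is precisely an instance of \eqref{eq:IM2}. Once you add \eqref{eq:IM2} to your list of ingredients for associativity, your argument goes through.
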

\begin{proof}
We need to show that the provided lax structure satisfies the associativity constraint \eqref{eq:associativity}
and the unitality constraints \eqref{eq:unitality_left}, \eqref{eq:unitality_right}.
Moreover, we need to show that both $\unit^T$ and $\mult^T$ are monoidal transformations.

The associativity constraint \eqref{eq:associativity} holds: Let $A,B,C \in \cC$. We have the following diagram:
\begin{center}
    \begin{tikzpicture}[every node/.style={scale=0.95}]
          \coordinate (r) at (2.5,0);
          \coordinate (d) at (0,-2.3);
          \node (11) {$A \otimes M \otimes B \otimes M \otimes C \otimes M$};
          \node (15) at ($(11) + 4*(r)$) {$A \otimes M \otimes B \otimes C \otimes M$};
          \node (51) at ($(11) + 4*(d)$) {$A \otimes B \otimes M \otimes C \otimes M$};
          \node (55) at ($(11) + 4*(d) + 4*(r)$) {$A \otimes B \otimes C \otimes M$};
          \node (2m) at ($(11) + 0*(d) + 2*(r)$) {$A \otimes M \otimes B \otimes C \otimes M^{\otimes 2}$};
          \node (3l) at ($(11) + 1.5*(d) + 0*(r)$) {$A \otimes B \otimes M^{\otimes 2} \otimes C \otimes M$};
          \node (3m) at ($(11) + 1.5*(d) + 2*(r)$) {$A \otimes B \otimes M \otimes C \otimes M^{\otimes 2}$};
          \node (3xm) at ($(11) + 2.5*(d) + 2*(r)$) {$A \otimes B \otimes C \otimes M^{\otimes 3}$};
          \node (3r) at ($(11) + 2.5*(d) + 4*(r)$) {$A \otimes B \otimes C \otimes M^{\otimes 2}$};
          \node (4m) at ($(11) + 4*(d) + 2*(r)$) {$A \otimes B \otimes C \otimes M^{\otimes 2}$};

          \draw[->,out=10,in=180-10] (11) to node[above]{$TA \otimes \lax^T_{B,C}$} (15);
          \draw[->,out=-10,in=180+10] (51) to node[below]{$\lax^T_{A \otimes B, C}$} (55);
          \draw[->,out=-90-40,in=90+45] (11) to node[below,rotate=-90]{$\lax^T_{A,B} \otimes TC$} (51);
          \draw[->,out=-90+30,in=90-35] (15) to node[rotate=-90,above]{$\lax^T_{A,B \otimes C}$} (55);
          \draw[->] (11) to node[below,yshift=-0.5em]{$\id \otimes \rswap_C \otimes \id$} (2m);
          \draw[->] (2m) to node[below]{$\id \otimes \mult^M$} (15);
          \draw[->] (11) to node[right]{$\id \otimes \rswap_B \otimes \id$} (3l);
          \draw[->] (3l) to node[above,yshift=0.2em]{$\id \otimes \rswap_C \otimes \id$} (3m);
          \draw[->] (3l) to node[right]{$\id \otimes \mult^M \otimes \id$} (51);
          \draw[->] (51) to node[above]{$\id \otimes \rswap_C \otimes \id$} (4m);
          \draw[->] (4m) to node[above]{$\id \otimes \mult^M$} (55);
          \draw[->] (2m) to node[left]{$\id \otimes \rswap_B \otimes \id$} (3m);
          \draw[->] (3m) to node[left]{$\id \otimes \rswap_C \otimes \id$} (3xm);
          \draw[->] (3xm) to node[above,yshift=0.2em]{$\id \otimes M \otimes \mult^M$} (3r);
          \draw[->] (15) to node[rotate=-90,below]{$\id \otimes \rswap_{B \otimes C} \otimes \id$} (3r);
          \draw[->] (3r) to node[left]{$\id \otimes \mult^M$} (55);
          \draw[->] (3xm) to node[left]{$\id \otimes \mult^M \otimes M$}(4m);
          \draw[->,out=-90+60,in=90-60] (2m) to node[rotate=-90,above]{$\id \otimes \rswap_{B \otimes C} \otimes \id$}(3xm);
          
    \end{tikzpicture}    
\end{center}
The lower right rectangle commutes by the associativity of the monoid $M$.
The lower left rectangle commutes by \Cref{eq:IM2}.
The curved shape in the middle commutes by \Cref{eq:HB2}.
The outer curved shapes commute by definition of the lax structure.
The remaining rectangles commute by the interchange law.

\smallskip

The unitality constraint \eqref{eq:unitality_left} holds: Let $A \in \cC$. Then:
\begin{align*}
\lax^T_{\one, A} \circ (\lax^T_0 \otimes \id_{TA})
&{=}
(A \otimes \mult^M) \circ (\rswap_A \otimes M) \circ (\unit^M \otimes A \otimes M) & & \text{(def)}\\
&= (A \otimes \mult^M) \circ (A \otimes \unit^M \otimes M) & & \eqref{eq:IM1}\\
&= \id_{A \otimes M} & & \text{(monoid)}
\end{align*}

The unitality constraint \eqref{eq:unitality_right} holds: Let $A \in \cC$. Then:
\begin{align*}
\lax^T_{A, \one} \circ (\id_{TA} \otimes \lax^T_0)
&=
(A \otimes \mult^M) \circ (A \otimes \rswap_{\one} \otimes M) \circ ( A \otimes M \otimes \unit^M) & & \text{(def)}\\
&= (A \otimes \mult^M) \circ ( A \otimes M \otimes \unit^M)  & & \eqref{eq:HB1}\\
&= \id_{A \otimes M} & & \text{(monoid)}
\end{align*}

Next, we show that $\unit^T$ is a monoidal transformation.
We compute for $A,B \in \cC$:
\begin{align*}
&\phantom{=} \lax^T_{A,B} \circ (\unit^T_A \otimes \unit^T_B) \\
&=
(A \otimes B \otimes \mult^M) \circ (A \otimes \rswap_B \otimes M) \circ (A \otimes \unit^M \otimes B \otimes \unit^M) & & \text{(def)} \\
&= (A \otimes B \otimes \mult^M) \circ (A \otimes B \otimes \unit^M \otimes \unit^M) & & \eqref{eq:IM1}\\
&= A \otimes B \otimes \unit^M & & \text{(monoid)}\\
&= \unit^T_{A \otimes B} & & \text{(def)}\\
\end{align*}

Last, we show that $\mult^T$ is a monoidal transformation, i.e., that we have an equality
\[
\lax^T_{A,B} \circ (\mult^T_A \otimes \mult^T_B) = \mult^T_{A \otimes B} \circ \lax^{T^2}_{A,B}
\]
for $A,B \in \cC$. This equality can be encoded as the commutativity of the outer rectangle of the following diagram:
\begin{center}
    \begin{tikzpicture}[every node/.style={scale=0.85}]
          \coordinate (r) at (3,0);
          \coordinate (d) at (0,-3);
          \node (11) {$A \otimes M^{\otimes 2} \otimes B \otimes M^{\otimes 2}$};
          \node (12) at ($(11) + 2*(r)$) {$A \otimes M \otimes B \otimes M^{\otimes 2}$};
          \node (13) at ($(11) + 4*(r)$) {$A \otimes M \otimes B \otimes M$};
          \node (21) at ($(11) + 0*(r) + (d)$) {$A \otimes M \otimes B \otimes M^{\otimes 3}$};
          \node (22) at ($(11) + 1.5*(r) + (d)$) {$A \otimes B \otimes M^{\otimes 4}$};
          \node (23) at ($(11) + 3*(r) + (d)$) {$A \otimes B \otimes M^{\otimes 3}$};
          \node (31) at ($(11) + 0*(r) + 2*(d)$) {$A \otimes M \otimes B \otimes M^{\otimes 2}$};
          \node (32) at ($(11) + 4*(r) + 2*(d)$) {$A \otimes B \otimes M^{\otimes 2}$};
          \node (41) at ($(11) + 0*(r) + 3*(d)$) {$A \otimes B \otimes M^{\otimes 3}$};
          \node (42) at ($(11) + 1.5*(r) + 3*(d)$) {$A \otimes B \otimes M^{\otimes 4}$};
          \node (51) at ($(11) + 0*(r) + 4*(d)$) {$A \otimes B \otimes M^{\otimes 2}$};
          \node (52) at ($(11) + 4*(r) + 4*(d)$) {$A \otimes B \otimes M$};

          \draw[->] (11) to node[above]{$A \otimes \mult^M \otimes B \otimes M^{\otimes 2}$}(12);
          \draw[->] (11) to node[rotate=-90,below]{$\id \otimes \rswap_{B\otimes M} \otimes \id$}(21);
          \draw[->] (11) to node[above right]{$A \otimes \rswap_B^2 \otimes M^{\otimes 2}$}(22);
          \draw[->] (12) to node[above]{$A \otimes M \otimes B \otimes \mult^M$}(13);
          \draw[->] (12) to node[above right]{$\id \otimes \rswap_{B} \otimes \id$}(23);
          \draw[->] (13) to node[rotate=-90,above]{$\id \otimes \rswap_B \otimes \id$}(32);
          \draw[->] (21) to node[rotate=-90,below]{$\id \otimes M \otimes \mult^M$}(31);
          \draw[->] (21) to node[above right, yshift=2em, xshift=-2em]{$\id \otimes \rswap_B \otimes \id$}(42);
          \draw[->] (22) to node[below]{$\id \otimes \mult^M \otimes M^{\otimes 2}$}(23);
          \draw[->] (22) to node[rotate=-90,above]{$A \otimes B \otimes M \otimes \rswap_{M} \otimes M$}(42);
          \draw[->] (22) to node[above right]{$\id \otimes \mult^M$} (52);
          \draw[->] (23) to node[below left]{$\id \otimes M \otimes \mult^M$}(32);
          \draw[->] (31) to node[rotate=-90,below]{$\id \otimes \rswap_{B} \otimes \id$}(41);
          \draw[->] (32) to node[rotate=-90,above]{$\id \otimes \mult^M$}(52);
          \draw[->] (41) to node[rotate=-90,below]{$\id \otimes \mult^M \otimes M$} (51);
          \draw[->] (42) to node[below]{$\id \otimes M^{\otimes 2} \otimes \mult^M$} (41);
          \draw[->] (42) to node[below left]{$\id \otimes \mult^M$}(52);
          \draw[->] (51) to node[above]{$\id \otimes \mult^M$}(52);
    \end{tikzpicture}    
\end{center}
In this diagram, $\rswap_B^2$ denotes the morphism
\[
M \otimes M \otimes B \xrightarrow{M \otimes \rswap_B} M \otimes B \otimes M \xrightarrow{\rswap_B \otimes M} B \otimes M \otimes M.
\]
and we also use $\mult^M$ for the morphism
\[
M^{\otimes 4} \rightarrow M
\]
that is obtained by applying the multiplication $M^{\otimes 2} \rightarrow M$ three times.

The middle triangle in the diagram commutes by \eqref{eq:CM} (extended to four factors).
The lower rectangle and the rectangle directly above the middle triangle commute by the associativity of $M$.
The upper rectangle involving $A \otimes \rswap_B^2 \otimes M^{\otimes 2}$ commutes by \eqref{eq:IM2}.
The lower rectangle involving $A \otimes \rswap_B^2 \otimes M^{\otimes 2}$ commutes by \eqref{eq:HB2} and the interchange law.
The remaining rectangles commute by the interchange law.
\end{proof}

\pagebreak[2]

\subsection{Monoidal Kleisli adjunctions}\label{sec:mon-Kleisli}

We recall the construction of the Kleisli category.

\begin{definition}
Let $T$ be a monad on a category $\cC$.
The \emph{Kleisli category} of $T$, denoted by $\Kleisli{T}$, is given as follows:
\begin{enumerate}
    \item The objects of $\Kleisli{T}$ are given by the objects of $\cC$.
    \item For $A,B \in \Kleisli{T}$, we set
    \[
    \Hom_{\Kleisli{T}}(A,B) := \Hom_{\cC}(A, TB).
    \]
    For $\alpha \in \Hom_{\cC}(A, TB)$, we write
    \[
    \AsKleisli{\alpha}: A \rightarrow B
    \]
    for its corresponding morphism in the Kleisli category.
    \item The identity of $A\in \Kleisli{T}$ is given by
    \[
    A \xrightarrow{\AsKleisli{\unit^T_A}} A.
    \]
    \item The composition of morphisms $\AsKleisli{\alpha}:A \rightarrow B$ and $\AsKleisli{\beta}: B \rightarrow C$ in $\Kleisli{T}$ is given by
    \[
    \AsKleisli{A \xrightarrow{\alpha} TB \xrightarrow{T\beta} T^2C \xrightarrow{\mult^T_C} TC}.
    \]
\end{enumerate}
\end{definition}

Whenever $T$ is a monoidal monad, $\Kleisli{T}$ inherits a monoidal structure.

\begin{lemma}
Let $T$ be a monoidal monad on a monoidal category $\cC$.
Then $\Kleisli{T}$ is a monoidal category with $\one_{\cC}$ as a tensor unit and with the tensor product given as follows:
For $\AsKleisli{\alpha}: A \rightarrow B$ and $\AsKleisli{\gamma}: C \rightarrow D$ in $\Kleisli{T}$, we set $\AsKleisli{\alpha} \otimes\AsKleisli{\gamma}: A \otimes C \rightarrow B \otimes D$ in $\Kleisli{T}$ as
\[
\AsKleisli{A \otimes C \xrightarrow{\alpha \otimes \gamma} TB \otimes TC \xrightarrow{\lax^T_{B,C}} T(B \otimes D)}.
\]
\end{lemma}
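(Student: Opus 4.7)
The plan is to verify the monoidal category axioms on $\Kleisli{T}$ by transporting the associator and the left and right unitors of $\cC$ along the canonical identity-on-objects bijection $\cC\to\Kleisli{T}$ that sends a morphism $f\colon A\to B$ in $\cC$ to $\AsKleisli{\unit^T_B\circ f}\colon A\to B$ in $\Kleisli{T}$. Each of the three structural ingredients of a monoidal monad in the sense of \Cref{definition:monoidal_monad} will be invoked in a distinct step: the associativity and unitality of $\lax^T$ from \eqref{eq:associativity}--\eqref{eq:unitality_right}, the monoidality of $\unit^T$, and the monoidality of $\mult^T$.

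First, I would verify that the given formula defines a bifunctor $\otimes\colon\Kleisli{T}\times\Kleisli{T}\to\Kleisli{T}$. Preservation of identities reduces to the equation
$$\lax^T_{A,C}\circ(\unit^T_A\otimes \unit^T_C)=\unit^T_{A\otimes C},$$
which is precisely the monoidality of $\unit^T$ as stated in \eqref{eq:mon-trans-1}. Preservation of composition is the main diagrammatic step: after unfolding the definitions of Kleisli composition and tensor, one is reduced to showing
$$\lax^T_{C,F}\circ(\mult^T_C\otimes\mult^T_F)=\mult^T_{C\otimes F}\circ T(\lax^T_{C,F})\circ \lax^T_{TC,TF}$$
for all $C,F\in\cC$. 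The composite $T(\lax^T_{C,F})\circ\lax^T_{TC,TF}$ is exactly the component $\lax^{T^2}_{C,F}$ of the lax structure of the monad $T^2$, so this identity is exactly the monoidality of $\mult^T$. The reduction to this identity uses naturality of $\lax^T$ to move $T\alpha_2\otimes T\gamma_2$ past $\lax^T_{B,E}$, where $\alpha_2\colon B\to TC$ and $\gamma_2\colon E\to TF$ are the second factors of the two composites being compared. I expect this reduction, which chains naturality of $\lax^T$, associativity of the lax structure at the level of $T^2$, and monoidality of $\mult^T$, to be the main technical obstacle.

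Next, I would define the associator and the two unitors on $\Kleisli{T}$ as the images of those on $\cC$ under the assignment $f\mapsto\AsKleisli{\unit^T\circ f}$; their naturality in $\Kleisli{T}$ follows by combining the formula for $\otimes$ on morphisms, naturality of $\unit^T$ and of $\lax^T$, the unitality axioms \eqref{eq:unitality_left}--\eqref{eq:unitality_right} for $\lax^T$, and the monad-unit identity $\mult^T\circ T\unit^T=\id$. Finally, both the pentagon and the triangle axioms on $\Kleisli{T}$ reduce to their $\cC$-counterparts: composing two transported constraint morphisms $\AsKleisli{\unit^T\circ f}$ and $\AsKleisli{\unit^T\circ g}$ in $\Kleisli{T}$ collapses, by the same monad-unit identity and naturality of $\unit^T$, to $\AsKleisli{\unit^T\circ g\circ f}$, so both sides of each coherence diagram in $\Kleisli{T}$ reduce to $\AsKleisli{\unit^T\circ h}$ for the same composite $h$ in $\cC$ furnished by the pentagon, respectively triangle, in $\cC$. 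Once bifunctoriality of $\otimes$ is in hand, all remaining monoidal axioms thus pull back through this free assignment from the monoidal structure of $\cC$.
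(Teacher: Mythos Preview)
Your proposal is correct. The paper does not give a proof at all; it simply records this as a well-known fact and points to Day's 1974 paper for a reference. Your direct verification is sound: bifunctoriality of $\otimes$ on $\Kleisli{T}$ is exactly where the monoidality of $\unit^T$ and $\mult^T$ enters, and the associativity and unitality axioms for $\lax^T$ are what make the transported associator and unitors natural in $\Kleisli{T}$. One small remark: in your reduction of preservation of composition, only naturality of $\lax^T$ and monoidality of $\mult^T$ are actually needed; associativity of $\lax^T$ plays its role later, in the naturality of the associator, as you correctly note further on.
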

\begin{proof}
This is a well-known fact, see, e.g., \cite{Day74}*{Section 4}.
\end{proof}

A monad $T$ on a category $\cC$ gives rise to the famous \emph{Kleisli adjunction}
\begin{center}
    \begin{tikzpicture}[ baseline=(A)]
          \coordinate (r) at (3,0);
          \node (A) {$\cC$};
          \node (B) at ($(A) + (r)$) {$\Kleisli{T}$};
          \draw[->, out = 30, in = 180-30] (A) to node[mylabel]{$\Free$} (B);
          \draw[<-, out = -30, in = 180+30] (A) to node[mylabel]{$\Forg$} (B);
          \node[rotate=90] (t) at ($(A) + 0.5*(r)$) {$\vdash$};
    \end{tikzpicture} 
\end{center}
where the functors are given by
\begin{align*}
\Free( A \xrightarrow{\alpha} B ) := A \xrightarrow{\AsKleisli{ A \xrightarrow{\alpha} B \xrightarrow{\unit^T_B} TB }} B
\end{align*}
and
\begin{align*}
\Forg( A \xrightarrow{\AsKleisli{\alpha}} B ) := TA \xrightarrow{T\alpha} T^2B \xrightarrow{\mult^T_B} TB.
\end{align*}
The Kleisli adjunction satisfies a universal property (see, e.g., \cite{RiehlContext}*{Proposition 5.2.11}). It is the initial adjunction yielding $T$, i.e., if $G \dashv R$ is an adjunction such that $RG = T$, then we have a diagram of functors
\begin{equation}\label{equation:Kleisli_up}
    \begin{tikzpicture}[baseline=($(11) + 0.5*(d)$)]
          \coordinate (r) at (4,0);
          \coordinate (d) at (0,-1);
          \node (11) {$\cC$};
          \node (12) at ($(11) + (r) - (d)$) {$\cD$};
          \node (21) at ($(11) + (r) + (d)$) {$\Kleisli{T}$};
          \node (22) at ($(11) + 2*(r)$) {$\cC$};
          \draw[->] (11) to node[above]{$G$} (12);
          \draw[->] (21) to node[below]{$\Forg$} (22);
          \draw[->] (11) to node[below]{$\Free$} (21);
          \draw[->] (12) to node[above]{$R$} (22);
          \draw[->,dashed] (21) to (12);
    \end{tikzpicture}
\end{equation}
with the dashed arrow being a uniquely determined functor that renders the diagram commutative.

\begin{theorem}\label{theorem:Kleisli_monoidal_up}
If $T$ is a monoidal monad, then the Kleisli adjunction and its universal property lift to $\Catlax$. More precisely, $\Free$ and $\Forg$ can be equipped with lax monoidal structures such that $\Free \dashv \Forg$ is a monoidal adjunction and, for any monoidal adjunction $G \dashv R$ such that $RG = T$, the uniquely induced functor of \eqref{equation:Kleisli_up} can be equipped with a lax monoidal structure and \eqref{equation:Kleisli_up} is a commutative diagram in $\Catlax$.
\end{theorem}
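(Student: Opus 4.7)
The plan is to leverage \Cref{lemma:strong_and_monoidal_adj}: once $\Free$ has been endowed with a strong monoidal structure, a unique lax structure on $\Forg$ making $\Free \dashv \Forg$ a monoidal adjunction is then automatic. Since $\Free$ acts as the identity on objects and $\Kleisli{T}$ inherits its tensor product from $\cC$, I would take the structure morphisms $\lax^{\Free}_{A,B}$ and $\lax^{\Free}_0$ to be the identity morphisms in $\Kleisli{T}$, which correspond under the Kleisli definition to $\unit^T_{A \otimes B}$ and $\unit^T_\one = \lax_0^T$ (equal since $\unit^T$ is a monoidal transformation) as morphisms in $\cC$. The associativity and unitality coherences for $\Free$ are trivial at the level of identities, and naturality reduces to naturality of $\unit^T$. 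To identify the resulting lax structure on $\Forg$ with $\lax^T$, I would unwind the formula \eqref{eq:construct_lax}: the factor $\counit_A \otimes \counit_B$ in $\Kleisli{T}$ is represented by $\lax^T_{A,B}$ in $\cC$, and the monad unit axiom $\mult^T_X \circ \unit^T_{TX} = \id$ collapses the composite to $\lax^T_{A,B}$.

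For part (2), the comparison functor $\tilde{G}\colon \Kleisli{T}\to \cD$ is determined on objects by $\tilde{G}(A) = GA$ and on morphisms by $\tilde{G}(\AsKleisli{\alpha}) = \counit^{G\dashv R}_{GB}\circ G(\alpha)$. I would endow it with the lax monoidal data inherited from $G$, namely $\lax^{\tilde{G}}_{A,B} := \lax^G_{A,B}$ and $\lax^{\tilde{G}}_0 := \lax^G_0$; the associativity and unitality coherences are then literally those of $G$. The only substantive check is naturality of $\lax^{\tilde{G}}$ with respect to Kleisli morphisms $\AsKleisli{\alpha}\colon A\to B$ and $\AsKleisli{\gamma}\colon C \to D$, which after expanding the Kleisli tensor product reduces to the identity
\[
\lax^G_{B,D} \circ (\counit_{GB}\otimes \counit_{GD}) = \counit_{G(B\otimes D)}\circ G(\lax^T_{B,D})\circ \lax^G_{TB,TD}.
\]
I expect this to be the main obstacle. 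It should follow by substituting $\lax^T_{B,D} = R(\lax^G_{B,D}) \circ \lax^R_{GB, GD}$, which is forced by the hypothesis $RG = T$ as monoidal monads, then applying naturality of $\counit^{G\dashv R}$ together with the monoidality condition $\counit_{X\otimes Y}\circ G(\lax^R_{X,Y})\circ \lax^G_{RX,RY} = \counit_X\otimes \counit_Y$, and finally invoking $\oplax^G\circ \lax^G = \id$.

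Finally, commutativity of the diagram in $\Catlax$ splits into two checks. The equality $\tilde{G}\circ \Free = G$ as lax monoidal functors reduces to verifying that $\tilde{G}(\lax^{\Free}_{A,B}) = \id_{G(A\otimes B)}$, which is immediate from the triangle identity applied to $\counit^{G\dashv R}_{G(A\otimes B)} \circ G\unit^{G\dashv R}_{A\otimes B}$; on structure morphisms, the composite then equals $\lax^G_{A,B}$ by construction. The equality $\Forg = R\circ \tilde{G}$ as lax monoidal functors, on structure morphisms, amounts precisely to $\lax^T_{A,B} = R(\lax^G_{A,B})\circ \lax^R_{GA,GB}$, which is exactly the assumption that $T = RG$ as monoidal monads.
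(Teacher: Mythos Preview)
Your proposal is correct. The paper's own proof consists of a single sentence declaring the constructions ``standard and easy to verify'' together with a citation to \cite{Zawadowski12}*{Corollary 4.2}; it does not actually carry out any of the verifications. You have supplied precisely those standard details: making $\Free$ strict monoidal, invoking \Cref{lemma:strong_and_monoidal_adj} to obtain the lax structure on $\Forg$ and identifying it with $\lax^T$, equipping the comparison functor with $\lax^G$, and reducing the key naturality check to the monoidality of $\counit^{G\dashv R}$ combined with the identification $\lax^T = R(\lax^G)\circ \lax^R_{G,G}$ forced by the hypothesis $RG = T$ in $\Catlax$. One small sharpening: where you write that naturality of $\lax^{\Free}$ ``reduces to naturality of $\unit^T$,'' what you actually need is that $\unit^T$ is a \emph{monoidal} transformation, so that $\lax^T_{B,D}\circ(\unit^T_B\otimes\unit^T_D)=\unit^T_{B\otimes D}$, which gives $\Free(f)\otimes\Free(g)=\Free(f\otimes g)$.
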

\begin{proof}
The necessary constructions are standard and easy to verify.
Alternatively, this theorem follows from \cite{Zawadowski12}*{Corollary 4.2}.
\end{proof}

\begin{remark}\label{remark:Kleisli_induced_is_strong}
It follows from \Cref{lemma:strong_and_monoidal_adj} that in \Cref{theorem:Kleisli_monoidal_up}, the three functors $G$, $\Free$, and the induced functor are strong monoidal.
Moreover, the induced functor is always fully faithful.
It follows that we can always regard $\Kleisli{T}$ as a full monoidal subcategory of $\cD$, which can be identified with the full image of $G$, i.e., the full monoidal subcategory on objects of the form $GA$, for $A$ in $\cC$.
\end{remark}

\subsection{Monoidal Kleisli adjunctions given by commutative central monoids}\label{subsection:kleisli_adjunction_by_central_monoids}

Let $(M, \unit^M, \mult^M, \rswap)$ be a commutative central monoid in a monoidal category $\cC$.
By \Cref{theorem:commutative_central_monoid_gives_monoidal_monad}, the endofunctor $\monmon{M} = (- \otimes M)$ is a monoidal monad on $\cC$.
Thus, by \Cref{theorem:Kleisli_monoidal_up}, we have a corresponding monoidal Kleisli adjunction
\begin{equation}\label{equation:commutative_central_monoid_adjunction}
    \begin{tikzpicture}[ baseline=(A)]
          \coordinate (r) at (6,0);
          \node (A) {$\cC$};
          \node (B) at ($(A) + (r)$) {$\Kleisli{\monmon{M}}$};
          \draw[->, out = 20, in = 180-20] (A) to node[mylabel]{$G := \Free$} (B);
          \draw[<-, out = -20, in = 180+20] (A) to node[mylabel]{$R := \Forg$} (B);
          \node[rotate=90] (t) at ($(A) + 0.5*(r)$) {$\vdash$};
    \end{tikzpicture} 
\end{equation}
We describe its defining data more explicitly.
The functors are given by
\begin{equation}
G( A \xrightarrow{\alpha} B ) = A \xrightarrow{\AsKleisli{ A \xrightarrow{\alpha \otimes \unit^M} B \otimes M }} B
\end{equation}
for $(A \xrightarrow{\alpha} B) \in \cC$ and
\begin{equation}
R( X \xrightarrow{\AsKleisli{\beta}} Y ) = (X \otimes M \xrightarrow{\beta \otimes M} Y \otimes M \otimes M \xrightarrow{Y \otimes \mult^M} Y \otimes M)
\end{equation}
for $(X \xrightarrow{\AsKleisli{\beta}} Y) \in \Kleisli{\monmon{M}}$.
The unit of the Kleisli adjunction for $A \in \cC$ is given by
\begin{equation}\label{eq:unit_kleisli}
\unit^{G\dashv R}_A = A \xrightarrow{A \otimes \unit^M} A \otimes M
\end{equation}
and the counit for $X \in \Kleisli{\monmon{M}}$ is given by
\begin{equation}
\counit^{G\dashv R}_X = X \otimes M \xrightarrow{\AsKleisli{\id_{X \otimes M}}} X.
\end{equation}
The functor $G$ is strict monoidal.
The functor $R$ is lax monoidal with structure morphisms given by
\begin{equation}
\lax^R_0 = \unit^M: \one_{\cC} \rightarrow M
\end{equation}
and
\begin{equation}\label{eq:lax_kleisli}
\lax^R_{X,Y} = (X \otimes M \otimes Y \otimes M \xrightarrow{X \otimes \rswap_Y \otimes M} X \otimes Y \otimes M \otimes M \xrightarrow{X \otimes Y \otimes \mult^M} X \otimes Y \otimes M).
\end{equation}
for $X,Y \in \Kleisli{\monmon{M}}$.

\begin{lemma}\label{lemma:Kleisli_projection_formula}
The projection formula morphisms of the adjunction in \eqref{equation:commutative_central_monoid_adjunction} for $A \in \cC$, $X \in \Kleisli{T}$ are given by the following formulas:
\begin{itemize}
    \item $\big(R(X) \otimes A \xrightarrow{\projr{X}{A}} R(X \otimes A)\big) =\big( X \otimes M \otimes A \xrightarrow{X \otimes \rswap_A} X \otimes A \otimes M \big)$
    \item $\big( A \otimes R(X) \xrightarrow{\projl{A}{X}} R(A \otimes X)\big) = \id_{A \otimes X \otimes M}$
\end{itemize}
In particular, $R$ satisfies the projection formula.
\end{lemma}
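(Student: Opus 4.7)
The plan is to unfold both projection formula morphisms using the explicit descriptions of the unit \eqref{eq:unit_kleisli} and the lax structure \eqref{eq:lax_kleisli}, and then collapse the composition using two facts about the commutative central monoid $(M, \unit^M, \mult^M, \rswap)$: the half-braiding compatibility \eqref{eq:IM1} and the unit axiom $\mult^M \circ (\unit^M \otimes M) = \id_M = \mult^M \circ (M \otimes \unit^M)$.

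First I would treat $\projl{A}{X}$. By definition \eqref{eq:proj}, it is the composition
\[
A \otimes X \otimes M \xrightarrow{A \otimes \unit^M \otimes X \otimes M} A \otimes M \otimes X \otimes M \xrightarrow{A \otimes \rswap_X \otimes M} A \otimes X \otimes M \otimes M \xrightarrow{A \otimes X \otimes \mult^M} A \otimes X \otimes M.
\]
The interchange law together with \eqref{eq:IM1} (which says $\rswap_X \circ (\unit^M \otimes X) = X \otimes \unit^M$) reduces the first two arrows to $A \otimes X \otimes \unit^M \otimes M$; composing with $A \otimes X \otimes \mult^M$ and applying the unit axiom of $M$ yields the identity.

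Next I would treat $\projr{X}{A}$. By definition \eqref{eq:projr}, it is the composition
\[
X \otimes M \otimes A \xrightarrow{X \otimes M \otimes A \otimes \unit^M} X \otimes M \otimes A \otimes M \xrightarrow{X \otimes \rswap_A \otimes M} X \otimes A \otimes M \otimes M \xrightarrow{X \otimes A \otimes \mult^M} X \otimes A \otimes M.
\]
Using the interchange law, the first two arrows combine to $(X \otimes \rswap_A) \otimes \unit^M$; composing with $X \otimes A \otimes \mult^M$ then reduces, via $\mult^M \circ (M \otimes \unit^M) = \id_M$, to $X \otimes \rswap_A$, as claimed.

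Finally, the claim that $R$ satisfies the projection formula is immediate: $\projl{A}{X}$ is the identity, and $\projr{X}{A} = X \otimes \rswap_A$ is an isomorphism since $\rswap_A$ is an isomorphism by assumption on the commutative central monoid $M$. No step here presents a real obstacle; the only care needed is the bookkeeping with the interchange law when unfolding $\lax^R_{A,X}$ and $\lax^R_{X,A}$.
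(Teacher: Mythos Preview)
Your proof is correct and follows essentially the same approach as the paper's: both unfold the projection formula morphisms via \eqref{eq:proj}, \eqref{eq:projr}, \eqref{eq:unit_kleisli}, \eqref{eq:lax_kleisli}, then use \eqref{eq:IM1} (for $\projl{A}{X}$) or the interchange law (for $\projr{X}{A}$) followed by the monoid unit axiom to collapse the composite. The only cosmetic difference is the order in which you treat the two cases.
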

\begin{proof}
We calculate:
\begin{align*}
\projr{X}{A}
&{=}
\lax^R_{X,GA} \circ (RX \otimes \unit^{G\dashv R}_A) &  \eqref{eq:projr}\\
&= (X \otimes A \otimes \mult^M) \circ (X \otimes \rswap_A \otimes M) \circ (X \otimes M \otimes A \otimes \unit^M) & \eqref{eq:lax_kleisli},\eqref{eq:unit_kleisli}\\
&= (X \otimes A \otimes \mult^M) \circ (X \otimes A \otimes \unit^M) \circ (X \otimes \rswap_A) &  \mkern-24mu \text{(interchange law)}\\
&= \id \circ (X \otimes \rswap_A) & \text{(monoid)}
\end{align*}
and
\begin{align*}
\projl{A}{X}
&{=}
\lax^R_{GA,X} \circ (\unit^{G\dashv R}_A \otimes RX ) & \eqref{eq:proj}\\
&= (A \otimes X \otimes \mult^M) \circ (A \otimes \rswap_A \otimes M) \circ (A \otimes \unit^M \otimes X \otimes M) & \mkern18mu\eqref{eq:lax_kleisli},\eqref{eq:unit_kleisli}\\
&= (A \otimes X \otimes \mult^M) \circ (A \otimes X \otimes \unit^M \otimes M) & \eqref{eq:IM1}\\
&= \id & \text{(monoid)}
\end{align*}
Since $\rswap$ and $\id$ are isomorphisms, the projection formula holds.
\end{proof}

\begin{corollary}\label{corollary:main_theorem_for_Kleisli}
We have a braided lax monoidal functor
\begin{align*}
    \cZ( \Kleisli{\monmon{M}} ) \xrightarrow{\cZ(\Forg)} \cZ( \cC ).
\end{align*}
More explicitly: If $X \in \Kleisli{\monmon{M}}$ is equipped with a half-braiding
\[
X \otimes Y \xrightarrow{\AsKleisli{c: X \otimes Y \rightarrow Y \otimes X \otimes M}} Y \otimes X
\]
natural in $Y \in \Kleisli{\monmon{M}}$, then the half-braiding of $\cZ(\Forg)(X,\AsKleisli{c})$ is given by
\[
(X \otimes M) \otimes A \xrightarrow{X \otimes \rswap_A} X \otimes A \otimes M \xrightarrow{c \otimes M} A \otimes X \otimes M \otimes M \xrightarrow{A \otimes X \otimes \mult^M} A \otimes (X \otimes M).
\]
\end{corollary}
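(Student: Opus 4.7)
The plan is to derive this corollary as a direct application of the main induction theorem (\Cref{prop:functorZF2}) to the monoidal Kleisli adjunction $G = \Free \dashv \Forg = R$ of \eqref{equation:commutative_central_monoid_adjunction}. Two ingredients have already been established: \Cref{theorem:Kleisli_monoidal_up} shows that $\Free \dashv \Forg$ is a monoidal adjunction, and \Cref{lemma:Kleisli_projection_formula} verifies that the projection formula holds for $\Forg$ (with $\projl{A}{X} = \id$ and $\projr{X}{A} = X \otimes \rswap_A$). Feeding these into \Cref{prop:functorZF2} immediately yields the existence of a braided lax monoidal functor $\cZ(\Forg)\colon \cZ(\Kleisli{\monmon{M}}) \to \cZ(\cC)$, together with the inherited lax structure $\lax^{\cZ(\Forg)}_{(X,c),(Y,d)} = \lax^{\Forg}_{X,Y}$ and $\lax^{\cZ(\Forg)}_0 = \unit^M$ from \eqref{eq:lax_kleisli}.

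For the explicit description of the half-braiding, I would substitute into the general formula
\[
c^{R}_A = \big(RX\otimes A \xrightarrow{\projr{X}{A}} R(X\otimes GA) \xrightarrow{R(c_{GA})} R(GA\otimes X) \xrightarrow{(\projl{A}{X})^{-1}} A\otimes RX\big)
\]
from \Cref{prop:functorZF2}. Since $GA = A$ as underlying objects in the Kleisli category, a half-braiding morphism $c_A \colon X \otimes A \to A \otimes X$ in $\Kleisli{\monmon{M}}$ is represented by a morphism $c\colon X \otimes A \to A \otimes X \otimes M$ in $\cC$, so by definition of $\Forg$ we get
\[
R(c_{GA}) = (A \otimes X \otimes \mult^M) \circ (c \otimes M)\colon X \otimes A \otimes M \longrightarrow A \otimes X \otimes M.
\]
Plugging in $\projr{X}{A} = X \otimes \rswap_A$ and $(\projl{A}{X})^{-1} = \id$ from \Cref{lemma:Kleisli_projection_formula}, the composition unfolds exactly to
\[
(X\otimes M)\otimes A \xrightarrow{X\otimes \rswap_A} X\otimes A\otimes M \xrightarrow{c\otimes M} A\otimes X\otimes M\otimes M \xrightarrow{A\otimes X\otimes \mult^M} A\otimes (X\otimes M),
\]
which is the formula claimed.

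There is essentially no obstacle here: the hard work has already been done in proving \Cref{prop:functorZF2}, \Cref{theorem:Kleisli_monoidal_up}, and \Cref{lemma:Kleisli_projection_formula}. The only bookkeeping to carry out carefully is the identification of the Kleisli morphism $c_{GA}$ with its underlying $\cC$-morphism $c\colon X\otimes A \to A\otimes X\otimes M$ and the subsequent unwinding of $R$ on this morphism using the monoid multiplication. One could optionally add a sanity check that the displayed formula really defines a half-braiding on $X \otimes M$ in $\cC$, but this is guaranteed a priori by the fact that $\cZ(\Forg)$ lands in $\cZ(\cC)$.
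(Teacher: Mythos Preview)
Your proposal is correct and follows exactly the same approach as the paper: apply \Cref{prop:functorZF2} to the Kleisli adjunction \eqref{equation:commutative_central_monoid_adjunction}, using \Cref{theorem:Kleisli_monoidal_up} and \Cref{lemma:Kleisli_projection_formula} as the required inputs. The paper's proof is a single sentence to this effect, and your additional unwinding of the half-braiding formula via the explicit projection formula morphisms is accurate and simply makes explicit what the paper leaves to the reader.
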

\begin{proof}
This is \Cref{prop:functorZF2} applied to the Kleisli adjunction in \eqref{equation:commutative_central_monoid_adjunction}.
\end{proof}

\subsection{A characterization theorem}\label{sec:Kleisli-charact}

We complete this section by providing sufficient conditions for a monoidal adjunction to be equivalent to a monoidal Kleisli adjunction. 

\begin{lemma}\label{lemma:iso_of_monoidal_monads}
Let $G \dashv R$ be a monoidal adjunction such that the projection formula holds for $R$.
Then
\[
(- \otimes \rightadj\one) \xrightarrow{\projl{-}{\one}} \rightadj\mainfun(-)
\]
is an isomorphism of monoidal monads.
\end{lemma}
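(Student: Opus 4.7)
The plan is to verify three things: (i) $\projl{-}{\one}$ is a morphism of monads, (ii) it is an isomorphism, and (iii) it is additionally compatible with the lax monoidal structures on the two monads. Claim (i) is already established as \Cref{corollary:morphism_of_monads}, and claim (ii) is immediate from the hypothesis that the projection formula holds for $R$. So the work lies entirely in (iii), i.e.\ verifying the two monoidal transformation diagrams \eqref{eq:mon-trans-1} and \eqref{eq:mon-trans-2} for $\eta := \projl{-}{\one}$ with source monad $T = (-)\otimes R\one$ and target monad $RG$.

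For the unit diagram \eqref{eq:mon-trans-2}, I would unfold $\lax^{T}_0 = \lax^R_0$ (the unit $\unitm^{R\one}$ of the commutative central monoid $R\one$, see the description of $\lax^T_0$ following \Cref{theorem:commutative_central_monoid_gives_monoidal_monad}) and $\lax^{RG}_0 = R(\lax^G_0)\circ \lax^R_0$. Since $G$ is strong monoidal, $\lax^G_0$ is an isomorphism, and using \eqref{eq:proj} at $A = X = \one$ together with the unitality axiom \eqref{eq:unitality_left} for $\lax^R$, the morphism $\projl{\one}{\one}$ reduces (up to unitors) to $R(\lax^G_0)$. This gives the required commutativity.

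For the multiplication diagram \eqref{eq:mon-trans-1}, the key input is the explicit description of the commutative central monoid structure on $R\one$ coming from \Cref{proposition:commutative_central_monoid_R1}. In particular, the half-braiding $\rswap_A$ on $R\one$ is, by the formula for $\cZ(R)$ in \Cref{prop:functorZF2} applied to $(\one_\cD, \mathrm{id})$, characterized by $\projl{A}{\one}\circ \rswap_A = \projr{\one}{A}$, and the multiplication $\mult^{R\one}$ is $\lax^R_{\one,\one}$. Starting from the composite
\[
\projl{A\otimes B}{\one}\circ \lax^T_{A,B} \colon A\otimes R\one \otimes B\otimes R\one \longrightarrow RG(A\otimes B),
\]
I would first substitute the definition of $\lax^T_{A,B}$ as $(\id\otimes \mult^{R\one})\circ(\id\otimes \rswap_B\otimes \id)$, rewrite $\rswap_B$ through $\projl{B}{\one}^{-1}\circ \projr{\one}{B}$, and then push $\projl{A\otimes B}{\one}$ inward using the tensor coherence of \Cref{lem:projl-tensor-coh} (together with the strong monoidality of $G$ to replace $R(\oplax^G_{A,B}\otimes \id)$ by an isomorphism). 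On the other side, $\lax^{RG}_{A,B} = R(\lax^G_{A,B})\circ \lax^R_{GA,GB}$, and the compatibility of $\lax^R$ with the projection formula provided by \Cref{lemma:compatibility_lax_projection} (applied suitably) converts $\lax^R_{GA,GB}\circ(\projl{A}{\one}\otimes \projl{B}{\one})$ into a composite built from $\lax^R$ applied to $\projl{A\otimes B}{X}$-type morphisms. Matching the two sides should come down to the associativity axiom \eqref{eq:associativity} for $\lax^R$ and the interchange law.

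The main obstacle I anticipate is bookkeeping for the multiplication diagram: several natural transformations ($\projlnoarg$, $\projrnoarg$, $\lax^R$, $\lax^G$, $\rswap$, $\mult^{R\one}$) interact, and the definition of $\rswap$ is itself indirect through \Cref{prop:functorZF2}. The cleanest way to organize the verification is probably to dualize the argument used in the proof of \Cref{lemma:Kleisli_projection_formula} (which computes projection formula morphisms for the Kleisli adjunction of a commutative central monoid) and to exploit \Cref{lemma:sym_compatibility_lax_projection}; alternatively, one can use the bicategorical naturality of \Cref{lem:Cbimodule-adj-RL} to reduce both sides to manipulations inside $\cZ(\cC)$, where the monoid structure of $R\one$ was defined in the first place. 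Once all substitutions are carried out, the identity holds by the associativity and unitality of $\lax^R$ and the naturality of $\projlnoarg$.
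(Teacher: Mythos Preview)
Your proposal is correct and takes essentially the same approach as the paper: reduce to checking that $\projl{-}{\one}$ respects the lax structures, unwind $\rswap$ via the identity $\projl{A}{\one}\circ\rswap_A=\projr{\one}{A}$, and invoke the projection-formula coherence lemmas. The paper's proof is simply more streamlined, organizing the multiplication compatibility as a single diagram whose cells commute by \Cref{lemma:sym_compatibility_lax_projection}, \Cref{lem:projl-tensor-coh}, \Cref{lem:proj-lr-coh}, and \eqref{eq:IM2} (rather than computing each side separately), and it leaves the unit compatibility \eqref{eq:mon-trans-2} implicit.
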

\begin{proof}
Since $G \dashv R$ is a monoidal adjunction, $RG$ is a monoidal monad (see \Cref{ex:adj-mon-monad}).
Since the projection formula holds, $R\one$ is a commutative central monoid by \Cref{proposition:commutative_central_monoid_R1} whose half-braiding for $A \in \cC$ is given by
\begin{equation}\label{eq:swap}
 \rswap_A: R\one \otimes A \xrightarrow{\projr{\one}{A}} RA \xrightarrow{(\projl{A}{\one})^{-1}} A \otimes R\one.   
\end{equation}
Thus $(- \otimes \rightadj\one)$ is a monoidal monad by \Cref{theorem:commutative_central_monoid_gives_monoidal_monad}.
By \Cref{corollary:morphism_of_monads} we already know that $\projl{-}{\one}$ is an isomorphism of monads.
It suffices to show that this isomorphism respects the lax structures. This can be encoded by the commutativity of the large outer rectangle of the following diagram:
\begin{center}
    \begin{tikzpicture}[every node/.style={scale=0.9}]
          \coordinate (r) at (6,0);
          \coordinate (d) at (0,-2);
          \node (11) {$A \otimes R\one \otimes B \otimes R\one$};
          \node (13) at ($(11) + 2*(r)$) {$RGA \otimes RGB$};
          \node (13z) at ($(11) + 2*(r) + 2*(d)$) {$R(GA \otimes GB)$};
          \node (21) at ($(11) + 4*(d)$) {$A \otimes B \otimes R\one$};
          \node (11z) at ($(11) + 2*(d)$) {$A \otimes B \otimes R\one \otimes R\one$};
          \node (23) at ($(11) + 4*(d) + 2*(r)$) {$RG(A \otimes B)$};

          \node (a) at ($(11) + (d) + (r)$) {$A \otimes R\one \otimes R\one \otimes B$};
          \node (b) at ($(11) + 2*(d) + (r)$) {$A \otimes R\one \otimes B$};
          \node (c) at ($(11) + 3*(d) + (r)$) {$A \otimes RGB$};

          \draw[->] (a) to node[above right]{$\id \otimes \rswap_B$}(11);
          \draw[->] (a) to node[above left]{$\projl{A}{\one} \otimes \projr{\one}{B}$}(13);
          \draw[->] (a) to node[left]{$\id \otimes \lax_{\one,\one} \otimes \id$}(b);

          \draw[->] (11) to node[above]{$\projl{A}{\one} \otimes \projl{B}{\one}$}(13);
          \draw[->] (11) to  node[right]{$\id \otimes \rswap_B \otimes \id$}(11z);
          \draw[->] (11z) to node[right]{$\id \otimes \lax_{\one,\one}$}(21);
          \draw[->] (21) to node[below]{$\projl{A\otimes B}{\one}$}(23);
          \draw[->] (21) to node[below right]{$\id \otimes \projl{B}{\one}$}(c);
          \draw[->] (b) to node[above]{$\projrl{A}{\one}{B}$}(13z);
          \draw[->] (b) to node[right]{$\id \otimes \projr{\one}{B}$}(c);
          \draw[->] (b) to node[above,yshift=2em]{$\id \otimes \rswap_B$}(21);
          \draw[->] (c) to node[below right]{$\projl{A}{GB}$}(13z);
          \draw[->] (13z) to node[left]{$R\lax^G_{A,B}$}(23);
          \draw[->] (13) to node[left]{$\lax_{GA,GB}$}(13z);
    \end{tikzpicture}    
\end{center}
We show that each small part of this diagram commutes.
The upper triangle and the left inner triangle commute by \eqref{eq:swap}.
The left pentagon commutes by \eqref{eq:IM2}.
The right upper rectangle commutes by \Cref{lemma:sym_compatibility_lax_projection}.
The lower rectangle commutes by \Cref{lem:projl-tensor-coh}.
The right inner triangle commutes by \Cref{lem:proj-lr-coh}.

Thus, each small part of this diagram commutes. Since we assume that the projection formula holds, $\rswap$ is an isomorphism. From this, we conclude that the large outer rectangle commutes, which gives the claim.
\end{proof}

A dual result to \Cref{lemma:iso_of_monoidal_monads} is found in \cite{BLV}*{Theorem 6.6(b)}.

\begin{definition}\label{definition:equivalent_as_monoidal_adj}
    We say that two monoidal adjunctions are \emph{equivalent as monoidal adjunctions} if they are equivalent as objects in the bicategory $\Adj_{\Catlax}$ (see \Cref{subsection:bicat_of_adjunctions}).
\end{definition}

\begin{theorem}[Characterization theorem]\label{thm:char-thm}
Let $G \dashv R$ be a monoidal adjunction such that the following holds:
\begin{itemize}
    \item $G$ is essentially surjective,
    \item the projection formula holds for $R$.
\end{itemize}
Then $G \dashv R$ is equivalent as a monoidal adjunction to an adjunction of the form \eqref{equation:commutative_central_monoid_adjunction}.
More precisely: $M := R(\one)$ is a commutative central monoid, $\monmon{M} := (-\otimes R(\one))$ is a monoidal monad, and we have an equivalence $\Kleisli{T_M} \xrightarrow{\sim} \cD$ of monoidal categories such that we have a diagram of functors
\begin{center}
    \begin{tikzpicture}[baseline=($(11) + 0.5*(d)$)]
          \coordinate (r) at (4,0);
          \coordinate (d) at (0,-0.75);
          \node (11) {$\cC$};
          \node (12) at ($(11) + (r) - (d)$) {$\cD$};
          \node (21) at ($(11) + (r) + (d)$) {$\Kleisli{T_M}$};
          \node (22) at ($(11) + 2*(r)$) {$\cC$};
          \draw[->] (11) to node[above]{$G$} (12);
          \draw[->] (11) to node[below]{$\Free$} (21);
          \draw[->] (21) to node[right]{$\simeq$}(12);
          \draw[->] (21) to node[below]{$\Forg$} (22);
          \draw[->] (12) to node[above]{$R$} (22);
    \end{tikzpicture}
\end{center}
in which the left triangle strictly commutes and the right triangle commutes up to natural isomorphism.
\end{theorem}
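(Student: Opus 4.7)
The plan is to construct the equivalence in stages and then upgrade it to an equivalence of monoidal adjunctions. First, by \Cref{proposition:commutative_central_monoid_R1}, $M := R(\one)$ carries the structure of a commutative central monoid, and by \Cref{theorem:commutative_central_monoid_gives_monoidal_monad} the endofunctor $T_M = -\otimes M$ is a monoidal monad. \Cref{lemma:iso_of_monoidal_monads} gives an isomorphism of monoidal monads $\projl{-}{\one}\colon T_M \xrightarrow{\sim} RG$. Since the monoidal Kleisli construction is functorial in isomorphisms of monoidal monads, this induces a strong monoidal isomorphism $\Phi\colon \Kleisli{T_M} \xrightarrow{\sim} \Kleisli{RG}$ that intertwines the two monoidal Kleisli adjunctions; in particular $\Phi$ is the identity on objects and intertwines the Free and Forg functors on each side.

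Next, I will apply the universal property of the Kleisli adjunction in $\Catlax$ (\Cref{theorem:Kleisli_monoidal_up}) to the given monoidal adjunction $G \dashv R$. This produces a strong monoidal functor $\tilde{G}\colon \Kleisli{RG}\to \cD$ with $\tilde{G}\circ \Free = G$ strictly, together with a canonical natural isomorphism $R\circ \tilde{G} \cong \Forg$ assembled from the adjunction data. Composing with $\Phi$ yields a strong monoidal functor $\tilde{G}\circ \Phi\colon \Kleisli{T_M}\to \cD$ rendering the left triangle of the theorem's diagram strictly commutative and the right triangle commutative up to natural isomorphism. To see that it is an equivalence, recall (as in \Cref{remark:Kleisli_induced_is_strong}) that $\tilde{G}$ is fully faithful since on hom-sets it coincides with the adjunction bijection
\[
\Hom_{\Kleisli{RG}}(A, B) = \Hom_\cC(A, RGB) \xrightarrow{\sim} \Hom_\cD(GA, GB).
\]
Essential surjectivity of $\tilde{G}$ follows from the hypothesis that $G$ is essentially surjective, because every $D\in \cD$ satisfies $D \cong GA = \tilde{G}(A)$ for some $A\in\cC$. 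Hence $\tilde{G}\circ \Phi$ is a strong monoidal equivalence.

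Finally, to conclude equivalence as monoidal adjunctions in the sense of \Cref{definition:equivalent_as_monoidal_adj} and \Cref{subsection:bicat_of_adjunctions}, I will assemble the data into a $1$-morphism in $\Adj_{\Catlax}$ with identity component on $\cC$ and $\tilde{G}\circ \Phi$ (or a chosen strong monoidal quasi-inverse) as the other component. The comparison $2$-cells are the identity on the left adjoint side (by strict commutativity of the left triangle) and the natural isomorphism $R\circ \tilde{G}\circ \Phi \cong \Forg$ on the right adjoint side. The main obstacle will be verifying that these compatibility $2$-cells satisfy the mate relation required by the definition of a $1$-morphism in $\Adj_{\Catlax}$; this reduces, via the calculus of mates developed in the appendix, to compatibility with the units and counits of the two adjunctions, which is forced by the uniqueness clause of the Kleisli universal property together with the fact that $\tilde{G}\circ\Phi$ is strong monoidal. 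Since both functor components are then equivalences of (monoidal) categories, the resulting $1$-morphism is automatically an equivalence in $\Adj_{\Catlax}$, yielding the claimed equivalence of monoidal adjunctions.
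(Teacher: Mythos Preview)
Your argument is correct and tracks the paper's proof closely through the first two stages: invoking \Cref{proposition:commutative_central_monoid_R1} and \Cref{lemma:iso_of_monoidal_monads} to obtain the monoidal isomorphism $\Kleisli{T_M}\simeq\Kleisli{RG}$, and then using \Cref{theorem:Kleisli_monoidal_up} together with \Cref{remark:Kleisli_induced_is_strong} and essential surjectivity of $G$ to see that the induced functor $\Kleisli{RG}\to\cD$ is a strong monoidal equivalence.

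Where you diverge is in the final step. You propose to assemble an explicit $1$-morphism in $\Adj_{\Catlax}$ by specifying both comparison $2$-cells and then verifying the mate relation by hand. This works, but it is more effort than necessary. The paper instead appeals directly to \Cref{corollary:easy_characterization_of_adj_equivalences}: to exhibit an equivalence in $\Adj_{\Catlax}$ it suffices to give equivalences $U,V$ and an invertible $2$-cell $G'U\Rightarrow VG$ on the \emph{left adjoint} side alone. Here $U=\id_{\cC}$, $V$ is your composite equivalence $\Kleisli{T_M}\xrightarrow{\sim}\cD$, and the left triangle commutes \emph{strictly}, so the $2$-cell is an identity and there is nothing to check. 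The mate on the right adjoint side, and hence the natural isomorphism $R\circ V\cong\Forg$, is then produced automatically by \Cref{corollary:dual_easy_characterization_of_adj_equivalences}; one never needs to construct it or verify compatibility with units and counits. In short: your plan is sound, but the appendix machinery is designed precisely so that the ``main obstacle'' you anticipate disappears.
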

\begin{proof}
Since the projection formula holds, $M$ is a commutative central monoid by \Cref{proposition:commutative_central_monoid_R1}.
By \Cref{lemma:iso_of_monoidal_monads}, we have an isomorphism of monoidal monads $\monmon{M} \cong RG$ on $\cC$. This isomorphism gives rise to a monoidal equivalence $\Kleisli{T_M} \xrightarrow{\sim} \Kleisli{RG}$ such that the diagram
\begin{center}
    \begin{tikzpicture}[baseline=($(11) + 0.5*(d)$)]
          \coordinate (r) at (4,0);
          \coordinate (d) at (0,-0.75);
          \node (11) {$\cC$};
          \node (12) at ($(11) + (r) - (d)$) {$\Kleisli{RG}$};
          \node (21) at ($(11) + (r) + (d)$) {$\Kleisli{T_M}$};
          \draw[->] (11) to node[above]{$\Free$} (12);
          \draw[->] (11) to node[below]{$\Free$} (21);
          \draw[->] (21) to node[right]{$\simeq$}(12);
    \end{tikzpicture}
\end{center}
in $\Catlax$ strictly commutes.
Last, since $G$ is essentially surjective, so is the induced functor in \eqref{equation:Kleisli_up}. Since the induced functor is always fully faithful and strong monoidal by \Cref{remark:Kleisli_induced_is_strong}, it is an equivalence of monoidal categories. Thus, we obtain the following strictly commutative diagram
\begin{center}
    \begin{tikzpicture}[baseline=($(11) + 0.5*(d)$)]
          \coordinate (r) at (4,0);
          \coordinate (d) at (0,-0.75);
          \node (11) {$\cC$};
          \node (12) at ($(11) + (r) - (d)$) {$\cD$};
          \node (21) at ($(11) + (r) + (d)$) {$\Kleisli{RG}$};
          \draw[->] (11) to node[above]{$G$} (12);
          \draw[->] (11) to node[below]{$\Free$} (21);
          \draw[->] (21) to node[right]{$\simeq$}(12);
    \end{tikzpicture}
\end{center}
in $\Catlax$.
The strict commutativity of the diagram in the statement now follows by combining the two diagrams in the proof.
Moreover, the strict commutativity of the diagram in the statement suffices to obtain an equivalence of adjunctions in $\Catlax$ by \Cref{corollary:easy_characterization_of_adj_equivalences}.
The commutativity of the other triangle up to natural isomorphism follows from \Cref{corollary:dual_easy_characterization_of_adj_equivalences}.
\end{proof}


\section{Eilenberg--Moore adjunctions}\label{sec:EM}

Commutative central monoids $(M,\rswap)$ in $\cC$ played a key role in \Cref{sec:Kleisli-charact} through capturing monoidal monads of monoidal adjunctions whose right adjoint satisfies the projection formula. It is known that the Eilenberg--Moore categories of modules over $M$ obtain a tensor product from $\cC$ provided reflexive coequalizers exist and are preserved by the tensor product, cf.~\cites{Par,Schau}.
In \Cref{sec:mon-EM}, we show that the free-forget adjunction of such Eilenberg--Moore categories is itself monoidal and its right adjoint $\Forg$ satisfies the projection formula.

Further, in \Cref{sec:monadicity}, we discuss a universal property of such monoidal Eilenberg--Moore categories and a monoidal version of the crude monadicity theorem. The monadicity theorem imposes the additional assumption that the right adjoint $R$ preserves reflexive coequalizers and reflects isomorphisms.

We conclude with a discussion of the center of monoidal Eilenberg--Moore categories in \Cref{sec:loc-mod}. By a result of Schauenburg, the Drinfeld center of the Eilenberg--Moore category is equivalent to the category of local modules over the commutative central monoid $M$ and our results produce a braided lax monoidal functor from this category of local modules to the Drinfeld center of $\cC$ which is identified with the forgetful functor, see \Cref{sec:loc-mod}.

\subsection{Basic definitions and results}\label{sec:EM-basics}

Given a monad $T$, we consider the Eilenberg--Moore category $\EiMo{T}$ of $T$-algebras. Its objects are pairs $(A,\act^A)$ where $A$ is an object of $\cC$ and $\act^A\colon T(A)\to A$ is a morphism, called \emph{action morphism}, satisfying that the  diagrams
\begin{gather}\label{equation:CT-axioms}
    \vcenter{\hbox{\xymatrix{
TT(A)\ar[rr]^{T(\act^A)}\ar[d]^{\mult^T_{A}}&&T(A)\ar[d]^{\act^A}\\
    T(A)\ar[rr]^{\act^A}&& A
    }}} \qquad \text{ and }\qquad 
    \vcenter{\hbox{\xymatrix{
    A\ar[rr]^{\unit^T_A}\ar@{=}[rd] &&T(A)\ar[dl]^{\act^A}\\
    &A&
    }}}
\end{gather}
commute. 
A morphism $f\colon (A,\act^A)\to (B,\act^B)$ in $\EiMo{T}$ corresponds to a morphism $f\colon A\to B$ in $\cC$ such that the diagram 
\begin{align}\label{equation:CT-morphisms}
\vcenter{\hbox{\xymatrix{
T(A)\ar[rr]^{T(f)}\ar[d]^{\act^A}&& T(B)\ar[d]^{\act^B}\\
A\ar[rr]^{f}&& B
}}}    
\end{align}
commutes. 

Recall that there is the well-known Eilenberg--Moore adjunction 
\begin{equation}\label{equation:EM-adjunction}
    \begin{tikzpicture}[ baseline=(A)]
          \coordinate (r) at (6,0);
          \node (A) {$\cC$};
          \node (B) at ($(A) + (r)$) {$\EiMo{T}$.};
          \draw[->, out = 20, in = 180-20] (A) to node[mylabel]{$\Free$} (B);
          \draw[<-, out = -20, in = 180+20] (A) to node[mylabel]{$\Forg$} (B);
          \node[rotate=90] (t) at ($(A) + 0.5*(r)$) {$\vdash$};
    \end{tikzpicture} 
\end{equation}
The functor $\Forg \colon \EiMo{T}\to \cC$ maps 
$(A,\act^A)$ to $A$ and preserves morphisms, and the functor $\Free \colon \cC\to \EiMo{T}$ maps an object $B$ of $\cC$ to $(T(B),T(\mult^T_B))$  and a morphism $f\colon B\to C$ in $\cC$ to $T(f)$.
Now, consider an adjunction $G \dashv R$ such that $RG = T$.
We have the following commutative diagram of functors
\begin{equation}\label{equation:EM-Kleisli-diag}
    \begin{tikzpicture}[baseline=($(11) + 0.5*(d)$)]
          \coordinate (r) at (4,0);
          \coordinate (d) at (0,-2);
          \node (11) {$\cC$};
          \node (12) at ($(11) + (r) - (d)$) {$\EiMo{T}$};
          \node (21) at ($(11) + (r)$) {$\cD$};
          \node (31) at ($(11) + (r) + (d)$) {$\Kleisli{T}$};
          \node (22) at ($(11) + 2*(r)$) {$\cC$,};
          \draw[->] (11) to node[above]{$G$} (21);
          \draw[->] (12) to node[above,xshift=0.5em]{$\Forg$} (22);
          \draw[->] (11) to node[above,xshift=-0.5em]{$\Free$} (12);
          \draw[->] (31) to node[below,xshift=0.5em]{$\Forg$} (22);
          \draw[->] (11) to node[below]{$\Free$} (31);
          \draw[->] (21) to node[above]{$R$} (22);
          \draw[->,dashed] (21) to (12);
           \draw[->,dashed] (31) to (21);
    \end{tikzpicture}
\end{equation}
which includes the diagram from \eqref{equation:Kleisli_up} as as sub-diagram. The dashed arrow $\cD\to \EiMo{T}$ is given by sending an object $X$ in $\cD$ to $R(X)$, together with the action given by 
$$\act^{R(X)}\colon TR(X)=RGR(X)\xrightarrow{R(\counit_X)} R(X)$$
for any object $X$ of $\cD$ and acts via $R$ on morphisms.

\begin{remark}\label{rem:EM-universal}
Given a monad $T\colon\cC\to \cC$, the adjunction $\Free\dashv \Forg$ of \eqref{equation:EM-adjunction} is terminal among adjunctions $G \dashv R$ that compose to $T$.
\end{remark}

A \emph{reflexive pair} is a pair of parallel morphisms $\xymatrix{A\ar@/^/[r]^{f}\ar@/_/[r]_{g}& B}$ such that there exists a common section, i.e., a morphism $s\colon B\to A$ satisfying that $fs=gs=\id_B$. We call the colimit of such a reflexive pair a \emph{reflexive coequalizer}. We remark that this colimit is simply given by the coequalizer of the underlying pair of morphisms $f,g$. Consider the full and faithful functor 
$\Kleisli{T}\to \EiMo{T}$ obtained by composition of the two dashed arrows in \Cref{equation:EM-Kleisli-diag}. This functor enables us to regard $\Kleisli{T}$ as a full subcategory of $\EiMo{T}$, and we call the objects in the image \emph{free}. The next lemma states that general objects in $\EiMo{T}$ are reflexive coequalizers of free objects.

\begin{lemma}\label{lem:EM-cocompletion}
    Let $(A,\act^A)$ be an object in $\EiMo{T}$. Then the following diagram is a coequalizer diagram in $\EiMo{T}$:
    $$\xymatrix{
    \left(T(T(A)),\mult^T_{T(A)}\right)\ar@/^/[rr]^{\mult^T_A}
    \ar@/_/[rr]_{T(\act^A)}&& \left( T(A),\mult^T_A \right)
    \ar[rr]^{\act^{A}}&& \left( A,\act^A \right).
    }
    $$
    Moreover, the depicted parallel morphisms form a reflexive pair of free objects with a common section in $\EiMo{T}$ given by $T(\unit_A)$.
\end{lemma}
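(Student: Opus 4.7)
The plan is to verify each assertion of the lemma using only the $T$-algebra axioms \eqref{equation:CT-axioms}, the monad unit and associativity, and the $\EiMo{T}$-morphism condition \eqref{equation:CT-morphisms}. I would proceed in three stages.

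First, I would check that every arrow in the diagram really is a morphism in $\EiMo{T}$. For $\act^A \colon (T(A),\mult^T_A) \to (A,\act^A)$, this is exactly the associativity axiom on the left of \eqref{equation:CT-axioms}. For $\mult^T_A \colon (T^2(A),\mult^T_{T(A)}) \to (T(A),\mult^T_A)$, it is the associativity of the multiplication of the monad $T$. For $T(\act^A) \colon (T^2(A),\mult^T_{T(A)}) \to (T(A),\mult^T_A)$, it is the naturality of $\mult^T$ applied to $\act^A$. The equality $\act^A\circ \mult^T_A = \act^A\circ T(\act^A)$ is again the associativity axiom \eqref{equation:CT-axioms}, so the diagram makes sense in $\EiMo{T}$.

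Second, I would verify the reflexive-pair claim. Setting $s := T(\unit^T_A) \colon T(A) \to T^2(A)$, one has $\mult^T_A \circ T(\unit^T_A) = \id_{T(A)}$ by the right unit axiom of the monad, and $T(\act^A)\circ T(\unit^T_A) = T(\act^A \circ \unit^T_A) = T(\id_A) = \id_{T(A)}$ by the right diagram of \eqref{equation:CT-axioms}. Both free objects $(T(A),\mult^T_A)$ and $(T^2(A),\mult^T_{T(A)})$ are of the form $\Free(-)$, so this establishes the reflexive pair of free objects with common section $T(\unit^T_A)$.

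Third, and this is the central step, I would establish the universal property of the coequalizer. Suppose $f \colon (T(A),\mult^T_A) \to (B,\act^B)$ in $\EiMo{T}$ satisfies $f\circ \mult^T_A = f\circ T(\act^A)$. I define the candidate factorization by
\[
g \;:=\; f \circ \unit^T_A \colon A \longrightarrow B.
\]
Using naturality of $\unit^T$ applied to $\act^A$, one computes
\[
g \circ \act^A = f\circ \unit^T_A \circ \act^A = f \circ T(\act^A)\circ \unit^T_{T(A)} = f \circ \mult^T_A \circ \unit^T_{T(A)} = f,
\]
where the second-to-last equality uses the coequalizing hypothesis on $f$ and the last uses the left unit axiom of $T$. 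To see that $g$ is a morphism in $\EiMo{T}$, I use that $f$ is one and again the right unit axiom:
\[
\act^B \circ T(g) = \act^B \circ T(f) \circ T(\unit^T_A) = f\circ \mult^T_A \circ T(\unit^T_A) = f = g\circ \act^A.
\]
Uniqueness of $g$ is immediate: any $g'$ with $g'\circ \act^A = f$ satisfies $g' = g'\circ \act^A\circ \unit^T_A = f\circ \unit^T_A = g$, again by the right diagram of \eqref{equation:CT-axioms}.

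The main obstacle is purely bookkeeping: one must carefully track which diagrams come from monad axioms, which from the algebra axioms, and which from the hypothesis that $f$ is an $\EiMo{T}$-morphism. Once those are disentangled, every step reduces to a single application of one of these standard identities, so no deeper input is needed.
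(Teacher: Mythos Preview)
Your argument is correct and is exactly the standard direct verification that lies behind the paper's one-line citation of \cite{Bor2}*{Lemma 4.3.3}; the paper does not spell any of this out, so your proof is more detailed rather than different in substance. One small point you glossed over: the lemma asserts that the common section $T(\unit^T_A)$ lies \emph{in} $\EiMo{T}$, so you should also note that $T(\unit^T_A)\colon (T(A),\mult^T_A)\to (T^2(A),\mult^T_{T(A)})$ is an $\EiMo{T}$-morphism, which is just naturality of $\mult^T$ applied to $\unit^T_A$.
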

\begin{proof}
We have a coequalizer diagram by \cite{Bor2}*{Lemma 4.3.3}.
\end{proof}

\begin{remark}
    In general, the category $\EiMo{T}$ might not be monoidal even if $\cC$ is monoidal. General conditions on $T$ ensuring that $\EiMo{T}$ is monoidal were discussed in \cite{Seal}. 

In the following subsections, rather than studying the most general context, we will restrict to monoidal monads of the form $T\cong (-)\otimes M$ for a commutative central monoid $(M,\rswap^M)$.
This restriction is motivated by \Cref{lemma:iso_of_monoidal_monads} and the fact that we are interested in monoidal adjunctions for which the projection formula holds in this paper.
\end{remark}

\subsection{Monoidal Eilenberg--Moore adjunction}\label{sec:mon-EM}

For the rest of \Cref{sec:EM} we impose the following assumptions on $\cC$.
\begin{assumption}\label{ass:C-good}
    We assume that $\cC$ has coequalizers of reflexive pairs and that the tensor product of $\cC$ preserves such coequalizers in both components.
\end{assumption}

In the following, let $(M, \rswap^M)$ be a commutative central monoid in $\cC$. By \Cref{theorem:commutative_central_monoid_gives_monoidal_monad}, $T_M=(-)\otimes M$ is a monoidal monad that inherits its structure from $M$. Recall that the monad structure is given by 
$$\unit^{T_M}_A=A\otimes \unit^M, \qquad \mult^{T_M}_{A}=A\otimes \mult^M,$$
where $\unit^M$, $\mult^M$ are the unit and multiplication of the monoid $M$. The lax monoidal structure is given by 
$$\lax^{T_M}_0 := \unit^M, \qquad \lax^{T_M}_{A,B}=A \otimes ((B \otimes \mult^M)(\rswap^M_B \otimes M)).$$
The Eilenberg--Moore category $\EiMo{T_M}$ can be identified with that of \emph{right modules} $\rModint{\cC}{M}$ over $M$ internal to $\cC$, see e.g.~\cites{Par,Schau,DMNO}, which is why we also call the objects in $\EiMo{T_M}$ \emph{(right) $M$-modules}.
We study the following adjunction:

\begin{equation}\label{equation:EM-adjunction_central_monoid}
    \begin{tikzpicture}[ baseline=(A)]
          \coordinate (r) at (6,0);
          \node (A) {$\cC$};
          \node (B) at ($(A) + (r)$) {$\EiMo{T_M} \simeq \rModint{\cC}{M}$};
          \draw[->, out = 20, in = 180-20] (A) to node[mylabel]{$\Free$} (B);
          \draw[<-, out = -20, in = 180+20] (A) to node[mylabel]{$\Forg$} (B);
          \node[rotate=90] (t) at ($(A) + 0.5*(r)$) {$\vdash$};
    \end{tikzpicture} 
\end{equation}

\begin{lemma}\label{lem:CM-reflexive-coeq}
    Reflexive coequalizers exist in $\EiMo{T_M}$ and $\Forg\colon \EiMo{T_M}\to \cC$ preserves and creates them. 
\end{lemma}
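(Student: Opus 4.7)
The plan is to follow the standard argument that the forgetful functor from algebras over a monad creates those colimits that are preserved by the monad, specialized to the monad $T_M = (-) \otimes M$.

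Concretely, given a reflexive pair $f,g\colon (A,\act^A) \rightrightarrows (B,\act^B)$ in $\EiMo{T_M}$ with common section $s\colon (B,\act^B) \to (A,\act^A)$, I would first form the coequalizer $q\colon B \to C$ of the underlying pair $f,g$ in $\cC$, which exists by \Cref{ass:C-good}. Because $\cC$'s tensor product preserves reflexive coequalizers in each variable, the functor $T_M = (-)\otimes M$ sends this reflexive coequalizer diagram to a coequalizer diagram, so $q \otimes M\colon B \otimes M \to C \otimes M$ is a coequalizer of $f\otimes M, g\otimes M$. The composite $q\circ \act^B\colon B \otimes M \to C$ coequalizes $f\otimes M$ and $g\otimes M$, since $\act^B \circ (f\otimes M) = f\circ \act^A$ and likewise for $g$ by the morphism axiom \eqref{equation:CT-morphisms}. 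Hence there is a unique $\act^C\colon C \otimes M \to C$ with $\act^C \circ (q\otimes M) = q\circ \act^B$.

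Next, I would verify that $(C,\act^C)$ satisfies the algebra axioms \eqref{equation:CT-axioms}. The unit axiom follows directly from the universal property of $q$ together with the unit axiom for $\act^B$. For the associativity axiom, I would apply $(-)\otimes M \otimes M$ to the original reflexive pair; since tensoring preserves reflexive coequalizers in each variable, iterating gives that $q\otimes M\otimes M$ is still a coequalizer, and both composites $\act^C\circ (\act^C \otimes M)$ and $\act^C\circ (C \otimes \mult^M)$ agree after precomposition with $q\otimes M\otimes M$ by the associativity for $\act^B$, hence they agree. This also shows $q\colon (B,\act^B) \to (C,\act^C)$ is a morphism in $\EiMo{T_M}$.

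To see this is a coequalizer in $\EiMo{T_M}$, consider any module morphism $h\colon (B,\act^B) \to (D,\act^D)$ with $hf=hg$. Then $h$ factors uniquely as $h = \tilde h \circ q$ in $\cC$; commutativity of the action square for $\tilde h$ follows by precomposition with the coequalizer $q\otimes M$, using that both $h$ and $q$ are module morphisms. This establishes that coequalizers of reflexive pairs exist in $\EiMo{T_M}$ and that $\Forg$ preserves them; the uniqueness of $\act^C$ at each stage establishes creation in the sense that any coequalizer of the underlying pair in $\cC$ lifts uniquely to a coequalizer in $\EiMo{T_M}$.

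The argument is essentially routine once one observes that the crucial input is the preservation of reflexive coequalizers by $T_M$ (and by $T_M \circ T_M$), which is immediate from the two-variable preservation assumed in \Cref{ass:C-good}; no serious obstacle arises. The only minor care needed is to use the reflexive, rather than arbitrary, coequalizer preservation of tensoring, which is automatic here since reflexive pairs are tensored with reflexive pairs.
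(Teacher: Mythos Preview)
Your argument is correct and is essentially the standard proof that $\Forg$ creates colimits preserved by the monad, specialized to reflexive coequalizers and $T_M=(-)\otimes M$; this is exactly what the paper invokes, except that the paper simply cites the general results (Borceux, \emph{Handbook} II, Proposition 4.3.2, and Riehl, \emph{Category Theory in Context}, Theorem 5.6.5) rather than spelling them out. Your final remark is slightly garbled: the relevant point is just that tensoring with the fixed object $M$ sends a reflexive pair (with its section) to a reflexive pair, so \Cref{ass:C-good} applies directly.
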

\begin{proof}
    Existence of coequalizers in $\EiMo{T_M}$ and that $\Forg$ preserves them follows from \cite{Bor2}*{Proposition~4.3.2} since, by \Cref{ass:C-good}, $T_M=(-)\otimes M$ preserves reflexive coequalizers, 
    and we assume that reflexive coequalizers exist in $\cC$. Similarly, by \cite{RiehlContext}*{Theorem~5.6.5}, the functor $\Forg$ creates reflexive coequalizers.
\end{proof}

\begin{definition}
    The  \emph{tensor product} of two objects $X = (A,\act^A)$ and $Y = (B,\act^B)$ in $\EiMo{T_M}$ is a pair consisting of an object $X\otimes_M Y$ in $\cC$,  together with a morphism $\quo_{X,Y}\colon A\otimes B\to X\otimes_M Y$ such that the diagram
\begin{equation}
\vcenter{\hbox{
\xymatrix{
A\otimes M \otimes B\ar@/^2pc/[rr]^{\act^A\otimes B}\ar@/_2pc/[rr]_{A\otimes \act^B\rswap_B}&& A\otimes B \ar[rr]^-{\quo_{X,Y}}&& X\otimes_M Y,
}
}}
\end{equation}
is a coequalizer in $\cC$. Note that this is actually a reflexive coequalizer with a common section given by $A \otimes \unit^M \otimes B$.
\end{definition}

\begin{lemma}\label{lem:rel-tensor}
    The tensor product $X\otimes_M Y$, for $X=(A,\act^A)$ and $Y=(B,\act^B)$, exists and gives an object in $\EiMo{T_M}$ with action $\act^{X\otimes_M Y}$ defined as the unique factorization appearing in  
    $$
    \xymatrix{
   A\otimes B\otimes M\ar[d]_{\quo_{X,Y}\otimes M} \ar[rr]^{A\otimes \act^{B}} && A\otimes B\ar[d]^{\quo_{X,Y}} \\
   (X\otimes_M Y)\otimes M \ar[rr]^{\act^{X\otimes_M Y}}&& X\otimes_M Y.
    }
    $$

\end{lemma}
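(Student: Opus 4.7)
The plan is to proceed in three steps: (i) establish the existence of the defining coequalizer $\quo_{X,Y}$, (ii) construct $\act^{X\otimes_M Y}$ via the universal property of this coequalizer after tensoring with $M$, and (iii) verify the two $T_M$-algebra axioms of \eqref{equation:CT-axioms}.

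\emph{Existence of $\quo_{X,Y}$.} The parallel pair $\act^A \otimes B$ and $(A \otimes \act^B)(A \otimes \rswap_B)$ from $A \otimes M \otimes B$ to $A \otimes B$ is reflexive with common section $A \otimes \unit^M \otimes B$: the first composite equals $\id_{A \otimes B}$ by the unit axiom for $X$, and for the second composite the centrality relation \eqref{eq:IM1} gives $\rswap_B \circ (\unit^M \otimes B) = B \otimes \unit^M$, after which the unit axiom for $Y$ again yields the identity. Since $\cC$ has reflexive coequalizers by \Cref{ass:C-good}, the coequalizer $\quo_{X,Y}\colon A \otimes B \to X \otimes_M Y$ exists.

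\emph{Construction of the action.} Set $f_1 := \act^A \otimes B$ and $f_2 := (A \otimes \act^B)(A \otimes \rswap_B)$. View $A \otimes M \otimes B$ and $A \otimes B$ as right $M$-modules via $A \otimes M \otimes \act^B$ and $A \otimes \act^B$, respectively. The key observation is that both $f_1$ and $f_2$ are $M$-module morphisms: for $f_1$ this is immediate from the interchange law, and for $f_2$ it reduces to proving the identity
\[
\act^B \circ (B \otimes \mult^M) \circ (\rswap_B \otimes M) \;=\; \act^B \circ \rswap_B \circ (M \otimes \act^B)
\]
of morphisms $M \otimes B \otimes M \to B$. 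Rewriting the right-hand side using the naturality of $\rswap$ applied to $\act^B$ as $(\act^B \otimes M) \circ \rswap_{B \otimes M}$, expanding $\rswap_{B \otimes M}$ by the hexagon \eqref{eq:HB2}, and finally invoking $\mult^M \circ \rswap_M = \mult^M$ from \eqref{eq:CM} together with the associativity of $\act^B$ yields the equality. Since $(-) \otimes M$ preserves reflexive coequalizers by \Cref{ass:C-good}, the map $\quo_{X,Y} \otimes M$ is the coequalizer of $f_1 \otimes M$ and $f_2 \otimes M$. The $M$-module-morphism property of each $f_i$ combined with $\quo_{X,Y} \circ f_1 = \quo_{X,Y} \circ f_2$ shows that $\quo_{X,Y} \circ (A \otimes \act^B) \circ (f_i \otimes M) = \quo_{X,Y} \circ f_i \circ (A \otimes M \otimes \act^B)$ is independent of $i$, so the universal property of the coequalizer $\quo_{X,Y} \otimes M$ supplies the unique factorization $\act^{X \otimes_M Y}$ rendering the displayed square commutative.

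\emph{Algebra axioms and main obstacle.} The unit axiom $\act^{X \otimes_M Y} \circ ((X \otimes_M Y) \otimes \unit^M) = \id$ follows by pre-composing with the epimorphism $\quo_{X,Y}$ and invoking the unit axiom for $\act^B$, while the associativity axiom follows by pre-composing with $\quo_{X,Y} \otimes M \otimes M$ (still an epimorphism, since $(-) \otimes M^{\otimes 2}$ preserves the defining reflexive coequalizer by \Cref{ass:C-good}) and reducing to the associativity of $\act^B$ via the defining relation for $\act^{X \otimes_M Y}$. The only non-routine step is the verification that $f_2$ is an $M$-module morphism, which is the unique place where the central monoid axioms \eqref{eq:HB2} and \eqref{eq:CM} are genuinely used beyond what is already built into the monoidal monad structure of $T_M$; everything else is standard universal-property bookkeeping.
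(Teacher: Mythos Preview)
Your proof is correct and follows the same approach as the paper: existence via \Cref{ass:C-good}, induced action via the universal property of the (tensored) reflexive coequalizer, and inheritance of the algebra axioms from $\act^B$. The paper's own proof is a two-line sketch that leaves all of this implicit; your version spells out the one genuinely non-formal step---that $f_2$ respects the right $M$-action, which is where \eqref{eq:HB2} and \eqref{eq:CM} enter---making explicit the role of commutativity that the paper does not mention at this point.
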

\begin{proof}
The tensor product $X\otimes_M Y$ exists in $\cC$ under \Cref{ass:C-good}.
Again, due to \Cref{ass:C-good}, the action morphism exists and inherits its axioms from $\act^B$.
\end{proof}

The next lemma appears, e.g., in \cite{Schau}*{Section~2.2, Lemma~4.1}. 

\begin{lemma}\label{lem:CT-monoidal}
    The tensor product $\otimes_M$ makes $\EiMo{T_M}$ a monoidal category and $\otimes_M$ preserves reflexive coequalizers.
\end{lemma}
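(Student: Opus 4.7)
The plan is to construct the monoidal structure on $\EiMo{T_M}$ by exploiting the universal property of the reflexive coequalizers defining $\otimes_M$, which are well-behaved by Assumption \ref{ass:C-good}. The tensor unit is $(M,\mult^M)$, viewed as a right $M$-module via its own multiplication. For the right unitor on $X=(A,\act^A)$, I would observe that the action $\act^A\colon A\otimes M\to A$ coequalizes the defining parallel pair of $X\otimes_M M$ (this is exactly the associativity axiom \eqref{equation:CT-axioms} for $\act^A$), hence induces $r_X\colon X\otimes_M M\to X$; its inverse is induced by $A\otimes\unit^M$. For the left unitor, I would check that $\act^A\circ \rswap^M_A\colon M\otimes A\to A$ coequalizes the defining pair for $M\otimes_M X$, using the action axiom together with the half-braiding property \eqref{eq:IM2} and commutativity \eqref{eq:CM} of $M$; this gives $\ell_X\colon M\otimes_M X\to X$ with inverse induced by $\unit^M\otimes A$.

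For the associator, given $X,Y,Z\in\EiMo{T_M}$ with underlying objects $A,B,C$, I would use that $\otimes$ preserves reflexive coequalizers in each variable (Assumption \ref{ass:C-good}), so iterating the coequalizer construction once yields a common presentation of both $(X\otimes_M Y)\otimes_M Z$ and $X\otimes_M(Y\otimes_M Z)$ as the joint coequalizer of the parallel pair $A\otimes M\otimes B\otimes M\otimes C\rightrightarrows A\otimes B\otimes C$, where the two maps use the $M$-actions on $A$ and $C$ against the transported $M$-action on $B$. The associator of $\cC$ intertwines these two presentations, inducing an isomorphism on coequalizers, and the pentagon and triangle axioms descend from their counterparts in $\cC$ via the uniqueness of factorizations through reflexive coequalizers. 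The functoriality of $\otimes_M$ on morphisms, including the compatibility of $\act^{X\otimes_M Y}$ from Lemma \ref{lem:rel-tensor} with composition, is obtained by the same uniqueness principle.

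For the preservation of reflexive coequalizers, suppose $X_1 \rightrightarrows X_2 \to X_3$ is a reflexive coequalizer in $\EiMo{T_M}$ and $Y=(B,\act^B)$ is an $M$-module. By Lemma \ref{lem:CM-reflexive-coeq}, the diagram of underlying objects $A_1\rightrightarrows A_2\to A_3$ is a reflexive coequalizer in $\cC$. Since $\otimes$ preserves reflexive coequalizers in both variables, the rows and columns of the bicoequalizer diagram for $X_i\otimes_M Y$ (indexed by $i$ and by the pair $A_i\otimes M\otimes B\rightrightarrows A_i\otimes B$) both form reflexive coequalizers, and a standard interchange-of-colimits argument for reflexive coequalizers shows that $X_3\otimes_M Y$ is the reflexive coequalizer of $X_1\otimes_M Y\rightrightarrows X_2\otimes_M Y$; preservation in the second variable is symmetric.

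The main obstacle will be the careful bookkeeping for the associator and for verifying the pentagon, since every coherence morphism is defined only up to the universal property of a coequalizer and involves commuting several maps past the quotient morphisms $\quo_{X,Y}$; however, each identity ultimately reduces by uniqueness to the corresponding coherence in $\cC$ together with the half-braiding axioms \eqref{eq:HB1}--\eqref{eq:CM} of the commutative central monoid $M$. As noted, this construction is classical and worked out in detail in \cite{Schau}*{Section~2.2}, so in the paper it suffices to outline the construction and invoke this reference.
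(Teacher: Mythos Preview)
Your proposal is correct and takes essentially the same approach as the paper: the monoidal structure is the one of \cite{Schau}*{Section~2.2} (with the observation that only reflexive coequalizers are needed), and preservation of reflexive coequalizers follows from the interchange-of-colimits argument you describe. The paper's own proof is considerably terser---it simply invokes Schauenburg for the first part and the phrase ``colimits commute with other colimits'' for the second---so your sketch of the unitors, associator, and bicoequalizer diagram merely makes explicit what the paper leaves implicit.
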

\begin{proof}
Note that it is sufficient to assume that coequalizers of reflexive pairs exist in $\cC$ and are preserved by the tensor product of $\cC$ in both components (rather than assuming this for general coequalizers as in \cite{Schau}).

The result that $\otimes_M$ preserves reflexive coequalizers follows from the same assumption for $\otimes$ and the fact that colimits commute with other colimits. 
\end{proof}

\begin{remark}
The monoidal structure on $\EiMo{T_M}$ of \Cref{lem:CT-monoidal} is a special case of \cite{Seal}*{Corollary 2.5.6}.
Indeed, Seal constructs a tensor product $\boxtimes$ on $\EiMo{T_M}$ such that $\cC \rightarrow \EiMo{T_M}$ is strong monoidal \cite{Seal}*{Section 2.5.3}.
Moreover, it is assumed in \cite{Seal}*{Corollary 2.5.6} that $\boxtimes$ commutes componentwise with reflexive coequalizers. From these two facts, it follows that the tensor product functors $\boxtimes$ and $\otimes_M$ are equivalent. In both cases, the structure morphisms are inherited from $\cC$. Thus, the resulting monoidal structures are equivalent as well.
\end{remark}

\begin{remark}\label{remark:naturality_of_quo}
We obtain the following natural transformation:
$$
    \begin{tikzpicture}[ baseline=(A)]
          \coordinate (r) at (6,0);
          \node (A) {$\EiMo{T_M} \times \EiMo{T_M}$};
          \node (B) at ($(A) + (r)$) {$\cC$};
          \draw[->, out = 20, in = 180-20] (A) to node[mylabel]{$\Forg(-) \otimes \Forg(-)$} (B);
          \draw[->, out = -20, in = 180+20] (A) to node[mylabel]{$\Forg( - \otimes_M - )$} (B);
          \node[rotate=-90] (t) at ($(A) + 0.5*(r)$) {$\Longrightarrow$};
          \node (t2) at ($(A) + 0.6*(r)$) {$\quo$};
    \end{tikzpicture} 
$$
\end{remark}
    
\Cref{lem:CT-monoidal} shows that if $\cC$ satisfies \Cref{ass:C-good}, then so does $\EiMo{T_M}$.
Without loss of generality, we will also treat $\EiMo{T_M}$, with monoidal product $\otimes_M$, as a strict monoidal category to simplify the exposition.

\begin{proposition}
    \label{prop:EM-mon-adjunction}
    The functor 
    $$\Free\colon\cC\to \EiMo{T_M}, \quad A \mapsto (A\otimes M, A\otimes \mult^M), \quad \left(A\xrightarrow{f}B\right)\mapsto \left(A\otimes M\xrightarrow{f\otimes M}B\otimes M\right),$$ 
is a strong monoidal functor with lax monoidal structure  $\lax^\Free_{A,B}$ defined as the factorization appearing in:
    $$
    \xymatrix{
    \Free(A)\otimes \Free(B)\ar[rr]^-{\lax^{T_M}_{A,B}}\ar[dr]|-{\quo_{\Free(A),\Free(B)}}&& \Free(A\otimes B)\\
    &\ar[ru]|-{\lax^\Free_{A,B}}\Free(A)\otimes_M \Free(B)&
    }
    $$
    In particular, $\Free \dashv\Forg$ is a monoidal adjunction with $\lax^{\Forg}_{X,Y} = \quo_{X,Y}$ for $X,Y \in \EiMo{T_M}$.
\end{proposition}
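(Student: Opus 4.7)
The plan is to construct the lax monoidal structure on $\Free$ via the universal property of the coequalizer defining $\otimes_M$, show it is strong using that $\Forg$ is conservative, and then recover the induced lax structure on $\Forg$ from \Cref{lemma:strong_and_monoidal_adj}(2), identifying it with $\quo$. To produce the factorization $\lax^\Free_{A,B}$, I must verify that $\lax^{T_M}_{A,B}\colon A \otimes M \otimes B \otimes M \to A \otimes B \otimes M$ coequalizes the reflexive pair
\[
\alpha_1 = (A \otimes \mult^M) \otimes (B \otimes M), \qquad \alpha_2 = (A \otimes M \otimes B \otimes \mult^M) \circ (A \otimes M \otimes \rswap^M_{B \otimes M}),
\]
whose coequalizer defines $\Free(A) \otimes_M \Free(B)$. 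After expanding $\rswap^M_{B \otimes M}$ via \eqref{eq:HB2}, the equality $\lax^{T_M}_{A,B} \circ \alpha_1 = \lax^{T_M}_{A,B} \circ \alpha_2$ reduces to a short diagram chase using associativity of $\mult^M$, naturality of $\rswap^M$, and crucially the commutativity axiom \eqref{eq:CM}, which makes the auxiliary braiding $\rswap^M_M$ disappear after postcomposition with $\mult^M$. The universal property of the coequalizer then produces $\lax^\Free_{A,B}$ uniquely, while $\lax^\Free_0$ is the canonical identification $\Free(\one) = M$ with the tensor unit of $\EiMo{T_M}$.

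To see $\lax^\Free_{A,B}$ is an isomorphism, I invoke that $\Forg$ is conservative and preserves reflexive coequalizers by \Cref{lem:CM-reflexive-coeq}, which reduces the problem to showing that $\lax^{T_M}_{A,B}$ is itself a coequalizer of $(\alpha_1, \alpha_2)$ in $\cC$. It admits a natural section $t := A \otimes \unit^M \otimes B \otimes M$, as is immediate from \eqref{eq:IM1} together with unitality of $M$; a short verification using \eqref{eq:CM} once more shows that $\quo \circ t \circ \lax^{T_M}_{A,B} = \quo$, which suffices, by a split-coequalizer argument, for $\lax^{T_M}_{A,B}$ to be a coequalizer. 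Uniqueness of coequalizers then forces $\lax^\Free_{A,B}$ to be an isomorphism. The associativity and unitality coherences for $\lax^\Free$ descend from those of the monoidal monad $T_M$ established in \Cref{theorem:commutative_central_monoid_gives_monoidal_monad} by postcomposition with the epimorphism $\quo$.

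Once $\Free$ is shown to be strong monoidal, \Cref{lemma:strong_and_monoidal_adj}(2) yields a unique lax structure on $\Forg$ making $\Free \dashv \Forg$ a monoidal adjunction. To identify $\lax^\Forg_{X,Y} = \quo_{X,Y}$, I apply the explicit formula \eqref{eq:construct_lax} with the oplax structure $(\lax^\Free)^{-1}$ on $\Free$, noting that the unit of the adjunction is $A \mapsto A \otimes \unit^M$ and the counit $\counit_X$ is the action morphism $\act^X \colon X \otimes M \to X$; a direct calculation using naturality of $\quo$ as in \Cref{remark:naturality_of_quo} collapses the composite to $\quo_{X,Y}$. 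The main obstacle is the coequalizing verification in the first two steps, where the interplay of the half-braiding $\rswap^M$ with the multiplication $\mult^M$ must be tracked through several applications of the central monoid axioms; once past this, everything else is essentially formal.
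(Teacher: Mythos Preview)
Your proposal is correct and follows essentially the same route as the paper. Both arguments factor $\lax^{T_M}_{A,B}$ through the defining coequalizer, exhibit the section $t = A \otimes \unit^M \otimes B \otimes M$ to show invertibility (the paper writes the inverse of $\Forg(\lax^\Free_{A,B})$ directly as $\quo \circ t$, while you phrase the same computation as showing $\lax^{T_M}_{A,B}$ is itself a coequalizer), inherit the coherences from $T_M$, and identify $\lax^\Forg$ with $\quo$ via \eqref{eq:construct_lax}.
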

\begin{proof}
Recall the lax monoidal structure for $T_M$,
    $$\lax^{T_M}_{A,B}=(A\otimes B \otimes \mult^M)(A\otimes \rswap_B\otimes M)\colon T_M(A)\otimes T_M(B)\to T_M(A\otimes B),$$ from \Cref{theorem:commutative_central_monoid_gives_monoidal_monad}. One verifies directly that $\lax^{T_M}$ factors through $\Free(A)\otimes_M \Free(B)$. Denote the factorization by 
    $$\lax^\Free_{A,B}\colon \Free(A)\otimes_M \Free(B)\to \Free(A\otimes B).$$
    This morphism is an isomorphism. For this, it suffices to describe an inverse of $\Forg( \lax^\Free_{A,B} )$, which is given by the composition
    $$(A\otimes B)\otimes M\xrightarrow{A\otimes \unit^M\otimes B\otimes M} (A \otimes M)\otimes (B \otimes M)\xrightarrow{\quo_{\Free(A),\Free(B)}} \Forg(\Free(A)\otimes_M \Free(B)).$$
The coherences for $\lax^\Free$ hold since they hold for $\lax^{T_M}$.
As $\lax^{\Free}$ is an isomorphism, $\Free$ is strong monoidal. Therefore, $\Forg$ has a unique lax monoidal structure making $\Free\dashv\Forg$ a monoidal adjunction by \Cref{lemma:strong_and_monoidal_adj}. We compute $\lax^{\Forg}$ on $X = (A, \act^A)$ and $Y = (B, \act^B)$ as follows:
\begin{align*}
    \lax^{\Forg}_{X,Y}=&\Forg(\counit_X\otimes \counit_Y)\Forg(\lax^{\Free}_{A,B})^{-1}\unit_{A\otimes B}\\
    =&\Forg(\act^A\otimes \act^B)\big(\quo_{\Free(A),\Free(B)}(A\otimes \unit^M\otimes B\otimes M)\big)(A\otimes B\otimes \unit^M)\\
    =&\quo_{X,Y}.
\end{align*}
In the first equality, we use the construction of the lax structure on the right adjoint \Cref{eq:construct_lax}. The second equality uses the explicit structure of unit and counit of the adjunction $\Free\dashv \Forg$ and the inverse for $\Forg(\lax^\Free)$ displayed above. The third equality uses, after applying naturality of $\quo$ (see \Cref{remark:naturality_of_quo}), that the unit of $M$ acts trivially on $A$ and $B$.
\end{proof}

\begin{remark}\label{remark:Kleisli_as_monoidal_sub_of_EM}
We can regard $\Kleisli{ T_M }$ as a full monoidal subcategory of $\EiMo{ T_M }$. Indeed, we can use the universal property of $\Kleisli{ T_M }$ (\Cref{theorem:Kleisli_monoidal_up}) and
\Cref{prop:EM-mon-adjunction}.
\end{remark}

\begin{lemma}\label{prop:EL-proj-formulas}
    The projection formula holds for $\Forg$ of the Eilenberg--Moore adjunction from \eqref{equation:EM-adjunction_central_monoid}
\end{lemma}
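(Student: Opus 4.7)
The plan is to invert both projection formula morphisms by constructing explicit inverses via the universal property of the coequalizer presentation of $\otimes_M$. Using \Cref{prop:EM-mon-adjunction} to identify $\lax^{\Forg}_{X,Y}=\quo_{X,Y}$, and the explicit unit $\unit_A = A\otimes\unit^M$ of $\Free\dashv\Forg$, we have
$$\projl{A}{X} = \quo_{\Free(A),X}\circ (A\otimes \unit^M\otimes B)\colon A\otimes B\longrightarrow \Forg(\Free(A)\otimes_M X)$$
for $X=(B,\act^B)\in\EiMo{T_M}$. By \Cref{lem:CM-reflexive-coeq} the codomain is the coequalizer in $\cC$ of the parallel pair $(A\otimes \mult^M)\otimes B$ and $(A\otimes M)\otimes (\act^B\rswap_B)$. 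A similar description holds for $\projr{X}{A}$, where \eqref{eq:HB2} and \eqref{eq:CM} simplify the second parallel morphism to $B\otimes (A\otimes\mult^M)(\rswap_A\otimes M)$.

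The main step is to factor $A\otimes (\act^B\rswap_B)\colon (A\otimes M)\otimes B\to A\otimes B$ through the coequalizer, yielding a candidate inverse $\beta$ of $\projl{A}{X}$. The required coequalization identity
$$\act^B\rswap_B\circ (\mult^M\otimes B) = \act^B\rswap_B\circ (M\otimes \act^B\rswap_B)\colon M\otimes M\otimes B\longrightarrow B$$
expresses the associativity of the induced left $M$-action $\act^B\rswap_B$ on $B$; I would derive it by combining \eqref{eq:IM2} (to rewrite $\rswap_B\circ(\mult^M\otimes B)$), the right-module associativity axiom for $\act^B$ from \eqref{equation:CT-axioms}, and the commutativity \eqref{eq:CM}.

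Verifying that $\beta$ is a two-sided inverse of $\projl{A}{X}$ is then routine: the identity $\beta\circ\projl{A}{X}=\id_{A\otimes B}$ is immediate from \eqref{eq:IM1} together with the right-module unit axiom, and $\projl{A}{X}\circ\beta=\id$ follows from $\quo$ being epimorphic, the monoid unit axiom $\mult^M(\unit^M\otimes M)=\id_M$, and the coequalizer relation applied after pre-composition with the common section $A\otimes\unit^M\otimes M\otimes B$. The argument for $\projr{X}{A}$ is structurally identical, with candidate inverse induced by $(\act^B\otimes A)\circ (B\otimes \rswap_A^{-1})\colon B\otimes A\otimes M\to B\otimes A$. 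The main obstacle is the bookkeeping for the interplay of $\rswap$, $\mult^M$, $\unit^M$ and the module structure, but each required identity reduces to a single application of one of the commutative-central-monoid axioms combined with a module axiom.
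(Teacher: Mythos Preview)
Your argument is correct and complete, but it takes a different route from the paper. The paper does not construct an explicit inverse: instead it observes that the projection formula morphism is an isomorphism whenever $X$ lies in the Kleisli subcategory (this is \Cref{lemma:Kleisli_projection_formula}, via \Cref{remark:Kleisli_as_monoidal_sub_of_EM}), and then extends to arbitrary $X\in\EiMo{T_M}$ using that every object is a reflexive coequalizer of free objects (\Cref{lem:EM-cocompletion}) together with the fact that $\Forg$, $\Free$, $\otimes$, and $\otimes_M$ all preserve reflexive coequalizers.

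Your approach is more elementary and self-contained: you produce an explicit inverse $\beta$ by factoring $A\otimes(\act^B\rswap_B)$ through the defining coequalizer of $\Free(A)\otimes_M X$, and the required coequalization identity is exactly the associativity of the induced left action, which you correctly reduce to \eqref{eq:IM2}, the module associativity axiom, and \eqref{eq:CM}. The paper's argument is shorter and more structural (it recycles the Kleisli result and the colimit machinery already in place), while yours has the advantage of producing a concrete formula for the inverse and not invoking any preservation-of-colimits statements beyond the coequalizer universal property itself. One small wording point: in checking $\projl{A}{X}\circ\beta=\id$, what you really use is $\id_{A\otimes M\otimes B}=(A\otimes\mult^M\otimes B)\circ(A\otimes\unit^M\otimes M\otimes B)$ followed by the coequalizer relation, rather than literally ``pre-composing with the section''; but this is the computation you describe, so the argument goes through.
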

\begin{proof}
We show how to derive that the projection formula morphisms for the Eilenberg--Moore adjunction being invertible follows from the same statement for the Kleisli category in  \Cref{lemma:Kleisli_projection_formula}. 
For this, we regard the Kleisli category of $\cC$ as a full monoidal subcategory of $\EiMo{T_M}$ (see \Cref{remark:Kleisli_as_monoidal_sub_of_EM}).
Then the projection formula morphism
\[
A \otimes \Forg( X ) \rightarrow \Forg( \Free(A) \otimes_M X )
\]
is an isomorphism for all $A \in \cC$ and $X \in \Kleisli{M}$ by \Cref{lemma:Kleisli_projection_formula}.
But now, every object in $\EiMo{T_M}$ is a reflexive coequalizer of objects in the Kleisli category by \Cref{lem:EM-cocompletion}. Since the four constituents $\Forg$, $\Free$, $\otimes$, and $\otimes_M$ respect reflexive coequalizers, it follows that the projection formula morphism is an isomorphism for all objects in $\EiMo{T_M}$ as well.
\end{proof}

\begin{remark}\label{rem:Hopfmonads}
In the dual setup of an opmonoidal adjunction $L\dashv G$, the tensor product of $\cC$ always gives rise to a canonical tensor product on the Eilenberg--Moore category independent of the existence of reflexive coequalizers in $\cC$, see \cite{Moe} and \cite{BV}*{Theorem~2.3}. In that context, the projection formula holds for the Eilenberg--Moore adjunction of the associated monad $T=GL$ if and only if $T$ is a Hopf monad \cite{BLV}*{Theorem 2.15}. 
Note that this is different to our context, i.e., the context of a monoidal adjunction and \Cref{ass:C-good}: here, the projection formula always holds for the Eilenberg--Moore adjunction by \Cref{prop:EL-proj-formulas}.
\end{remark}

We now prove a universal property of the monoidal Eilenberg--Moore adjunction.

\begin{theorem}\label{thm:EM-monoidal-universal}
Assume $G\dashv R$ is a monoidal adjunction such that the monoidal monad $RG$ is given by tensoring with a commutative central monoid $M$, i.e., $RG= T_M$.
Then the unique induced functor $\Tilde{R}$ in     \begin{equation}\label{equation:EM-diag}
    \begin{tikzpicture}[baseline=($(11) + 0.5*(d)$)]
          \coordinate (r) at (4,0);
          \coordinate (d) at (0,-1);
          \node (11) {$\cC$};
          \node (12) at ($(11) + (r) - (d)$) {$\cD$};
          \node (21) at ($(11) + (r) + (d)$) {$\EiMo{T_M}$};
          \node (22) at ($(11) + 2*(r)$) {$\cC$};
          \draw[->] (11) to node[above]{$G$} (12);
          \draw[->] (21) to node[below]{$\Forg$} (22);
          \draw[->] (11) to node[below]{$\Free$} (21);
          \draw[->] (12) to node[above]{$R$} (22);
          \draw[->,dashed] (12) to node[right]{$\tilde{R}$} (21);
    \end{tikzpicture}
\end{equation}
can be equipped with a unique lax monoidal structure such that \eqref{equation:EM-diag} is a strictly commutative diagram in $\Catlax$.
\end{theorem}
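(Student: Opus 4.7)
The plan is as follows. First, I recall that at the level of underlying categories the functor $\tilde{R}$ is uniquely determined by $\tilde{R}(X)=(R(X),R(\counit_X))$ and $\tilde{R}(f)=R(f)$, and that the identity $\Forg\circ\tilde{R}=R$ holds strictly by construction. Strict commutativity of the upper triangle $\tilde{R}\circ G=\Free$ at the level of underlying categories is a direct consequence of the assumption $RG=T_M$ as monads: the action $R(\counit_{G(A)})\colon T_M^2(A)\to T_M(A)$ attached to $\tilde{R}(G(A))$ is, by definition, the monad multiplication of $RG$, which by hypothesis equals $\mult^{T_M}_A=A\otimes\mult^M$, matching the structure of $\Free(A)$.

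Second, I would pin down the lax monoidal structure on $\tilde{R}$. Because $\Forg$ is faithful and strict commutativity of the lower triangle in $\Catlax$ is required, the structure morphism $\lax^{\tilde{R}}_{X,Y}\colon\tilde{R}(X)\otimes_M\tilde{R}(Y)\to\tilde{R}(X\otimes Y)$ is forced to be the unique morphism in $\EiMo{T_M}$ whose image under $\Forg$ is the unique factorization of $\lax^R_{X,Y}$ through $\quo_{\tilde{R}(X),\tilde{R}(Y)}$. Indeed, by \Cref{prop:EM-mon-adjunction} one has $\lax^{\Forg}_{X,Y}=\quo_{X,Y}$, so the composed lax structure enforces the equation $\Forg(\lax^{\tilde{R}}_{X,Y})\circ\quo_{\tilde{R}(X),\tilde{R}(Y)}=\lax^R_{X,Y}$. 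The nullary structure $\lax^{\tilde{R}}_0$ is fixed analogously as the unique morphism in $\EiMo{T_M}$ whose image under $\Forg$ is $\lax^R_0\colon\one\to R(\one)$; its compatibility with the $T_M$-action on $\tilde{R}(\one_\cD)$ is automatic from $RG=T_M$ and the monoid axioms of $M$.

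Third, and this is the main technical step, I must show that $\lax^R_{X,Y}$ actually coequalizes the defining parallel pair
\[
R(\counit_X)\otimes R(Y),\ (R(X)\otimes R(\counit_Y))\circ(R(X)\otimes\rswap_{R(Y)})\colon R(X)\otimes M\otimes R(Y)\rightrightarrows R(X)\otimes R(Y).
\]
By naturality of $\lax^R$ in each slot applied to $\counit_X$ and $\counit_Y$, the two composites with $\lax^R_{X,Y}$ rewrite as $R(\counit_X\otimes Y)\circ\lax^R_{GR(X),Y}$ and $R(X\otimes\counit_Y)\circ\lax^R_{X,GR(Y)}\circ(R(X)\otimes\rswap_{R(Y)})$, respectively. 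Comparing the two expressions reduces to identifying the half-braiding $\rswap_{R(Y)}$ of the commutative central monoid $M$ with the natural transformation controlling the lax structure of the monoidal monad $RG$ on arguments coming from the adjunction data. This identification is precisely the content of the hypothesis $RG=T_M$ as monoidal monads, i.e., that $R(\lax^G_{A,B})\circ\lax^R_{G(A),G(B)}$ agrees with $\lax^{T_M}_{A,B}=(A\otimes B\otimes\mult^M)\circ(A\otimes\rswap_B\otimes M)$. Carrying out this comparison via a diagram chase that combines this equality, naturality of $\lax^R$, and the associativity axiom for $\lax^R$ is the step I expect to require the bulk of the work.

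Fourth, once $\lax^{\tilde{R}}$ is in place, the associativity and unitality axioms for $\tilde{R}$ follow from faithfulness of $\Forg$: each axiom is an equality of morphisms in $\EiMo{T_M}$, and after application of $\Forg$ it becomes the corresponding axiom for $R$, which holds by hypothesis. The same faithfulness argument shows that the upper triangle $\tilde{R}\circ G=\Free$ commutes as lax monoidal functors, since both sides acquire the same image under $\Forg$, namely $RG=T_M$ with its prescribed lax structure. Uniqueness of the lax monoidal lift is a restatement of the uniqueness observation in the second step.
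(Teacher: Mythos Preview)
Your outline is correct and matches the paper's proof in its overall architecture: identify $\tilde{R}$ on underlying categories, force $\lax^{\tilde{R}}$ as the unique factorization of $\lax^R$ through $\quo$, verify the coequalizing condition, and deduce the coherences from faithfulness of $\Forg$. The nullary part and the uniqueness argument are also as in the paper.

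The one substantive difference is in the coequalizing step. The paper does not go through naturality of $\lax^R$ applied to $\counit$ and the monoidal monad identity $RG=T_M$ as you propose; instead it writes the $M$-action on $\tilde{R}(X)$ as $\lax^R_{X,\one}$ (via \Cref{lemma:lax_via_proj}) and reduces the claim, by associativity of $\lax^R$, to the single equality $\lax^R_{\one,Y}=\lax^R_{Y,\one}\circ\rswap_{R(Y)}$. This last identity falls out immediately from \Cref{lemma:lax_via_proj} together with the description of the half-braiding as $\rswap_A=(\projl{A}{\one})^{-1}\circ\projr{\one}{A}$. Your route would also work, but unwinding the monoidal monad equality $\lax^{RG}=\lax^{T_M}$ to extract the swap on $R(Y)$ is considerably more laborious than the two-line argument the paper uses. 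If you rewrite the action as $\lax^R_{X,\one}$ rather than $R(\counit_X)$ and invoke associativity of $\lax^R$ first, the diagram chase you anticipate collapses.
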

\begin{proof}
Observe that $\tilde{R}(X)$ can be regarded as the $M$-module $(R(X),\lax^R_{X,\one})$ by \Cref
{lemma:lax_via_proj}, for an object $X$ in $\cD$. 

We construct the lax monoidal structure $\lax^{\tilde{R}}_{X,Y}: \tilde{R}(X)\otimes_{M} \tilde{R}(Y) \rightarrow \tilde{R}( X \otimes Y )$ as the unique morphism that renders the following diagram commutative:
\begin{align}\label{eq:lax-tilde}
\vcenter{\hbox{\xymatrix{
R(X)\otimes R(Y)\ar[rr]^{\lax^R_{X,Y}}\ar[d]_{\quo_{\tilde{R}(X),\tilde{R}(Y)}} && R(X\otimes Y)\ar@{=}[d]\\
\Forg(\tilde{R}(X)\otimes_{M} \tilde{R}(Y))\ar[rr]^{\Forg(\lax^{\tilde{R}}_{X,Y})} && \Forg\tilde{R}(X\otimes Y)
}}}
\end{align}
The existence of such a morphism follows from commutativity of the outer diagram in 
$$
\resizebox{\textwidth}{!}{
\xymatrix@R=40pt{
&R(X)\otimes M\otimes R(Y)\ar[rr]^{R(X)\otimes \projr{\one}{R(Y)}}\ar[dl]_{\lax^R_{X,\one}\otimes R(Y)}\ar[rrdd]|-{R(X)\otimes \lax^R_{\one, Y}} && R(X)\otimes RGR(Y)\ar[dr]^{R(X)\otimes \projl{R(Y)}{\one}^{-1}}\ar[dd]|-{R(X)\otimes R(\counit_Y)}&\\
R(X)\otimes R(Y)\ar[dr]_{\lax^R_{X,Y}} &&&& R(X)\otimes R(Y)\otimes M\ar[dl]^{R(X)\otimes \lax^R_{Y,\one}}.\\
&R(X\otimes Y)&& \ar[ll]^{\lax^R_{X,Y}}R(X)\otimes R(Y)&
}}
$$
Here, both interior triangles on the right commute by \Cref{lemma:lax_via_proj}, and the left interior square commutes by associativity of the lax monoidal structure \Cref{eq:associativity}.
We can choose $\lax_0^{\tilde{R}}$ to be the identity on $\tilde{R}(\one)=M$.
One checks that $\lax^{\tilde{R}}_{X,Y}$ and $\lax^{\tilde{R}}_{0}$ are morphisms of right $M$-modules which satisfy the necessary coherences and render the diagram of the statement strictly commutative. 
\end{proof}

\begin{example}\label{ex:left-exact-tensor}
We consider the algebra $S := \Bbbk[x]/\langle x^2 \rangle$ of dual numbers over a field $\Bbbk$.
Then $T := (- \otimes_\Bbbk S)$ defines a monad on finite-dimensional vector spaces $\cC = \vect_\Bbbk$. The Eilenberg--Moore category $\EiMo{T}$ is given by $\rmod{S}$, i.e., the category of right $S$-modules of finite $\Bbbk$-dimension.
We describe two inequivalent monoidal structures on $\rmod{S}$:
the first one is given by the monoidal structure of \Cref{lem:CT-monoidal}, i.e., the usual right exact tensor product $\otimes_S$.
For the second monoidal structure, we consider the following equivalence of categories:
\[
(-)^{\vee} := \Hom_S(-,S): (\rmod{S})^{\oop} \rightarrow \rmod{S}.
\]
Structure transport along $(-)^{\vee}$ of the right exact monoidal structure $\otimes_S$ on $(\rmod{S})$ yields a right exact monoidal structure on $(\rmod{S})^{\oop}$, i.e., a left exact monoidal structure $\ast_S$ on $\rmod{S}$. Concretely, it is given by
\[
M \ast_S N := (M^{\vee} \otimes_S N^{\vee})^{\vee}
\]
for $M,N \in \rmod{S}$. Since $\otimes_S$ is not left exact, the two monoidal structures are inequivalent. However, they are equivalent when we restrict to free modules, i.e., for both monoidal structures, the functor
\[
\Free: \vect_\Bbbk \rightarrow \rmod{S}
\]
is strong monoidal in a canonical way.
In this situation, the comparison functor of \Cref{thm:EM-monoidal-universal} is given by the identity functor of $\rmod{S}$ equipped with a lax monoidal structure given by a canonical map of the form
\[
M \otimes_S N \rightarrow M \ast_S N
\]
which is not an isomorphism in general, e.g., it is the zero map for $M = N = \Bbbk$.
\end{example}

\subsection{Beck's monadicity theorem and monoidal adjunctions}\label{sec:monadicity}

Recall that an adjunction $G\dashv R$ with $ T = RG$ such that the comparison functor $\cD \rightarrow \EiMo{T}$ is an equivalence is called \emph{monadic}. We can now derive a \emph{monoidal} version of the crude monadicity theorem. For this, we work with the following strict bicategory.

\begin{definition}
    Let $\Catlaxcoref$ be the $2$-full sub-bicategory of $\Catlax$ from \Cref{section:monoidal-monad} 
    \begin{itemize}
        \item whose objects are monoidal categories with reflexive coequalizers and tensor products that respect these reflexive coequalizers,
        \item $1$-morphisms are lax monoidal functors that preserve reflexive coequalizers,
        \item $2$-morphisms are all monoidal transformations.
    \end{itemize}
\end{definition}

We require the following lemmas.

\begin{lemma}\label{lem:EM-mon-adj-coref} The Eilenberg--Moore adjunction from \Cref{equation:EM-adjunction_central_monoid} is an adjunction internal to $\Catlaxcoref$.
\end{lemma}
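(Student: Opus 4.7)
The plan is to observe that an adjunction internal to $\Catlaxcoref$ amounts to a monoidal adjunction in $\Catlax$ whose two categories lie in $\Catlaxcoref$ and whose two functors preserve reflexive coequalizers; no extra condition on the $2$-cells (the unit and counit) is needed, since $\Catlaxcoref$ is $2$-full in $\Catlax$. Thus the task splits into three checks: the objects are in $\Catlaxcoref$, the Eilenberg--Moore free/forgetful pair is a monoidal adjunction, and both functors preserve reflexive coequalizers.

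For the object level, $\cC$ belongs to $\Catlaxcoref$ by \Cref{ass:C-good}, while $\EiMo{T_M}$ has reflexive coequalizers by \Cref{lem:CM-reflexive-coeq} and its tensor product $\otimes_M$ preserves them by \Cref{lem:CT-monoidal}. The monoidal adjunction property, and in particular the monoidality of the unit and counit, was already established in \Cref{prop:EM-mon-adjunction}; together with the lax monoidal structures $\lax^{\Free}$ and $\lax^{\Forg} = \quo$, this provides all the $1$- and $2$-morphism data required for an internal adjunction.

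It remains to verify that $\Free$ and $\Forg$ preserve reflexive coequalizers. For $\Forg$, this is immediate from \Cref{lem:CM-reflexive-coeq}. For $\Free$, I would appeal to the general principle that a left adjoint preserves all colimits that exist in its domain, so since reflexive coequalizers exist in $\cC$ by \Cref{ass:C-good}, $\Free$ preserves them. (Alternatively, one can argue directly: on underlying objects $\Free = (-) \otimes M$, and $(-) \otimes M$ preserves reflexive coequalizers by \Cref{ass:C-good}; since $\Forg$ creates reflexive coequalizers, this underlying computation lifts to $\EiMo{T_M}$.)

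I do not anticipate a substantial obstacle: every ingredient is already available from preceding lemmas, and the argument is essentially a bookkeeping exercise of collecting the hypotheses in the definition of $\Catlaxcoref$. The only mildly delicate point is making sure that the natural notion of ``adjunction internal to a $2$-full sub-bicategory'' really reduces to the three checks above, which is immediate from \Cref{definition:internal_adjunction} once one notes that $2$-fullness means every monoidal transformation in $\Catlax$ between reflexive-coequalizer-preserving lax monoidal functors is automatically a $2$-cell of $\Catlaxcoref$.
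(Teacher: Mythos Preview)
Your proposal is correct and follows essentially the same approach as the paper: verify that both categories lie in $\Catlaxcoref$, invoke \Cref{prop:EM-mon-adjunction} for the monoidal adjunction structure, and check that $\Free$ (as a left adjoint) and $\Forg$ (by \Cref{lem:CM-reflexive-coeq}) preserve reflexive coequalizers. If anything, you are slightly more careful than the paper in explicitly citing \Cref{lem:CT-monoidal} for the fact that $\otimes_M$ preserves reflexive coequalizers, and in spelling out why $2$-fullness means no extra condition on the unit and counit is required.
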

\begin{proof}
    We have seen that the Eilenberg--Moore adjunction $\Free\dashv \Forg$ is a monoidal adjunction in \Cref{prop:EM-mon-adjunction}. By \Cref{ass:C-good} and \Cref{lem:CM-reflexive-coeq}, $\cC$ and $\EiMo{T_M}$ are objects in $\Catlaxcoref$. Further, $\Forg$ is a $1$-morphism in $\Catlaxcoref$ as it preserves reflexive coequalizers and it is clear that $\Free$ preserves reflexive coequalizers as it is a left adjoint. 
\end{proof}

\begin{lemma}\label{lem:G-ref-coeq}
Let $G\dashv R$ be a monoidal adjunction in $\Catlaxcoref$ such that $T=RG$ is given by tensoring with a central commutative monoid $M$. If every object in $\cD$ is a reflexive coequalizer of a diagram with objects in the image of $G$, then $\tilde{R}$ is a strong monoidal functor. 
\end{lemma}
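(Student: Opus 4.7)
The plan is to show that the lax monoidal structure $\lax^{\tilde R}$ constructed in \Cref{thm:EM-monoidal-universal} is invertible. Since $\lax_0^{\tilde R} = \id_M$ is trivially an isomorphism, only $\lax^{\tilde R}_{X,Y}$ requires attention, and I would proceed in two stages: first verifying the claim on free objects, then propagating it along reflexive coequalizer presentations.

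For the first stage, I would exploit that the diagram \eqref{equation:EM-diag} strictly commutes in $\Catlax$, yielding $\tilde R \circ G = \Free$ as an equality of lax monoidal functors. Expanding the lax structure of this composition gives the identity
\[
\lax^{\Free}_{A,B} \;=\; \tilde R(\lax^G_{A,B}) \circ \lax^{\tilde R}_{G(A), G(B)}
\]
in $\EiMo{T_M}$ for all $A, B \in \cC$. Since $\Free$ is strong monoidal by \Cref{prop:EM-mon-adjunction}, $G$ is strong monoidal by \Cref{lemma:strong_and_monoidal_adj}, and functors preserve isomorphisms, two of the three morphisms in this equation are isos, forcing $\lax^{\tilde R}_{G(A), G(B)}$ to be an isomorphism.

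Next, I would observe that $\tilde R$ preserves reflexive coequalizers: indeed, $\Forg \circ \tilde R = R$ preserves them by the hypothesis $G \dashv R \in \Catlaxcoref$, while $\Forg$ creates them by \Cref{lem:CM-reflexive-coeq}, so the claim transfers to $\tilde R$. Combined with the fact that both $\otimes$ in $\cD$ (since $\cD \in \Catlaxcoref$) and $\otimes_M$ in $\EiMo{T_M}$ (by \Cref{lem:CT-monoidal}) preserve reflexive coequalizers in each variable, it follows that the bifunctors $(X, Y) \mapsto \tilde R(X) \otimes_M \tilde R(Y)$ and $(X,Y) \mapsto \tilde R(X \otimes Y)$ from $\cD \times \cD$ to $\EiMo{T_M}$ preserve reflexive coequalizers in each variable separately.

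Finally, given arbitrary $X, Y \in \cD$, the hypothesis lets me present each as a reflexive coequalizer of objects in the image of $G$. Applying the natural transformation $\lax^{\tilde R}_{-,-}$ to these presentations and taking coequalizers iteratively in each variable, $\lax^{\tilde R}_{X,Y}$ arises as a reflexive coequalizer of morphisms of the form $\lax^{\tilde R}_{G(A), G(B)}$, each of which is iso by the first stage; hence $\lax^{\tilde R}_{X,Y}$ itself is an isomorphism. The only nontrivial technical input is the creation of reflexive coequalizers by $\Forg$, which is what lets the coequalizer-preservation property transfer from $R$ to $\tilde R$; everything else is formal from the coherences already assembled.
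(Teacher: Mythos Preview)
Your proposal is correct and follows essentially the same approach as the paper: first verify that $\lax^{\tilde R}_{G(A),G(B)}$ is an isomorphism via the identification $\tilde R G = \Free$ as lax monoidal functors (with $\Free$ strong monoidal), then propagate to all of $\cD$ using that $\tilde R$, $\otimes$, and $\otimes_M$ preserve reflexive coequalizers. Your derivation that $\tilde R$ preserves reflexive coequalizers (from $\Forg\tilde R = R$ and creation by $\Forg$) is exactly the content of the paper's \Cref{lem:tildeR-preserves-coref}, which the paper's own proof invokes only implicitly.
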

\begin{proof}
Consider the lax structure of $\tilde{R}$ evaluated on two objects in the image of $G$:
$$
\xymatrix@R=5pt{
\ar[rr]^{\lax^{\tilde{R}}_{G(A),G(B))}}\ar@{=}[d]\tilde{R}G(A)\otimes_M \tilde{R}G(B) && \tilde{R}(G(A)\otimes G(B))\ar@{=}[d]\\
\Free(A)\otimes_M \Free(B)\ar[rr]^{\lax_{A,B}^{\Free}}&&\Free(A\otimes B)
}
$$
The bottom row isomorphism can be identified with the lax monoidal structure of the free functor $\Free\colon \cC\to \Kleisli{\cC}$. Hence, it is an isomorphism by \Cref{lemma:Kleisli_projection_formula}. By naturality, and using that $\tilde{R}$ preserves reflexive coequalizers by assumption, $\lax^{\tilde{R}}_{X,Y}$ is also an isomorphism for all reflexive coequalizers $X,Y$ of objects in the image of $G$. If every object in $\cD$ is of this form, $\tilde{R}$ becomes strong monoidal. 
\end{proof}

\begin{remark}
The functor $\tilde{R}$ is mentioned (as a strong monoidal functor) in \cite{Saf}*{Proposition 2.16} to exist if $R$ satisfies the projection formula.  Note that \Cref{ex:left-exact-tensor} shows, in particular, that the assumption that the tensor product of $\cD$ preserves reflexive coequalizers is crucial in \Cref{lem:G-ref-coeq}. 
\end{remark}

\begin{lemma}\label{lem:tildeR-preserves-coref}
If $R$ preserves reflexive coequalizers, then so does $\tilde{R}$.
\end{lemma}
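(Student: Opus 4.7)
The strategy is to exploit the factorization $\Forg \circ \tilde{R} = R$ together with the fact, established in \Cref{lem:CM-reflexive-coeq}, that $\Forg\colon \EiMo{T_M} \to \cC$ creates (and preserves) reflexive coequalizers. So I would start from a reflexive pair $f, g\colon X \to Y$ in $\cD$ with common section $s\colon Y \to X$, and with coequalizer $q\colon Y \to Z$ in $\cD$; the goal is to show that $\tilde{R}(q)$ is a coequalizer of $\tilde{R}(f), \tilde{R}(g)$ in $\EiMo{T_M}$.

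Applying the functor $\tilde{R}$ to the pair produces a reflexive pair $\tilde{R}(f), \tilde{R}(g)\colon \tilde{R}(X) \to \tilde{R}(Y)$ in $\EiMo{T_M}$ with common section $\tilde{R}(s)$, since $\tilde{R}$ is a functor and therefore preserves the identities witnessing $s$ as a common section. Now I would push the situation down to $\cC$ by applying $\Forg$: using $\Forg \tilde{R} = R$ and the hypothesis that $R$ preserves reflexive coequalizers, the morphism $R(q) = \Forg\tilde{R}(q)$ is the coequalizer in $\cC$ of the reflexive pair $\Forg\tilde{R}(f), \Forg\tilde{R}(g)$.

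At this point I would invoke the creation statement from \Cref{lem:CM-reflexive-coeq}: given a reflexive pair in $\EiMo{T_M}$ whose image under $\Forg$ has a coequalizer in $\cC$, there is a unique lift of that coequalizer to $\EiMo{T_M}$, and this lift is itself a coequalizer of the original pair. Since $\tilde{R}(q)$ is a morphism in $\EiMo{T_M}$ whose image under $\Forg$ is exactly this coequalizer, uniqueness forces $\tilde{R}(q)$ to be the coequalizer of $\tilde{R}(f), \tilde{R}(g)$ in $\EiMo{T_M}$, which proves that $\tilde{R}$ preserves reflexive coequalizers.

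There is no real obstacle in this argument; the only point to be slightly careful about is distinguishing between $\Forg$ \emph{preserving} versus \emph{creating} reflexive coequalizers, since it is precisely the creation property that lets us transfer the coequalizer statement from $\cC$ back up to $\EiMo{T_M}$ without having to separately verify the universal property there.
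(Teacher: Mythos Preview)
Your argument is correct and follows essentially the same approach as the paper. The paper's proof is terser: it notes that $\Forg$ \emph{reflects} reflexive coequalizers (a consequence of the creation statement in \Cref{lem:CM-reflexive-coeq}), and then the factorization $R = \Forg\tilde{R}$ immediately gives the result. Your use of creation and the uniqueness-of-lift argument amounts to deriving this reflection property by hand, so the two arguments coincide.
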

\begin{proof}
    By \Cref{lem:CM-reflexive-coeq}, $\Forg$, in particular, reflects reflexive coequalizers. If $R=\Forg\tilde{R}$ preserves them, it follows that $\tilde{R}$ preserves them as well.
\end{proof}
In particular, if $G\dashv R$ is an adjunction internal to $\Catlaxcoref$ satisfying the projection formula, then the diagram of \Cref{equation:EM-diag} also lies in $\Catlaxcoref$.
For the following result, recall our notion of an equivalence of monoidal adjunctions in \Cref{definition:equivalent_as_monoidal_adj}.

\begin{theorem}[Crude monoidal monadicity]\label{thm:crude-mon-monadicity}
    Assume that 
    \begin{itemize}
        \item $G\dashv R$ is a monoidal adjunction in $\Catlaxcoref$,
        \item $R$ satisfies the projection formula,
        \item $R$ reflects isomorphisms (i.e., if $R(f)$ is an isomorphism, then so is $f$).
    \end{itemize}
    Then $G\dashv R$ is equivalent as a monoidal adjunction to the Eilenberg--Moore adjunction from \eqref{equation:EM-adjunction_central_monoid}, for $M=R( \one )$. In particular, $G\dashv R$ is monadic.
\end{theorem}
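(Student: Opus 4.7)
The plan is to verify the hypotheses of the monoidal analogues developed above, then invoke the standard (non-monoidal) crude monadicity theorem at the level of underlying categories, and finally promote the resulting equivalence to one of monoidal adjunctions via the bicategorical machinery of \Cref{subsection:bicat_of_adjunctions}.

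First, since the projection formula holds for $R$, by \Cref{proposition:commutative_central_monoid_R1}, $M = R(\one)$ is a commutative central monoid, so $T_M = (-) \otimes M$ is a monoidal monad by \Cref{theorem:commutative_central_monoid_gives_monoidal_monad}. Moreover, \Cref{lemma:iso_of_monoidal_monads} supplies an isomorphism of monoidal monads $T_M \xrightarrow{\sim} RG$, which in turn gives a monoidal isomorphism $\EiMo{T_M} \cong \EiMo{RG}$ over $\cC$. Via this identification, \Cref{thm:EM-monoidal-universal} produces a lax monoidal comparison functor $\tilde{R}\colon \cD \to \EiMo{T_M}$ fitting into the strictly commutative diagram \eqref{equation:EM-diag}, with $\Forg \circ \tilde{R} = R$ and $\tilde{R} \circ G = \Free$.

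Since $G\dashv R$ lives in $\Catlaxcoref$, the right adjoint $R$ preserves reflexive coequalizers; combined with the hypothesis that $R$ reflects isomorphisms, Beck's crude monadicity theorem ensures that the comparison functor $\tilde{R}$ is an equivalence of underlying categories. By \Cref{lem:tildeR-preserves-coref}, $\tilde{R}$ also preserves reflexive coequalizers, and its quasi-inverse does so automatically (as an equivalence).

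Next, I promote $\tilde R$ to a monoidal equivalence. By \Cref{lem:EM-cocompletion}, every object of $\EiMo{T_M}$ is a reflexive coequalizer of free objects, i.e., of objects in the image of $\Free = \tilde{R} \circ G$. Transporting this presentation along the quasi-inverse of $\tilde R$ (which preserves reflexive coequalizers) shows that every object of $\cD$ is a reflexive coequalizer of objects in the image of $G$. Hence \Cref{lem:G-ref-coeq} applies and the lax monoidal structure $\lax^{\tilde R}$ of \Cref{thm:EM-monoidal-universal} is invertible, so $\tilde R$ is strong monoidal, and thus a monoidal equivalence.

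Finally, the strict commutativity $\tilde R \circ G = \Free$ in $\Catlaxcoref$, together with $\tilde R$ being a monoidal equivalence, is exactly the input required by \Cref{corollary:easy_characterization_of_adj_equivalences} to produce an equivalence in $\Adj_{\Catlax}$ between $\Free \dashv \Forg$ and $G \dashv R$; the commutativity of the right-hand triangle up to monoidal natural isomorphism is then automatic by \Cref{corollary:dual_easy_characterization_of_adj_equivalences}. Monadicity of $G\dashv R$ follows immediately. The main obstacle is really only the bookkeeping required to turn the equivalence $\tilde R$ into a genuine $1$-equivalence in $\Adj_{\Catlax}$, which is precisely what the appendix is set up to handle; the other steps reduce to known arguments once the appropriate monoidal coherences are in place.
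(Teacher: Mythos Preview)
Your proof is correct and follows essentially the same route as the paper's: apply the classical crude monadicity theorem to obtain that $\tilde{R}$ is an equivalence, then verify the hypotheses of \Cref{lem:G-ref-coeq} (via \Cref{lem:EM-cocompletion} transported across the equivalence) to upgrade $\tilde{R}$ to strong monoidal, and finally invoke the appendix to conclude the equivalence of monoidal adjunctions. You have merely made explicit some steps the paper leaves implicit, such as the use of \Cref{lem:tildeR-preserves-coref} and the appeal to \Cref{corollary:easy_characterization_of_adj_equivalences}.
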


\begin{proof}
The assumptions of the theorem include those necessary to apply the crude monadicity theorem \cite{BW}*{5.1.~Proposition}, hence $G\dashv R$ is monadic and $\tilde{R}$ is an equivalence. 
It suffices to prove that $\tilde{R}$ is also strong monoidal.
For this, we check the assumptions of \Cref{lem:G-ref-coeq}: since the projection formula holds, $T = RG$ is given by tensoring with the commutative central monoid $M = R(\one)$.
Moreover, since $\cD \simeq \EiMo{T_M}$, every object in $\cD$ is a reflexive coequalizer of an object in the image of $G$ by \Cref{lem:EM-cocompletion}.

\end{proof}

In particular, the theorem gives conditions under which there is a monoidal equivalence $\rModint{\cC}{M}\simeq \cD$ for the commutative central monoid $M=R(\one)$ from \Cref{proposition:commutative_central_monoid_R1}.

To conclude this section, we recover a special case of \Cref{thm:crude-mon-monadicity} appearing in \cite{BN}*{Proposition~6.1}. For this, we call a monoidal category $\cC$ an \emph{abelian tensor category} if it is a $\Bbbk$-linear tensor category in the sense of \cite{EGNO}*{Section~4.1}. In particular, $\cC$ is a locally finite $\Bbbk$-linear rigid abelian category with a $\Bbbk$-bilinear tensor product such that $\End_\cC(\one)=\Bbbk$. 

\begin{corollary}\label{cor:abelian-mon-monadicity}
    Let $\cC$, $\cD$ be abelian tensor categories over a field $\Bbbk$ and  assume given a monoidal adjunction $G\dashv R$. If $R$ is faithful and exact, then $G\dashv R$ is monadic and equivalent to $\Free\dashv \Forg$ as a monoidal adjunction. In particular, there is an equivalence of  monoidal categories $\cD\simeq \rModint{\cC}{M}$.
\end{corollary}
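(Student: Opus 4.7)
The plan is to reduce this to the crude monoidal monadicity theorem (\Cref{thm:crude-mon-monadicity}) by verifying its three hypotheses in the abelian tensor category setting: that $G\dashv R$ is an adjunction internal to $\Catlaxcoref$, that $R$ satisfies the projection formula, and that $R$ reflects isomorphisms.

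First, I would check the $\Catlaxcoref$ hypothesis. Abelian categories have all finite colimits, so $\cC$ and $\cD$ both admit reflexive coequalizers. Since both categories are rigid, the functors $A\otimes(-)$ and $(-)\otimes A$ have both left and right adjoints (given by tensoring with the respective duals), hence preserve all colimits; in particular, the tensor products preserve reflexive coequalizers in each variable. The functor $G$ preserves colimits as a left adjoint, and $R$ preserves reflexive coequalizers because it is exact. Thus $G\dashv R$ genuinely lies in $\Catlaxcoref$.

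Second, since $\cC$ is rigid, \Cref{corollary:rigid_proj_formula} immediately gives that the projection formula holds for $R$. Third, I would verify the standard fact that a faithful exact additive functor $F$ between abelian categories reflects isomorphisms: given $f$ with $F(f)$ invertible, exactness gives $F(\ker f)=\ker F(f)=0$ and $F(\coker f)=\coker F(f)=0$, and faithfulness forces any object $X$ with $F(X)=0$ to be zero (since then $F(\id_X)=0=F(0)$ implies $\id_X=0$), so $\ker f=\coker f=0$ and $f$ is an isomorphism.

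With the three hypotheses in place, \Cref{thm:crude-mon-monadicity} applies directly and gives that $G\dashv R$ is monadic and equivalent as a monoidal adjunction to the Eilenberg--Moore adjunction of \eqref{equation:EM-adjunction_central_monoid} for the commutative central monoid $M=R(\one)$; in particular, $\cD\simeq \rModint{\cC}{M}$ as monoidal categories. There is no real obstacle in this argument beyond unpacking the abelian tensor category axioms: the substantive content is entirely contained in \Cref{thm:crude-mon-monadicity}, and the corollary merely records the classical case where rigidity, exactness, and faithfulness streamline the hypotheses.
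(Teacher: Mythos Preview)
Your proof is correct and follows essentially the same route as the paper: verify the projection formula via rigidity (\Cref{corollary:rigid_proj_formula}), check that reflexive coequalizers exist and are preserved by the tensor products and by $R$, observe that a faithful exact functor between abelian categories reflects isomorphisms, and then invoke \Cref{thm:crude-mon-monadicity}. The only cosmetic difference is that you justify exactness of the tensor product via the adjoint-functor argument, whereas the paper cites \cite{EGNO}*{Section~4.2} directly.
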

\begin{proof}
    First, we note that the projection formula holds for $G\dashv R$ by \Cref{corollary:rigid_proj_formula}. Now, since $\cC$ and $\cD$ are abelian, coequalizers exist. By rigidity, the tensor product is exact in both components \cite{EGNO}*{Section~4.2}. As $R$ is assumed to be exact, it preserves coequalizers and as it is assumed to be faithful and exact, it reflects isomorphisms since $\cC$ and $\cD$ are assumed to be abelian.
    Hence, the assumptions of \Cref{thm:crude-mon-monadicity} hold and  $G\dashv R$ is isomorphic to $\Free\dashv \Forg$ as a monoidal adjunction.
\end{proof}

\begin{remark}
\Cref{thm:crude-mon-monadicity} has a partial analogue for symmetric monoidal exact functors of tensor triangulated categories with a right adjoint satisfying the projection formula \cite{San}*{3.8. Proposition}. There, it is assumed that $R(\one)$ is a separable algebra and hence $\rModint{\cC}{R(\one)}$ is the Karoubian envelope of the Kleisli category. 
\end{remark}

\subsection{Center of Eilenberg--Moore categories and local modules}
\label{sec:loc-mod}

In the following, we recall a description of the Drinfeld center of the monoidal category $\EiMo{T_M}=\rModint{\cC}{M}$ for a commutative central monoid $M$ in $\cC$. 

\begin{definition}[\cite{Par}]
    The category of \emph{local modules} over a commutative central monoid $(M,\rswap^M)$ is defined as the full subcategory of the category $\rModint{\cZ(\cC)}{M}$ consisting of right $M$-modules $(A,\act^A)$ such that 
\begin{equation}\label{eq:local-modules}
    \act^A \Psi_{M,A}\Psi_{A,M}=\act^A.
\end{equation}
Here, $\Psi$ denotes the braiding in $\cZ(\cC)$.
We denote the category of local modules over $A$ by $\rModloc{\cZ(\cC)}{M}$.
\end{definition}
 In particular, $\Psi_{M,A}=\rswap^M_A$. The following result can be found in \cite{Par}*{Theorem~2.5}, cf.~\cite{Schau}*{Section~4}. Recall that we assume that \Cref{ass:C-good} holds.

\begin{proposition}[Pareigis]
    The category  $\rModloc{\cZ(\cC)}{M}$ is a braided monoidal subcategory of the monoidal category  $\rModint{\cZ(\cC)}{M}$.
\end{proposition}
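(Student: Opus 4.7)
The plan is to verify the three conditions that together establish $\rModloc{\cZ(\cC)}{M}$ as a braided monoidal subcategory of $\rModint{\cZ(\cC)}{M}$: (i) the monoidal unit $(M, \mult^M)$ of $\rModint{\cZ(\cC)}{M}$ is local; (ii) the relative tensor product $A \otimes_M B$ of two local modules is again local; and (iii) the braiding $\Psi$ of $\cZ(\cC)$ descends to a natural isomorphism $\Psi^M_{A,B}\colon A \otimes_M B \isomorph B \otimes_M A$ for local $A$, $B$.

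For (i), the locality condition on $(M,\mult^M)$ reads $\mult^M \circ \Psi_{M,M} \circ \Psi_{M,M} = \mult^M$, which follows at once by applying the commutativity axiom \eqref{eq:CM}, namely $\mult^M \circ \Psi_{M,M} = \mult^M$, twice.

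For (ii), I would equip $A \otimes_M B$ with the action induced, via the universal property of the defining coequalizer, from $\id_A \otimes \act^B$. The composite $\Psi_{M, A \otimes_M B} \circ \Psi_{A\otimes_M B, M}$ can then be computed on representatives in $A \otimes B$ using the hexagon identities for $\Psi$ in $\cZ(\cC)$; it is expressed as the descent of $\id_A \otimes (\Psi_{M,B} \circ \Psi_{B,M})$ to the quotient. Precomposing with $\act^{A\otimes_M B}$ and invoking locality of $B$ recovers $\act^{A\otimes_M B}$, as required.

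The main obstacle lies in (iii): showing that $\quo_{B,A} \circ \Psi_{A,B}\colon A \otimes B \to B \otimes_M A$ coequalizes the parallel pair $\act^A \otimes B$ and $(A \otimes \act^B) \circ (A \otimes \Psi_{M,B})$ that defines $A \otimes_M B$. Naturality of $\Psi$ rewrites each composite with $\Psi_{A,B}$ into one involving $\Psi_{A \otimes M, B}$ or $\Psi_{A, B \otimes M}$; unpacking these via the hexagon identities recasts them in terms of the parallel pair defining $B \otimes_M A$, up to a residual monodromy of the form $\Psi_{M,A} \circ \Psi_{A,M}$ acting on $A \otimes M$ before $\act^A$. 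Locality of $A$ absorbs precisely this monodromy, so the two composites agree after $\quo_{B,A}$. This produces a well-defined $\Psi^M_{A,B}$ on the quotient; its inverse is constructed symmetrically from $\Psi^{-1}$, and the hexagon axioms, naturality, and compatibility with $M$-actions for $\Psi^M$ all descend automatically from the corresponding properties of $\Psi$ in $\cZ(\cC)$.
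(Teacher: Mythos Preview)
Your proposal follows the standard Pareigis argument that the paper cites; the paper's own proof is a one-sentence sketch stating only the construction of the braiding via factorization of $\quo_{B,A}\circ\Psi_{A,B}$. Your outline for (i) and (iii) is correct.

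There is, however, a genuine imprecision in (ii). The claim that the monodromy $\Psi_{M,A\otimes_M B}\circ\Psi_{A\otimes_M B,M}$ is the descent of $\id_A \otimes (\Psi_{M,B}\circ\Psi_{B,M})$ is false: by the hexagon axioms, its lift to $A\otimes B\otimes M$ is
\[
(A\otimes\Psi_{M,B})\circ\bigl((\Psi_{M,A}\circ\Psi_{A,M})\otimes B\bigr)\circ(A\otimes\Psi_{B,M}),
\]
which also carries the monodromy with $A$. Locality of $B$ alone cannot suffice, since $A\otimes_M M\cong A$ would then force every right $M$-module $A$ to be local. The correct argument uses locality of \emph{both} factors: compose with $\quo\circ(A\otimes\act^B)$, rewrite via the coequalizer relation $\quo\circ(A\otimes\act^B)\circ(A\otimes\Psi_{M,B})=\quo\circ(\act^A\otimes B)$ to bring the action onto $A$, apply locality of $A$ to absorb $\Psi_{M,A}\circ\Psi_{A,M}$, rewrite back to the $B$-side, and only then apply locality of $B$ to absorb the remaining $\Psi_{M,B}\circ\Psi_{B,M}$.
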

\begin{proof}
    The braiding of local modules is given by the factorization of
    $$A\otimes B\xrightarrow{\Psi_{A,B}}B\otimes A\xrightarrow{\quo_{Y,X}} Y\otimes_M X,$$
through $X\otimes_M Y$, which exists for local modules 
$X = (A, \act^A)$ and $Y=(B, \act^B)$. 
\end{proof}

\begin{theorem}[Schauenburg]\label{thm:Schauenburg}
 There is an equivalence of braided monoidal categories 
    $$\cZ(\rModint{\cC}{M})\simeq \rModloc{\cZ(\cC)}{M}.$$
\end{theorem}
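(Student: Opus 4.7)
The approach is to exploit the monoidal Eilenberg--Moore adjunction $\Free \dashv \Forg$ of \eqref{equation:EM-adjunction_central_monoid}, for which the projection formula holds by \Cref{prop:EL-proj-formulas}. Applying \Cref{prop:functorZF2} already gives a braided lax monoidal functor
\[
\cZ(\Forg) \colon \cZ(\rModint{\cC}{M}) \to \cZ(\cC),
\]
and the plan is to refine this into a braided monoidal equivalence with target $\rModloc{\cZ(\cC)}{M}$.

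First I would construct a functor $\Phi \colon \cZ(\rModint{\cC}{M}) \to \rModloc{\cZ(\cC)}{M}$ sending $(X,c^X)$ to $\cZ(\Forg)(X,c^X) = (\Forg(X), \gamma)$ equipped with its right $M$-action $\act^X \colon \Forg(X) \otimes M \to \Forg(X)$. Here $\gamma_A$ is extracted from $c^X_{\Free(A)}$ via the projection formula isomorphisms $X \otimes_M \Free(A) \cong \Forg(X) \otimes A$ and $\Free(A) \otimes_M X \cong A \otimes \Forg(X)$. That $\act^X$ is a morphism in $\cZ(\cC)$ and that the locality condition \eqref{eq:local-modules} holds are both coherences derivable from naturality of $c^X$ in $Y$ and the tensor compatibility \eqref{eq:Ztensorcomp}, by testing against the free module $Y = \Free(M)$ and the $M$-module morphism $\Free(\mult^M) \colon \Free(M) \to \Free(\one)$.

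Second, I would construct a quasi-inverse $\Theta \colon \rModloc{\cZ(\cC)}{M} \to \cZ(\rModint{\cC}{M})$. Given $(X, \gamma, \act^X)$ with underlying $M$-module $(X, \act^X)$, for each $Y = (B, \act^B) \in \rModint{\cC}{M}$ I would define $c^X_Y \colon X \otimes_M Y \to Y \otimes_M X$ as the unique factorization, through the coequalizer $\quo_{X,Y} \colon X \otimes B \to X \otimes_M Y$, of the composite
\[
X \otimes B \xrightarrow{\gamma_B} B \otimes X \xrightarrow{\quo_{Y,X}} Y \otimes_M X.
\]
The crucial step is showing that $\quo_{Y,X} \circ \gamma_B$ coequalizes the parallel pair defining $X \otimes_M Y$. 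One of the two required equalities follows from $\act^X$ being a morphism in $\cZ(\cC)$, using the explicit formula $c^{X \otimes M}_B = (\gamma_B \otimes M)(X \otimes \rswap^M_B)$ for the half-braiding on $X \otimes M$. The other, after rewriting using naturality of $\gamma$, tensor compatibility, and the coequalizer relation defining $Y \otimes_M X$, collapses to the identity $\act^X \circ \rswap^M_X \circ \gamma_M = \act^X$, which is precisely \eqref{eq:local-modules}. I would then verify that $c^X_Y$ is natural in $Y$, invertible (by the same construction applied to $\gamma^{-1}$), and satisfies \eqref{eq:Ztensorcomp} and \eqref{eq:Zunitcomp}, all by reducing to the universal property of $\otimes_M$ and the corresponding coherences for $\gamma$.

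Finally, I would verify that $\Phi$ and $\Theta$ are mutually quasi-inverse and that the equivalence is braided monoidal. For $\Phi \circ \Theta \cong \id$, evaluating the half-braiding of $\Theta(X, \gamma, \act^X)$ on $\Free(A)$ and identifying via the projection formula recovers $\gamma_A$. For $\Theta \circ \Phi \cong \id$, by \Cref{lem:EM-cocompletion} every $M$-module is a reflexive coequalizer of free modules, and since $\otimes_M$ preserves such coequalizers by \Cref{lem:CT-monoidal}, a half-braiding in $\cZ(\rModint{\cC}{M})$ is determined by its values on free modules, which agree with those of $\Theta \circ \Phi$ by construction. For the braided monoidal structure, $\Theta$ is strong monoidal with coherence morphisms given by the identity on underlying $M$-modules, and the braiding in $\rModloc{\cZ(\cC)}{M}$, defined by factorization of $\gamma$ through $\quo$, coincides with the half-braiding $c^X_Y$ produced by $\Theta$. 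The main technical obstacle I anticipate is precisely the coequalization step in the construction of $\Theta$: chasing the parallel pair through $\quo_{Y,X} \circ \gamma_B$ requires careful use of naturality and tensor compatibility of half-braidings, and the conceptual content of the theorem lies in the fact that locality is exactly the condition making this coequalization succeed.
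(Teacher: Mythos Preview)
Your proposal is correct and follows the same construction as the paper (and Schauenburg's original): extract a half-braiding on $\Forg(X)$ by evaluating $c^X$ on free modules via the projection formula, and conversely extend a half-braiding $\gamma$ on $X$ to one on $X \otimes_M Y$ by factoring $\quo_{Y,X}\circ\gamma_B$ through the coequalizer. The paper's proof is in fact only a citation of \cite{Schau}*{Theorem~4.4} together with a one-sentence sketch of the functor $\Phi$, so your outline is considerably more detailed than what the paper provides; in particular, your identification of the locality condition as precisely the obstruction to the coequalization step is the correct conceptual point, and it is exactly how Schauenburg's argument proceeds.
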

\begin{proof}
    We only indicate a brief sketch of the equivalence from \cite{Schau}*{Theorem~4.4}. Given an object $X=(A,\act^A)$ with half-braiding $c$ in $\cZ(\rModint{\cC}{M}),$
    evaluating the half-braiding $c$ on an object of the form $\Free(B)$ for $B \in \cC$ gives, under the isomorphism
    \[\projr{X}{B}: \Forg(X)\otimes B\cong \Forg(X\otimes_M \Free(B))\] 
    a half-braiding $\ov{c}$ such that $(A,\ov{c})$ becomes an object in $\cZ(\cC)$. It follows that the action $\act^A$ then makes $(A,\ov{c})$ a right $M$-module in $\cZ(\cC)$. On morphisms, the equivalence is given by the identity. 
\end{proof}

Our result \Cref{prop:functorZF2} now amounts to the following result for any commutative central monoid $(M,\rswap^M)$ in $\cZ(\cC)$.

\begin{corollary}\label{cor:Z(R)-locmod}
Under \Cref{ass:C-good}, $\Forg\colon \rModint{\cC}{M}\to \cC$ induces a braided lax monoidal functor $\cZ(\Forg): \cZ(\rModint{\cC}{M})\to \cZ(\cC)$. Under the equivalence from \Cref{thm:Schauenburg} this lax monoidal functor corresponds to the forgetful functor 
$$\Forgloc\colon \rModloc{\cZ(\cC)}{M}\to \cZ(\cC),$$
which forgets the $M$-action.
\end{corollary}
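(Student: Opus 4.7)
The plan is to combine the main induction theorem \Cref{prop:functorZF2} with the monoidal Eilenberg--Moore adjunction \Cref{prop:EM-mon-adjunction} and the projection-formula result \Cref{prop:EL-proj-formulas}, and then to match the resulting half-braiding with the one appearing in Schauenburg's equivalence (\Cref{thm:Schauenburg}).

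First I would observe that under \Cref{ass:C-good}, \Cref{prop:EM-mon-adjunction} provides a monoidal adjunction $\Free \dashv \Forg$ between $\cC$ and $\rModint{\cC}{M}$, while \Cref{prop:EL-proj-formulas} asserts that the projection formula holds for $\Forg$. The hypotheses of \Cref{prop:functorZF2} are thus met, and applying it produces the desired braided lax monoidal functor $\cZ(\Forg)\colon \cZ(\rModint{\cC}{M})\to \cZ(\cC)$, with lax monoidal structure given by $\lax^{\cZ(\Forg)}_{X,Y} = \lax^{\Forg}_{X,Y} = \quo_{X,Y}$ via \eqref{equation:lax_on_ZR} and \Cref{prop:EM-mon-adjunction}. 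This establishes the existence part of the statement.

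Next I would unpack the formula from \Cref{prop:functorZF2}: for $(X,c)\in \cZ(\rModint{\cC}{M})$ with $X = (A,\act^A)$ and $B\in \cC$, the half-braiding $c^{\Forg}_B$ is the composite
\[
\Forg(X) \otimes B \xrightarrow{\projr{X}{B}} \Forg(X \otimes_M \Free(B)) \xrightarrow{\Forg(c_{\Free(B)})} \Forg(\Free(B) \otimes_M X) \xrightarrow{(\projl{B}{X})^{-1}} B \otimes \Forg(X).
\]
This is exactly the recipe used to build the half-braiding $\bar c$ on the underlying object $A=\Forg(X)$ in the sketch of Schauenburg's equivalence recalled in the proof of \Cref{thm:Schauenburg} (evaluating $c$ on free modules and transporting along the projection formula isomorphisms). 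Consequently, the object assignment of $\cZ(\Forg)$ coincides with that of $\Forgloc$ precomposed with Schauenburg's equivalence, and since both functors act as the identity on underlying morphisms in $\cC$ (one by $\Forg$, the other by dropping the $M$-action), the two functors agree on morphisms as well.

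The remaining point is the match of monoidal structures: the tensor product on $\rModloc{\cZ(\cC)}{M}$ is inherited from $\rModint{\cZ(\cC)}{M}$ and, under the forgetful functor to $\cZ(\cC)$, is again realized by the same coequalizer $\quo$; hence the lax structure of $\Forgloc$ equals $\quo$, matching $\lax^{\cZ(\Forg)}$. The main (and only mild) obstacle is the careful bookkeeping of the identification, since Schauenburg's half-braiding is described in the proof of \Cref{thm:Schauenburg} using only $\projrnoarg$, whereas our $c^{\Forg}$ uses both $\projrnoarg$ and $\projlnoarg$; however this is only a superficial discrepancy as the forward transport through $(\projlnoarg)^{-1}$ is forced by the requirement that the half-braiding land in $B\otimes\Forg(X)$, and the two prescriptions give the same natural isomorphism.
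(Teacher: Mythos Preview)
Your proposal is correct and follows essentially the same approach as the paper: obtain $\cZ(\Forg)$ from \Cref{prop:functorZF2} (via \Cref{prop:EM-mon-adjunction} and \Cref{prop:EL-proj-formulas}), then identify the resulting half-braiding with the one in the sketch of \Cref{thm:Schauenburg}. The paper's proof is terser and does not spell out the lax-structure match or the morphism check, but the argument is the same.
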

\begin{proof}
The braided lax monoidal functor $\cZ(\Forg)$ is obtained by \Cref{prop:functorZF2}. For a right $M$-module $X = (A, \act^A)$ with half-braiding $c$, the underlying object of $\cZ(\Forg)(X)$ is $A$. The half-braiding of $\cZ(\Forg)(X)$ is given by the composition $\ov{c}$ in the diagram
\[
\xymatrix{
A\otimes B\ar[rr]^{\ov{c}_{B}}\ar[d]^{\projr{X}{B}}&&B\otimes A\\
\Forg(X\otimes_M \Free(B))\ar[rr]^{\Forg(c_{\Free(B)})}&&\Forg(\Free(B)\otimes_M X)\ar[u]_{\projl{B}{X}^{-1}}.
}
\]
We see that this half-braiding $\ov{c}$ is precisely the half-braiding used in the proof of \Cref{thm:Schauenburg}. Thus, $\cZ(\Forg)$ corresponds, under the equivalence from \Cref{thm:Schauenburg}, to the stated forgetful functor $\Forgloc$. 
\end{proof}

We obtain the following direct corollary.

\begin{corollary}\label{cor:monadicity-center}
    Under the conditions of \Cref{thm:crude-mon-monadicity}, there is an equivalence of braided monoidal categories $\cZ(\cD)\simeq \rModloc{\cZ(\cC)}{M}$.
\end{corollary}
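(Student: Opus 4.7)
The proof will proceed by a direct concatenation of two equivalences already established in the paper. The plan is to combine the monoidal equivalence $\cD \simeq \rModint{\cC}{M}$ coming from the crude monoidal monadicity theorem (\Cref{thm:crude-mon-monadicity}) with Schauenburg's equivalence (\Cref{thm:Schauenburg}) identifying the Drinfeld center of $\rModint{\cC}{M}$ with the category of local modules.

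First, I would invoke \Cref{thm:crude-mon-monadicity}: the hypotheses on $G \dashv R$ (monoidal adjunction in $\Catlaxcoref$, projection formula for $R$, and $R$ reflecting isomorphisms) imply that $G \dashv R$ is equivalent as a monoidal adjunction to the Eilenberg--Moore adjunction $\Free \dashv \Forg$ associated to $M = R(\one)$. In particular, the comparison functor $\tilde R\colon \cD \xrightarrow{\sim} \rModint{\cC}{M}$ is an equivalence of monoidal categories.

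Next, since the Drinfeld center construction $\cZ(-)$ is functorial with respect to (strong) monoidal functors and sends monoidal equivalences to braided monoidal equivalences (this follows from the general bimodule center $2$-functoriality in \Cref{lem:Z-functorial} applied to the regular bimodule, or directly by observing that an equivalence of monoidal categories induces, by strictness of half-braidings, an equivalence of centers preserving tensor product and braiding), the equivalence $\tilde R$ yields a braided monoidal equivalence
\[
\cZ(\tilde R)\colon \cZ(\cD) \xrightarrow{\sim} \cZ(\rModint{\cC}{M}).
\]
Composing with Schauenburg's braided monoidal equivalence from \Cref{thm:Schauenburg},
\[
\cZ(\rModint{\cC}{M}) \simeq \rModloc{\cZ(\cC)}{M},
\]
produces the desired braided monoidal equivalence $\cZ(\cD) \simeq \rModloc{\cZ(\cC)}{M}$.

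There is no real obstacle here, as the corollary is essentially a bookkeeping consequence of the two main theorems it cites; the only subtlety worth a brief remark is that, under this chain of equivalences, the induced lax monoidal functor $\cZ(R)\colon \cZ(\cD) \to \cZ(\cC)$ from \Cref{prop:functorZF2} corresponds precisely to the forgetful functor $\Forgloc$ by \Cref{cor:Z(R)-locmod}, so the equivalence is compatible with the forgetful structure to $\cZ(\cC)$.
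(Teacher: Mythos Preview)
Your proposal is correct and matches the paper's intended argument: the paper states this as a ``direct corollary'' without explicit proof, and the only reasonable reading is exactly the concatenation you give --- the monoidal equivalence $\cD \simeq \rModint{\cC}{M}$ from \Cref{thm:crude-mon-monadicity} followed by Schauenburg's equivalence \Cref{thm:Schauenburg}. Your additional remark about compatibility with $\cZ(R)$ via \Cref{cor:Z(R)-locmod} is a nice bonus that the paper does not spell out here.
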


\begin{example}
Let $\sfG,\sfH$ be groups, $\phi\colon \sfG\to \sfH$ a group homomorphism, and $\omega\in C^3(\sfH,\Bbbk)$ a $3$-cocycle. Denote by $\vect_\sfG^\omega$ the category of finite-dimensional $\sfG$-graded $\Bbbk$-vector spaces with associators given by $\omega$, see e.g.~\cite{EGNO}*{Example 2.3.8}. Then $\phi$ induces a strong monoidal functor $\phi_*\colon \vect_\sfG^{\phi^*\omega}\to \vect_\sfH^\omega$. Its right adjoint is always exact. It is faithful if and only if $\phi$ is surjective \cite{HLRC}*{Lemma 3.2}. The commutative monoid $R(\one)=M$ is isomorphic to the group algebra of $\ker \phi$. It follows that $\cZ(\vect_\sfG^{\phi^*\omega})$ and $\rModloc{\cZ(\vect_\sfH^\omega)}{M}$ are equivalent as braided monoidal categories by \Cref{cor:monadicity-center} recovering \cite{HLRC}*{Corollary~3.3}.
\end{example}


\section{(Co-)induction for Yetter--Drinfeld modules of Hopf algebras}
\label{sec:YD}

In this section, we will specify the categorical results on induced functors on Drinfeld centers from \Cref{sec:Z-functors} to left and right adjoints of restriction functors between categories of (co)modules over Hopf algebras. 

Let $H$ be a Hopf algebra\footnote{We assume all Hopf algebras to have an invertible antipode $S\colon H\to H$.} over $\Bbbk$ and consider its category $\lMod{H}$ of left modules.
Let $\varphi\colon K\to H$ be a morphism of Hopf algebras. Then restriction along $\varphi$ provides a strong monoidal functor
$$
G=\Res_\varphi\colon \lMod{H}\to \lMod{K}, \quad W\mapsto \left.W\right|_K.
$$
This functor has left and right adjoints given by induction and coinduction, respectively. These are given by 
\begin{align}
  L(V):=\Ind_\varphi(V):=H\otimes_K V, \qquad R(V):=\CoInd_\varphi(V):=\Hom_{\lMod{K}}(H,V),
\end{align}
where the former is a left $H$-module through left multiplication, and the latter is a left $H$-module via
\begin{align}\label{eq:H-action-Coind}
    h\cdot f(g)=f(gh), \qquad f\in \Hom_{\lMod{K}}(H,V), ~~ h,g\in H.
\end{align}

In the following, we find that the projection formula holds for  
\begin{itemize}
    \item induction functors of modules along \emph{any} $\varphi$ (\Cref{sec:YD-ind}),
    \item coinduction functors of modules along $\varphi$ such that $H$ is finitely generated projective over $K$ (\Cref{sec:YD-coind})
    \item induction functors of comodules along  \emph{any} $\varphi$ (\Cref{sec:comod-ind}).
\end{itemize}
Note that these categories are not rigid, as we do  not restrict to finite-dimensional objects, so it does not follow from \Cref{corollary:rigid_proj_formula} that the projection formula holds. We also explore the induced functors on Yetter--Drinfeld categories obtained from induction and coinduction.

\subsection{Induction on Yetter--Drinfeld module categories}\label{sec:YD-ind}

One checks that unit and counit transformation of the opmonoidal adjunction $$\Ind_\varphi=L\dashv G=\Res_\varphi$$
are given by
\begin{align*}
    \unit^{L\dashv G}_V&\colon V\to GL(V), &v&\mapsto 1\otimes v,\\
    \counit^{L\dashv G}&\colon LG(W)\to W, &h\otimes w&\mapsto hw,
\end{align*}
for any  $K$-module $V$ and any $H$-module $W$. The oplax structure of $L$ is given by
\begin{gather}\label{eq:oplax-Hopf}
\oplax_{V,W}\colon  L(V\otimes U)\to L(V)\otimes L(U), \quad h\otimes (u\otimes v)\mapsto (h_{(1)}\otimes v)\otimes (h_{(2)}\otimes u).
\end{gather}

\begin{lemma}\label{lem:proj-iso-L-Hopf}
The projection formula holds for $\Ind_\varphi\colon \lMod{K}\to \lMod{H}$ with the isomorphisms given by
\begin{align*}
    \iprojl{W}{V}&\colon L(G(W)\otimes V)\to W\otimes L(V), &h\otimes (w\otimes v)&\mapsto h_{(1)}w\otimes h_{(2)}\otimes v,\\
        (\iprojl{W}{V})^{-1}&\colon W\otimes L(V)\to L(G(W)\otimes V), &w\otimes (h\otimes v)&\mapsto h_{(2)}\otimes S^{-1}(h_{(1)})w\otimes  v,\\
    \iprojr{V}{W}&\colon L(V\otimes G(W))\to L(V)\otimes W, &h\otimes (v\otimes w) &\mapsto h_{(1)}\otimes v\otimes h_{(2)}w,\\
    (\iprojr{V}{W})^{-1}&\colon L(V)\otimes W\to L(V\otimes G(W)), &(h\otimes v)\otimes w &\mapsto h_{(1)}\otimes v\otimes S(h_{(2)})w.
\end{align*}
\end{lemma}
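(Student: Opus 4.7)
The strategy is purely computational. First I would verify that the displayed formulas for $\iprojl{W}{V}$ and $\iprojr{V}{W}$ match the categorical definitions \eqref{eq:iproj} and \eqref{eq:iprojr} by unwinding these using the oplax structure \eqref{eq:oplax-Hopf} and the counit $\counit_W\colon H\otimes_K W\to W$, $h\otimes w\mapsto hw$. This is a one-line check: applying $\oplax$ to $h\otimes(w\otimes v)$ yields $(h_{(1)}\otimes w)\otimes(h_{(2)}\otimes v)$, and then $(\counit_W\otimes\id)$ gives $h_{(1)}w\otimes(h_{(2)}\otimes v)$, matching the stated formula for $\iprojl{W}{V}$; the case of $\iprojr{V}{W}$ is symmetric.

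The main step is to show that the proposed inverse formulas define well-defined $H$-linear maps and that they really invert $\iprojl{W}{V}$ and $\iprojr{V}{W}$. Well-definedness on the balanced tensor $H\otimes_K V$ and $H$-linearity both reduce by standard bookkeeping to $\varphi$ being a Hopf algebra morphism together with $S^{-1}$ being an anti-algebra and anti-coalgebra morphism. For the composite $\iprojl{W}{V}\circ(\iprojl{W}{V})^{-1}$ applied to $w\otimes(h\otimes v)$, one obtains $h_{(2)}S^{-1}(h_{(1)})w\otimes h_{(3)}\otimes v$, which collapses to $w\otimes h\otimes v$ by the antipode identity $h_{(2)}S^{-1}(h_{(1)})=\epsilon(h)\,1$ (applied with $h_{(1)}\otimes h_{(2)}$ viewed as $\Delta$ of the first factor in the $(\Delta\otimes\id)\Delta$ triple splitting) followed by counitality. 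The reverse composite $(\iprojl{W}{V})^{-1}\circ\iprojl{W}{V}$ produces $h_{(3)}\otimes S^{-1}(h_{(2)})h_{(1)}w\otimes v$ and is handled by the dual antipode identity $S^{-1}(h_{(2)})h_{(1)}=\epsilon(h)\,1$; this is the step where invertibility of the antipode, a standing hypothesis of the paper, is genuinely used. The verifications for $\iprojr{V}{W}^{\pm 1}$ are formally identical after transposing tensor factors, so no independent work is required.

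The only real hurdle is careful tracking of Sweedler indices and of the two natural splittings of $\Delta^2$. Nothing beyond this is needed; in particular, unlike the coinduction situation in \Cref{sec:YD-coind}, no finiteness or projectivity hypothesis on $H$ over $K$ enters, because the inverses are given by explicit formulas rather than by an abstract finiteness argument via \Cref{lemma:adjoints_to_tensors_and_proj_formula}.
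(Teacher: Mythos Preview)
Your proposal is correct and follows essentially the same approach as the paper's proof: compute the projection formula morphisms from the definitions \eqref{eq:iproj}, \eqref{eq:iprojr} via the explicit oplax structure and counit, then verify the displayed inverses directly using the antipode identities. The paper merely sketches this in two lines, while you spell out the Sweedler bookkeeping and correctly flag where invertibility of the antipode is used.
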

\begin{proof}
We use \eqref{eq:iproj} and \eqref{eq:iprojr} to compute the expressions for the projection formula morphisms $ \iprojl{W}{V}$ and $ \iprojr{W}{V}$ for $\Ind_\varphi \dashv \Res_\varphi$. Then, one checks directly that the given formulas indeed give inverses for these morphisms using the antipode axioms.
\end{proof}

Recall that there is an equivalence $\cZ(\lMod{H})\simeq \lYD{H}$ of the Drinfeld center and the category of (potentially infinite-dimensional) Yetter--Drinfeld modules over a Hopf algebra $H$, see \cite{Yet}, \cite{Maj}*{Example 9.1.8}, or \cite{EGNO}*{Proposition~7.15.3}.
The category $\lYD{H}$ consists of $\Bbbk$-vector spaces $V$ which are both left $H$-modules and left $H$-comodules with the coaction
$$\delta\colon V\to H\otimes V, \qquad v\mapsto v^{(-1)}\otimes v^{(0)},$$
satisfying the \emph{Yetter--Drinfeld condition}
\begin{equation}\label{eq:YD-cond}
h_{(1)}w^{(-1)}\otimes h_{(2)}w^{(0)}=(h_{(1)}w)^{(-1)}h_{(2)}\otimes (h_{(1)}w)^{(0)}.
\end{equation}
Given an invertible antipode $S$, this is equivalent to 
\begin{equation}\label{eq:YD-cond2}
\delta(h w)=h_{(1)}w^{(-1)}S(h_{(3)})\otimes h_{(2)}w^{(0)}.
\end{equation}

The induced functor on Drinfeld centers obtained from $\Ind_\varphi \dashv \Res_\varphi$ specifies to the following result.

\begin{corollary}\label{cor:YDind}
Given $V$ a Yetter--Drinfeld module of $K$ with coaction
$$\delta^V\colon V\to K\otimes V, \qquad v\mapsto v^{(-1)}\otimes v^{(0)},$$
we define
\begin{align}
    \delta^{\Ind_\varphi(V)}(h\otimes v)=h_{(1)}v^{(-1)}S(h_{(3)})\otimes h_{(2)}\otimes v^{(0)}.
\end{align}
Then the assignment 
\begin{align}
    \cZ(\Ind_\varphi)(V,\delta^V):=(\Ind_\varphi(V),\delta^{\Ind_\varphi(V)}), \qquad f\mapsto \Ind_{\varphi}(f),
\end{align}
for any morphism $f$ in $\lYD{K}$,
gives an oplax monoidal functor $\cZ(\Ind_\varphi)\colon \lYD{K}\to \lYD{H}$. Its oplax monoidal structure is given by \Cref{eq:oplax-Hopf}.
\end{corollary}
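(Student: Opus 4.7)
The plan is to apply the op-version of the main induction theorem (Theorem \ref{prop:functorZF2_op_version}) to the opmonoidal adjunction $\Ind_\varphi \dashv \Res_\varphi$. Since $\Res_\varphi$ is strong monoidal, the op-dual of Lemma \ref{lemma:strong_and_monoidal_adj} makes this an opmonoidal adjunction with the unique oplax monoidal structure on $\Ind_\varphi$ given by \eqref{eq:oplax-Hopf}. By Lemma \ref{lem:proj-iso-L-Hopf}, the projection formula holds for $\Ind_\varphi$, with the explicit formulas for $\iprojl{W}{V}$ and $\iprojr{V}{W}$ and their inverses in hand. Hence Theorem \ref{prop:functorZF2_op_version} immediately produces a braided oplax monoidal functor
\[
\cZ(\Ind_\varphi)\colon \cZ(\lMod{K})\longrightarrow \cZ(\lMod{H})
\]
whose oplax monoidal structure is inherited from \eqref{eq:oplax-Hopf}.

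It remains to transport this functor through the standard equivalence $\cZ(\lMod{H}) \simeq \lYD{H}$, under which a half-braiding $c^V$ corresponds to the coaction $\delta^V(v) = v^{(-1)} \otimes v^{(0)}$ via the rule $c^V_W(v \otimes w) = v^{(-1)} w \otimes v^{(0)}$. By Theorem \ref{prop:functorZF2_op_version}, the half-braiding on $\Ind_\varphi(V) = H \otimes_K V$ is the composite
\[
c^{\Ind_\varphi(V)}_W = \iprojl{W}{V} \circ \Ind_\varphi(c^V_{\Res_\varphi W}) \circ (\iprojr{V}{W})^{-1}.
\]
The proof is completed by a direct symbolic calculation: chasing an element $(h \otimes v) \otimes w$ through this composite using the explicit formulas of Lemma \ref{lem:proj-iso-L-Hopf} and coassociativity (expanding $h_{(1)(1)} \otimes h_{(1)(2)} \otimes h_{(2)} = h_{(1)} \otimes h_{(2)} \otimes h_{(3)}$) yields
\[
c^{\Ind_\varphi(V)}_W\bigl((h \otimes v) \otimes w\bigr) = h_{(1)} v^{(-1)} S(h_{(3)}) w \otimes h_{(2)} \otimes v^{(0)},
\]
from which the stated formula for $\delta^{\Ind_\varphi(V)}$ can be read off. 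No substantial obstacle is expected: the real content — establishing that $\Ind_\varphi$ satisfies the projection formula, and producing a functor on centers out of this — is already packaged in Lemma \ref{lem:proj-iso-L-Hopf} and Theorem \ref{prop:functorZF2_op_version}, and the remaining Sweedler-calculus is brief and routine.
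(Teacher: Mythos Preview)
Your proposal is correct and follows essentially the same approach as the paper: invoke Lemma~\ref{lem:proj-iso-L-Hopf} to ensure the projection formula holds for $\Ind_\varphi$, then apply Theorem~\ref{prop:functorZF2_op_version}. The paper's proof is the terse two-line version of exactly this; your additional Sweedler-calculus verifying the explicit coaction formula is routine and correct, and simply makes explicit what the paper leaves to the reader.
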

\begin{proof}
By \Cref{lem:proj-iso-L-Hopf}, the projection formula holds for $\Ind_\varphi$. Thus, the braided oplax monoidal functor on the Drinfeld centers is obtained from  \Cref{prop:functorZF2_op_version}.
\end{proof}

\begin{remark}\label{rem:fd-modules}
For the functor $R=\Ind_\varphi$ to restrict to the rigid categories of \emph{finite-dimensional modules} to give a functor $\Ind_\varphi\colon \lmod{K}\to \lmod{H}$, we require that $H$ is finitely generated as a left $K$-module.
In this case, $\cZ(\Ind_\varphi)$ restricts to an oplax monoidal functor $\cZ(\Ind_\varphi)\colon \lyd{K}\to \lyd{H}$ between the rigid categories of finite-dimensional Yetter--Drinfeld modules.
\end{remark}

\begin{example}\label{ex:trivial}
    For any Hopf algebra $H$ and $K=\Bbbk$ the ground field, we obtain that $R(\Bbbk)=H\otimes_\Bbbk \Bbbk=H$ is a YD module over $H$ with regular left action and left coaction given by the \emph{adjoint coaction}
    $$\delta(h)=h_{(1)}S(h_{(3)})\otimes h_{(2)}, \qquad \forall h\in H.$$
    Since $R$ is oplax monoidal, $H$ is a coalgebra object in $\lYD{H}$. 
\end{example}

\begin{example}\label{ex:groups}
Let $\mathsf{G}$ be a group, not necessarily finite, and $\mathsf{K}\leq \mathsf{G}$ a subgroup. Then the group algebra $K:=\Bbbk \mathsf{K}$ is a Hopf subalgebra of $H:=\Bbbk \mathsf{G}$. Given a Yetter--Drinfeld module $V$ of $K$, 
$$V=\bigoplus_{d\in \mathsf{K}}V_d, \qquad kV_d=V_{kdk^{-1}},\qquad \forall d,k\in \mathsf{K},$$
we obtain that $\Ind_K^H(V)=H\otimes_{K}V$ is a Yetter--Drinfeld module of $H$ with $\mathsf{G}$-grading given by the coaction
$$\delta^{\Ind_K^G(V)}(g\otimes v_d)=gdg^{-1}\otimes v_d,$$
for $v_d\in V_d$ and $g\in \mathsf{G}$. It was shown in \cite{FHL}*{Appendix~B} that $\cZ(\Ind_K^H)\colon \lYD{K}\to \lYD{H}$ is a separable Frobenius monoidal functor. The oplax monoidal structure obtained from \Cref{cor:YDind} agrees with the one used there. 
\end{example}

\begin{example} \label{ex:Taft}
Let $q\in \Bbbk$ be a primitive $n$-th root of unity in a field $\Bbbk$ of characteristic zero and denote $\mathsf{G}=\langle g|g^n=1\rangle=\mathsf{C}_n$, the cyclic group of order $n$. Consider the \emph{Taft algebra}
$$T:=T_n(q)=\Bbbk \langle g,x\rangle/(g^n=1, x^n=0, gx=qxg),$$
which is a Hopf algebra with coproduct $\Delta$, counit $\varepsilon$, and antipode $S$ uniquely determined by 
$$\Delta(g)=g\otimes g, \quad \Delta(x)=x\otimes 1+g\otimes x, \quad \varepsilon(g)=1, \quad \varepsilon(x)=0, \quad S(g)=g^{-1}, \quad S(x)=-g^{-1}x.$$

The inclusion of the group algebra $G=\Bbbk \mathsf{G}$ into $T$ is a morphism of Hopf algebras. By \Cref{cor:YDind}, it induces an oplax monoidal functor 
$$ \lMod{\Bbbk \mathsf{C}_n\times \mathsf{C}_n}\simeq \lYD{G}\xrightarrow{\cZ(\Ind_{G}^T)} \lYD{T}.$$
Simple objects in $\lYD{G}$ are given by vector spaces $\Bbbk_{i,j}$, $i=0,\ldots, n-1$, with a single basis vector $v_{i,j}$  of degree $g^i$ and a $G$-action given by $gv_{i,j}=q^j v_{i,j}$. We can describe $V_{i,j}:=\Ind_G^T(\Bbbk_{i,j})$ explicitly. It has a basis given by $w_k=x^k\otimes v_{i,j}$, for $0\leq k\leq n-1$, with action of the generators given by 
$$gw_k=q^{k+j}w_k, \qquad xw_k=\begin{cases} w_{k+1}, &\text{if $0\leq k<n-1$} \\
0, & \text{if $k={n-1}$.}
\end{cases}$$
Thus, $\Bbbk_{i,j}$ is generated by $w_0$ as a left $T$-module, and it suffices to specify the coaction on $w_0$, where we have 
$$\delta(w_0)=g^i\otimes w_0.$$
Formulas for $\delta(w_k)$ can be derived from \Cref{eq:YD-cond2}.
\end{example}

The following consequence of \Cref{cor:YDind} provides examples of cocommutative coalgebras.

\begin{corollary}
\begin{enumerate}
    \item 
    Consider the algebra $\one\in \lMod{K}$ which has a canonical structure of a cocommutative coalgebra in $\lYD{K}$. Its image 
    $$A:=\cZ(\Ind_\varphi)(\one)=H\otimes_K \one\cong H/H\cdot \ker \varepsilon_K$$ is a cocommutative coalgebra in $\lYD{H}$ with the induced action, adjoint coaction $\delta^{\mathrm{ad}}$, and coproduct $\overline{\Delta}$ given by 
    $$\delta^{\mathrm{ad}}(h)=h_{(1)}S(h_{(3)})\otimes h_{(2)}, \qquad \overline{\Delta}(h)=h_{(1)}\otimes h_{(2)}.$$
    \item Similarly,  $K$ is a cocommutative coalgebra in $\lYD{K}$ with respect to the given coproduct, the regular left action and adjoint coaction. This coalgebra object is denoted by $K^{\ad}$. Now, \Cref{cor:YDind} shows that the image $\cZ(\Ind_\varphi)(K^{\ad})$ is the coalgebra $H^{\ad}$.
\end{enumerate}
\end{corollary}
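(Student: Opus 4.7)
The plan is to derive both statements as direct consequences of \Cref{cor:YDind} together with the general principle that braided oplax monoidal functors preserve cocommutative comonoids (the braiding is only needed for cocommutativity; coalgebra structure alone is preserved by any oplax monoidal functor). Since $\cZ(\Ind_\varphi)\colon \lYD{K}\to\lYD{H}$ is braided oplax monoidal by \Cref{cor:YDind}, it sends cocommutative coalgebras in $\lYD{K}$ to cocommutative coalgebras in $\lYD{H}$. The remaining work is to identify the underlying object, coaction, and coproduct explicitly.

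For part (1), I would note that the tensor unit $\one=\Bbbk\in\lYD{K}$, with trivial action, trivial coaction $\delta^{\one}(1)=1\otimes 1$, and identity structure maps, is canonically a cocommutative coalgebra (as is the unit in any braided monoidal category). Thus $A:=\cZ(\Ind_\varphi)(\one)$ is a cocommutative coalgebra in $\lYD{H}$. Computing the underlying $H$-module, $\Ind_\varphi(\one)=H\otimes_K\Bbbk$ is the quotient of $H$ by the relations $h\varphi(k)\otimes 1=\varepsilon_K(k)h\otimes 1$, i.e., $H/H\cdot\varphi(\ker\varepsilon_K)$, which I abbreviate as $H/H\cdot\ker\varepsilon_K$. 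The coaction formula from \Cref{cor:YDind} specialised to $v^{(-1)}\otimes v^{(0)}=1\otimes 1$ yields $\delta(h\otimes 1)=h_{(1)}S(h_{(3)})\otimes h_{(2)}\otimes 1$, which under the identification with the quotient gives $\delta^{\mathrm{ad}}$. The coproduct is obtained by composing $\Ind_\varphi(\Delta_{\one})$ (which is essentially the identity since $\Delta_{\one}$ is the unit coherence) with the oplax structure $\oplax_{\one,\one}$ of \eqref{eq:oplax-Hopf}, giving $h\otimes 1\mapsto (h_{(1)}\otimes 1)\otimes(h_{(2)}\otimes 1)$, i.e., $\overline{\Delta}(\bar h)=\bar h_{(1)}\otimes\bar h_{(2)}$.

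For part (2), I would first verify that $K^{\ad}=(K,\mathrm{reg},\delta^{\mathrm{ad}}_K)$ is an object of $\lYD{K}$ by checking the Yetter--Drinfeld compatibility \eqref{eq:YD-cond2} for the adjoint coaction, which is a standard computation using $\Delta$ being an algebra map and the antipode axioms. The coproduct $\Delta_K\colon K\to K\otimes K$ is then a morphism in $\lYD{K}$ (the two actions match since $K$ is cocommutative in the sense needed only for the coassociative coalgebra structure, and $\Delta$ is comultiplicative for the adjoint coaction since $\Delta$ is a coalgebra map). Applying $\cZ(\Ind_\varphi)$ and using the canonical isomorphism $H\otimes_K K\isomorph H$, $h\otimes k\mapsto hk$, one directly computes that the formula in \Cref{cor:YDind} with $\delta^{K^{\ad}}(k)=k_{(1)}S(k_{(3)})\otimes k_{(2)}$ transforms into
\[
\delta(hk)=h_{(1)}k_{(1)}S(k_{(3)})S(h_{(3)})\otimes h_{(2)}k_{(2)}=(hk)_{(1)}S((hk)_{(3)})\otimes (hk)_{(2)},
\]
which is precisely the adjoint coaction on $H$. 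Similarly, composing $\Ind_\varphi(\Delta_K)$ with the oplax structure gives $h\otimes k\mapsto(h_{(1)}\otimes k_{(1)})\otimes(h_{(2)}\otimes k_{(2)})$, which under the identification becomes $hk\mapsto (hk)_{(1)}\otimes(hk)_{(2)}$, i.e., the standard coproduct of $H$. Thus $\cZ(\Ind_\varphi)(K^{\ad})=H^{\ad}$.

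There is no real obstacle here; the argument is essentially a bookkeeping exercise, and the only point requiring some care is the computation in part (2) showing that $h_{(1)}k_{(1)}S(k_{(3)})S(h_{(3)})=(hk)_{(1)}S((hk)_{(3)})$, which follows from $S$ being an anti-homomorphism and $\Delta$ being a homomorphism. In principle, the proof could also be stated as: by naturality, there is a morphism of coalgebras $\cZ(\Ind_\varphi)(\one)\to\cZ(\Ind_\varphi)(K^{\ad})$ induced by the unit $\one\to K^{\ad}$ in $\lYD{K}$, factoring both computations, but I would present the explicit verification as above to make the identifications transparent.
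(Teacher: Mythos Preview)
Your proposal is correct and follows exactly the approach the paper intends: the corollary is stated without proof as a direct consequence of \Cref{cor:YDind}, and your elaboration---that braided oplax monoidal functors send cocommutative comonoids to cocommutative comonoids, followed by the explicit identification of action, coaction, and coproduct via the formulas in \Cref{cor:YDind} and \eqref{eq:oplax-Hopf}---is precisely what is implicit in the paper (compare \Cref{ex:trivial}, where the same reasoning is spelled out for $K=\Bbbk$). Your computations are correct, including the telescoping verification $h_{(1)}k_{(1)}S(k_{(3)})S(h_{(3)})\otimes h_{(2)}k_{(2)}=(hk)_{(1)}S((hk)_{(3)})\otimes (hk)_{(2)}$ in part~(2).
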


\subsection{Coinduction on Yetter--Drinfeld module categories}\label{sec:YD-coind}

Now consider the monoidal adjunction  
$$\Res_\varphi=G\dashv R=\CoInd_\varphi,$$
for a morphism of Hopf algebras $\varphi\colon K\to H$. Explicitly, $R$ is given by 
$$R(V)=\Hom_{K}(H,V)=\Hom_{\lMod{K}}(H,V).$$
The unit and counit of the adjunction $G\dashv R$ are given by 
\begin{align}
    \unit_W&\colon W\longrightarrow RG(W)=\Hom_K(H,\left.W\right|_K),     &  w&\mapsto (h\mapsto h\cdot w),\\
    \counit_V&\colon GR(V)=\left.\Hom_K(H,V)\right|_K\longrightarrow V, &  f&\mapsto f(1).
\end{align}
The lax monoidal structure is given by 
\begin{align}\label{eq:lax-Hopf}
    \lax_{V,W}\colon R(V)\otimes R(W)\longrightarrow R(V\otimes W), \quad f\otimes g\mapsto \big(h\mapsto f(h_{(1)})\otimes g(h_{(2)})\big).
\end{align}
Using the expressions for the projection formula morphisms from \eqref{eq:iproj} and \eqref{eq:iprojr}, we obtain the following natural transformations
\begin{align}
    \projl{W}{V}&\colon W\otimes R(V)\to R(G(W)\otimes V), & w\otimes 
    f&\mapsto \big(h\mapsto h_{(1)}\cdot w\otimes f(h_{(2)})\big), \label{eq:lproj-coind}\\
    \projr{V}{W}&\colon R(V)\otimes W\to R(V\otimes G(W)), & f\otimes w&\mapsto \big(h\mapsto f(h_{(1)})\otimes h_{(2)}\cdot w\big).\label{eq:rproj-coind}
\end{align}

We want to investigate when the projection formula holds for $\CoInd_\varphi$. To do this, given a vector space $W$, we denote by $W^{\triv}$ the trivial $K$-module structure on $W$, that is, $k\cdot w=\varepsilon(k)w$ for all $k\in K$ and $w\in W$. We need the following lemma. 
\begin{lemma}\label{lem:auxilary}
    Assume that $H$ is finitely generated projective as a  left $K$-module. Then, for any $K$-module $V$ and any $H$-module $W$, the canonical $\Bbbk$-linear maps
    \begin{align*}
        \alpha_{W,V}\colon W\otimes \Hom_K(H,V)&\to \Hom_K(H, W^{\triv}\otimes V),\quad w\otimes f \mapsto \big(h\mapsto w\otimes f(h)\big), \\
        \alpha_{V,W}\colon \Hom_K(H,V)\otimes W &\to \Hom_K(H, V\otimes W^{\triv}),\quad w\otimes f \mapsto \big(h\mapsto f(h)\otimes w\big),
    \end{align*}
    are invertible and natural in $W$ and $V$.
\end{lemma}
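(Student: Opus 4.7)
The plan is to reduce to the case $H = K$ by using that both sides of $\alpha_{W,V}$, viewed as functors in the variable $H$ (taken in the category of left $K$-modules, with the $K$-structure coming from $\varphi$), are additive and preserve retracts, and that $\alpha_{W,V}$ is a natural transformation between these functors. Observe first that the explicit formulas defining $\alpha_{W,V}$ and $\alpha_{V,W}$ only use the $K$-module structure of $H$ and are clearly natural in $W$, $V$, and in $H$ along $K$-linear maps, so this extra naturality in $H$ is immediate.

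Next, I would check the base case $H = K$. Under the canonical isomorphism $\Hom_K(K,V) \cong V$, $f \mapsto f(1)$, the map $\alpha_{W,V}$ becomes $w \otimes v \mapsto (k \mapsto w \otimes k\cdot v)$ which, after evaluation at $1$, is the identity of $W \otimes V$. Hence $\alpha_{W,V}$ is an isomorphism when $H = K$, and by additivity in $H$ it is an isomorphism whenever $H$ is a finitely generated free left $K$-module. Both functors $H \mapsto W \otimes \Hom_K(H,V)$ and $H \mapsto \Hom_K(H, W^{\triv} \otimes V)$ also preserve retracts in $H$, since $\Hom_K(-,V)$ preserves split idempotents and tensoring with the $\Bbbk$-vector space $W$ is exact.

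Since $H$ is finitely generated projective as a left $K$-module, it is a retract of $K^n$ for some $n$. Combining this with the preceding paragraph yields that $\alpha_{W,V}$ is an isomorphism. The argument for $\alpha_{V,W}$ is completely analogous (alternatively, it can be obtained by passing to the $\otimes$-opposite monoidal categories). Naturality in $W$ and $V$ is a direct inspection of the formulas.

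I do not anticipate a serious obstacle here: the argument is essentially the standard ``reduce to $H = K$, then sum and summand'' technique. The only minor point requiring attention is to make sure that the $K$-action on $H$ used in $\Hom_K(H,-)$ is the left multiplication via $\varphi$, so that $H$ being finitely generated projective as a left $K$-module yields the desired retract of $K^n$; beyond this bookkeeping, no computation of substance is needed.
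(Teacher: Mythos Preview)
Your argument is correct. The reduction to $H=K$ via naturality in $H$, followed by extension to finite free modules by additivity and then to finitely generated projectives by retracts, is a clean and standard way to establish this.

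The paper takes a slightly different, more explicit route: it invokes the dual basis lemma to obtain elements $h_1,\dots,h_n\in H$ and $f_1,\dots,f_n\in\Hom_K(H,K)$ with $h=\sum_i f_i(h)\cdot h_i$, uses these to show that the canonical map $\Hom_K(H,K)\otimes_K U\to\Hom_K(H,U)$ is an isomorphism for every $K$-module $U$, and then composes three isomorphisms
\[
\Hom_K(H,W^{\triv}\otimes V)\;\cong\;\Hom_K(H,K)\otimes_K(W^{\triv}\otimes V)\;\cong\;W\otimes\bigl(\Hom_K(H,K)\otimes_K V\bigr)\;\cong\;W\otimes\Hom_K(H,V)
\]
and checks that this composite is inverse to $\alpha_{W,V}$. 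The advantage of the paper's approach is that it yields an explicit formula for $\alpha_{W,V}^{-1}$ in terms of the dual basis data, namely $f\mapsto\sum_i f_i\otimes f(h_i)$ (after the identification $\Hom_K(H,K)\otimes_K U\cong\Hom_K(H,U)$); your functorial argument is more conceptual and avoids introducing dual basis elements at all. Since the subsequent proofs in the paper only invoke $\alpha^{-1}$ abstractly rather than its explicit formula, your approach would serve equally well there.
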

\begin{proof}
   Assume that $H$ is finitely generated projective as a left $K$-module. By the dual basis lemma for projective modules, see, e.g., \cite{Coh}*{Section 4.5, Proposition 5.5}, we can find elements $h_1,\ldots, h_n$ in $H$ and $f_1,\ldots, f_n$ in $K^\vee:=\Hom_K(H,K)$ such that 
    \begin{equation}\label{eq:dualbasis}
    h=\sum_{i=1}^n f_i(h)\cdot h_i , \qquad \forall h\in H.
    \end{equation}
Thus, for any left $K$-module $U$ and $f\in \Hom_K(H,U)$, we have
$$f(h)=\sum_{i=1}^n f_i(h)\cdot f(h_i).$$
It follows that the canonical map 
$$\Hom_K(H,K)\otimes_K U\to \Hom_K(H,U), \quad f\otimes u \mapsto \big(h\mapsto f(h)\cdot u\big),$$
has an inverse given by $f\mapsto \sum_{i=1}^n f_i\otimes f(h_i)$. By composition, we have linear isomorphisms
\begin{align*}
    \Hom_K(H,W^{\triv}\otimes U)&\isomorph\Hom_K(H,K)\otimes_K(W^{\triv}\otimes U)\\
    &\isomorph W\otimes \Hom_K(H,K)\otimes_K U\\
    &\isomorph W\otimes \Hom_K(H,U),
\end{align*}
which one checks to be inverse to $\alpha_{W,V}$. The second isomorphism in this chain is given by the symmetric structure of $\Bbbk$-vector spaces, and we remove the superscript $\triv$ as afterwards the $K$-action is not used. The proof for $\alpha_{V,W}$ is similar.
\end{proof}

\begin{remark}\label{rem:identification}
By structure transfer, the  target $\Bbbk$-vector spaces for $\alpha_{V,W}
$ and $\alpha_{W,V}$ obtain a unique left $H$-module structures such that $\alpha_{V,W}$ and $\alpha_{W,V}$ are isomorphisms of  $H$-modules. Thus, in the following, we will identify these $H$-modules, provided that $H$ is finitely generated projective as a left $K$-module.
\end{remark}

\begin{lemma}\label{lem:proj-iso-R-Hopf}
Assume that $H$ is finitely generated projective as a  left $K$-module via $\varphi\colon K\to H$. Then the projection formula holds for $R=\CoInd_\varphi$, see \Cref{def:proj-formulas-hold}. 
\newline
Under the isomorphisms from \Cref{lem:auxilary}, the inverses for $\projlnoarg$, $\projrnoarg$ are given by 
\begin{align*}
        \projl{W}{V}^{-1}&\colon R(G(W)\otimes V)\to W\otimes R(V), & f&\mapsto \big(h\mapsto (S(h_{(1)})\otimes 1)\cdot f(h_{(2)})\big),\\
    \projr{V}{W}^{-1}&\colon R(V\otimes G(W))\to R(V)\otimes W, & f&\mapsto \big(h\mapsto (1\otimes S(h_{(2)}))\cdot f(h_{(1)})\big),
\end{align*}
where we denote $(h\otimes 1)\cdot w\otimes v=hw\otimes v$ and $(1\otimes h)\cdot v\otimes w=v\otimes hw$, respectively, for $h\in H, v\in V, w\in W$. 
\end{lemma}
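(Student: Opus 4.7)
The plan is to prove invertibility by constructing explicit inverses and verifying them directly, with all the work happening after a single identification of spaces. First I would invoke \Cref{lem:auxilary} and \Cref{rem:identification} to transport $\projl{W}{V}$ through the isomorphism $\alpha_{W,V}\colon W\otimes R(V)\isomorph R(W^{\triv}\otimes V)$ (this is the only place where the hypothesis that $H$ is finitely generated projective over $K$ enters). Expanding $\alpha_{W,V}$ on a simple tensor $w\otimes f$ and comparing with \Cref{eq:lproj-coind}, one sees that under this identification $\projl{W}{V}$ corresponds to the map $R(W^{\triv}\otimes V)\to R(G(W)\otimes V)$ sending $F$ to the function $h\mapsto (h_{(1)}\otimes 1)\cdot F(h_{(2)})$, where the action is on the first tensor factor via the $H$-module structure of $W$. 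In this form the formula in the statement, $f\mapsto \bigl(h\mapsto (S(h_{(1)})\otimes 1)\cdot f(h_{(2)})\bigr)$, makes sense a priori as a function $H\to W\otimes V$, and our task becomes to show that it actually lies in $R(W^{\triv}\otimes V)$ and is a two-sided inverse to the transported $\projl{W}{V}$.

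Next I would verify well-definedness. Writing $F(h) := (S(h_{(1)})\otimes 1)\cdot f(h_{(2)})$ for $f\in R(G(W)\otimes V)$, one needs to check $F(kh) = k\cdot F(h)$ with the $W^{\triv}\otimes V$ action, i.e.\ purely on the $V$ factor. Using anti-multiplicativity of $S$, the diagonal $K$-linearity of $f\colon H\to G(W)\otimes V$, and the antipode axiom $S(k_{(1)})k_{(2)} = \varepsilon(k)\cdot 1$, a short Sweedler calculation yields
\[
F(kh) = (S(h_{(1)})S(k_{(1)})\otimes 1)\cdot(k_{(2)}\otimes k_{(3)})\cdot f(h_{(2)}) = (S(h_{(1)})\otimes k)\cdot f(h_{(2)}),
\]
which is exactly $k\cdot F(h)$ in $W^{\triv}\otimes V$.

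Verifying that the two compositions are the identity is then a direct Sweedler computation. In one direction,
\[
(h_{(1)}\otimes 1)\cdot(S(h_{(2)})\otimes 1)\cdot f(h_{(3)}) = (h_{(1)}S(h_{(2)})\otimes 1)\cdot f(h_{(3)}) = f(h),
\]
using $h_{(1)}S(h_{(2)}) = \varepsilon(h)\cdot 1$; in the other direction,
\[
(S(h_{(1)})\otimes 1)\cdot(h_{(2)}\otimes 1)\cdot F(h_{(3)}) = (S(h_{(1)})h_{(2)}\otimes 1)\cdot F(h_{(3)}) = F(h),
\]
by the same axiom on the other side. This establishes that $\projl{W}{V}$ is an isomorphism with the claimed inverse. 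The statement for $\projr{V}{W}$ follows by the entirely analogous argument, applied with the roles of the two tensor factors interchanged, using $\alpha_{V,W}$ in place of $\alpha_{W,V}$; alternatively one may invoke $\otimes\text{-}\oop$-duality to deduce it from the left-handed case. The only conceptual step is the identification of spaces via \Cref{lem:auxilary}; the remaining content is bookkeeping with Sweedler notation that I do not expect to present an obstacle.
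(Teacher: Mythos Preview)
Your proposal is correct and follows essentially the same approach as the paper: both arguments transport the problem through the isomorphism $\alpha_{W,V}$ of \Cref{lem:auxilary}, write down the same candidate inverse $f\mapsto\bigl(h\mapsto (S(h_{(1)})\otimes 1)\cdot f(h_{(2)})\bigr)$, and verify the two compositions using the antipode identities $h_{(1)}S(h_{(2)})=\varepsilon(h)=S(h_{(1)})h_{(2)}$. You additionally spell out the well-definedness check (that the candidate lands in $\Hom_K(H,W^{\triv}\otimes V)$), which the paper leaves implicit; otherwise the arguments coincide, and the right-handed case is deferred in both.
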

\begin{proof}
First, consider the map 
$$\pi^l_{W,V}\colon \Hom_K(H,\left.W\right|_K\otimes V)\to \Hom_K(H,W^{\triv}\otimes V), \quad f \mapsto \big(h\mapsto (S(h_{(1)})\otimes 1)\cdot f(h_{(2)})\big),$$
where $W|_K$ denotes $W$ viewed as a $K$-module.
We compute that 
\begin{align*}
\pi^l_{W,V}\projl{W}{V}(w\otimes f)&=\big(h\mapsto (S(h_{(1)})\otimes 1)\cdot h_{(2)(1)}\cdot w \otimes f(h_{(2)(2)})\big)\\
    &=\big(h\mapsto S(h_{(1)(1)})h_{(1)(2)}\cdot w \otimes f(h_{(2)})\big)\\
    &=\big(h\mapsto \varepsilon(h_{(1)}) w \otimes f(h_{(2)})\big)\\
    &=\big(h\mapsto w \otimes f(h)\big)=\alpha_{W,V}(w\otimes f).
\end{align*}
Here, we applied coassociativity of the coproduct, followed by the antipode axioms, and the counit axioms. Next, for $f\in \Hom_K(H, G(W)\otimes V)$, we compute that
\begin{align*} 
    \projl{W}{V}\alpha_{W,V}^{-1}\pi^l_{W,V}(f)&= \big(h\mapsto (h_{(1)}\otimes 1)\cdot \pi^l_{W,V}(f)(h_{(2)})\big)\\
    &=\big(h\mapsto (h_{(1)}S(h_{(2)(1)})\otimes 1)\cdot f(h_{(2)(2)})\big)\\
    &=\big(h\mapsto (\varepsilon(h_{(1)})\otimes 1)\cdot f(h_{(2)})\big)\\
    &=\big(h\mapsto f(h)\big)=f.
\end{align*}
 Hence, if 
$H$ is finitely generated projective as a left $K$-module, $\alpha_{W,V}$ is invertible by \Cref{lem:auxilary} and the above shows that $\pi^l_{W,V}=(\projl{W}{V}\alpha_{W,V}^{-1})^{-1}$. Hence, $\projl{W}{V}$ is invertible.
Invertibility of $\projr{W}{V}$ is proved similarly. Thus, the projection formula holds for $R$ as claimed.
\end{proof}

Hence, \Cref{prop:functorZF2}(a) implies the following result. 

\begin{corollary}\label{cor:YDcoind}
Assume that $H$ is finitely generated projective as a left $K$-module.
    Given a  Yetter--Drinfeld module $(V,\delta)$ over $K$ with coaction
$$\delta^V\colon V\to K\otimes V,\quad  \delta^V(v)=v^{(-1)}\otimes v^{(0)},$$
we define
\begin{gather}
\delta^{R(V)}\colon R(V)\to H\otimes R(V), \nonumber\\
\delta^{R(V)}(f)=\alpha_{H,V}^{-1}\big(h\mapsto S(h_{(1)})f(h_{(2)})^{(-1)}h_{(3)}\otimes f(h_{(2)})^{(0)}\big),\label{eq:coaction-YDcoind}
\end{gather}
under the isomorphism $\alpha_{H,V}$ from \Cref{lem:auxilary}.

Then the assignment 
\begin{align}
    \cZ(\CoInd_\varphi)(V):=(\CoInd_\varphi(V),\delta^{R(V)}), \qquad f\mapsto \CoInd_{\varphi}(f)=\Hom_K(H,f),
\end{align}
for any morphism $f$ in $\lYD{K}$,
gives a braided lax monoidal functor $\cZ(\CoInd_\phi)\colon \lYD{K}\to \lYD{H}$. The lax monoidal structure is given by \Cref{eq:lax-Hopf}.

Moreover, $\CoInd_\varphi$ restricts to a functor $\lyd{K}\to \lyd{H}$, for the rigid categories of finite-dimensional YD modules.
\end{corollary}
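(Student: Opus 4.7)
The plan is to combine the main induction theorem (\Cref{prop:functorZF2}) with the projection formula result already established in \Cref{lem:proj-iso-R-Hopf}. The assignment $\Res_\varphi \dashv \CoInd_\varphi$ is a monoidal adjunction between categories of left modules, with $\Res_\varphi$ strict strong monoidal and $\CoInd_\varphi$ lax monoidal via \eqref{eq:lax-Hopf}. Under the hypothesis that $H$ is finitely generated projective as a left $K$-module, \Cref{lem:proj-iso-R-Hopf} guarantees that the projection formula holds for $\CoInd_\varphi$. Thus \Cref{prop:functorZF2} directly produces a braided lax monoidal functor $\cZ(\CoInd_\varphi)\colon \cZ(\lMod{K})\to \cZ(\lMod{H})$ whose lax monoidal structure coincides with that of $\CoInd_\varphi$, hence with \eqref{eq:lax-Hopf}. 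Composing with the standard braided monoidal equivalences $\cZ(\lMod{K})\simeq \lYD{K}$ and $\cZ(\lMod{H})\simeq \lYD{H}$ then yields the desired functor $\lYD{K}\to \lYD{H}$, so it only remains to make the induced coaction on $R(V)$ explicit and to check the finite-dimensional restriction.

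For the coaction, I would use that under the equivalence $\cZ(\lMod{H})\simeq \lYD{H}$, the $H$-coaction corresponding to a half-braiding $c$ on an object $W$ is recovered by evaluating $c_H(w\otimes 1_H)$, where $H$ denotes the regular left $H$-module. Applied to the half-braiding $c^R$ on $R(V)$ coming from \Cref{prop:functorZF2}, this gives
\[
\delta^{R(V)}(f) \;=\; c^R_H(f\otimes 1) \;=\; (\projl{H}{V})^{-1}\bigl(R(c^V_{G(H)})\circ \projr{V}{H}\bigr)(f\otimes 1).
\]
Substituting the explicit formula \eqref{eq:rproj-coind} for $\projr{V}{H}$, the half-braiding $c^V_{G(H)}(v\otimes h)=v^{(-1)}h\otimes v^{(0)}$ of the YD module $V$, and the formula for $(\projl{H}{V})^{-1}$ from \Cref{lem:proj-iso-R-Hopf} (which is described via the canonical identification $\alpha_{H,V}$ of \Cref{lem:auxilary}), a single application of coassociativity produces exactly \eqref{eq:coaction-YDcoind}. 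This is the main technical step; everything else in the plan is formal.

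For the final assertion that $\CoInd_\varphi$ restricts to a functor $\lyd{K}\to \lyd{H}$, I would re-use the dual basis already employed in \Cref{lem:auxilary}: there exist $h_1,\dots,h_n\in H$ and $p_1,\dots,p_n\in\Hom_K(H,K)$ with $h=\sum_i p_i(h)h_i$ for every $h\in H$. For any left $K$-module $V$, the map $\Hom_K(H,V)\to V^n$ sending $f$ to $(f(h_1),\dots,f(h_n))$ is a $\Bbbk$-linear split injection with retraction $(v_1,\dots,v_n)\mapsto \bigl(h\mapsto\sum_i p_i(h)v_i\bigr)$, the latter being $K$-linear because each $p_i$ is. Hence $\Hom_K(H,V)$ is a $\Bbbk$-linear summand of $V^n$, in particular finite dimensional whenever $V$ is, so $\cZ(\CoInd_\varphi)$ restricts to finite-dimensional YD modules.

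The main obstacle is the explicit computation of $\delta^{R(V)}$ in the second paragraph: tracking the antipode together with iterated coproducts through the three constituent morphisms of $c^R_H$ and through the identification $\alpha_{H,V}$ requires some care. However, because the nontrivial inverses are already recorded in \Cref{lem:proj-iso-R-Hopf}, the calculation reduces to the straightforward coassociativity manipulation sketched above.
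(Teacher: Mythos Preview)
Your proposal is correct and follows essentially the same approach as the paper: the paper's proof is the single sentence ``Hence, \Cref{prop:functorZF2}(a) implies the following result'' preceding the corollary, so the deduction via \Cref{lem:proj-iso-R-Hopf} and \Cref{prop:functorZF2} is exactly what is intended. Your explicit verification of the coaction formula \eqref{eq:coaction-YDcoind} via $c^R_H(f\otimes 1_H)$ and your dual-basis argument for the finite-dimensional restriction go beyond what the paper spells out, but both are straightforward and correct.
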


We will now provide a series of examples of coinduction functors.

\begin{example}\label{ex:trivial2}
As in \Cref{ex:trivial}, take $K=\Bbbk$ and assume $H$ is finite-dimensional. Then $\CoInd_\Bbbk^H(\Bbbk)=H^*=\Hom_\Bbbk(H,\Bbbk)$, the dual of $H$.
 Then $H^*$ is a YD module over $H$ with coaction
 $$\delta^{H^*}(f)=\big(h\mapsto S(h_{(1)})f(h_{(2)})h_{(3)} \big) \in\Hom_\Bbbk(H,H)\cong H\otimes H^*.$$
 We note that if $H$ is infinite-dimensional, this does not give a well-defined coaction. 
\end{example}

The next example serves the purpose of applying the results from \Cref{sec:Kleisli} to the Hopf algebra setting.

\begin{example}\label{ex:Kleisli1}
If $H$ is finite-dimensional and $K=\Bbbk$. Then, in the terminology of \Cref{sec:Kleisli}, the forgetful functor $G\colon \lMod{H}\to \lMod{\Bbbk}$, which is essentially surjective, and its right adjoint $R$ form a monoidal adjunction. Since the projection formula holds by \Cref{lem:proj-iso-R-Hopf}, the corresponding monoidal monad $T=RG$ is isomorphic to tensoring with $R(\one)=H^*$ by \Cref{lemma:iso_of_monoidal_monads}. The central structure on $H^*$ is given by the map
$$\rswap_V\colon H^*\otimes V \to V\otimes H^*, \quad f\otimes v\mapsto \alpha^{-1}_{V,\one}(h\mapsto S(h_{(1)})h_{(3)}\cdot v \otimes f(h_{(2)})),$$
using the formulas in \Cref{eq:rproj-coind} and \Cref{lem:proj-iso-R-Hopf}. This central structure can be identified with the half-braiding obtained from $H^*$ as an object $\lYD{H}$ with respect to the coregular action $(h\cdot f)(k)=f(kh)$ and coadjoint coaction given by 
$$\delta^H(f)=\sum_\alpha S((h_\alpha)_{(1)})(h_{\alpha})_{(3)}\otimes f((h_\alpha)_{(2)})f_\alpha,$$
where $\{f^\alpha\}$, $\{h_\alpha\}$ are dual bases. In fact, with this structure and the product obtained by dualizing $\Delta_H$, $H^*$ is a commutative algebra in $\lYD{H}$.

Let $\cC=\lMod{H}$ and let $T$ be the monoidal monad from above. The category of right $H^*$-modules in $\cC$ is equivalent to the category of \emph{Hopf modules}, see e.g.~\cite{Mon}*{Section 1.9}. Indeed, dualizing a right $H^*$-action to a left $H$-coaction via a dual basis for $H$ gives the description of the category used there. The Kleisli category $\Kleisli{T}$ is equivalent to the category of \emph{free Hopf modules}, i.e., right $H^*$-modules of the form $V\otimes H^*$ where $V$ is a $H$-module. 
Under this equivalence, the functor $\Free\colon \cC\to \rModint{\cC}{H^*}$ is given by $V\mapsto V\otimes H^*$. The $H$-action is then given by the tensor product of $H$-modules, while the right $H^*$-action is given by multiplication in $H^*$.

Now, the characterization theorem, \Cref{thm:char-thm}, implies that this category of free Hopf modules is equivalent to $\lMod{\Bbbk}$ (as a monoidal category). Moreover, by \Cref{lem:EM-cocompletion} we see that $\rModint{\lMod{H}}{H^*}\simeq \lMod{\Bbbk}$ since $\lMod{\Bbbk}$ is closed under coequalizers and the canonical functor respects these. This recovers   the \emph{fundamental theorem of Hopf modules},  namely, that $\rModint{\lMod{H}}{H^*}\simeq \lMod{\Bbbk}$, cf., e.g., \cite{Mon}*{Section 1.9.4}. 

Now, the functor $\cZ(R)\colon \lMod{\Bbbk}\to \cZ(\lMod{H})=\lYD{H}$ is characterized by sending $\Bbbk$ to $H^*$ with the above action and coaction.
\end{example}

\begin{example}\label{ex:groups2}
    In the group case of \Cref{ex:groups} with $K=\Bbbk\mathsf{K}\subset\Bbbk\mathsf{G}=H$ , take a YD module $V=\bigoplus_{d\in \mathsf{K}} V_d$ and for a homogeneous basis $\{v_j\}_j$ of $V$ consider the basis $\{f_{i,j}\}_{i,j}$ of $\CoInd_K^H(V)=\Hom_K(H,V)$, where
    $$f_{i,j}(kg_l)=\delta_{i,l}k\cdot v_j, \qquad \text{for all }k\in K,$$
    where we fix a coset decomposition $\mathsf G=\sqcup_i \mathsf{K}g_i$.
    Then $R(V)$ is a YD module with coaction given by 
    $$\delta(f_{i,j})=g_i^{-1}|v_j|g_i\otimes f_{i,j}.$$
One can show that in this example, $\Ind_K^H\cong \CoInd_K^H$ and describe the lax monoidal structure explicitly, see \cite{FHL}*{Appendix~B}.
\end{example}

\begin{example}\label{ex:Taft2}
    Returning to \Cref{ex:Taft}, we have a full list of simple YD modules over $G=\Bbbk\mathsf{C}_n$ of the form $\Bbbk_{i,j}=\Bbbk v_{i,j}$, where $g\cdot v_{i,j}=q^jv_{i,j}$ and $|v_{i,j}|=g^i$. A basis for $R(\Bbbk_{i,j})$ is given by the unique left $G$-module homomorphisms $f_k=f^{i,j}_k\colon T\to \Bbbk_{i,j}$ defined on generators $x^k$ by 
    $$f^{i,j}_k(x^l)=\delta_{k,l}v_{i,j}.$$
As a left $T$-module, $R(\Bbbk_{i,j})$ is cyclic and generated by $f_{n-1}^{i,j}$. The $T$-coaction on this generator is given by
$$\delta(f_{n-1}^{i,j})=g^{i+1}\otimes f_{n-1}^{i,j}.$$
\end{example}

\subsection{Induction of comodules categories}\label{sec:comod-ind}

In this section, we show that the projection formula always holds for the right adjoint of a restriction functor of comodule categories over arbitrary Hopf algebras. This way, any morphism of Hopf algebras induces a lax monoidal functor on Yetter--Drinfeld module categories.

Assume that $K,H$ are Hopf algebras over a commutative ring $R$ and $\varphi\colon K\to H$ is a morphism of Hopf algebras. Restriction along $\varphi$ gives the strong monoidal functor 
$$\Res^\varphi\colon \lcomod{K}\to \lcomod{H}, \qquad (V,\delta)\mapsto \left.V\right|^{H}=(V,(\varphi\otimes \id)\delta),$$
which is the identity on morphisms. 
The functor $\Res^\varphi$ has a right adjoint, comodule induction, which we denote by $\Ind^\varphi$. It is given by cotensor products, namely,
$$\Ind^\varphi\colon \lcomod{H} \to \lcomod{K}, \quad V\mapsto K\Box_{H} V.$$
Here, $K\Box_{H} V$ is the equalizer of the $R$-linear maps
$$\xymatrix{
K\otimes V\ar@/^/[rr]^{\id_{K}\otimes \delta_V}\ar@/_/[rr]_{(\id_{K}\otimes \varphi)\Delta_K\otimes \id_V}&&K\otimes H \otimes V
},$$
and thus a subspace of $K\otimes V$. One can check that the unit and counit of the adjunction $\Res^\varphi \dashv \Ind^\varphi$ are given by 
\begin{gather}\unit_V=\delta_V \colon V\to \Ind^\varphi\Res^\varphi (V)=K\Box_{H} (\left.V\right|^{H})\\
\counit_W=\varepsilon_K\otimes \id_W\colon \Res^\varphi\Ind^\varphi(W)=\left.(K\Box_{H} W)\right|^H \to W,
\end{gather}
for an $K$-comodule $(V,\delta_V)$ and an $H$-comodule $(W,\delta_W)$.

The lax monoidal structure for $\Ind^\varphi$,  
\begin{gather}
    \lax_{V,U}\colon \Ind^\varphi(V)\otimes \Ind^\varphi(U)\to \Ind^\varphi(V\otimes U),
\end{gather}
is given by the restriction to cotensor products of the map 
$$\lax'_{V,U}=(m_{K}\otimes \id_{V\otimes U})(\id_{K}\otimes \tau_{V,K}\otimes \id_U)\colon (K\otimes V)\otimes (K\otimes U)\longrightarrow K\otimes (V\otimes U),$$
where $\tau_{V,K}\colon V\otimes K\to K\otimes V$ is the swap map, i.e., the symmetric monoidal structure of $(\lmod{R},\otimes=\otimes_R)$. Thus, from \Cref{eq:proj} we derive that the projection formula morphism, 
\begin{gather}
    \projl{V}{W} \colon V\otimes \Ind^\varphi(W)\to \Ind^\varphi (\Res^\varphi(V)\otimes W), 
\end{gather}
is given by restricting the map
\begin{gather}\label{eq:map-in-projl}
\lax'_{\left.V\right|^{H},U}(\delta_V\otimes \id_{K\otimes W})\colon V\otimes (K\otimes W)\longrightarrow K\otimes (V\otimes W),
\end{gather}
where $\delta_V$ is the $K$-coaction of $V$, to the respective cotensor products. 

\begin{proposition}\label{prop:comod-Ind-proj}
The projection formula holds for the adjunction $\Res^\varphi\dashv \Ind^\varphi$.
\end{proposition}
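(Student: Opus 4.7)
The plan is to identify each projection formula morphism with the restriction to cotensor subspaces of a $\Bbbk$-linear isomorphism on the ambient tensor products, built from a Galois-type map. For $\projl{V}{W}$, this ambient map is
\[
\tilde{\Phi}\colon V \otimes K \otimes W \to K \otimes V \otimes W, \quad v \otimes k \otimes w \mapsto v^{(-1)}k \otimes v^{(0)} \otimes w,
\]
where $\delta_V(v) = v^{(-1)} \otimes v^{(0)}$ denotes the $K$-comodule structure on $V$. First I will verify that $\tilde{\Phi}$ is a $\Bbbk$-linear isomorphism with explicit inverse
\[
\tilde{\Psi}\colon K \otimes V \otimes W \to V \otimes K \otimes W, \quad k \otimes v \otimes w \mapsto v^{(0)} \otimes S^{-1}(v^{(-1)})k \otimes w.
\]
The equalities $\tilde{\Phi}\tilde{\Psi} = \id$ and $\tilde{\Psi}\tilde{\Phi} = \id$ reduce, via coassociativity of $\delta_V$, to the antipode identities $\sum x_{(2)} S^{-1}(x_{(1)}) = \varepsilon(x)\cdot 1 = \sum S^{-1}(x_{(2)}) x_{(1)}$, which hold in any Hopf algebra with invertible antipode.

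Next I will show that $\tilde{\Phi}$ restricts to an isomorphism between $V \otimes (K \Box_H W)$ and $K \Box_H (\Res^\varphi(V) \otimes W)$. Both cotensor products are presented as equalizers of parallel pairs $V \otimes K \otimes W \rightrightarrows V \otimes K \otimes H \otimes W$ and $K \otimes V \otimes W \rightrightarrows K \otimes H \otimes V \otimes W$, where $V \otimes -$ preserves equalizers since $\Bbbk$ is a field. I will introduce an auxiliary isomorphism $\tilde{\Phi}'\colon V \otimes K \otimes H \otimes W \to K \otimes H \otimes V \otimes W$ given by $v \otimes k \otimes h \otimes w \mapsto (v^{(-1)})_{(1)} k \otimes \varphi((v^{(-1)})_{(2)}) h \otimes v^{(0)} \otimes w$, and check that $\tilde{\Phi}$ and $\tilde{\Phi}'$ form a morphism of parallel pairs by a diagram chase using only coassociativity of $\Delta_K$ together with the fact that $\varphi$ and $\Delta_K$ are algebra maps; invertibility of $\tilde{\Phi}'$ follows by a similar Galois-type factorization. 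The general principle that parallel-pair isomorphisms identify their equalizers then yields the desired isomorphism of cotensor subspaces, which by construction coincides with $\projl{V}{W}$.

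The right projection formula morphism $\projr{V}{W}$ is handled by a parallel argument using the mirrored map $k \otimes v \mapsto k v^{(-1)} \otimes v^{(0)}$, whose inverse is $k \otimes v \mapsto k S(v^{(-1)}) \otimes v^{(0)}$; here the antipode $S$ itself is used rather than $S^{-1}$, since the multiplication now occurs on the right of $k$. The main technical hurdle is the verification of invertibility of the auxiliary map $\tilde{\Phi}'$ together with commutativity of the two parallel-pair squares; both are routine Sweedler-notation computations that do not require anything beyond the Hopf algebra axioms for $K$ and $\varphi$ being a coalgebra map.
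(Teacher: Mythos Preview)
Your proposal is correct and follows essentially the same route as the paper's proof. The paper writes down the same ambient inverse (in element-free form it is $(\id_V\otimes m_{K}\otimes \id_W)(\tau_{K,V}\otimes \id)((S_K^{-1}\otimes \id_V)\delta_V\otimes \id_{K\otimes W})(\tau_{K,V}\otimes \id_W)$, which unwinds to your $\tilde{\Psi}$) and then simply asserts that this inverse restricts to the cotensor subspaces; your parallel-pair argument with the auxiliary map $\tilde{\Phi}'$ makes that restriction step explicit, and your remark about $\projr{V}{W}$ using $S$ rather than $S^{-1}$ matches the paper's comment that the right case is easier.
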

\begin{proof}
We have to show that the morphisms $\projl{V}{W}$ and $\projr{W}{V}$ are isomorphisms. We include the proof for $\projl{V}{W}$ with the other case being easier, using $S_K$ instead of $S_K^{-1}$. 
Since $K$ is a Hopf algebra with invertible antipode $S_K$, we obtain an inverse to the map in \Cref{eq:map-in-projl} via
$$(\id_V\otimes m_{K}\otimes \id_W)(\tau_{K,V}\otimes \id)((S_K^{-1}\otimes \id_V)\delta_V\otimes \id_{K\otimes W})(\tau_{K,V}\otimes \id_W).$$
One checks that the restriction of the above map to $K\Box_H (\left.V\right|^H\otimes W)$ has image contained in $V\otimes (K\Box_H W)$  and thus $\projl{V}{U}$ is indeed invertible. 
\end{proof}

Hence, \Cref{prop:functorZF2}(a) induces a braided lax monoidal functor 
$$\cZ(\Ind^\varphi)\colon \cZ(\lcomod{H})\to \cZ(\lcomod{K}).$$
We will display its structure in terms of Yetter--Drinfeld module categories over $H$ and $K$. For a braided monoidal category $\cC$, $\cC^{\brop}$ denotes $\cC$ with the same monoidal structure but inverse braiding $\Psi^{\cC^{\brop}}_{X,Y}:=(\Psi^\cC_{Y,X})^{-1}$. We recall the following well-known equivalence of braided monoidal categories.

\begin{lemma}\label{lem:YD-comod}
    For any Hopf algebra $H$, there is an equivalence of braided monoidal categories
    $\cZ(\lcomod{H})^{\brop}\simeq\lYD{H}.$
\end{lemma}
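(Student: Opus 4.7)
The plan is to construct an explicit braided monoidal equivalence. Define $F\colon \lYD{H} \to \cZ(\lcomod{H})^{\brop}$ on a Yetter--Drinfeld module $(V, \cdot, \delta^V)$ by retaining the comodule structure and equipping $V$ with the half-braiding
$$c^V_W(v \otimes w) := w^{(0)} \otimes S^{-1}(w^{(-1)}) \cdot v, \qquad W \in \lcomod{H},$$
with $F$ acting as the identity on morphisms. In the reverse direction, define $G\colon \cZ(\lcomod{H})^{\brop} \to \lYD{H}$ by equipping the underlying $H$-comodule of $(V, c)$ with an $H$-action extracted from the single morphism $c_H\colon V \otimes H \to H \otimes V$ (where $H$ carries the regular coaction $\Delta$); the precise formula is dictated by inverting the formula defining $F$ when applied to the regular comodule.

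I would first check $F$ is well-defined: that $c^V_W$ is a comodule morphism follows from a direct calculation combining the Yetter--Drinfeld condition \eqref{eq:YD-cond2}, coassociativity of $\delta^W$, and the antipode identity $h_{(2)}S^{-1}(h_{(1)}) = \varepsilon(h)$; naturality in $W\in \lcomod H$ is automatic since the formula uses only $\delta^W$. The half-braiding coherences \eqref{eq:Ztensorcomp} and \eqref{eq:Zunitcomp} then translate directly to the $H$-module axioms on $V$ via elementary manipulations of $\Delta$ and $S^{-1}$. Dually, the well-definedness of $G$ is obtained by specializing the half-braiding axioms to $W = H$ and $W = \one$ to produce an $H$-module structure on $V$, with the Yetter--Drinfeld compatibility \eqref{eq:YD-cond} arising from the fact that $c_H$ intertwines the coactions with respect to $\Delta$ on $H$.

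The main obstacle is showing $F$ and $G$ are mutually inverse, which reduces to proving that for any $(V, c) \in \cZ(\lcomod{H})$, the component $c_W$ at an arbitrary comodule $W$ is determined by $c_H$ through the formula defining $F$. This is established by naturality of $c$ applied to the coaction morphism $\delta^W\colon W \to H \otimes W^{\triv}$ (a morphism in $\lcomod{H}$ by coassociativity of $\delta^W$, where $H \otimes W^{\triv}$ denotes $H \otimes W$ with coaction $\Delta \otimes \id_W$), combined with the tensor coherence \eqref{eq:Ztensorcomp} applied to the factorization $H \otimes W^{\triv}$ and the observation that $c_{W^{\triv}}$ acts as the tensor swap on trivial comodules, which follows from \eqref{eq:Zunitcomp} and naturality with respect to the inclusion $\one \hookrightarrow W^{\triv}$ of each element. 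For the braided structure, $F$ carries the trivial lax monoidal structure since tensor products of YD modules have the same underlying comodule as tensor products of their images under $F$, and a direct computation using the antipode identity above shows $\Psi^{YD}_{W,V}\circ c^V_W = \id_{V \otimes W}$, so that the braiding $(c^W_V)^{-1}$ in $\cZ(\lcomod{H})^{\brop}$ matches the Yetter--Drinfeld braiding $\Psi^{YD}_{V,W}(v\otimes w)= v^{(-1)}\cdot w\otimes v^{(0)}$, accounting for the $(-)^{\brop}$ in the statement.
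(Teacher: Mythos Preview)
Your proposal is correct and follows essentially the same strategy as the paper, though you provide more detail and construct the equivalence primarily in the direction $\lYD{H}\to\cZ(\lcomod{H})^{\brop}$, whereas the paper writes down the inverse direction explicitly (defining the $H$-action via $a_V:=(\id_V\otimes\varepsilon_H)c_H^{-1}$ and citing naturality of $c^{-1}$ with respect to $m_H$ and $1_H$ to obtain the module axioms). Your reconstruction of $c_W$ from $c_H$ via the coaction $\delta^W\colon W\to H\otimes W^{\triv}$ and the tensor coherence is exactly the ``One checks'' that the paper leaves implicit; note only that your argument for $c_{W^{\triv}}$ being the swap uses that every object of $\lcomod{H}$ is spanned by images of $\one$, which is fine here but is a feature of the $\Bbbk$-linear setting rather than a general categorical fact.
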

\begin{proof}
    The functor sends an object $(V,c)$, where $c\colon V\otimes X\to X\otimes V$ is a half-braiding (cf.~\Cref{def::Z}), to the left $H$-module
    $$a_V:=(\id_V\otimes \varepsilon_H)c^{-1}_{H}\colon H\otimes V\to V,$$
where $H$ is regarded as a left $H$-comodule via the coproduct $\Delta_H$. Naturality of $c^{-1}$ with respect to the morphisms of $H$-comodules $m_H$ and $1_H$, and compatibility with the monoidal structure, imply that $a_V$ defines a left $H$-module structure on $V$. The fact that $c^{-1}_H$ is a morphism of left $H$-comodules implies that $V$ is a Yetter--Drinfeld module of $H$ with respect to the given coaction and action $a_V$. One checks that this assignment gives an equivalence of monoidal categories. However, the braiding of Yetter--Drinfeld modules corresponds to the inverse braiding on $\cZ(\lcomod{H})$.
\end{proof}

\begin{corollary}\label{cor:IndZ-comod}
The functor $\cZ(\Ind^\varphi)$ gives a braided lax monoidal functor from $\lYD{H}$ to $\lYD{K}$ which sends an $H$-Yetter--Drinfeld module $(V,a_V,\delta_V)$ to the $K$-Yetter--Drinfeld module $(K\Box_H V,a_{K\Box_H V},\delta_{K\Box_H V})$ given by 
\begin{gather}
    a_{K\Box_H V} \colon K\otimes (K\Box_H V) \to K\Box_H V,\quad k\otimes (l\otimes v)\mapsto k_{(1)}lS(k_{(3)})\otimes a_V(k_{(2)}\otimes v),  \\
    \delta_{K\Box_H V}\colon K\Box_H V\to K\otimes (K\Box_H V), \quad k\otimes v\mapsto k_{(1)}\otimes( k_{(2)}\otimes v).
\end{gather}
The lax monoidal structure is given by 
\begin{gather}
\begin{split}
    \lax_{V,W}\colon &(K\Box_H V)\otimes (K\Box_H W)\to K\Box_H(V\otimes W), \\
    &(k\otimes v)\otimes (l\otimes w)\mapsto kl\otimes (v\otimes w),
\end{split}\\
    \lax^0\colon \one \to K\Box_H \one, \quad v\mapsto 1_K\otimes v.
    \end{gather}
\end{corollary}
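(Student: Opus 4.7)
The plan is threefold: apply the main induction theorem \Cref{prop:functorZF2}, translate via the equivalence of \Cref{lem:YD-comod}, and compute explicitly. First, by \Cref{prop:comod-Ind-proj} the projection formula holds for the monoidal adjunction $\Res^\varphi \dashv \Ind^\varphi$, so \Cref{prop:functorZF2} produces a braided lax monoidal functor
\[
\cZ(\Ind^\varphi)\colon \cZ(\lcomod{H}) \longrightarrow \cZ(\lcomod{K}).
\]
Since reversing the braidings on both source and target does not affect the property of being braided lax monoidal, composing with the equivalences of \Cref{lem:YD-comod} yields a braided lax monoidal functor $\lYD{H} \to \lYD{K}$. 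The remaining work is to translate the data produced by \Cref{prop:functorZF2} into explicit YD-module formulas.

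For the lax monoidal structure, the identification \eqref{equation:lax_on_ZR} shows that the lax structure of $\cZ(\Ind^\varphi)$ coincides with that of $\Ind^\varphi$ itself. Inserting the explicit formula for $\lax'_{V,W}$ recalled just before \Cref{prop:comod-Ind-proj}, and restricting to cotensor products, yields precisely the claimed formulas for $\lax_{V,W}$ and $\lax^0$. For the coaction, the underlying $K$-comodule of $\cZ(\Ind^\varphi)(V,c^V)$ is $K\Box_H V$ with $K$-coaction inherited from $\Delta_K$ acting on the $K$-factor; under \Cref{lem:YD-comod} this is exactly the stated YD coaction $k\otimes v\mapsto k_{(1)}\otimes (k_{(2)}\otimes v)$.

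For the action, starting from a YD module $(V,a_V,\delta_V)$ with half-braiding $c^V$ in $\cZ(\lcomod{H})$, characterized via \Cref{lem:YD-comod} by $a_V = (\id_V\otimes\varepsilon_H)\circ (c^V_H)^{-1}$, \Cref{prop:functorZF2} furnishes the half-braiding
\[
c^{K\Box_H V}_A = (\projl{A}{V})^{-1}\circ \Ind^\varphi(c^V_{\Res^\varphi A})\circ \projr{V}{A},
\]
from which the $K$-action on $K\Box_H V$ is recovered via \Cref{lem:YD-comod} by evaluating the inverse at $A=K$ (with its regular left coaction) and post-composing with $\id\otimes\varepsilon_K$. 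Inserting \eqref{eq:map-in-projl} for $\projlnoarg$ and the inverse $\projlnoarg^{-1}$ exhibited in the proof of \Cref{prop:comod-Ind-proj}, together with the corresponding expression for $\projrnoarg$, and applying the antipode axioms of $K$ and the YD condition \eqref{eq:YD-cond} for $V$, a direct calculation gives the stated formula $k\otimes (l\otimes v)\mapsto k_{(1)}lS(k_{(3)})\otimes a_V(k_{(2)}\otimes v)$.

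The main obstacle will be this final computation: one must carefully track the $\brop$-dualization in \Cref{lem:YD-comod} so that extracting the YD action from the inverse half-braiding produces the correct expression, and must invoke the YD condition for $V$ at the right step so that the antipode terms $k_{(1)}S(k_{(3)})$ arise in the conjugation pattern on the $K$-factor while $a_V(k_{(2)}\otimes v)$ appears in the $V$-factor.
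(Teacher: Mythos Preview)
Your proposal is correct and follows essentially the same approach as the paper: apply \Cref{prop:functorZF2} (after noting the projection formula holds by \Cref{prop:comod-Ind-proj}) and translate via the equivalence of \Cref{lem:YD-comod}, observing that a braided lax monoidal functor remains braided after reversing the braidings on both sides. The paper's own proof is terser and does not spell out the derivation of the explicit formulas, so your added detail on extracting the lax structure, coaction, and action is a welcome elaboration rather than a deviation.
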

\begin{proof}
The statement follows from \Cref{prop:functorZF2}(a) under the equivalence of braided monoidal categories from \Cref{lem:YD-comod}.
    Note that the functor $\cZ(\Ind^\varphi)$ is braided if and only if it preserves the reverse braiding.
\end{proof}


\section{Examples and applications}\label{sec:examples}

We conclude by discussing some examples and applications of the functors between Drinfeld centers introduced in this article. We start by considering examples arising from affine algebraic groups in \Cref{sec:Zgroup}, followed by functors that induce Yetter--Drinfeld modules over quantum groups of roots at unity from objects in the center of certain algebraic groups in \Cref{sec:Uepsilon}. We study module categories over commutative central monoids internal to categories of representations of Radford--Majid biproduct Hopf algebras in \Cref{sec:biproduct}.

\subsection{Drinfeld centers of representations of algebraic groups}
\label{sec:Zgroup}

Let $\sfG$ be an affine algebraic group over an algebraically closed field $\Bbbk$. Its coordinate algebra $\OG$ is a commutative Hopf algebra, and we can consider the symmetric monoidal category $\Rep \sfG:=\lcomod{\OG}$, with tensor product $\otimes=\otimes_\Bbbk$.
The Drinfeld center $\cZ(\Rep \sfG)$ is equivalent to the category $\lYD{\OG}$. Since $\OG$ is a commutative Hopf algebra, it is an algebra object in $\Rep\sfG$ with respect to the coadjoint coaction 
$$\delta^{\coad}\colon \OG\to \OG\otimes \OG, \quad x \mapsto x_{(1)}S(x_{(3)})\otimes x_{(2)}.$$
It follows that $\cZ(\Rep \sfG)$ is equivalent to the category $\lModint{\OG^\coad}{\Rep \sfG}$, where $\OG^\coad$ denotes the algebra $\OG$ as an object in $\Rep \OG$ with the coadjoint coaction. The tensor product of two objects 
is given by the tensor product in $\Rep \sfG$, where the action of $\OG^\coad$ is given via the coproduct map $\Delta\colon \OG\to \OG\otimes \OG$.

We now give a geometric interpretation of $\cZ(\Rep \sfG)$. Consider a commutative $\Bbbk$-algebra $A$, i.e., $A=\OX$ for an affine scheme $\sfX=\Spec A$ over $\Bbbk$. The category $\QCoh{\sfX}$ of \emph{quasi-coherent sheaves on $\sfX$} is simply the category of $\OX$-modules. Now, a continuous action of the algebraic group $\sfG$ on $\sfX$ amounts to a coaction 
$$\delta\colon \OX\to \OG\otimes \OX$$
such that $\OG$ is a (commutative) algebra object in $\Rep \sfG$. This is equivalent to $\OX$ being a comodule algebra over $\OG$.
Rather than providing a model for the quotient $\sfX/\sfG$, we only define the category $\QCoh{\sfX/\sfG}$ of \emph{$\sfG$-equivariant coherent sheaves on $\sfX$} as
$$\QCoh{\sfX/\sfG}:= \lModint{\OX}{\Rep \sfG}.$$

The coadjoint coaction $\delta^\coad$of $\sfG$ on itself corresponds to the morphism of schemes given by 
$$\Spec(\delta^\coad)=a^{\ad}\colon \sfG\times \sfG\to \sfG,\quad (g,h)\mapsto g^{-1}hg,$$
i.e., to the adjoint action. 
Thus, there is an equivalence of $\Bbbk$-linear categories 
$$\cZ(\Rep \sfG)\simeq \QCoh{\sfG/^{\ad} \sfG},$$
where the quotient of $G$ by itself is taken with respect to this adjoint action. This point of view on the Drinfeld center appears in \cite{BFN}.

\begin{remark}
    The category $\QCoh{\sfG/^{\ad} \sfG}$ has a natural relative tensor product $\otimes_{\OG}$ inherited from the symmetric monoidal category $\QCoh{G}=\lMod{\OG}$ which exists since $\OG$ is a commutative algebra in $\QCoh{G}$, cf., \Cref{lem:rel-tensor}. However, the tensor product on $\cZ(\Rep \sfG)$ is given by the tensor product $\otimes=\otimes_\Bbbk$ of $\Rep \sfG$ since the forgetful functor $F\colon \cZ(\Rep \sfG)\to \Rep \sfG$ is monoidal. As already evident in the finite group case, this tensor product is not symmetric but braided monoidal in general. In this section, we always use the tensor product $\otimes_\Bbbk$ on $\QCoh{\sfG/^{\ad} \sfG}$.
\end{remark}

We can generalize \Cref{ex:groups2} to the setting of affine group schemes. For this, assume given a morphism of affine group schemes $\phi\colon \sfK\to \sfG$. Equivalently, we have a morphism of Hopf algebras $\varphi=\phi^*\colon \OG\to \OK$. As described in \Cref{sec:comod-ind}, the strong monoidal functor 
$$\Res^\varphi\colon \Rep \sfG\to \Rep \sfK, \qquad (V,\delta)\mapsto \left.V\right|^{\OK}=(V,(\varphi\otimes \id)\delta),$$
has a right adjoint given by cotensor products,
$$\Ind^\varphi\colon \Rep \sfK\to \Rep \sfG, \quad V\mapsto \OG\Box_{\OK} V,$$
where $\OG\Box_{\OK} V$ is the equalizer of the maps
$$\xymatrix{
\OG\otimes V\ar@/^/[rr]^{\id_{\OG}\otimes \delta}\ar@/_/[rr]_{(\id_{\OG}\otimes \varphi)\Delta\otimes \id_V}&&\OG\otimes \OK \otimes V
}.$$

Now, \Cref{cor:IndZ-comod} specifies to the following result.

\begin{corollary}\label{cor:alg-grp}
A morphism $\phi\colon \sfK\to \sfG$ of affine group schemes induces a lax monoidal functor
$$\cZ(\Ind^{\phi^*})\colon \QCoh{\sfK/^{\ad} \sfK }\to \QCoh{\sfG/^{\ad} \sfG}.$$
\end{corollary}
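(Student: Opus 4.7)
The statement follows immediately by applying \Cref{cor:IndZ-comod} to the morphism of commutative Hopf algebras $\varphi := \phi^{\ast}\colon \OG \to \OK$. In the notation of \Cref{sec:comod-ind}, this takes $K = \OG$ and $H = \OK$, so that
\[
\Ind^{\phi^*}\colon \lcomod{\OK} = \Rep \sfK \longrightarrow \lcomod{\OG} = \Rep \sfG
\]
is the right adjoint to the restriction functor $\Res^{\phi^*}$. By \Cref{prop:comod-Ind-proj} the projection formula holds for this monoidal adjunction, so \Cref{prop:functorZF2}, translated to Yetter--Drinfeld modules via \Cref{lem:YD-comod}, yields a braided lax monoidal functor
\[
\cZ(\Ind^{\phi^*})\colon \lYD{\OK} \longrightarrow \lYD{\OG},
\]
whose explicit formulas (action, coaction, and lax structure) are recorded in \Cref{cor:IndZ-comod}.

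The second step is to transport this functor along the equivalences of $\Bbbk$-linear braided monoidal categories
\[
\lYD{\OG} \;\simeq\; \cZ(\Rep \sfG) \;\simeq\; \lModint{\OG^{\coad}}{\Rep \sfG} \;\simeq\; \QCoh{\sfG/^{\ad}\sfG},
\]
discussed at the beginning of \Cref{sec:Zgroup}, and analogously for $\sfK$. Precomposing $\cZ(\Ind^{\phi^*})$ with the equivalence $\QCoh{\sfK/^{\ad}\sfK} \simeq \lYD{\OK}$ and postcomposing with $\lYD{\OG} \simeq \QCoh{\sfG/^{\ad}\sfG}$ produces the desired braided lax monoidal functor. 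Since braided lax monoidal functors are in particular lax monoidal, the weaker statement claimed in the corollary follows.

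There is no genuine obstacle here: everything reduces to checking that the hypotheses of \Cref{cor:IndZ-comod} apply in the present geometric setting (which holds for any morphism of Hopf algebras, and in particular for any $\phi^*$ arising from a morphism of affine group schemes) and to invoking the standard identification of $\cZ(\Rep \sfG)$ with equivariant quasi-coherent sheaves on the adjoint quotient stack. If desired, the underlying functor can be described concretely as $V \mapsto \OG \Box_{\OK} V$, with $\OG$-action and $\OG$-coaction obtained from those on the group $\sfK$ via conjugation by elements of $\sfG$ restricted through $\phi^*$, as spelled out in \Cref{cor:IndZ-comod}.
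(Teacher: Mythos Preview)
Your proof is correct and takes essentially the same approach as the paper: the paper simply states that \Cref{cor:IndZ-comod} specifies to this result, and you have spelled out what that specification entails, including the identification of $\lYD{\OG}$ with $\QCoh{\sfG/^{\ad}\sfG}$ from the beginning of \Cref{sec:Zgroup}.
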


\begin{remark}
We can give an interpretation of the functor $\Ind^{\varphi}$ in terms of the usual formulation of representations via actions 
$$\rho_V\colon \sfG(\Bbbk) \longrightarrow \End_\Bbbk(V).$$
An element $g\in \sfG(\Bbbk)$ is an algebra homomorphism $g\colon \OG\to \Bbbk$ and it acts on $V$ via
\begin{equation}
    g\cdot v=g(v^{(-1)})v^{(0)}, \qquad\forall v\in V.
\end{equation}
Here, the product of  $g_1,g_2\in \sfG(\Bbbk)$ is given by 
$$(g_1\cdot g_2)(f)=g_2(f_{(1)})g_1(f_{(2)}),$$
making $\rho_V$ a left module. 
An example of such an action is the regular action of $G(\Bbbk)$ on $\OG$ given via
$$g\cdot f=g(f_{(1)})f_{(2)}, \qquad \forall f\in \OG.$$
If $\OG=\Bbbk[x_1,\ldots, x_n]/(f_1,\ldots, f_m)$,
then 
$$\sfG(\Bbbk)=\{ {\bf a}=(a_1,\ldots, a_n)~\mid~ f_i(a_1,\ldots, a_n)=0, ~\forall i=1,\ldots, m\} \subseteq \Bbbk^n,$$
and the action is given by 
$$({\bf a}\cdot f)({\bf b})=f({\bf ab}).$$
With this description of $\sfG$-representations, the cotensor product is given by 
$$
\OG\otimes^{\sfK(\Bbbk)} V:=\Big\{\sum_i f_i\otimes v_i\in \OG\otimes V ~\mid~ \sum_i f_i\cdot \phi(k)\otimes v_i=\sum_i f_i\otimes k\cdot v_i, ~\forall k\in \sfK(\Bbbk)\Big\}\subseteq \OG\otimes V.$$
Viewed as a $\sfG(\Bbbk)$-representation, $\Ind^{\phi^*}(V)=\OG\otimes^{\sfK(\Bbbk)} V$ is equipped with the left regular action ${\bf a}\cdot (f\otimes v)=f({\bf a}(-))\otimes v$.
\end{remark}

As a consequence of \Cref{cor:alg-grp}, the images of commutative algebra objects in $\QCoh{\sfK/^{\ad} \sfK }$ are commutative in $\QCoh{\sfG/^{\ad} \sfG }$.

\begin{example}
The image of the tensor unit $\one$ corresponds to the $\sfG(\Bbbk)$-representation 
$$\OG^{\sfK(\Bbbk)}=\{f\in \OG~\mid~ f\cdot \phi(k)=f, ~\forall k\in \sfK(\Bbbk)\}$$
 of invariants with respect to the right $\sfK(\Bbbk)$-action. Since $\OG$ is commutative, the half-braiding on this object is just the symmetric braiding of $\Rep \sfG$.
\end{example}

\begin{example}
    The coordinate algebra $\OK$ is a commutative algebra in $\QCoh{\sfK/^{\ad} \sfK}$ with respect to the regular coaction and trivial action (which coincides with the adjoint action by commutativity). It follows that $\Ind^{\phi^*}(\OK)\cong \OG$ as commutative algebras in $\QCoh{\sfG/^{\ad} \sfG}$. 
\end{example}

The case of finite groups viewed as affine group schemes is recovered as a special case. 

\begin{example}
    Let $\iota\colon \sfK\subseteq \sfG$ be an inclusion of finite groups. Then $\OG=\Bbbk[\sfG]$ is the Hopf algebra of $\Bbbk$-valued functions on $\sfG$. The symmetric monoidal category $\lcomod{\OG}$ is equivalent to $\lMod{\Bbbk \sfG }=\Rep \sfG$, the category of $\Bbbk \sfG$-modules. The functor $\Res\colon \Rep \sfG\to \Rep \sfK$ is the usual restriction functor and the above corollary gives an equivalent description of \Cref{ex:groups2} via an isomorphic functor $\Ind^{\iota^*}\cong \CoInd_{\iota}$.
\end{example}

Dually, (op)lax monoidal functors from  $\QCoh{\sfG/^{\ad} \sfG }$ to  $\QCoh{\sfK/^{\ad} \sfK }$ can be induced from adjoints of the direct image functor of quasi-coherent sheaves. 
Indeed, for the morphism $\phi\colon \sfK\to \sfG$ of affine algebraic groups, restriction along $\varphi\colon \OG\to \OK$ gives the direct image functor 
$$\phi_*\colon \QCoh{\sfK}=\lMod{\OK} \longrightarrow \QCoh{\sfG}=\lMod{\OG}.$$
This functor is monoidal with respect to the tensor product induced by the multiplication map, i.e., the coproduct of the coordinate rings. That is, for $m\colon \sfG\times \sfG\to \sfG$, 
$$V\otimes W=m_*(V\boxtimes W).$$ 
The left adjoint of $\phi_*$ is the inverse image functor $\phi^*$ and the projection formula holds for the monoidal adjunction $\phi^* \dashv \phi_*$. Hence, by \Cref{cor:YDind},  we obtain an \emph{oplax} monoidal functor
$$\cZ(\phi^*)\colon \QCoh{\sfG/^{\ad} \sfG }\to \QCoh{\sfK/^{\ad} \sfK }.$$
Moreover, if $\varphi\colon \OG \to \OK$ makes $\OK$ a finitely generated projective $\OG$-module, then $\phi_*$ has a right adjoint $\phi^!$ which is given by coinduction along the morphism of algebras $\varphi$ and, by \Cref{cor:YDcoind}, induces a \emph{lax} monoidal functor 
$$\cZ(\phi^!)\colon \QCoh{\sfG/^{\ad} \sfG }\to \QCoh{\sfK/^{\ad} \sfK }.$$

\subsection{Quantum groups at roots of unity}\label{sec:Uepsilon}
Let $U_q(\fr{g})$ be the \emph{quantized enveloping algebra}, or, \emph{quantum group}, associated to a fixed Cartan datum of a semisimple Lie algebra $\fr{g}$, see \cite{DeCK} or \cite{CP}*{Section 9.1}. 
We can specialize the quantum group $U_q(\fr{g})$ to a root of unity $\epsilon\in \mC$ of \emph{odd} order $\ell$  (also required to be prime to $3$ if $\mathfrak{g}$ contains a factor of type $G_2$). That is, $U_\epsilon(\fr{g})$, the De Concini--Kac--Procesi form of the quantum group at a root of unity, is obtained from $U_q(\fr{g})$ by extension of scalars along the homomorphism $\mC(q)\to \mC, q\mapsto \epsilon$. We let $E_i, F_i, K_i$ denote the usual generators of these quantum groups.

Consider the Hopf subalgebra $Z_0$ defined in \cite{BG}*{Section III.6.2} which contains $E_i^\ell, F_i^\ell, K_i^\ell$ and is closed under the braid group action. 
It is well-known that $Z_0$ is a polynomial central Hopf subalgebra. Thus, $Z_0$ is the coordinate ring of an algebraic group, which is described in the following lemma summarizing results found, e.g., in \cite{BG}*{Section III.6}. 

\begin{proposition}
        Let $\sfG$ be a connected, simply connected, semisimple algebraic group over $\mC$ associated to the Cartan datum fixed above. Then the Hopf algebra $Z_0$ is isomorphic to the coordinate ring $\OH$ of the semidirect product group 
    $$\sfH:=(\sfN^-\times \sfN^+)\rtimes \sfT,$$
    where $\sfN^-$ and $\sfN^+$ are the nilpotent subgroups and $\sfT$ is the Cartan part of a Lie group $\sfG$.
    Moreover, the quantum group $U_\epsilon(\fr{g})$ is free of rank $(\rank \fr{g})^\ell$ as a left module over $\OH$.
\end{proposition}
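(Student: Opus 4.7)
The proposition collects foundational structural results of De Concini--Kac--Procesi on the specialization $U_\epsilon(\fr{g})$ at an odd root of unity. The plan is to sketch how these facts are established following \cite{DeCK} and \cite{BG}*{Section III.6}, since all the statements are summaries from this well-developed theory.

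For the Hopf algebra isomorphism $Z_0 \cong \OH$, the first step would be to construct the positive and negative root vectors $E_\alpha, F_\alpha$ for $\alpha \in \Phi^+$ via Lusztig's braid group action on $U_q(\fr{g})$, and to verify that their $\ell$-th powers, together with $K_i^{\pm \ell}$, all lie in and generate the Hopf subalgebra $Z_0$. The hypothesis that $\ell$ is odd (and coprime to $3$ when a $G_2$ factor is present) is essential at this stage: it guarantees centrality by collapsing the quantum commutation identities and quantum binomial formulas into classical relations when restricted to $\ell$-th powers. Next, I would identify the three subalgebras $Z_0^+ = \mC[E_\alpha^\ell : \alpha \in \Phi^+]$, $Z_0^- = \mC[F_\alpha^\ell : \alpha \in \Phi^+]$, and $Z_0^0 = \mC[K_i^{\pm \ell} : i]$ as polynomial, polynomial, and Laurent polynomial algebras of the expected dimensions, matching the algebras $\mathcal{O}_{\sfN^+}$, $\mathcal{O}_{\sfN^-}$, and $\mathcal{O}_\sfT$ respectively.

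The main obstacle is upgrading this identification from an isomorphism of algebras to an isomorphism of Hopf algebras compatible with the semidirect product structure on $\sfH$. The key tool here is Lusztig's quantum Frobenius homomorphism, which provides the conceptual bridge between the quantum $\ell$-th powers and the classical coordinate functions. Concretely, one would compute the coproducts $\Delta(E_\alpha^\ell)$, $\Delta(F_\alpha^\ell)$, and $\Delta(K_i^{\pm \ell})$ explicitly: the cross-terms of mixed PBW type that would appear in $\Delta(E_\alpha)^\ell$ vanish after taking $\ell$-th powers at an odd root of unity, so the resulting coproducts match the multiplication on $\sfN^\pm$ and $\sfT$. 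The semidirect product structure $(\sfN^- \times \sfN^+) \rtimes \sfT$ is then forced by the commutation of $K_i^\ell$ with $E_\alpha^\ell$ and $F_\alpha^\ell$, which recovers the adjoint action of $\sfT$ on $\sfN^\pm$.

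Finally, for the freeness claim, I would invoke the De Concini--Kac PBW theorem for $U_\epsilon(\fr{g})$: the ordered monomials $\prod_\alpha F_\alpha^{m_\alpha} \prod_i K_i^{c_i} \prod_\alpha E_\alpha^{n_\alpha}$ form a $\mC$-basis, and restricting the exponents to $\{0, 1, \ldots, \ell - 1\}$ produces a free basis of $U_\epsilon(\fr{g})$ as a left $Z_0$-module. Since $Z_0 \cong \OH$ by the first part, this exhibits $U_\epsilon(\fr{g})$ as a free $\OH$-module of the stated rank.
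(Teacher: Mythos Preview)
The paper does not supply its own proof of this proposition: it is stated as a summary of known results, with the preceding sentence explicitly pointing to \cite{BG}*{Section III.6} (and the ambient discussion also referencing \cite{DeCK} and \cite{CP}*{Section 9.1}). Your proposal correctly recognizes this and sketches the standard De Concini--Kac--Procesi argument behind those references, so you are aligned with the paper's treatment; there is nothing further to compare.
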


We can therefore apply the constructions of the paper to the extension of Hopf algebras $\iota\colon \OH\hookrightarrow U_\epsilon(\fr{g})$ and obtain the following results. 

\begin{corollary}\label{cor:Q-group-ind}
Induction and coinduction of module categories give the following braided lax, respectively, oplax monoidal functors.
\begin{enumerate} 
\item[(i)]
\Cref{cor:YDind} gives the  oplax monoidal functor
$$\cZ(\Ind_\iota)\colon \QCoh{\sfH/^{\ad} \sfH}\longrightarrow \lYD{U_\epsilon(\fr{g})}.
$$    
\item[(i)]
\Cref{cor:YDcoind} gives the lax monoidal functor
$$\cZ(\CoInd_\iota)\colon \QCoh{\sfH/^{\ad} \sfH}\longrightarrow \lYD{U_\epsilon(\fr{g})}.
$$
\end{enumerate}
Both functors restrict to the subcategories of finite-dimensional objects.
\end{corollary}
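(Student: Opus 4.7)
The plan is to apply \Cref{cor:YDind} and \Cref{cor:YDcoind} to the Hopf algebra inclusion $\iota\colon \OH \hookrightarrow U_\epsilon(\fr{g})$, together with an identification of the source $\lYD{\OH}$ with $\QCoh{\sfH/^{\ad}\sfH}$ exploiting that $\OH$ is commutative.

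For part (i), \Cref{cor:YDind} applies to any morphism of Hopf algebras and so yields directly the braided oplax monoidal functor $\cZ(\Ind_\iota)\colon \lYD{\OH} \to \lYD{U_\epsilon(\fr{g})}$. For part (ii), \Cref{cor:YDcoind} requires that $U_\epsilon(\fr{g})$ be finitely generated projective as a left $\OH$-module via $\iota$; this is precisely supplied by the preceding proposition, which asserts freeness of $U_\epsilon(\fr{g})$ over $\OH = Z_0$ of finite rank. The braided lax monoidal functor $\cZ(\CoInd_\iota)$ then follows from \Cref{cor:YDcoind}. The closing claim on restriction to finite-dimensional subcategories is immediate: by finite rank, both $U_\epsilon(\fr{g}) \otimes_{\OH} V$ and $\Hom_{\OH}(U_\epsilon(\fr{g}), V)$ are finite-dimensional over $\Bbbk$ whenever $V$ is.

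The main delicate point is the identification $\lYD{\OH} \simeq \QCoh{\sfH/^{\ad}\sfH}$ needed to match source categories. By the discussion in \Cref{sec:Zgroup} applied to the commutative Hopf algebra $\OH$, one has $\QCoh{\sfH/^{\ad}\sfH} \simeq \cZ(\Rep \sfH) = \cZ(\lcomod{\OH})$, and \Cref{lem:YD-comod} identifies $\cZ(\lcomod{\OH})^{\brop}$ with $\lYD{\OH}$ as braided monoidal categories, so composing yields $\QCoh{\sfH/^{\ad}\sfH}^{\brop} \simeq \lYD{\OH}$. Since a braided (op)lax monoidal functor remains braided (op)lax monoidal when the braidings on both source and target are simultaneously reversed, the functors produced above transfer to functors with source $\QCoh{\sfH/^{\ad}\sfH}$ as claimed. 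Keeping track of this braiding reversal, and verifying compatibility with the explicit formulas for the induced half-braidings in \Cref{cor:YDind} and \Cref{cor:YDcoind}, is the only step requiring genuine care; all remaining verifications are direct applications of the preceding corollaries.
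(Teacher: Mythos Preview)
Your proposal is correct and matches the paper's approach, which simply states the corollary as an immediate consequence of \Cref{cor:YDind} and \Cref{cor:YDcoind} together with the preceding proposition ensuring $U_\epsilon(\fr{g})$ is free of finite rank over $\OH$. Your extra care about the identification $\QCoh{\sfH/^{\ad}\sfH}\simeq \lYD{\OH}$ and the braiding reversal is reasonable, though the paper treats this identification as already established in \Cref{sec:Zgroup} and does not revisit it here.
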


\begin{example}\label{ex:small-quantum}
    The image $\cZ(\CoInd_\iota)(\one)=:A$ is a commutative algebra in the category of left YD modules over $U_\epsilon(\fr{g})$. As a $\mC$-vector space, $A$ is isomorphic to the dual $u_\epsilon(\fr{g})^*$ of the small quantum group $u_\epsilon(\fr{g})$, where the small quantum group is the quotient
    $$u_\epsilon(\fr{g})=U_\epsilon(\fr{g})/\langle E_{\beta_i}^\ell, F_{\beta_i}^\ell, K_{j}^\ell-1, (\forall i,j)\rangle,$$
   which is, equivalently, the quotient by the  ideal in $U_\epsilon(\fr{g})$ generated by the kernel of the counit of $Z_0$. Here, $\beta_1,\ldots, \beta_N$ is a list of positive roots, $K_1,\ldots, K_r$ are the free generators of the Cartan part.  We denote the quotient map by $\pi\colon U_\epsilon(\fr{g})\twoheadrightarrow u_\epsilon(\fr{g})$. Then, as $Z_0$ is central in $U_\epsilon(\fr{g})$, the map
   $$\xymatrix{
   u_\epsilon(\fr{g})^* \ar[rr]^-{f\mapsto f\pi}&&A=\CoInd^\iota(\one)}
   $$
    is well-defined and an isomorphism of $\mC$-vector spaces. The inverse is given by evaluating a $Z_0$-module map $U_\epsilon(\fr{g})\to \mC$ at a representative from the set 
       $$\{ E_{\beta_1}^{b_1}\ldots E_{\beta_N}^{b_N}K_{1}^{m_1}\ldots K_{r}^{m_r} F_{\beta_1}^{c_1}\ldots F_{\beta_N}^{c_N}~\mid~ 0\leq b_i,c_i, m_j\leq \ell-1, \forall i,j\}$$
       which gives a set of free generators for $U_\epsilon(\fr{g})$ as a free left $Z_0$-module and gives a $\mC$-basis for $u_\epsilon(\fr{g})$ after applying the quotient map $\pi$.  
  
    The product on $A$ is given by the dual of the coproduct, i.e., 
    $m_A=\Delta^*\colon u_\epsilon(\fr{g})^*\otimes u_\epsilon(\fr{g})^*\to u_\epsilon(\fr{g})^*$. The left $U_\epsilon(\fr{g})$-action is the restriction of the coregular action of $u_\epsilon(\fr{g})$ on its dual along the quotient map $\pi$. The $U_\epsilon(\fr{g})$-coaction is defined by evaluating the adjoint coaction of $U_\epsilon(\fr{g})$ against the pairing 
    $$\ev(\pi\otimes \id)\colon u_\epsilon(\fr{g})^*\otimes U_\epsilon(\fr{g})\to \mC,$$
    which is well-defined since the kernel of the counit of $Z_0$ is contained in the radical of the  evaluation pairing 
    $$\ev\colon \Hom_\mC(U_\epsilon(\fr{g}),\mC)\otimes U_\epsilon(\fr{g})\to \mC.$$   
\end{example}

\begin{example}
    Evaluating $\cZ(\Ind_\iota)$ at the coalgebra $\one$ displays $u_\epsilon(\fr{g})\cong U_\epsilon(\fr{g})\otimes_{Z_0}\Bbbk$ as a cocommutative coalgebra in  the category of $U_\epsilon(\fr{g})$-YD modules. The coproduct is the coproduct of $u_\epsilon(\fr{g})$ and the action is given by restricting the action along the quotient map $\pi$, while the coaction is the regular coaction
    $(\id\otimes \pi)\delta$ co-restricted to $U_\epsilon(\fr{g})\otimes u_\epsilon(\fr{g})$ which factors as a morphism $u_\epsilon(\fr{g})\to U_\epsilon(\fr{g})\otimes u_\epsilon(\fr{g})$.
\end{example}

\subsection{Applications to biproduct Hopf algebras}\label{sec:biproduct}

In this section, we apply the results from \Cref{sec:Kleisli} and \Cref{sec:EM} to inclusions of Hopf algebras. In particular, for inclusions of the form $K\subset H=B\rtimes K$, we  describe the commutative central algebra 
$$A_{H|K}:=\CoInd(\one)\cong B^*.$$
We show that its category of internal modules is equivalent to $\lMod{K}$ and identify the functor $\cZ(\CoInd)$ on Drinfeld centers. We start with the following lemma.

\begin{lemma}\label{lem:Hopf-ext-section}
    Let $\iota\colon K\rightarrow H$ be a morphism of Hopf algebras. The following are equivalent:
    \begin{enumerate}
        \item [(i)] For the functor $G=\Res_\iota\colon \lMod{H}\to \lMod{K}$ there exists a functor $I$ such that $GI\cong \id_{\lMod{K}}$. 
        \item [(ii)] There exists a left $H$-module $V$ such that $G(V)\cong K^\reg$ and $G$ induces an isomorphism
        $$\End_{\lMod{H}}(V)\cong \End_{\lMod{K}}(K^\reg).$$
        \item[(iii)] There is an algebra map $\pi\colon H\to K$ such that $\pi\iota=\id_K$.
    \end{enumerate}
\end{lemma}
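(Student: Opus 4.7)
The plan is to establish the equivalence via the cycle (iii)$\Rightarrow$(i)$\Rightarrow$(ii)$\Rightarrow$(iii), with the final implication being the only step that requires a genuine argument.

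For (iii)$\Rightarrow$(i), given an algebra splitting $\pi\colon H\to K$ of $\iota$, the natural candidate for $I\colon\lMod{K}\to\lMod{H}$ is pullback of scalars along $\pi$: send a $K$-module $V$ to the same underlying vector space endowed with the $H$-action $h\cdot v:=\pi(h)v$, and act on morphisms as the identity on the underlying linear maps. Since $\pi\iota=\id_K$, restricting this $H$-action along $\iota$ recovers the original $K$-action, so $GI=\id_{\lMod{K}}$ on the nose.

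For (i)$\Rightarrow$(ii), the obvious choice is $V:=I(K^\reg)$, for which $G(V)\cong K^\reg$ by the hypothesized natural isomorphism $GI\cong\id$. Because $G=\Res_\iota$ acts as the identity on underlying vector spaces and on morphisms, it is faithful, giving an injection $\End_H(V)\hookrightarrow\End_K(G(V))$. Surjectivity is then immediate: given $f\in\End_K(K^\reg)$, the endomorphism $I(f)\in\End_H(V)$ is sent by $G$ to $f$ modulo the natural isomorphism $GI\cong\id$.

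The core of the argument is (ii)$\Rightarrow$(iii). Using $G(V)\cong K^\reg$, identify $V$ with $K$ as a left $K$-module, so that the restricted $K$-action becomes left multiplication. The standard identification $\End_K(K^\reg)\cong K^{\oop}$ realizes every $K$-linear endomorphism as a right multiplication $R_k\colon x\mapsto xk$ for a unique $k\in K$. Hypothesis (ii) then forces each $R_k$ to lie in the image of $G$, i.e., to be $H$-linear, which spelled out reads $h\cdot(xk)=(h\cdot x)k$ for all $h\in H$ and $x,k\in K$. Specializing to $x=1$ yields $h\cdot k=(h\cdot 1)k$, motivating the definition $\pi(h):=h\cdot 1\in K$. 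Unitality of $\pi$ is immediate, and multiplicativity follows from $\pi(hh')=(hh')\cdot 1=h\cdot(h'\cdot 1)=h\cdot\pi(h')=\pi(h)\pi(h')$, where the final equality applies the formula $h\cdot k=\pi(h)k$ with $k=\pi(h')$. Finally, $\pi(\iota(k))=\iota(k)\cdot 1=k\cdot 1=k$, so $\pi\iota=\id_K$. The only nontrivial point in the entire proof is the recognition that the endomorphism condition in (ii) is precisely what is needed to make every right multiplication on $K$ automatically $H$-linear — once this is in hand, the algebra splitting $\pi$ reads off by evaluating at $1$.
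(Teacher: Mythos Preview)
Your proof is correct and follows essentially the same route as the paper: the cycle (iii)$\Rightarrow$(i)$\Rightarrow$(ii)$\Rightarrow$(iii), with the key step (ii)$\Rightarrow$(iii) handled by recognizing that the endomorphism isomorphism forces every right multiplication $R_k$ on $K$ to be $H$-linear, whence $\pi(h):=h\cdot 1$ is an algebra retraction. The paper's argument is identical in substance, differing only in phrasing (it frames the $H$-linearity of right multiplications as ``$K$ is an $H$-$K$-bimodule'').
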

\begin{proof}
First, we note that (i) implies (ii) with $V=I(R^\reg)$. The condition on endomorphism follows as  $G$ is faithful 
and $GI\cong \id_{\lMod{K}}$ implies that $G$ is full.

    Assuming (ii), we can choose a $H$-module $V$ such that $G(V)\cong K^\reg$ as a left $K$-module. This implies that any endomorphism $\alpha\colon V\to V$ of $H$-modules gives an endomorphism $G(\alpha)$ of $K$-modules on $G(V)\cong K^\reg$ which is thus given by right multiplication by an element $k_\alpha$ in $K$. Identifying $V$ and $K$, we have  
    $$\alpha(k)=kk_\alpha \qquad\forall k\in K.$$
    But $\alpha$ was a morphism of left $H$-modules, so 
    $$(h\cdot k)k_\alpha = h\cdot (k k_\alpha),\qquad\forall k\in K,h\in H .$$
    As $G$ is full, 
    for any $l\in K$, the $K$-module endomorphism $(-)\cdot l$ of $K^\reg$ lifts to an endomorphism of left $H$-modules on $V$ which restricts to $(-)\cdot l$ via $G$. By the above, we have
    $$(h\cdot k)l=h\cdot (kl), \qquad \forall h\in H,\, k,l\in K.$$
    This shows that $K$ is a $H$-$K$-bimodule. Now define
    $$\pi\colon H\to K, \quad h\mapsto h\cdot 1_K.$$
    It follows that 
    $$h\cdot k=h\cdot (1_K k)=(h\cdot 1_K)k=\pi(h)k.$$
    In particular, $\pi$ is a morphism of algebras since 
    $$\pi(hg)=hg\cdot 1_K=h\cdot (g\cdot 1_K)=h\cdot \pi(g)=\pi(h)\pi(g),\qquad \forall h,g\in H.$$
    Also, $\pi\iota(k)=k\cdot 1_K=k$ for all $k\in K$, as desired.
    Thus, (iii) holds.

    Conversely, given (iii) consider the functor 
    $$I=\Res_\pi\colon \lMod{K}\to \lMod{H}.$$
    Then, since $\pi\iota=\id_K$, $GI\cong \id_{\lMod{K}}$, showing that  (i) holds.
\end{proof}

\begin{definition}[The commutative central algebra $A_{H|K}$]
Assume $H$ is finitely generated projective as a left $K$-module. Then $G=\Res_\iota$ has a right adjoint $R=\CoInd_\iota$ such that the projection formula holds for $G\dashv R$ by \Cref{lem:proj-iso-R-Hopf}. We denote  
$$A_{H|K}:=R(\one)=\Hom_{\lmod{H}}(K,\one),$$
which is a commutative algebra in $\cZ(\lmod{H})$ with half-braiding given by
$$\rswap_{V}\colon A_{H|K}\otimes V\to V\otimes A_{H|K}, \quad f\otimes v\mapsto \alpha^{-1}_{V,\one}(h\mapsto S(h_{(1)})h_{(3)}\cdot v \otimes f(h_{(2)})).$$
\end{definition}

By \Cref{lemma:iso_of_monoidal_monads}, there is an isomorphism of monoidal monads between $T=RG$ and $T_{A_{H|K}}$ . We may interpret the category $\Kleisli{T_{A_{H|K}}}$ as the category of free right modules over the algebra $A_{H|K}$ in $\lMod{K}$ it is equivalent to the full subcategory on modules in the image of the functor 
$$\Free \colon \cC \to \rModint{\lMod{K}}{A_{H|K}}, \quad X\mapsto X\otimes A_{H|K}.$$
The following result can be seen as a generalization of the fundamental theorem of Hopf modules, cf.~\Cref{ex:Kleisli1}, which appears as the special case of the Hopf algebra extension $\Bbbk\hookrightarrow H$, with right inverse $\varepsilon$.

\begin{corollary}\label{cor:Kleisli-Hopf}
Assume that $H$ is finitely generated projective as a left $K$-module and that the equivalent conditions from \Cref{lem:Hopf-ext-section} hold. Then the following hold:
\begin{enumerate}
    \item[(1)] 
$\lMod{K}$ is equivalent, as a monoidal category, to $\Kleisli{T_{A_{H|K}}}$, which, in turn is equivalent to $\rModint{\lMod{H}}{A_{H|K}}$.
\item[(2)] The equivalences from Part (1) induce an equivalence of braided monoidal categories between $\cZ(\lMod{K})\simeq \lYD{K}$ and $\rModloc{\lYD{H}}{A_{H|K}}$.
\item[(3)] The functor $\cZ(\CoInd_{K}^H)$ corresponds to the forgetful functor $$\Forgloc\colon \rModloc{\lYD{H}}{A_{H|K}}\to \lYD{H}$$ under the equivalence from Part (2).
\end{enumerate}
\end{corollary}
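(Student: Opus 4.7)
The plan is to verify the hypotheses of the characterization theorem \Cref{thm:char-thm} and the crude monoidal monadicity theorem \Cref{thm:crude-mon-monadicity} for the monoidal adjunction $G = \Res_\iota \dashv R = \CoInd_\iota$, and then deduce Parts (2) and (3) directly from \Cref{cor:monadicity-center} and \Cref{cor:Z(R)-locmod}, respectively.

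For Part (1), the projection formula for $R$ holds by \Cref{lem:proj-iso-R-Hopf}, since $H$ is finitely generated projective as a left $K$-module. Moreover, $G = \Res_\iota$ is essentially surjective: by \Cref{lem:Hopf-ext-section}(i) there exists a functor $I = \Res_\pi$ with $G I \cong \id_{\lMod{K}}$, so every $K$-module lies in the essential image of $G$. Hence \Cref{thm:char-thm} applied to $G \dashv R$ yields the first equivalence $\lMod{K} \simeq \Kleisli{T_{A_{H|K}}}$ of monoidal categories, where $A_{H|K} = R(\one)$.

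To obtain the second equivalence $\lMod{K} \simeq \rModint{\lMod{H}}{A_{H|K}}$, I would next apply \Cref{thm:crude-mon-monadicity}. The categories $\lMod{K}$ and $\lMod{H}$ satisfy \Cref{ass:C-good} because they are cocomplete and $\otimes_\Bbbk$ preserves colimits in each variable; the projection formula was already checked. The remaining conditions are that $R$ preserves reflexive coequalizers and reflects isomorphisms. The first is clear because $H$ being projective as a left $K$-module makes $R = \Hom_K(H, -)$ exact, so it preserves all finite colimits. For the reflection of isomorphisms -- the main technical point -- I would use the algebra section $\pi\colon H \to K$ with $\pi\iota = \id_K$ from \Cref{lem:Hopf-ext-section}(iii) to build a natural $K$-linear splitting
\[
\sigma_V\colon V \to GR(V), \qquad v \mapsto \bigl(h \mapsto \pi(h)\, v\bigr),
\]
of the counit $\epsilon_V$ of $G \dashv R$, realizing $V$ as a retract of $GR(V)$. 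A short diagram chase combining $\epsilon_V \sigma_V = \id_V$ with the naturality of $\sigma$ and $\epsilon$ shows that for any morphism $f\colon V \to W$ with $R(f)$ invertible, the composite $\epsilon_V \circ G(R(f)^{-1}) \circ \sigma_W$ is a two-sided inverse of $f$. Thus \Cref{thm:crude-mon-monadicity} applies and delivers the second monoidal equivalence, completing Part (1).

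Parts (2) and (3) then follow routinely from results already proved. Part (2) is immediate from \Cref{cor:monadicity-center}, which under the hypotheses of \Cref{thm:crude-mon-monadicity} just verified produces a braided monoidal equivalence $\cZ(\lMod{K}) \simeq \rModloc{\cZ(\lMod{H})}{A_{H|K}}$; identifying $\cZ(\lMod{K}) \simeq \lYD{K}$ and $\cZ(\lMod{H}) \simeq \lYD{H}$ in the standard way yields the stated equivalence. For Part (3), the Eilenberg--Moore equivalence underlying Part (1) identifies the right adjoint $R = \CoInd_\iota$ with the forgetful functor $\Forg\colon \rModint{\lMod{H}}{A_{H|K}} \to \lMod{H}$, so \Cref{cor:Z(R)-locmod} identifies $\cZ(\CoInd_\iota)$ with $\Forgloc\colon \rModloc{\lYD{H}}{A_{H|K}} \to \lYD{H}$, as claimed. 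The only genuinely nontrivial step in this plan is the splitting-and-retract argument for reflection of isomorphisms; everything else is a careful bookkeeping of hypotheses of the abstract theorems in the present Hopf-algebraic setting.
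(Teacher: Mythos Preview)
Your proof is correct, and Parts~(2) and~(3) match the paper's reasoning essentially verbatim (the paper cites \Cref{cor:Z(R)-locmod}; you additionally separate out \Cref{cor:monadicity-center} for Part~(2), which is the same content). The genuine difference is in how you obtain the second equivalence of Part~(1). You invoke \Cref{thm:crude-mon-monadicity} directly, which forces you to check that $R$ reflects isomorphisms; your retract argument via the natural splitting $\sigma_V(v) = (h \mapsto \pi(h)v)$ of the counit does this cleanly (and in fact shows $R$ is faithful exact, hence reflects isomorphisms). The paper instead bypasses the reflection-of-isomorphisms check: having already identified $\lMod{K}$ with the Kleisli category via \Cref{thm:char-thm}, it observes that since $\CoInd$ preserves colimits ($H$ finitely generated projective), \Cref{lem:tildeR-preserves-coref} forces the comparison functor $\tilde R\colon \lMod{K}\to \EiMo{T_{A_{H|K}}}$ to preserve reflexive coequalizers; as $\lMod{K}$ is abelian and every Eilenberg--Moore object is a reflexive coequalizer of free ones (\Cref{lem:EM-cocompletion}), the fully faithful embedding $\Kleisli{T_{A_{H|K}}}\hookrightarrow \EiMo{T_{A_{H|K}}}$ is essentially surjective. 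Your route is the more standard monadicity package and makes explicit use of the section $\pi$, while the paper's route is slightly more economical in that it recycles the Kleisli equivalence already in hand and needs no new verification beyond preservation of coequalizers.
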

\begin{proof}
Since $H$ is finitely generated projective over $K$, the projection formula holds for $\Res\dashv\CoInd$ by \Cref{lem:proj-iso-R-Hopf}.
    The equivalence with $\Kleisli{T_{A_{K|H}}}$ follows from \Cref{thm:char-thm} since, by \Cref{lem:Hopf-ext-section}, $G=\Res_\iota$ is essentially surjective.

    Now, the functor $\CoInd\colon \lMod{K}\to \lMod{H}$ preserves colimits, and hence reflexive coequalizers, since $H$ is projective.  Thus, by \Cref{lem:tildeR-preserves-coref}, the functor $\lMod{K}\simeq\Kleisli{T_{A_{K|H}}}\to \rModint{\lMod{H}}{A_{H|K}}$ preserves reflexive coequalizers. Thus, since $\lMod{K}$ is abelian, it is closed under coequalizers and hence, by \Cref{lem:EM-cocompletion}, the Kleisli and Eilenberg--Moore categories are equivalent. The latter is equivalent to $\rModint{\lMod{H}}{A_{H|K}}$. This proves Part (1).
    
    Parts (2) and (3) now follow from \Cref{cor:Z(R)-locmod}.
\end{proof}

A large class of examples of Hopf algebra extensions satisfying the conditions of \Cref{cor:Kleisli-Hopf} can be obtained from the following lemma. To state the lemma, following \cite{Rad}*{Theorem~3}, consider $\Bbbk$-Hopf algebras $H$ and $K$ with Hopf algebra morphisms $\xymatrix{K\ar@/^/[r]^{\iota}&\ar@/^/[l]^{\pi}H}$ satisfying $\pi\iota=\id_K$. This situation is equivalent to the existence of a Hopf algebra $B$ in $\lYD{K}$ such that $H\cong B\rtimes K$ is the \emph{Radford--Majid biproduct} (or, \emph{bosonization}) of $K$ and $B$. The Hopf algebra structure on $B\rtimes K$ is detailed in \cite{RadBook}*{Theorem~11.6.7}. We denote by $\un{\Delta}(b)=b_{\un{(1)}}\otimes b_{\un{(2)}}$ the coproduct of $B$ as a Hopf algebra in $\lYD{K}$, and by $\delta_B(b)=b^{(-1)}\otimes b^{(0)}\in K\otimes B$ the left $K$-coaction on $B$.

\begin{lemma}\label{lem:Hopf-projection}
For $H=B\rtimes K$ as above, assume that $B$ is finite-dimensional. In this case, $A_{H|K}=\Hom_K(H,\Bbbk)$ can be identified with $B^*=\Hom_\Bbbk(B,\Bbbk)$ as a $\Bbbk$-vector space. The $H=B\rtimes K$-action is given by 
\begin{equation}\label{eq:B-star-action}
    ((b\otimes k)\cdot f)(c)=f(S^{-1}(k)\cdot cb),
\end{equation}
for $b,c\in B$, $k\in K$ and $f\in B^*$. The coaction is given by
\begin{equation}\label{eq:B-star-coaction}
   \delta(f)=\alpha^{-1}_{H,\Bbbk}\left(b\mapsto S(b_{(1)})b_{(3)}\otimes (b_{(2)}\cdot g)(1)\right)\in H\otimes B^*,
\end{equation}
and the product is given by 
\begin{equation}\label{eq:B-star-product}
(f\cdot g)(b)=f(S^{-1}({b_{\un{(2)}}}^{(-1)})\cdot b_{\un{(1)}})\otimes g({b_{\un{(2)}}}^{(0)}),
\end{equation}
for $f,g\in B^*$. 
\end{lemma}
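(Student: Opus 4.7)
The plan is to identify $A_{H|K}$ with $B^*$ as a $\Bbbk$-vector space first, and then transport each piece of structure (action, coaction, multiplication) through this identification using the general formulas already established in the paper for the coinduction functor $\CoInd_\iota$.

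For the vector space identification, we first exhibit $H = B \rtimes K$ as a free left $K$-module (via $\iota$) through the explicit isomorphism
\[
\phi \colon K \otimes_\Bbbk B \longrightarrow H, \qquad k \otimes b \longmapsto (1 \otimes k)(b \otimes 1) = k_{(1)} \cdot b \otimes k_{(2)},
\]
which is left $K$-linear when $K$ acts on the source by left multiplication on the first tensor factor. Its inverse is $\psi(b \otimes k) = k_{(2)} \otimes S^{-1}(k_{(1)}) \cdot b$, as verified by coassociativity combined with the identities $k_{(2)} S^{-1}(k_{(1)}) = \varepsilon(k) 1 = S^{-1}(k_{(2)}) k_{(1)}$ in $K$. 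Applying $\Hom_K(-,\Bbbk)$ then identifies $A_{H|K}$ with $B^*$: an element $f \in B^*$ corresponds to $\tilde f \in A_{H|K}$ with $\tilde f(b \otimes k) = f(S^{-1}(k) \cdot b)$, and conversely $\tilde f$ restricts to $\tilde f|_{B \otimes 1} \in B^*$.

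The module structure \eqref{eq:B-star-action} then follows at once from \eqref{eq:H-action-Coind}, since for $c \in B$,
\[
((b \otimes k) \cdot \tilde f)(c \otimes 1) = \tilde f((c \otimes 1)(b \otimes k)) = \tilde f(cb \otimes k) = f(S^{-1}(k) \cdot (cb)),
\]
using the biproduct multiplication $(c \otimes 1)(b \otimes k) = cb \otimes k$. For the product \eqref{eq:B-star-product}, we apply the lax monoidal structure \eqref{eq:lax-Hopf} to $\tilde f \otimes \tilde g$ and evaluate on $b \otimes 1 \in B \subset H$; the Radford--Majid coproduct
\[
\Delta_H(b \otimes 1) = (b_{\un{(1)}} \otimes {b_{\un{(2)}}}^{(-1)}) \otimes ({b_{\un{(2)}}}^{(0)} \otimes 1)
\]
from \cite{RadBook}*{Theorem~11.6.7} reduces this directly to $f(S^{-1}({b_{\un{(2)}}}^{(-1)}) \cdot b_{\un{(1)}}) \, g({b_{\un{(2)}}}^{(0)})$ as desired. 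The coaction \eqref{eq:B-star-coaction} is obtained by specializing \eqref{eq:coaction-YDcoind} to $V = \one$, noting that $(h_{(2)} \cdot \tilde f)(1) = \tilde f(h_{(2)})$ via \eqref{eq:H-action-Coind}, and then transporting from $H \otimes A_{H|K}$ to $H \otimes B^*$ via the identification above.

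The main technical obstacle is the coaction computation, since \eqref{eq:coaction-YDcoind} is stated through the inverse of $\alpha_{H,\Bbbk}$ from \Cref{lem:auxilary} and the resulting element of $H \otimes A_{H|K}$ must be rewritten in $H \otimes B^*$. This reduces to a direct expansion once the vector space identification $\phi$ is fixed, so the essential content of the proof is the first step together with the two algebraic calculations above.
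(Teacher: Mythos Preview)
Your proposal is correct and follows essentially the same route as the paper: both set up the identification $A_{H|K}\cong B^*$ via $\tilde f(b\otimes k)=f(S^{-1}(k)\cdot b)$ (using the relation $bk=k_{(2)}(S^{-1}(k_{(1)})\cdot b)$ in $B\rtimes K$), and then transport the action, product, and coaction through this isomorphism using \eqref{eq:H-action-Coind}, \eqref{eq:lax-Hopf}, and \eqref{eq:coaction-YDcoind} respectively, together with the Radford--Majid coproduct formula. The only cosmetic difference is that you package the freeness of $H$ over $K$ via the explicit map $\phi\colon K\otimes B\to H$, whereas the paper simply states the two mutually inverse linear maps in \eqref{eq:iso-Hom-HomK}; the computations for each structure are the same.
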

\begin{proof}
The isomorphism of $\Bbbk$-vector spaces is given by 
\begin{equation}\label{eq:iso-Hom-HomK}
    \xymatrix{
A_{H|K}=\Hom_K(B\rtimes K,\Bbbk) \ar@/^/[rrrr]^{g\,\mapsto \left(b\,\mapsto g(b\otimes 1)\right)}&&&& \ar@/^/[llll]^{\left(b\otimes k\mapsto f(S^{-1}(k)\cdot b)\right) \mapsfrom  f}\Hom_\Bbbk(B,\Bbbk)=B^*
}.
\end{equation}
Checking these are indeed mutually inverse linear maps uses the identity
$$bk=k_{(2)}(S^{-1}(k_{(1)})\cdot b),$$
for $k\in K, b\in B$ derived from the defining relation 
$$kb=(k_{(1)}\cdot b)k_{(2)}$$
of the product on $B\rtimes K$ via the antipode axioms. 

By construction, $B\rtimes K$ is isomorphic to $K\otimes B$ as a $\Bbbk$-vector space. Hence, it is free as a left $K$-module, with the action given by the product in $B\rtimes K$. Thus, the projection formula holds for the adjunction $\Res_\iota \dashv \CoInd_\iota$ for the inclusion $K\hookrightarrow B\rtimes K$.

To prove Equations \eqref{eq:B-star-action}--\eqref{eq:B-star-product} we transfer the respective structures on $A_{B\rtimes K|K}$ via the above isomorphism.
Since 
\begin{align*}
((b\otimes k)\cdot f)(c)&= g((c\otimes 1)(b\otimes k))=g(cb\otimes k)\\
    &=g(k_{(2)}((S^{-1}(k_{(1)})\cdot cb)\otimes 1))\\
    &=\varepsilon_K(k_{(2)})g((S^{-1}(k_{(1)})\cdot cb)\otimes 1)\\
    &=g((S^{-1}(k)\cdot cb)\otimes 1)=f(S^{-1}(k)\cdot cb),
\end{align*}
using the action from \Cref{eq:H-action-Coind}. Here, the map $f\colon B\to \Bbbk$ corresponding to $g$ under the isomorphism in \eqref{eq:iso-Hom-HomK}. 
This proves \Cref{eq:B-star-action} describing the $H$-action on $A_{H|K}$.

The coaction on $A_{B\rtimes K|K}$ is given by \Cref{eq:coaction-YDcoind} and specifies to 
\[
\delta(f)(b)=\delta(g)(b\otimes 1)=S(b_{(1)})b_{(3)}\otimes (b_{(2)}\cdot g)(1),
\]
for $g\in A_{B\rtimes K|K}$ and $b\in B$. This proves \Cref{eq:B-star-coaction}.

The product of $B^*$ is computed using the lax monoidal structure from \Cref{eq:lax-Hopf}. Using the coproduct formula for $\Delta(b)$, which is given by
$$\Delta(b\otimes k)=(b\otimes k)_{(1)}\otimes (b\otimes k)_{(2)}=\left(b_{\un{(1)}}\otimes {b_{\un{(2)}}}^{(-1)}h_{(1)}\right)\otimes\left({b_{\un{(2)}}}^{(0)}\otimes h_{(2)}\right),$$
this directly implies \Cref{eq:B-star-product}.
\end{proof}

\Cref{cor:Kleisli-Hopf} applies to the inclusions $\iota\colon K\hookrightarrow B\rtimes K$, for $B$ finite-dimensional, of \Cref{lem:Hopf-projection} because by \cite{Rad}, there is a Hopf algebra map $\pi\colon B\rtimes K\to K, b\otimes k\mapsto \varepsilon_B(b)k$ which is a right inverse for the map $\iota$.

\smallskip

A large class of examples fitting the setup of \Cref{lem:Hopf-projection} is obtained from finite-dimensional \emph{Nichols algebras} $B=\cB(V)$ realized as Hopf algebras in $\lyd{K}$, see, e.g., \cite{AS}. Here, $V$ is an object in $\lyd{K}$ and $\cB(V)$ is a graded quotient of the tensor algebra of $V$ by relations in degree at least $2$, where the space $V$ is concentrated in degree $1$.  The dual $\cB(V)^*\cong \cB(V^*)$ as a Hopf algebra in $\lyd{K}$. Thus, $A_{\cB(V)\rtimes K|B}\cong \cB(V^*)$ and \Cref{cor:Kleisli-Hopf} gives equivalences of monoidal categories
$$\lMod{K}\cong \rModint{\lMod{\cB(V)\rtimes K}}{\cB(V^*)}.
$$
This statement can be viewed as a fundamental theorem of Hopf modules internal to $\lMod{K}$ for a Hopf algebra $\cB(V)$ in $\lmod{K}$. If $K$ is quasitriangular and $B$ the image of a Hopf algebra in the braided monoidal category $\lmod{K}$ under the functor $\lmod{K}\rightarrow \lyd{K}$, this recovers \cite{Lyu}*{Theorem~1.1} and \cite{Tak}*{3.4~Theorem}.

\begin{example}
    As a special case, the Taft Hopf algebra $T=T_n(q)$ from \Cref{ex:Taft} and \Cref{ex:Taft2} is isomorphic to $B\rtimes K$, where $K=\Bbbk \mathsf{C}_n$ and $B=\Bbbk[x]/(x^n)$ which is the Nichols algebra of the $1$-dimensional YD module over $K$, spanned by $x$, with coaction $\delta(x)=g\otimes x$ and action $g\cdot x=qx$. It follows that $A_{T|K}\cong B^*$. We fix the basis $\{ f_i\}_{0\leq i\leq n-1}$ for $B^*$, where 
$$f_i\colon B\to \Bbbk, \quad x^j \mapsto \delta_{i,j}.$$
As a left $T$-module, $B^*$ is generated by $f_{n-1}$ which has coaction given by $\delta(f_{n-1})=q^{-1}g\otimes f_{n-1}$. The action of $B$  on $A_{T|K}$ is determined by 
$$(x^k\cdot f_i)=f_{i-k}, \qquad \text{for $k\leq i$}.$$
The left $K$-action is given by $g\cdot f_i=q^{-i}f_i$, since
$$(g\cdot f_i)(x^j)=f_i(g^{-1}\cdot x^j)=f_i(q^{-j}x^j)=q^{-j}\delta_{i,j},$$
using \Cref{eq:B-star-action}. Thus, by \Cref{eq:B-star-product}, the product on $A_{T|H}$ is computed as follows:
\begin{align*}
    (f_i\cdot f_j)(x^k)=&\sum_{\nu=0}^i \binom{k}{\nu}_q f_i(g^{-\nu}\cdot x^{k-\nu})f_j(x^\nu)\\
    =&\sum_{\nu=0}^i \binom{k}{\nu}_q q^{-\nu(k-\nu)}f_i(x^{k-\nu})f_j(x^\nu)\\
    =&\sum_{\nu=0}^i \binom{k}{\nu}_q q^{-\nu(k-\nu)}\delta_{i,k-\nu}\delta_{j,\nu}\\
    =&\binom{k}{k-i}_q q^{-(k-i)i}\delta_{i+j,k}\\
    =&\binom{k}{i}_q q^{-(k-i)i}f_{i+j}(x^k),
\end{align*}
where $(\cdot)_q$ denotes $q$-binomials. This shows that 
$$f_{i}\cdot f_j=\binom{i+j}{i}_q q^{-ij} f_{i+j}.$$
Thus, $A_{T|H}\cong \Bbbk[y]/\langle y^n\rangle$, via $f_i\mapsto \tfrac{q^{(i-1)i/2}}{[i]_q!}y^i$, where $[\cdot]_q!$ is the $q$-factorial. In particular, $1$ corresponds to $f_0$ and $y$ corresponds to $f_1$. It follows from \Cref{cor:Kleisli-Hopf} that, for this algebra, 
$$\rModint{\lMod{T_n(q)}}{A_{T|H}}\simeq \lMod{\Bbbk  \mathsf{C}_n}$$
is an equivalence of monoidal categories.
\end{example}

\begin{example}
    Consider the nilpotent part $u_\epsilon(\fr{n}^-)$ of the small quantum group $u_\epsilon(\fr{g})$ from \Cref{ex:small-quantum}, for $\epsilon$ an odd root of unity of order $\ell$. The Cartan part of $u_\epsilon(\fr{g})$ is the group algebra $K=\Bbbk \mZ_\ell^{\times r}$, for $r$ the rank of $\fr{g}$. Then $u_\epsilon(\fr{n}^-)$
is a Nichols algebra in the category $\lyd{K}$,  \cite{AS}*{Theorem~4.2}. The above \Cref{lem:Hopf-projection} and  \Cref{cor:Kleisli-Hopf} apply to these examples and $u_\epsilon(\fr{n}^-)^*$ can be identified with $u_\epsilon(\fr{n}^+)$ which obtains the structure of a commutative algebra in $\cZ(\lMod{u_\epsilon(\fr{b}^-)})$, for the negative Borel part $u_\epsilon(\fr{b}^-)\cong u_\epsilon(\fr{n}^{-})\rtimes K$. Further, by \Cref{cor:Kleisli-Hopf}, (2), we have an equivalence of braided monoidal categories
    $$\rModloc{\cZ(\lMod{u_\epsilon(\fr{b}^-)})}{u_\epsilon(\fr{n}^+)}\simeq \cZ(\lMod{K}).$$
\end{example}

\appendix

\section{}
\label{the-appendix}

The role of this appendix is two-fold. We first provide all required definitions about bicategories of modules and bimodules over monoidal categories. We extract these definitions from general concepts of bicategories in order to be able to apply strictification results for pseudofunctors to strictify (bi)module categories. Second, in \Cref{subsection:bicat_of_adjunctions}, we define a bicategory of adjunctions internal to a strict bicategory using the calculus of mates from \cite{KellyStreet}. The main use of this bicategory is to obtain a well-behaved notion of equivalence of monoidal adjunctions, used in \Cref{thm:char-thm} and \Cref{thm:crude-mon-monadicity}  of the main text.

\subsection{Modules over a bicategory}\label{subsection:modules_over_a_bicategory}

In this subsection, we briefly introduce the language of $2$-category theory.
For this, we sketch the relevant terminology and refer to \cite{JY21} as a very accessible reference for further details.
We also introduce modules over a bicategory and recall some strictification results.

A \emph{bicategory} $\cC$  consists of objects ($0$-cells), hom-categories $\Hom_{\cC}(A,B)$ for each pair of objects $A,B \in \cC$, a composition functor $\circ: \Hom_{\cC}(B,C) \times \Hom_{\cC}(A,B) \rightarrow \Hom_{\cC}(A,C)$ for each triple of objects $A,B,C \in \cC$, and identities $\id_A \in \Hom_{\cC}(A,A)$ for each $A \in \cC$ (\cite{JY21}*{Definition 2.1.3}).
Composition is associative up to a coherent isomorphism.
Identities behave like units with respect to composition up to a coherent isomorphism.
The objects in the hom-categories are called \emph{$1$-cells} or \emph{$1$-morphisms}, the morphisms \emph{$2$-cells} or \emph{$2$-morphisms}. The composition of $2$-morphisms within a single hom-category is called \emph{vertical composition}. The application of the composition functor to a pair of $2$-morphisms is called \emph{horizontal composition} and denoted by $\ast$.

A bicategory is called \emph{strict} if the coherent isomorphisms are given by identities. In \cite{JY21}, strict bicategories are called \emph{$2$-categories}, however, we do not adopt this term.

\begin{example}
The ``category of categories'' $\Cat$ is an example of a strict bicategory \cite{JY21}*{Example 2.3.14}.   
\end{example}

\begin{example}\label{example:monoidal_cats_as_bicategories}
Every monoidal category $\cC$ can be regarded as a bicategory with a single object \cite{JY21}*{Example 2.1.19}.
\end{example}

The appropriate notion of a functor between bicategories is a \emph{pseudofunctor} \cite{JY21}*{Definition 4.1.2}. It consists of a function between objects and functors between hom-categories. Applying a pseudofunctor needs to respect composition and identities up to coherent isomorphisms.
A pseudofunctor is called \emph{strict} if the coherent isomorphisms are given by identities.

For bicategories $\cC$ and $\cD$, there is a bicategory\footnote{To avoid set-theoretic issues, $\cC$ needs to have a set of objects.} of pseudofunctors \cite{JY21}*{Corollary 4.4.13} which we denote by
\[
\Hom( \cC, \cD ).
\]
It is a strict bicategory whenever $\cD$ is strict \cite{JY21}*{Corollary 4.4.13}.
The objects of $\Hom( \cC, \cD )$ are given by pseudofunctors $\cC \rightarrow \cD$.
Its morphisms are given by \emph{strong transformations} \cite{JY21}*{Definition 4.2.1}.
Its $2$-cells are given by \emph{modifications} \cite{JY21}*{Definition 4.4.1}.

A morphism $A \xrightarrow{\alpha} B$ in a bicategory is called an \emph{equivalence} if there exists a morphism $B \xrightarrow{\beta} A$ such that the composites $\alpha \circ \beta$ and $\beta \circ \alpha$ are isomorphic to the identities in their respective hom-categories \cite{JY21}*{Definition 5.1.18}. It can be shown that such an $\alpha$ is always part of the data defining an \emph{adjoint equivalence} \cite{JY21}*{Definition 6.2.1, Proposition 6.2.4}, a notion which transfers the corresponding well-known notion within $\Cat$ to an arbitrary bicategory.

\begin{example}\label{example:strictification_of_pseudofunctor_to_cat}
Let $\cC$ be a strict bicategory.
Then any pseudofunctor $F: \cC \rightarrow \Cat$, regarded as an object in $\Hom( \cC, \Cat )$, is equivalent to a strict pseudofunctor \cite{JY21}*{Exercise 8.5.6}.
\end{example}

We will need the following notion from $3$-category theory: a pseudofunctor $F: \cC \rightarrow \cD$ between bicategories is called a \emph{biequivalence} if there exists a pseudofunctor $G: \cD \rightarrow \cC$ such that the composites $F \circ G$ and $G \circ F$ are equivalent to the identities in their respective bicategories of pseudofunctors \cite{JY21}*{Definition 6.2.8}.

\begin{example}\label{example:strictification_of_bicategory}
For every bicategory there exists a biequivalence to a strict bicategory \cite{JY21}*{Theorem 8.4.1 (Coherence)}. This biequivalence is called a \emph{strictification}.
\end{example}

\begin{example}\label{example:biequivalence_of_functor_cats}
Let $F: \cC \rightarrow \cC'$ be a biequivalence of bicategories.
It is not hard to see that composition with $F$ induces a biequivalence between the bicategories of pseudofunctors
\[
\Hom( \cC', \cD ) \xrightarrow{\sim} \Hom( \cC, \cD ).
\]
\end{example}

\begin{definition}\label{definition:cat_of_left_modules}
Let $\cC$ be a bicategory.
We call
\[
\Mod{\cC} := \Hom( \cC, \Cat )
\]
the \emph{bicategory of $\cC$-modules}.
An object in $\Mod{\cC}$ is called a \emph{left $\cC$-module category}, or simply a \emph{$\cC$-module}.
\end{definition}

We note that $\Mod{\cC}$ is a strict bicategory since $\Cat$ is strict.

\begin{remark}\label{remark:strictification_bicategory}
Let $\cC \rightarrow \cC'$ be a strictification of a bicategory, see \Cref{example:strictification_of_bicategory}.
It induces biequivalences
\[
 \Mod{\cC} \simeq \Mod{\cC'} \simeq \cC'\text{-}\mathbf{Mod}'
\]
by \Cref{example:biequivalence_of_functor_cats} and \Cref{example:strictification_of_pseudofunctor_to_cat}.
Here, $\cC'\text{-}\mathbf{Mod}'$ denotes the full sub-bicategory of $\Mod{\cC'}$ given by strict pseudofunctors.
\end{remark}

\subsection{Modules over a monoidal category}\label{subsection:modules_over_monoidal_cat}
In this subsection, we describe modules over a monoidal category more explicitly. See also \cite{EGNO}*{Chapter 7} and \cite{capucci2022actegories} and for references.

Let $\cC$ be a monoidal category. 
We can regard $\cC$ as a bicategory with a single object, see \Cref{example:monoidal_cats_as_bicategories}.
Let $\cM: \cC \rightarrow \Cat$ be a pseudofunctor.
If we unpack the defining data of $\cM$, we obtain an alternative point of view on left modules:

\begin{definition}[A left module over a monoidal category]\label{definition:left_module}
A left $\cC$-module consists of:
\begin{enumerate}
    \item An \emph{underlying category}, which we also denote by $\cM$. It arises as the image of the unique $0$-cell of $\cC$ under the pseudofunctor $\cC \rightarrow \Cat$.
    \item \emph{Left action functors}
    \[
    (A \triangleright -): \cM \rightarrow \cM
    \]
    which arise as the image of the $1$-cells $A \in \cC$ (i.e., as the images of the objects in $\cC$ regarded as an ordinary monoidal category).
    \item An isomorphism
    \[
    \multiplicator_{A,B,M}: (A \otimes B) \triangleright M \xrightarrow{\sim} A \triangleright (B \triangleright M)
    \]
    natural in $A,B \in \cC$, $M \in \cM$, called the \emph{multiplicator}, which satisfies a coherence condition.
    It arises from the coherent isomorphism of a pseudofunctor which encodes compatibility with composition.
    Concretely, the coherence condition is given by the commutativity of the following diagram for $A,B,C \in \cC$, $M \in \cM$, where $\associator_{A,B,C}$ denotes the associator of the monoidal category $\cC$:
    \begin{center}
       \begin{tikzpicture}[baseline=($(11) + 0.5*(d)$)]
      \coordinate (r) at (4,0);
      \coordinate (d) at (0,-2.5);
      \node (11) {$((A \otimes B) \otimes C) \triangleright M$};
      \node (21) at ($(11) +(d) - (r)$) {$(A \otimes (B \otimes C)) \triangleright M$};
      \node (22) at ($(11) + (d) + (r)$) {$(A \otimes B) \triangleright (C \triangleright M)$};
      \node (31) at ($(21) + (d)$) {$A \triangleright ((B \otimes C) \triangleright M)$};
      \node (32) at ($(22) + (d)$) {$A \triangleright (B \triangleright (C \triangleright M))$};
      \draw[->] (11) to node[mylabel]{$\associator_{A,B,C} \triangleright M$} (21);
      \draw[->] (11) to node[mylabel]{$\multiplicator_{A \otimes B,C,M}$} (22);
      \draw[->] (21) to node[mylabel]{$\multiplicator_{A,B\otimes C,M}$} (31);
      \draw[->] (22) to node[mylabel]{$\multiplicator_{A,B,C \triangleright M}$} (32);
      \draw[->] (31) to node[below]{$A \triangleright \multiplicator_{B,C,M}$} (32);
    \end{tikzpicture} 
    \end{center}
    \item An isomorphism
    \[
    \unitor_{M}: \one_{\cC} \triangleright M \xrightarrow{\sim} M
    \]
    natural in $M \in \cM$ which satisfies a coherence condition, called the \emph{unitor}.
    It arises from the coherent isomorphism of a pseudofunctor which encodes compatibility with identities. Concretely, the coherence condition is given by the commutativity of the following diagram for $A \in \cC$, $M \in \cM$, where $\unitor_A$ denotes the unitor of the monoidal category $\cC$:
    \begin{center}
       \begin{tikzpicture}[baseline=($(11) + 0.5*(d)$)]
      \coordinate (r) at (4,0);
      \coordinate (d) at (0,-2);
      \node (11) {$(A \otimes \one_{\cC}) \triangleright M$};
      \node (12) at ($(11) + 2*(r)$) {$A \triangleright (\one_{\cC} \triangleright M)$};
      \node (2) at ($(11) + (d) + (r)$) {$A \triangleright M$};
      \draw[->] (11) to node[above]{$\multiplicator_{A,\one_{\cC},M}$} (12);
      \draw[->] (11) to node[mylabel]{$\unitor_{A} \triangleright M$} (2);
      \draw[->] (12) to node[mylabel]{$A \triangleright \unitor_M$} (2);
    \end{tikzpicture} 
    \end{center}
\end{enumerate}
Strictness of $\cM$ regarded as a pseudofunctor corresponds to the isomorphisms in (3) and (4) of the data above being equal to identities.
\end{definition}

If we unpack the definition of $1$-morphisms and $2$-morphisms in $\Mod{\cC}$ in the same way as we did in \Cref{definition:left_module} with the objects in $\Mod{\cC}$, we arrive at the following definitions:

\begin{definition}\label{definition:module_functor}
Let $\cM$ and $\cN$ be left $\cC$-module categories.
A \textbf{$\cC$-module functor} between $\cM$ and $\cN$ consists of the following data:
\begin{enumerate}
    \item A functor $F: \cM \rightarrow \cN$.
    \item An isomorphism
    \[
    \lineator_{A,M}: F(A \triangleright M) \xrightarrow{\sim} A \triangleright F(M)
    \]
    natural in $A \in \cC$, $M \in \cM$, called the \emph{lineator}.
\end{enumerate}
These data satisfy the following coherence laws:
\begin{enumerate}
    \item The following diagram commutes for $A,B \in \cC$, $M \in \cM$:
    \begin{center}
       \begin{tikzpicture}
      \coordinate (r) at (4,0);
      \coordinate (d) at (0,-2.5);
      \node (11) {$F((A\otimes B)\triangleright M)$};
      \node (21) at ($(11) +(d) - (r)$) {$F(A \triangleright(B\triangleright M))$};
      \node (22) at ($(11) + (d) + (r)$) {$(A \otimes B) \triangleright F(M)$};
      \node (31) at ($(21) + (d)$) {$A \triangleright F(B\triangleright M)$};
      \node (32) at ($(22) + (d)$) {$A\triangleright (B\triangleright F(M))$};
      \draw[->] (11) to node[mylabel]{$F(\multiplicator_{A,B,M})$} (21);
      \draw[->] (11) to node[mylabel]{$\lineator_{A\otimes B,M}$} (22);
      \draw[->] (21) to node[mylabel]{$\lineator_{A,B\triangleright M}$} (31);
      \draw[->] (22) to node[mylabel]{$\multiplicator_{A,B,F(M)}$} (32);
      \draw[->] (31) to node[below]{$A\triangleright \lineator_{B,M}$} (32);
    \end{tikzpicture} 
    \end{center}
    \item The following diagram commutes for $M \in \cM$:
    \begin{center}
       \begin{tikzpicture}
      \coordinate (r) at (4,0);
      \coordinate (d) at (0,-2);
      \node (11) {$F(\one_{\cC} \triangleright M)$};
      \node (12) at ($(11) + 2*(r)$) {$\one_{\cC} \triangleright F(M)$};
      \node (2) at ($(11) + (d) + (r)$) {$F(M)$};
      \draw[->] (11) to node[above]{$\lineator_{\one_{\cC},M}$} (12);
      \draw[->] (11) to node[mylabel]{$F(\unitor_{M})$} (2);
      \draw[->] (12) to node[mylabel]{$\unitor_{F(M)}$} (2);
    \end{tikzpicture} 
    \end{center}
\end{enumerate}

\end{definition}

\begin{remark}\label{remark:composite_of_left_module_functors}
Given composable $\cC$-module functors $\cM \xrightarrow{F} \cN \xrightarrow{G} \cL$, their composite has $G \circ F$ as an underlying functor and the lineator of the composite is given by
\[
GF( A \triangleright M ) \xrightarrow{G( \lineator^F_{A,M} )} G( A \triangleright FM ) \xrightarrow{\lineator^G_{A,FM}} A \triangleright GFM
\]
for $A \in \cC$, $M \in \cM$.
\end{remark}

\begin{definition}\label{definition:left_C_module_transformation}
Let $\cM$ and $\cN$ be left $\cC$-module categories.
Let $F,G: \cM \rightarrow \cN$ be $\cC$-module functors.
A \textbf{$\cC$-module transformation} between $F$ and $G$ is a natural transformation $\nu: F \rightarrow G$ such that the following diagram commutes for $A \in \cC$, $M \in \cM$:
\begin{equation}\label{equation:coherence_left_module_transformation}
\begin{tikzpicture}[ baseline=($(11) + 0.5*(d)$)]
      \coordinate (r) at (7,0);
      \coordinate (d) at (0,-2);
      \node (11) {$F(A\triangleright M)$};
      \node (12) at ($(11) + (r)$) {$A\triangleright F(M)$};
      \node (21) at ($(11) + (d)$) {$G(A\triangleright M)$};
      \node (22) at ($(11) + (d) + (r)$) {$A\triangleright G(M)$};
      \draw[->] (11) to node[above]{$\lineator_{A,M}$} (12);
      \draw[->] (21) to node[below]{$\lineator_{A,M}$} (22);
      \draw[->] (11) to node[left]{$\nu_{A\triangleright M}$} (21);
      \draw[->] (12) to node[right]{$A\triangleright \nu_{F(M)}$} (22);
\end{tikzpicture}
\end{equation}
\end{definition}

\begin{remark}
Right $\cC$-modules can be treated via $\rev$-duality, i.e., we define the bicategory of right $\cC$-modules as the bicategory of left $\cC^{\rev}$-modules and denote it by
\[
\rMod{\cC}.
\]
We usually denote the right action functors of $\cM \in \rMod{\cC}$ by
\[
(- \triangleleft A): \cM \rightarrow \cM
\]
for $A \in \cC$. 
\end{remark}

\begin{example}\label{example:C_as_a_left_module}
Any monoidal category $\cC$ can be regarded as a left $\cC$-module with the left action given by
\[
A \triangleright B := A \otimes B
\]
for $A, B \in \cC$. The multiplicator is given by the associator. The unitor for the module structure is given by the unitor of the monoidal structure.
Analogously, we can regard $\cC$ as a right $\cC$-module.
\end{example}

\begin{example}\label{example:D_as_a_C_module}
Let $\cC, \cD$ be strict monoidal categories.
We can turn $\cD$ into a left $\cC$-module by means of a strong monoidal functor $G\colon\cC\to\cD$:
\begin{itemize}
    \item The left action is given by $A \triangleright X := GA \otimes X$,
    \item the multiplicator is given by $G(A \otimes B) \otimes X \xrightarrow{\oplax^G_{A,B} \otimes X} GA \otimes GB \otimes X$.
    \item the unitor is given by $G(\one_{\cC}) \otimes X \xrightarrow{\oplax^G_0 \otimes X} X$
\end{itemize}
for $A,B \in \cC$, $X \in \cD$.
Analogously, we can turn $\cD$ into a right $\cC$-module.
\end{example}

\subsection{Bimodules over monoidal categories}\label{subsection:bimodules_over_a_bicategory}
In this subsection, we describe bimodules over a monoidal category more explicitly. Again, see \cite{EGNO}*{Chapter 7} and \cite{capucci2022actegories} and for references.

Let $\cC$ and $\cD$ be monoidal categories.
Their product $\cC \times \cD$ is the monoidal category whose underlying category is given by the product of the underlying category of $\cC$ with the underlying category of $\cD$, and whose tensor product $\otimes_{\cC \times \cD}$ is given by
\[
(A,X) \otimes_{\cC \times \cD} (B,Y) := (A \otimes_{\cC} B, X \otimes_{\cD} Y).
\]
Then the bicategory of $(\cC, \cD)$-bimodules is given by
\[
\Mod{(\cC \times \cD^{\rev})}.
\]
Although this is a convenient way to define bimodules, we spell out explicitly an alternative point of view on bimodules.

\begin{definition}\label{def:bimodule-cat}
A \emph{$(\cC, \cD)$-bimodule category}, or simply a $(\cC, \cD)$-bimodule, consists of the following data:
\begin{enumerate}
    \item A category $\cM$.
    \item Structure that turns $\cM$ into a left $\cC$-module with left action functor denoted by
    \[
    \triangleright: \cC \times \cM \rightarrow \cM.
    \]
    \item Structure that turns $\cM$ into a right $\cD$-module with right action functor denoted by
    \[
    \triangleleft: \cM \times \cD \rightarrow \cM.
    \]
    \item An isomorphism
    \[
    \bimodulator_{A,M,X}:  (A \triangleright M) \triangleleft X\xrightarrow{\sim} A \triangleright (M \triangleleft X)
    \]
    natural in $A \in \cC$, $M \in \cM$, $X \in \cD$, called the \emph{bimodulator}, such that the following diagrams commute:
    \begin{equation}
    \begin{tikzpicture}[baseline=(21)]
      \coordinate (r) at (4,0);
      \coordinate (d) at (0,-2.5);
      \node (11) {$((A \otimes B) \triangleright M) \triangleleft X$};
      \node (21) at ($(11) +(d) - (r)$) {$(A \triangleright (B \triangleright M)) \triangleleft X$};
      \node (22) at ($(11) + (d) + (r)$) {$(A \otimes B) \triangleright (M \triangleleft X)$};
      \node (31) at ($(21) + (d)$) {$A \triangleright ((B \triangleright M) \triangleleft X)$};
      \node (32) at ($(22) + (d)$) {$A \triangleright ( B \triangleright (M \triangleleft X))$};
      \draw[->] (11) to node[mylabel]{$\multiplicator_{A,B,M} \triangleleft X$}(21);
      \draw[->] (11) to node[mylabel]{$\bimodulator_{A \otimes B, M, X}$}(22);
      \draw[->] (21) to node[mylabel]{$\bimodulator_{A, B\triangleright M,X}$}(31);
      \draw[->] (22) to node[mylabel]{$\multiplicator_{A,B,M \triangleleft X}$}(32);
      \draw[->] (31) to node[below]{$A \triangleright \bimodulator_{B,M,X}$}(32);
    \end{tikzpicture} 
    \end{equation}

    \begin{equation}
    \begin{tikzpicture}[baseline=(21)]
      \coordinate (r) at (4,0);
      \coordinate (d) at (0,-2.5);
      \node (11) {$A \triangleright (M \triangleleft (X \otimes Y) )$};
      \node (21) at ($(11) +(d) - (r)$) {$A \triangleright( (M \triangleleft X) \triangleleft Y)$};
      \node (22) at ($(11) + (d) + (r)$) {$(A \triangleright M) \triangleleft (X \otimes Y)$};
      \node (31) at ($(21) + (d)$) {$(A \triangleright (M \triangleleft X)) \triangleleft Y$};
      \node (32) at ($(22) + (d)$) {$((A \triangleright M) \triangleleft X) \triangleleft Y$};
      \draw[->] (11) to node[mylabel]{$A \triangleright \multiplicator_{M,X,Y}$}(21);
      \draw[->] (22) to node[mylabel]{$\bimodulator_{A,M,X \otimes Y}$}(11);
      \draw[->] (31) to node[mylabel]{$\bimodulator_{A, M \triangleleft X,Y}$}(21);
      \draw[->] (22) to node[mylabel]{$\multiplicator_{A \triangleright M,X,Y}$}(32);
      \draw[->] (32) to node[below]{$\bimodulator_{A,M,X} \triangleleft Y$}(31);
    \end{tikzpicture} 
    \end{equation}

    \begin{equation}
      \begin{tikzpicture}[baseline=($(A) + 0.5*(d)$)]
        \coordinate (r) at (4,0);
        \coordinate (d) at (0,-2);
        \node (A) {$(\one_{\cC} \triangleright M) \triangleleft X$};
        \node (B) at ($(A) + 2*(r)$) {$\one_{\cC} \triangleright (M \triangleleft X)$};
        \node (C) at ($(A) + (d) + (r)$) {$M \triangleleft X$};
        \draw[->] (A) to node[above]{$\bimodulator_{\one_{\cC},M,X}$}(B);
        \draw[->] (B) to node[mylabel]{$\unitor_{M \triangleleft X}$}(C);
        \draw[->] (A) to node[mylabel]{$\unitor_M \triangleleft X$}(C);
      \end{tikzpicture} 
    \end{equation}

    \begin{equation}
      \begin{tikzpicture}[baseline=($(A) + 0.5*(d)$)]
        \coordinate (r) at (4,0);
        \coordinate (d) at (0,-2);
        \node (A) {$(A \triangleright M) \triangleleft \one_{\cD}$};
        \node (B) at ($(A) + 2*(r)$) {$A \triangleright (M \triangleleft \one_{\cD})$};
        \node (C) at ($(A) + (d) + (r)$) {$A \triangleright M$};
        \draw[->] (A) to node[above]{$\bimodulator_{A,M,\one_{\cD}}$}(B);
        \draw[->] (A) to node[mylabel]{$\unitor_{A \triangleright M}$}(C);
        \draw[->] (B) to node[mylabel]{$A \triangleright \unitor_M$}(C);
      \end{tikzpicture} 
    \end{equation}
\end{enumerate}
where $A \in \cC$, $M \in \cM$, $X,Y \in \cD$.
We simply call a $(\cC, \cC)$-bimodule a \emph{$\cC$-bimodule}.
\end{definition}

\begin{definition}\label{definition:functor_of_bimodules}
A \emph{(strong) functor} between $(\cC, \cD)$-bimodule categories $\cM$ and $\cN$ consists of the following data:
\begin{enumerate}
    \item A functor $F: \cM \rightarrow \cN$ between the underlying categories.
    \item An isomorphism
    \[
    \lineator_{A,M}: F(A \triangleright M) \xrightarrow{\sim} A \triangleright F(M)
    \]
    natural in $A \in \cC$, $M \in \cM$ that turns $F$ into a $1$-morphism of left $\cC$-modules.
    \item An isomorphism
    \[
    \lineator_{M,X}: F(M \triangleleft X) \xrightarrow{\sim} F(M) \triangleleft X
    \]
    natural in $X \in \cD$, $M \in \cM$ that turns $F$ into a $1$-morphism of right $\cD$-modules.
    \item The following diagram commutes for $A \in \cC$, $M \in \cM$, $X \in \cD$:
    \begin{equation}
  \begin{tikzpicture}[baseline=(21)]
    \coordinate (r) at (6,0);
    \coordinate (d) at (0,-2);
    
    \node (11) {$A \triangleright (FM \triangleleft X)$};
    \node (12) at ($(11)+(r)$) {$(A \triangleright FM) \triangleleft X$};

    \node (21) at ($(11) + (d) - 0.25*(r)$){$A \triangleright F(M \triangleleft X)$};
    \node (22) at ($(12)+ (d) + 0.25*(r)$) {$F(A \triangleright M) \triangleleft X$};

    \node (31) at ($(11)+2*(d)$) {$F( A \triangleright (M \triangleleft X))$};
    \node (32) at ($(12)+2*(d)$) {$F((A \triangleright M) \triangleleft X)$};

    \draw[<-,thick] (11) to node[left]{$A \triangleright \lineator_{M,X}$}(21);
    \draw[<-,thick] (21) to node[left]{$\lineator_{A,M\triangleleft X}$}(31);
    \draw[<-,thick] (31) to node[below,yshift=-0.2em]{$F( \bimodulator_{A,M,X} )$}(32);

    \draw[<-,thick] (11) to node[above]{$\bimodulator_{A,FM,X}$}(12);
    \draw[<-,thick] (12) to node[right]{$\lineator_{A,M} \triangleleft X$}(22);
    \draw[<-,thick] (22) to node[right,xshift=0.2em]{$\lineator_{A \triangleright M,X}$}(32);
  \end{tikzpicture}
\end{equation}

\end{enumerate}
\end{definition}

\begin{definition}\label{definition:transformation_of_bimodule_functors}
A \emph{transformation} between functors $F,G$ between $(\cC, \cD)$-bimodule categories is a natural transformation between the underlying functors $F \rightarrow G$ that is both a $2$-morphism in $\Mod{\cC}$ and $\rMod{\cD}$.
\end{definition}

\begin{definition}[\cite{capucci2022actegories}*{Definition 4.3.5}]
We denote the bicategory of $(\cC, \cD)$-bimodule categories with functors \Cref{definition:functor_of_bimodules} as $1$-morphisms and transformations \Cref{definition:transformation_of_bimodule_functors} as $2$-morphisms by $\Modlr{\cC}{\cD}$.
If we deal with $(\cC, \cC)$-bimodules, we also denote the corresponding bicategory by $\BiMod{\cC}$.
\end{definition}

\begin{remark}
We note that $\Modlr{\cC}{\cD}$ is a strict bicategory.
\end{remark}

The proof of the equivalence of both points of view on bimodules can be found in \cite{capucci2022actegories}*{Theorem 4.3.6}, where the authors consider lax $1$-morphisms, but their arguments also respect strong $1$-morphisms.

\begin{theorem}
There is a biequivalence
\[
\Mod{(\cC \times \cD^{\rev})} \xrightarrow{\sim} \Modlr{\cC}{\cD}.
\]
\end{theorem}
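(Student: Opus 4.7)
The plan is to construct a pseudofunctor in each direction and then show that their composites are equivalent to the respective identities, using the explicit unfolding of $\cC \times \cD^{\rev}$-actions. Since $\cC \times \cD^{\rev}$ is a monoidal category with a single object when regarded as a bicategory, a pseudofunctor $F\colon \cC \times \cD^{\rev} \to \Cat$ yields a category $\cM := F(\ast)$, together with functors $F(A,X)\colon \cM \to \cM$ for each $1$-cell $(A,X)$. I would define the left action $A \triangleright - := F(A, \one_\cD)$ and the right action $- \triangleleft X := F(\one_\cC, X)$, and read off the multiplicators $\multiplicator$ and unitors $\unitor$ for both actions from the laxity and unity constraints of $F$ applied to composable pairs of the form $(A,\one_\cD),(B,\one_\cD)$ and $(\one_\cC, X),(\one_\cC, Y)$, respectively. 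The bimodulator $\bimodulator_{A,M,X}$ arises from the canonical identification
\[
(A,\one_\cD) \otimes_{\cC \times \cD^{\rev}} (\one_\cC, X) \;=\; (A, X) \;=\; (\one_\cC, X) \otimes_{\cC \times \cD^{\rev}} (A, \one_\cD)
\]
by composing the corresponding laxity constraints of $F$, giving a canonical natural isomorphism $(A \triangleright M) \triangleleft X \cong F(A,X)(M) \cong A \triangleright (M \triangleleft X)$. The five pentagon/triangle axioms in Definition~\ref{def:bimodule-cat} then follow by evaluating the pseudofunctor associativity coherence on triples of $1$-cells drawn from $\{(A,\one_\cD)\} \cup \{(\one_\cC, X)\}$ and the unit coherence on the units in either factor.

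Conversely, from a $(\cC,\cD)$-bimodule as in Definition~\ref{def:bimodule-cat} I would set $F(A,X) := (A \triangleright -) \circ (- \triangleleft X)$, with the laxity constraint on composable pairs $(A,X),(B,Y)$ built from the two multiplicators together with $\bimodulator$ to rearrange factors into the required order, and the unity constraint given by the two unitors. The axioms of Definition~\ref{def:bimodule-cat} are exactly what is needed for the pentagon and triangle identities of a pseudofunctor on the product monoidal category; checking this is a straightforward but lengthy diagram-chase. One then observes that the two constructions are inverse up to canonical isomorphism: starting from a pseudofunctor and unfolding then reassembling recovers $F$ up to a coherent isomorphism supplied by the laxity constraints on $(A,\one_\cD)\otimes(\one_\cC, X)$, and the round-trip in the other direction is the identity by construction.

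For $1$-morphisms and $2$-morphisms, a strong transformation between pseudofunctors $F,G\colon \cC \times \cD^{\rev} \to \Cat$ consists of a component functor $\cM \to \cN$ between the values on the unique object together with invertible natural transformations indexed by $1$-cells $(A,X)$ of $\cC \times \cD^{\rev}$, satisfying naturality and coherence. Restricting these invertible transformations to $1$-cells of the form $(A,\one_\cD)$ and $(\one_\cC, X)$ yields the two lineators in Definition~\ref{definition:functor_of_bimodules}, and the coherence of the strong transformation on the composite $(A, X) = (A,\one_\cD) \otimes (\one_\cC, X)$ is precisely the hexagon involving $\bimodulator$ displayed there. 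Modifications restrict analogously to the transformations of Definition~\ref{definition:transformation_of_bimodule_functors}, using the same observation that the two module transformation axioms correspond to the modification axiom evaluated on $1$-cells in either factor.

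The main obstacle is bookkeeping rather than a genuine difficulty: one must check that the pseudofunctor coherence axioms on composable triples of arbitrary $1$-cells $(A,X)$ in the product bicategory are \emph{implied} by their restrictions to the two distinguished families $\{(A,\one_\cD)\}$ and $\{(\one_\cC, X)\}$ together with the bimodulator-based interchange. This reduction is clean because every $(A,X)$ factors uniquely (up to the unitors of $\cC \times \cD^{\rev}$) as $(A,\one_\cD)\otimes(\one_\cC, X)$, so the general coherence pentagon decomposes into subpentagons all of which appear as axioms in Definition~\ref{def:bimodule-cat}. As the proof is otherwise mechanical and this reduction is carried out in detail in \cite{capucci2022actegories}*{Theorem~4.3.6} for the lax case with arguments that transfer verbatim to the strong setting, I would simply indicate the above correspondence and invoke that reference for the verification of coherences.
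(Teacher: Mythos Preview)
Your proposal is correct and aligns with the paper's approach: the paper simply cites \cite{capucci2022actegories}*{Theorem 4.3.6}, noting that the arguments there for lax $1$-morphisms carry over to the strong setting, which is exactly the reference you invoke after your explicit sketch of the correspondence. Your additional unpacking of the pseudofunctor data into the bimodule data is accurate and more detailed than what the paper provides, but the underlying strategy and the key citation are identical.
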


\begin{example}
Any monoidal category $\cC$ can be regarded as a $\cC$-bimodule: the left and right actions are given as described in \Cref{example:C_as_a_left_module}, the bimodulator is given by the associator.
\end{example}

\begin{example}\label{example:D_as_a_C_bimodule}
Let $\cC, \cD$ be strict monoidal categories.
We can turn $\cD$ into a $\cC$-bimodule by means of a strong monoidal functor $G\colon\cC\to\cD$: the left and right module structures are described in \Cref{example:D_as_a_C_module}, the bimodulator is given by the associator.
We denote this bimodule by $\cD^G$.
\end{example}

\begin{example}\label{example:G_as_bimodule_functor}
A $G\colon\cC\to\cD$ strong monoidal functor between strict monoidal categories gives rise to a $\cC$-bimodule functor
\[
G\colon \cC\to \cD^G
\]
with lineators given by
\[
G( A \triangleright B ) = G( A \otimes B ) \xrightarrow{\oplax^G_{A,B}} GA \otimes GB = A \triangleright GB
\]
and
\[
G( B \triangleleft A ) = G( B \otimes A ) \xrightarrow{\oplax^G_{B,A}} GB \otimes GA= GB \triangleleft A
\]
for $A, B \in \cC$.
\end{example}

\subsection{A bicategory of adjunctions}\label{subsection:bicat_of_adjunctions}

In this subsection we discuss adjunctions internal to a strict bicategory $\cC$.
The goal is to describe an appropriate notion of an equivalence between two adjunctions.
For this, we first define a bicategory in which adjunctions reside as objects.
This bicategory is based on the theory of mates, see \cite{KellyStreet}*{Section 2}.
Recall that we denote horizontal composition in $\cC$ by $\ast$.

\begin{definition}\label{definition:internal_adjunction}
An \emph{(internal) adjunction} in $\cC$ consists of:
\begin{enumerate}
    \item Objects $C, D \in \cC$ and $1$-morphisms $C \xrightarrow{G} D \xrightarrow{R} C$,
    \item $2$-morphisms $\id_C \xrightarrow{\unit} RG$ and $GR \xrightarrow{\counit} \id_D$.
\end{enumerate}
These data satisfy the \emph{zigzag identities}:
\[
(\id_R \ast \counit) \circ (\unit \ast \id_R) = \id_R \qquad \qquad (\counit \ast \id_G) \circ (\id_G \ast \unit) = \id_G
\]
The $1$-morphism $G$ is called the \emph{left adjoint}, $R$ the \emph{right adjoint}.
\end{definition}

\Cref{definition:internal_adjunction} generalizes the familiar notion of an adjunction in the sense that an adjunction internal to $\Cat$ precisely yields an adjunction between functors.
In our paper, we will deal with adjunctions internal to the bicategory of bimodules (see \Cref{def:adj-C-bimodules}) and with adjunctions internal to the bicategory of monoidal categories and lax monoidal functors (see \Cref{subsubsection:monoidal_adjunction}). Moreover, we need the following statement:

\begin{proposition}[\cite{JY21}*{Proposition 6.1.7}]\label{proposition:internal_adj_respected_by_pseudofunctor}
A pseudofunctor maps internal adjunctions to internal adjunctions.
\end{proposition}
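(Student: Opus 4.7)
The plan is to construct an internal adjunction in the target bicategory from the given data in the source, and then verify the zigzag identities using the coherence of the pseudofunctor. Suppose $F \colon \cC \to \cD$ is a pseudofunctor and $(C, D, G, R, \unit, \counit)$ is an internal adjunction in $\cC$ in the sense of \Cref{definition:internal_adjunction}. Let $\phi_{S,T} \colon F(S) \circ F(T) \xrightarrow{\sim} F(S \circ T)$ denote the structure $2$-isomorphism witnessing compatibility of $F$ with composition, and let $\phi_A \colon \id_{F(A)} \xrightarrow{\sim} F(\id_A)$ denote the structure $2$-isomorphism witnessing compatibility with identities, for any object $A$ and composable $1$-morphisms $S, T$.

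First, I would take $F(G) \colon F(C) \to F(D)$ as the left adjoint and $F(R) \colon F(D) \to F(C)$ as the right adjoint, and define the new unit and counit by
\[
\unit' := \Bigl( \id_{F(C)} \xrightarrow{\phi_C} F(\id_C) \xrightarrow{F(\unit)} F(RG) \xrightarrow{\phi_{R,G}^{-1}} F(R) \circ F(G) \Bigr)
\]
and
\[
\counit' := \Bigl( F(G) \circ F(R) \xrightarrow{\phi_{G,R}} F(GR) \xrightarrow{F(\counit)} F(\id_D) \xrightarrow{\phi_D^{-1}} \id_{F(D)} \Bigr).
\]
The remaining task is to check the two zigzag identities, namely
\[
(\id_{F(R)} \ast \counit') \circ (\unit' \ast \id_{F(R)}) = \id_{F(R)}
\qquad \text{and} \qquad
(\counit' \ast \id_{F(G)}) \circ (\id_{F(G)} \ast \unit') = \id_{F(G)}.
\]

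To do this, I would expand the left-hand side of each identity using the definitions of $\unit'$ and $\counit'$, obtaining a pasting diagram of $2$-cells built from $F(\unit)$, $F(\counit)$, and the coherence isomorphisms $\phi$. The strategy is to shuffle the $\phi$'s inward using naturality and the pseudofunctor coherence axioms — the hexagon for associativity and the two triangles for unitality — until the composite is recognized as $F$ applied to the original zigzag composite. Since $F$ is a functor on hom-categories, $F$ applied to $\id_R$ or $\id_G$ is the identity of $F(R)$ or $F(G)$ respectively, which yields the desired equalities.

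The main obstacle is bookkeeping rather than conceptual content: horizontal composition in a bicategory is only associative up to coherent isomorphism, so the rearrangements between $F(R) \circ F(G) \circ F(R)$, $F(RG) \circ F(R)$, $F(R) \circ F(GR)$, and $F(RGR)$ introduce multiple occurrences of $\phi$ that must be shown to cancel in the correct pattern. A clean way to organize the verification is to invoke the strictification results (see \Cref{example:strictification_of_bicategory} and the discussion around pseudofunctors therein) to reduce to the case where both bicategories are strict and $F$ is a strict pseudofunctor; then the new unit and counit are literally $F(\unit)$ and $F(\counit)$, and the zigzag identities for $F(G) \dashv F(R)$ are obtained simply by applying $F$ to the original zigzag identities. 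Since internal adjunctions are preserved by biequivalences, this strictified argument transports back to the general case.
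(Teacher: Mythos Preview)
The paper does not prove this proposition at all: it simply cites \cite{JY21}*{Proposition 6.1.7} and moves on. Your direct argument --- defining $\unit'$ and $\counit'$ via the laxity constraints $\phi$ and then verifying the zigzag identities by naturality of $\phi$ together with the associativity hexagon and unit triangles for $F$ --- is correct and is exactly the standard proof one finds in the cited reference.

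One small caveat: your closing shortcut via strictification is a bit looser than the rest. \Cref{example:strictification_of_pseudofunctor_to_cat} in the paper only strictifies pseudofunctors with target $\Cat$, and in general replacing an arbitrary pseudofunctor by a strict one between strictified bicategories is more delicate than replacing a single bicategory by a strict one. Since your direct verification already goes through, you do not need this reduction; if you want to keep it, you should either cite a precise strictification-of-pseudofunctors statement or simply drop that paragraph.
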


In the remainder of this subsection, we fix two adjunctions $(C,D,G,R,\unit, \counit)$ and $(C',D',G',R',\unit', \counit')$ and simply refer to them by $G \dashv R$ and $G' \dashv R'$, respectively.

\begin{lemma}\label{lemma:mates}
Let $G \dashv R$ and $G' \dashv R'$ be adjunctions.
Suppose given
\begin{enumerate}
    \item $1$-morphisms $U: C \rightarrow C'$ and $V: D \rightarrow D'$,
    \item $2$-morphisms $G'U \xrightarrow{\alpha} VG$ and $UR \xrightarrow{\beta} R'V$.
\end{enumerate}
We depict these data as follows:
\begin{center}
    \begin{tikzpicture}[baseline=($(C) + 0.5*(d)$)]
          \coordinate (r) at (4,0);
          \coordinate (d) at (0,-1.5);
          \node (C) {$C$};
          \node (D) at ($(C) + (r)$) {$D$};
          \node (C2) at ($(D) + (r)$) {$C$};
          \node (Cp) at ($(C) + (d)$){$C'$};
          \node (Dp) at ($(Cp) + (r)$) {$D'$};
          \node (Cp2) at ($(Dp) + (r)$) {$C'$};
          
          \draw[->] (C) to node[above]{$G$} (D);
          \draw[->] (D) to node[above]{$R$} (C2);

          \draw[->] (Cp) to node[below]{$G'$} (Dp);
          \draw[->] (Dp) to node[below]{$R'$} (Cp2);

          \draw[->] (C) to node[left]{$U$} (Cp);
          \draw[->] (D) to node[left]{$V$} (Dp);
          \draw[->] (C2) to node[left]{$U$} (Cp2);

          \draw[-implies,double equal sign distance,shorten >=3em,shorten <=3em] (Cp) to node[below]{$\alpha$} (D);
          \draw[-implies,double equal sign distance,shorten >=3em,shorten <=3em] (C2) to node[below]{$\beta$} (Dp);
    \end{tikzpicture}  
\end{center}
Then the following equations are equivalent:
\begin{equation}\label{equation:resp_unit}
    (\id_{R'} \ast \alpha) \circ (\unit' \ast \id_U) = (\beta \ast \id_G) \circ (\id_U \ast \unit)
\end{equation}
\begin{equation}\label{equation:resp_counit}
    (\id_V \ast \counit) \circ (\alpha \ast \id_R) = (\counit' \ast \id_V) \circ (\id_{G'} \ast \beta)
\end{equation}
\begin{equation}\label{equation:def_of_mate_beta}
    \beta = (\id_{R'V} \ast \counit) \circ (\id_{R'} \ast \alpha \ast \id_R) \circ (\unit' \ast \id_{UR})
\end{equation}
\begin{equation}\label{equation:def_of_mate_alpha}
    \alpha = (\counit' \ast \id_{VG}) \circ (\id_{G'} \ast \beta \ast \id_G) \circ (\id_{G'U} \ast \unit)
\end{equation}
\end{lemma}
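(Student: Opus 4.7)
My strategy is to establish three equivalences which together give the four-way equivalence of the statement: (i) \eqref{equation:def_of_mate_beta} $\Leftrightarrow$ \eqref{equation:resp_unit}, (ii) \eqref{equation:def_of_mate_alpha} $\Leftrightarrow$ \eqref{equation:resp_counit}, and (iii) \eqref{equation:def_of_mate_beta} $\Leftrightarrow$ \eqref{equation:def_of_mate_alpha}. Equivalences (i) and (ii) are formally dual to each other under the symmetry swapping units and counits and reversing $2$-morphism directions, so only (i) and (iii) require genuine calculation. The only tools needed are the middle-four interchange law and the two zigzag identities for each of the adjunctions $G \dashv R$ and $G' \dashv R'$.

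For the forward direction of (i), I would substitute the formula for $\beta$ from \eqref{equation:def_of_mate_beta} into $(\beta \ast \id_G) \circ (\id_U \ast \unit)$, whisker with $\id_G$ to distribute the composition, and apply interchange to commute the resulting $\id_U \ast \unit$ past $\unit' \ast \id_{UR}$. This produces a subterm $(\id_{R'V} \ast \counit \ast \id_G) \circ (\id_{R'VG} \ast \unit)$ which collapses to $\id_{R'VG}$ by the zigzag identity $(\counit \ast \id_G) \circ (\id_G \ast \unit) = \id_G$ whiskered on the left by $R'V$; the remainder is exactly $(\id_{R'} \ast \alpha) \circ (\unit' \ast \id_U)$, as required. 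For the reverse direction, I would whisker \eqref{equation:resp_unit} on the right by $\id_R$ and substitute into the right-hand side of \eqref{equation:def_of_mate_beta} to rewrite the factor $(\id_{R'} \ast \alpha \ast \id_R) \circ (\unit' \ast \id_{UR})$ as $(\beta \ast \id_{GR}) \circ (\id_U \ast \unit \ast \id_R)$. Interchange then combines the outer $\counit$ with $\beta$ to give $\beta \circ (\id_{UR} \ast \counit)$, and the other zigzag $(\id_R \ast \counit) \circ (\unit \ast \id_R) = \id_R$ collapses the remainder to $\beta$.

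For (iii), the equivalence amounts to showing that the two operations $\alpha \mapsto \beta$ and $\beta \mapsto \alpha$ defined by \eqref{equation:def_of_mate_beta} and \eqref{equation:def_of_mate_alpha} are mutually inverse. I would substitute one formula into the other and expand via the interchange law, arranging the resulting composite so that the two remaining zigzag identities---namely, $(\counit' \ast \id_{G'}) \circ (\id_{G'} \ast \unit') = \id_{G'}$ for $G' \dashv R'$, and the second zigzag for $G \dashv R$---appear as consecutive subterms. Applying them sequentially collapses the expression back to the starting $2$-morphism. Combined with (i) and (ii), this yields the chain $\eqref{equation:resp_unit} \Leftrightarrow \eqref{equation:def_of_mate_beta} \Leftrightarrow \eqref{equation:def_of_mate_alpha} \Leftrightarrow \eqref{equation:resp_counit}$, completing the proof.

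The conceptual content is minimal—this is the classical calculus of mates of Kelly and Street—and the real obstacle is notational bookkeeping: tracking the various whiskerings and applying interchange in the correct positions. The strictness of $\cC$ is essential here, as it removes the need to insert associator and unitor coherences at every step; in a general bicategory the same argument would go through but with substantially more clutter.
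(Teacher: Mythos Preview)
Your proposal is correct and in fact more detailed than the paper's own proof, which simply states ``This is an easy computation.'' Your chain of equivalences via the interchange law and zigzag identities is exactly the standard verification of the mate correspondence from Kelly--Street that the paper leaves to the reader.
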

\begin{proof}
This is an easy computation.
\end{proof}

\begin{definition}[In the context of \Cref{lemma:mates}]
If one of the equivalent equations holds, then we call $\alpha$ and $\beta$ \emph{mates} and we refer to $(U,V, \alpha, \beta)$ as a \emph{mate datum} of the adjunctions $G \dashv R$ and $G' \dashv R'$.
\end{definition}

\begin{remark}
Equations \eqref{equation:resp_unit} and \eqref{equation:resp_counit} can be interpreted as follows: a mate datum respects the units and counits of the given adjunctions.
Moreover, equations \eqref{equation:def_of_mate_beta} and \eqref{equation:def_of_mate_alpha} can be interpreted as follows: mates determine each other uniquely.
\end{remark}

\begin{example}
The classical notion of an adjunction is recovered by an adjunction $G \dashv R$ internal to $\Cat$. Here, morphisms $GA \xrightarrow{\alpha} X$ are in bijection with morphisms $A \xrightarrow{\beta} RX$, where $A$ and $X$ are objects of the corresponding categories $\cC,\cD \in \Cat$.
These morphisms can be regarded as mates, where we interpret them as natural transformations between functors whose source is the terminal category:
\begin{center}
    \begin{tikzpicture}[baseline=($(C) + 0.5*(d)$)]
          \coordinate (r) at (4,0);
          \coordinate (d) at (0,-1.5);
          \node (C) {$\ast$};
          \node (D) at ($(C) + (r)$) {$\ast$};
          \node (C2) at ($(D) + (r)$) {$\ast$};
          \node (Cp) at ($(C) + (d)$){$\cC$};
          \node (Dp) at ($(Cp) + (r)$) {$\cD$};
          \node (Cp2) at ($(Dp) + (r)$) {$\cC$};
          
          \draw[->] (C) to (D);
          \draw[->] (D) to (C2);

          \draw[->] (Cp) to node[below]{$G$} (Dp);
          \draw[->] (Dp) to node[below]{$R$} (Cp2);

          \draw[->] (C) to node[left]{$A$} (Cp);
          \draw[->] (D) to node[left]{$X$} (Dp);
          \draw[->] (C2) to node[left]{$A$} (Cp2);

          \draw[-implies,double equal sign distance,shorten >=3em,shorten <=3em] (Cp) to node[below]{$\alpha$} (D);
          \draw[-implies,double equal sign distance,shorten >=3em,shorten <=3em] (C2) to node[below]{$\beta$} (Dp);
    \end{tikzpicture}  
\end{center}
In particular, if $X = GA$ and $\alpha = \id_{GA}$, we have $\beta = \unit$, which provides an example in which $\alpha$ is invertible while $\beta$ is not necessarily invertible.
\end{example}

\begin{example}
Let $G \dashv R$ be an oplax-lax adjunction, see \Cref{subsubsection:oplax_lax}. Then $\oplax^G$ and $\lax^R$ can be regarded as mates:
\begin{center}
    \begin{tikzpicture}[baseline=($(C) + 0.5*(d)$)]
          \coordinate (r) at (4,0);
          \coordinate (d) at (0,-1.5);
          \node (C) {$\cC \times \cC$};
          \node (D) at ($(C) + (r)$) {$\cD \times \cD$};
          \node (C2) at ($(D) + (r)$) {$\cC \times \cC$};
          \node (Cp) at ($(C) + (d)$){$\cC$};
          \node (Dp) at ($(Cp) + (r)$) {$\cD$};
          \node (Cp2) at ($(Dp) + (r)$) {$\cC$};
          
          \draw[->] (C) to node[above]{$G \times G$} (D);
          \draw[->] (D) to node[above]{$R \times R$} (C2);

          \draw[->] (Cp) to node[below]{$G$} (Dp);
          \draw[->] (Dp) to node[below]{$R$} (Cp2);

          \draw[->] (C) to node[left]{$\otimes$} (Cp);
          \draw[->] (D) to node[left]{$\otimes$} (Dp);
          \draw[->] (C2) to node[left]{$\otimes$} (Cp2);

          \draw[-implies,double equal sign distance,shorten >=3em,shorten <=3em] (Cp) to node[above,xshift=-1em]{$\oplax^G$} (D);
          \draw[-implies,double equal sign distance,shorten >=3em,shorten <=3em] (C2) to node[above,xshift=-1em]{$\lax^R$} (Dp);
    \end{tikzpicture}
\end{center}
Indeed, the construction of $\lax^R$ from $\oplax^G$ described in \eqref{eq:construct_lax} is a special instance of the construction of $\beta$ from $\alpha$ described in \eqref{equation:def_of_mate_beta}.
\end{example}

\begin{example}\label{example:proj_as_a_mate}
Let $G \dashv R$ be an oplax-lax adjunction, see \Cref{subsubsection:oplax_lax}. Then $\oplax^G$ and $\projlnoarg^R$ can be regarded as mates:
\begin{center}
    \begin{tikzpicture}[baseline=($(C) + 0.5*(d)$)]
          \coordinate (r) at (5,0);
          \coordinate (d) at (0,-1.5);
          \node (C) {$\cC \times \cC$};
          \node (D) at ($(C) + (r)$) {$\cC \times \cD$};
          \node (C2) at ($(D) + (r)$) {$\cC \times \cC$};
          \node (Cp) at ($(C) + (d)$){$\cC$};
          \node (Dp) at ($(Cp) + (r)$) {$\cD$};
          \node (Cp2) at ($(Dp) + (r)$) {$\cC$};
          
          \draw[->] (C) to node[above]{$\id_{\cC} \times G$} (D);
          \draw[->] (D) to node[above]{$\id_{\cC} \times R$} (C2);

          \draw[->] (Cp) to node[below]{$G$} (Dp);
          \draw[->] (Dp) to node[below]{$R$} (Cp2);

          \draw[->] (C) to node[left]{$\otimes$} (Cp);
          \draw[->] (D) to node[left]{$G \otimes \id_{\cD}$} (Dp);
          \draw[->] (C2) to node[left]{$\otimes$} (Cp2);

          \draw[-implies,double equal sign distance,shorten >=3em,shorten <=3em] (Cp) to node[above,xshift=-1em]{$\oplax^G$} (D);
          \draw[-implies,double equal sign distance,shorten >=3em,shorten <=3em] (C2) to node[above,xshift=-1em]{$\projlnoarg^R$} (Dp);
    \end{tikzpicture}
\end{center}
Indeed, the construction of $\projlnoarg^R$ from $\oplax^G$ described in \Cref{lem:alt-proj} is a special instance of the construction of $\beta$ from $\alpha$ described in \eqref{equation:def_of_mate_beta}. Analogously, $\oplax^G$ and $\projrnoarg^R$ can be regarded as mates.
\end{example}

\begin{definition}\label{definition:2_morphism_of_adjunctions}
A \emph{transformation between mate data} $(U_1, V_1, \alpha_1, \beta_1)$ and $(U_2, V_2, \alpha_2, \beta_2)$ of adjunctions $(G \dashv R)$ and $(G' \dashv R')$ consists of
\begin{itemize}
    \item $2$-morphisms $U_1 \xrightarrow{\tau} U_2$ and $V_1 \xrightarrow{\sigma} V_2$
\end{itemize}
such that the following equations hold:
\begin{equation}\label{equation_left_square_holds}
    \alpha_2 \circ (\id_{G'} \ast \tau) = (\sigma \ast \id_G) \circ \alpha_1
\end{equation}
\begin{equation}\label{equation_right_square_holds}
    \beta_2 \circ (\tau \ast \id_R) = (\id_{R'} \ast \sigma) \circ \beta_1
\end{equation}
\end{definition}

\begin{lemma}\label{lemma:forg_full_on_hom}
In the context of \Cref{definition:2_morphism_of_adjunctions}, the equations \eqref{equation_left_square_holds} and \eqref{equation_right_square_holds} are equivalent.
\end{lemma}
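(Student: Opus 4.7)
The plan is to use the explicit formulas \eqref{equation:def_of_mate_beta} and \eqref{equation:def_of_mate_alpha} from Lemma~\ref{lemma:mates}, which express each of $\alpha_i$ and $\beta_i$ in terms of the other via the unit and counit of the relevant adjunction. Since those formulas establish a bijective correspondence between $2$-morphisms $G'U \Rightarrow VG$ and $2$-morphisms $UR \Rightarrow R'V$, the two compatibility equations \eqref{equation_left_square_holds} and \eqref{equation_right_square_holds} should be viewed as expressing the same naturality condition transported across this bijection. So the strategy is: substitute the mate formula into one equation, move the whiskering $2$-cells $\tau$ and $\sigma$ past the unit/counit using the interchange law, apply the hypothesis to swap the middle factor, and reassemble.

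Concretely, to show \eqref{equation_left_square_holds} $\Rightarrow$ \eqref{equation_right_square_holds}, I would substitute formula \eqref{equation:def_of_mate_beta} for $\beta_1$ into the right-hand side of \eqref{equation_right_square_holds}, obtaining
\[
(\id_{R'} \ast \sigma) \circ (\id_{R'V_1} \ast \counit) \circ (\id_{R'} \ast \alpha_1 \ast \id_R) \circ (\unit' \ast \id_{U_1R}).
\]
The interchange law rewrites the leftmost composite as $(\id_{R'V_2} \ast \counit) \circ (\id_{R'} \ast \sigma \ast \id_{GR})$, after which the middle two factors combine to $\id_{R'} \ast ((\sigma \ast \id_G) \circ \alpha_1) \ast \id_R$. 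Applying hypothesis \eqref{equation_left_square_holds} replaces this by $\id_{R'} \ast (\alpha_2 \circ (\id_{G'} \ast \tau)) \ast \id_R$, and one further application of interchange on the rightmost factor pulls $\tau$ through $\unit'$ to the outside. Collecting terms produces the formula \eqref{equation:def_of_mate_beta} for $\beta_2$ post-composed with $(\tau \ast \id_R)$, which is precisely the left-hand side of \eqref{equation_right_square_holds}. The converse direction is entirely symmetric, using formula \eqref{equation:def_of_mate_alpha} instead and running the same manipulations on the opposite side of the adjunction.

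The argument is a direct computation with no genuine obstacle: the zigzag identities are not needed and no coherence beyond the interchange law intervenes. The only delicate point is bookkeeping the many identity $2$-morphisms whiskering each factor, which is handled most cleanly using a pasting or string-diagram presentation as in \cite{KellyStreet}*{Section~2}. The upshot is that the transformation data $(\tau, \sigma)$ is compatible with the mate correspondence, so imposing the compatibility on one side imposes it on the other.
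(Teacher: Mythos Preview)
Your proposal is correct and is precisely the computation one would carry out; the paper's own proof says only ``This is again an easy computation,'' so your expanded argument is essentially the same approach with the details filled in. Your observation that the zigzag identities are not needed here is also accurate: only the mate formulas \eqref{equation:def_of_mate_beta}, \eqref{equation:def_of_mate_alpha} and the interchange law enter.
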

\begin{proof}
This is again an easy computation.
\end{proof}

We obtain a strict bicategory $\Adj_{\cC}$ whose objects are adjunctions, $1$-morphisms are mate data, $2$-morphisms are transformations between mate data.
Composition of mate data is given by pasting diagrams and it is easy to see that mates are indeed compatible with this composition \cite{KellyStreet}*{Proposition 2.2}.
Horizontal and vertical composition of transformations between mate data is directly inherited from the corresponding notions in $\cC$.

\begin{remark}
There are other candidates for bicategories in which adjunctions reside as objects. For example, one can consider bicategories of pseudofunctors from the free adjunction into $\cC$, see \cite{SchanuelStreet}. These categories are useful in the study of the functoriality of Kleisli categories and Eilenberg--Moore categories. However, since we are only interested in an appropriate notion of equivalence between adjunctions, it suffices to work with $\Adj_{\cC}$.
\end{remark}

We compare $\Adj_{\cC}$ with the following bicategory:

\begin{definition}
We define the following strict bicategory $\LAdj_{\cC}$:
\begin{itemize}
    \item Objects are given by those $1$-morphisms in $\cC$ for which there exists a right adjoint.
    \item A $1$-morphism from $(C \xrightarrow{G} D)$ to $(C' \xrightarrow{G'} D')$ is given by $1$-morphisms $U: C \rightarrow C'$ and $V: D \rightarrow D'$ and a $2$-morphism $G'U \xrightarrow{\alpha} VG$.
    \item For $1$-morphisms $G \xrightarrow{(U_1, V_1, \alpha_1)} G'$ and $G \xrightarrow{(U_2, V_2, \alpha_2)} G'$, a $2$-morphism is given by $2$-morphisms $U_1 \xrightarrow{\tau} U_2$ and $V_1 \xrightarrow{\sigma} V_2$ such that \eqref{equation_left_square_holds} holds.
\end{itemize}
\end{definition}

The following proposition is a way to express that being a left adjoint is a property in a categorical sense.

\begin{proposition}\label{proposition:being_left_adjoint_is_a_property}
The canonical strict pseudofunctor
\[
\Adj_{\cC} \rightarrow \LAdj_{\cC}
\]
which forgets the right adjoint is a biequivalence.
\end{proposition}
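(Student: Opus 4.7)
The strategy is to verify two things about the forgetful pseudofunctor $F\colon \Adj_\cC \to \LAdj_\cC$: first, that it is (essentially) surjective on objects, and second, that for each pair of adjunctions it induces an isomorphism on hom-categories. These together imply $F$ is a biequivalence.

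Surjectivity on objects is immediate from the definitions: every object of $\LAdj_\cC$ is by construction a $1$-morphism $G\colon C \to D$ that admits some right adjoint, and any choice of such a right adjoint together with unit and counit produces an internal adjunction $(G \dashv R)$ whose image under $F$ is exactly $G$.

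For the local analysis, fix adjunctions $(G \dashv R)$ and $(G' \dashv R')$ and consider the induced functor
\[
F_{\mathrm{loc}}\colon \Hom_{\Adj_\cC}\!\bigl((G \dashv R),(G' \dashv R')\bigr) \longrightarrow \Hom_{\LAdj_\cC}(G, G'),
\]
which forgets $\beta$ on $1$-morphisms and is the identity on the data of $2$-morphisms. I claim $F_{\mathrm{loc}}$ is an isomorphism of categories. On $1$-morphisms it is a bijection: given $(U, V, \alpha) \in \Hom_{\LAdj_\cC}(G, G')$, Lemma \ref{lemma:mates} shows that formula \eqref{equation:def_of_mate_beta} produces a mate $\beta$ fitting $\alpha$, and that any mate of $\alpha$ must satisfy the same formula, so there is exactly one lift to a mate datum $(U,V,\alpha,\beta)$. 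On $2$-morphisms, a pair $(\tau,\sigma)$ satisfying only \eqref{equation_left_square_holds} automatically satisfies \eqref{equation_right_square_holds} by Lemma \ref{lemma:forg_full_on_hom}, so the $2$-morphisms in $\Adj_\cC$ between two lifted mate data are exactly the $2$-morphisms in $\LAdj_\cC$ between their images.

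Combining surjectivity on objects with the fact that each hom-category is mapped isomorphically yields that $F$ is a biequivalence. The work has essentially been done in Lemmas \ref{lemma:mates} and \ref{lemma:forg_full_on_hom}; the only thing to verify is that these assemble as claimed, and in particular that horizontal and vertical composition of mate data (which is defined by pasting and inherited from $\cC$) is respected strictly by $F$, which is direct from the construction of $\Adj_\cC$. There is thus no genuine obstacle, and the argument is essentially a bookkeeping exercise built on the two cited lemmas.
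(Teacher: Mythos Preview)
Your proof is correct and follows essentially the same approach as the paper: both check surjectivity on objects and then use Lemma~\ref{lemma:mates} and Lemma~\ref{lemma:forg_full_on_hom} to show the induced functors on hom-categories are equivalences, concluding via the Whitehead theorem for bicategories. Your observation that the hom-category functors are in fact \emph{isomorphisms} (since $\beta$ is uniquely determined by $\alpha$ via \eqref{equation:def_of_mate_beta}) is slightly sharper than the paper's phrasing, but the overall strategy is the same.
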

\begin{proof}
The pseudofunctor in question is actually surjective on objects.
Moreover, it induces essentially surjective functors between hom-categories due to \Cref{lemma:mates}. These functors on hom-categories are clearly faithful. Last, they are full by \Cref{lemma:forg_full_on_hom}.
Thus, we have a biequivalence by the Whitehead Theorem for bicategories \cite{JY21}*{Theorem 7.4.1}.
\end{proof}

\begin{remark}\label{remark:adj_equivalences_by_right_adjoints}
A version of \Cref{proposition:being_left_adjoint_is_a_property} for right adjoints holds by a duality argument.
\end{remark}

\begin{corollary}\label{corollary:easy_characterization_of_adj_equivalences}
$G \dashv R$ and $G' \dashv R'$ are equivalent in $\Adj_{\cC}$ if and only if there exist 
\begin{itemize}
    \item an equivalence $U: C \rightarrow C'$,
    \item an equivalence $V: D \rightarrow D'$,
    \item an invertible $2$-morphism $G'U \xrightarrow{\alpha} VG$.
\end{itemize}
\end{corollary}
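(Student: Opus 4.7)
The corollary is a direct application of \Cref{proposition:being_left_adjoint_is_a_property}. Since the forgetful pseudofunctor $\Adj_\cC \to \LAdj_\cC$ is a biequivalence, it preserves and reflects equivalences between objects (a standard fact: applying the pseudo-inverse of a biequivalence carries an equivalence in the target bicategory to one isomorphic to an equivalence in the source). Hence, $G \dashv R$ and $G' \dashv R'$ are equivalent in $\Adj_\cC$ if and only if their underlying left adjoints $G$ and $G'$ are equivalent as objects of $\LAdj_\cC$. The remaining task is to identify equivalences in $\LAdj_\cC$ with the stated data.

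My claim is that a 1-morphism $(U, V, \alpha) \colon G \to G'$ in $\LAdj_\cC$ is an equivalence precisely when $U, V$ are equivalences in $\cC$ and $\alpha$ is invertible. For sufficiency, I would choose pseudo-inverses $U^\vee, V^\vee$ in $\cC$ together with invertible unit and counit 2-morphisms, and then define $\alpha^\vee \colon GU^\vee \to V^\vee G'$ as the pasting composite obtained by conjugating $\alpha^{-1}$ with these units and counits. A routine check using the triangle identities and the interchange law then shows that $(U^\vee, V^\vee, \alpha^\vee)$ is a pseudo-inverse to $(U, V, \alpha)$ in $\LAdj_\cC$.

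For necessity, assume $(U', V', \alpha')$ is a pseudo-inverse of $(U, V, \alpha)$ in $\LAdj_\cC$. The isomorphisms in the hom-categories of $\LAdj_\cC$ realizing the two composites as identity 1-morphisms unpack, by \Cref{definition:2_morphism_of_adjunctions}, to invertible 2-morphisms $\tau \colon U'U \to \id_C$, $\sigma \colon V'V \to \id_D$, $\tau' \colon UU' \to \id_{C'}$, $\sigma' \colon VV' \to \id_{D'}$, witnessing that $U$ and $V$ are equivalences in $\cC$. Applying the compatibility equation \eqref{equation_left_square_holds} to both isomorphisms forces the pasting composites $(\id_{V'} \ast \alpha) \circ (\alpha' \ast \id_U)$ and $(\id_V \ast \alpha') \circ (\alpha \ast \id_{U'})$ to be invertible. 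The main obstacle is to deduce from this data that $\alpha$ itself is invertible; I plan to handle it by constructing an explicit inverse $\alpha^{-1} \colon VG \to G'U$ as a pasting of $\alpha'$ with the invertible 2-morphisms $\tau, \sigma, \tau', \sigma'$, mirroring the construction of $\alpha^\vee$ in the sufficiency direction, and then verifying the triangle identities using the two compatibility equations \eqref{equation_left_square_holds} and \eqref{equation_right_square_holds}.
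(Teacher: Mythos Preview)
Your approach is the same as the paper's: reduce via \Cref{proposition:being_left_adjoint_is_a_property} to characterizing equivalences in $\LAdj_{\cC}$, then identify these with the stated data. The paper simply asserts the second step without elaboration, whereas you sketch it out; your sketch is essentially correct, with one small slip: in the necessity direction you invoke \eqref{equation_right_square_holds}, but that equation lives in $\Adj_\cC$ and is not part of the definition of $2$-morphisms in $\LAdj_\cC$, so only the two instances of \eqref{equation_left_square_holds} (one for each composite) are available---these already suffice to verify that your candidate $\alpha^{-1}$ works.
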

\begin{proof}
By \Cref{proposition:being_left_adjoint_is_a_property}, $G \dashv R$ and $G' \dashv R'$ are equivalent in $\Adj_{\cC}$ if and only if $G$ and $G'$ are equivalent as objects in $\LAdj_{\cC}$, which is equivalent to the condition in the statement.
\end{proof}

\begin{corollary}\label{corollary:dual_easy_characterization_of_adj_equivalences}
$G \dashv R$ and $G' \dashv R'$ are equivalent in $\Adj_{\cC}$ if and only if there exist 
\begin{itemize}
    \item an equivalence $U: C \rightarrow C'$,
    \item an equivalence $V: D \rightarrow D'$,
    \item an invertible $2$-morphism $UR \xrightarrow{\beta} R'V$.
\end{itemize}
\end{corollary}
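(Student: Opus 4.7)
The plan is to mirror the proof of \Cref{corollary:easy_characterization_of_adj_equivalences} using the dual version of \Cref{proposition:being_left_adjoint_is_a_property} alluded to in \Cref{remark:adj_equivalences_by_right_adjoints}. Concretely, I would introduce the strict bicategory $\mathbf{RAdj}_{\cC}$ of right adjoints in $\cC$: its objects are $1$-morphisms $R\colon D \to C$ admitting a left adjoint; a $1$-morphism from $R$ to $R'$ is a triple $(U, V, \beta)$ consisting of $U\colon C \to C'$, $V\colon D \to D'$ and a $2$-cell $UR \xrightarrow{\beta} R'V$; and a $2$-morphism $(U_1, V_1, \beta_1) \to (U_2, V_2, \beta_2)$ is a pair $(\tau, \sigma)$ with $\tau\colon U_1 \to U_2$, $\sigma\colon V_1 \to V_2$ satisfying \eqref{equation_right_square_holds}. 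Vertical and horizontal composition are inherited from $\cC$ in the obvious way, and one checks directly that this yields a strict bicategory.

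Next, I would show that the canonical forgetful pseudofunctor
\[
\Adj_{\cC} \longrightarrow \mathbf{RAdj}_{\cC},
\]
sending an adjunction $G \dashv R$ to its right adjoint $R$ and a mate datum $(U, V, \alpha, \beta)$ to $(U, V, \beta)$, is a biequivalence. This is the dual of \Cref{proposition:being_left_adjoint_is_a_property} and its proof is a line-by-line dualization: surjectivity on objects is built into the definition of $\mathbf{RAdj}_{\cC}$; essential surjectivity on hom-categories follows from \Cref{lemma:mates} since \eqref{equation:def_of_mate_alpha} allows one to reconstruct a mate $\alpha$ from any given $\beta$; faithfulness on hom-categories is clear from the definitions; and fullness is precisely the equivalence of \eqref{equation_left_square_holds} and \eqref{equation_right_square_holds} established in \Cref{lemma:forg_full_on_hom}. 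The Whitehead theorem for bicategories (\cite{JY21}*{Theorem 7.4.1}) then yields the biequivalence.

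Finally, I would conclude exactly as in the proof of \Cref{corollary:easy_characterization_of_adj_equivalences}: since biequivalences reflect and preserve equivalences of objects, $G \dashv R$ and $G' \dashv R'$ are equivalent in $\Adj_{\cC}$ if and only if $R$ and $R'$ are equivalent as objects of $\mathbf{RAdj}_{\cC}$. Unpacking the latter gives exactly the data of equivalences $U\colon C \to C'$ and $V\colon D \to D'$ together with an invertible $2$-cell $\beta\colon UR \to R'V$, establishing the claim.

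There is essentially no substantive obstacle: the whole argument is a formal dualization of the left-adjoint case, and the only thing to be careful about is verifying that \Cref{lemma:mates} and \Cref{lemma:forg_full_on_hom} are genuinely symmetric in $\alpha$ and $\beta$ (which they are, by inspection of the four equivalent formulations in \Cref{lemma:mates}).
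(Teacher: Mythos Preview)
Your proposal is correct and follows essentially the same approach as the paper: the paper's proof is a one-line appeal to \Cref{remark:adj_equivalences_by_right_adjoints} and \Cref{corollary:easy_characterization_of_adj_equivalences}, and your argument simply spells out explicitly the dualization that the remark leaves implicit.
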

\begin{proof}
This follows from \Cref{remark:adj_equivalences_by_right_adjoints} and \Cref{corollary:easy_characterization_of_adj_equivalences}.
\end{proof}

\bibliography{biblio}
\bibliographystyle{amsrefs}%

\end{document}